\theoremstyle{plain}
\newtheorem{theorem}{Theorem}[section]
\newtheorem{corollary}[theorem]{Corollary}
\newtheorem{lemma}[theorem]{Lemma}
\newtheorem{axiom}{Axiom}
\begin{document}
\title{ The Fan Theorem, its strong negation, and the determinacy of games}
\author{Wim Veldman}
\address{Institute for Mathematics, Astrophysics and Particle Physics, Faculty of Science, Radboud University Nijmegen,
Postbus 9010, 6500 GL Nijmegen, the Netherlands}
\email{Wim.Veldman@ru.nl}
\date{}

\begin{abstract}
In the context of   a weak formal theory called {Basic Intuitionistic Mathematics $\mathsf{BIM}$}, we study Brouwer's  \textit{Fan Theorem} and  a strong negation of the Fan Theorem,  \textit{Kleene's Alternative (to the Fan Theorem)}.
We prove that  the Fan Theorem is  equivalent to   \textit{contrapositions} of a number of  intuitionistically accepted axioms of countable choice and that Kleene's Alternative is equivalent to  \textit{strong negations} of these statements. We  discuss finite and infinite games and introduce a constructively useful notion of \textit{determinacy}. We prove that the Fan Theorem is equivalent to the   \textit{Intuitionistic Determinacy Theorem}. This theorem  says that every subset of Cantor space $2^\omega$ is, in our constructively meaningful sense, determinate.   Kleene's Alternative is equivalent to a strong negation of  a special case of this theorem. We also consider  a \textit{uniform intermediate value theorem} and a \textit{compactness theorem for classical propositional logic}. The Fan Theorem is equivalent to each of these theorems and  Kleene's Alternative is equivalent to strong negations of them.   We end with a note on  \textit{`stronger'} Fan Theorems. The paper is a sequel to \cite{veldman2011b}. 
\end{abstract}

\maketitle

\section{Introduction}

 \subsection{Intuitionistic reverse mathematics}\label{SS:intuitionisticreversemathematics} L.E.J.~Brouwer did not present  his intuitionistic mathematics as a formal axiomatic theory. He did not like formalism and formalization and anxiously maintained the distinction between a mathematical proof and the linguistic expression that should help us to recover the proof but may fail to do so. 
The challenge to develop formal theories coming close to Brouwer's intentions  was taken up  by A.~Heyting, G.~Gentzen, S.C.~Kleene, G.~Kreisel,  \\J.~Myhill, A.S.~Troelstra, and others.  

Given  a preferably weak formal basic theory $\Gamma$   and  a formal proof in $\Gamma$  of  a statement $T$ from some extra assumption $A$, one may ask: is there also a formal proof in $\Gamma$ of this  extra assumption  $A$ from the statement $T$?  The study of such questions, as far as they belong  to the field of classical analysis or second-order arithmetic, is called \textit{reverse mathematics}, see \cite{Simpson}. The weak basic theory  
there is $\mathsf{RCA}_0$.

\subsection{The basic theory $\mathsf{BIM}$}Our subject is \textit{intuitionistic reverse mathematics}.

 The weak basic theory we use is the two-sorted first-order intuitionistic theory $\mathsf{BIM}$ (\textit{Basic Intuitionistic Mathematics}), introduced in \cite{veldman2011b}.  The domain of discourse of $\mathsf{BIM}$ consists of two kinds of   objects: natural numbers and infinite sequences of natural numbers. The axioms express some  basic assumptions like the (full) principle of induction on the set $\omega$ of the natural numbers,  and the fact that  the  set $\omega^\omega$ of the infinite sequences of natural numbers  is closed under the  recursion-theoretic operations.
 
 The reason that we use a basic theory different in spirit from the basic theory used in classical reverse mathematics is that, in intuitionistic analysis, one prefers the notion of an infinite sequence of natural numbers as a primitive notion above the notion of a subset of the set of the natural numbers, see \cite[Section 5]{veldman2011b}.

 For the  intuitionistic mathematician, the set $\omega^\omega$ of all infinite sequences of natural numbers is {\it not}, as one sometimes says when explaining the notion of `set' that lies at the basis of   classical set theory, the result of taking together the earlier constructed and completed items that are to be the \textit{`elements'} of the set. The set $\omega^\omega$ is a realm of possibilities: it is a framework for constructing, in the future, in all kinds of possibly as yet unforeseen ways, the objects that will be called the elements of the set. There are several kinds of infinite sequences  $\alpha = \bigl(\alpha(0), \alpha(1), \ldots \bigr)$ of natural numbers.  Sometimes, $\alpha$ is the result of executing a \textit{program},  a finitely formulated {\it algorithm}. It is also possible that   $\alpha$ is the result of a more or less {\it free step-by-step construction} that is not governed by a rule formulated at the start.  

\subsection{Two interpretations} The axioms of  $\mathsf{BIM}$  hold for their \textit{intended interpretation}, the interpretation given to them by the intuitionistic mathematician.  The axioms of $\mathsf{BIM}$ also become true for her if she assumes that the elements of $\omega^\omega$ are just the    Turing-computable functions from $\omega$ to $\omega$.  Turing-computable functions may be represented by the natural number coding their program, and 
$\mathsf{BIM}$ may be seen to be  a \textit{conservative extension of first-order intuitionistic arithmetic $\mathsf{HA}$}, \textit{Heyting arithmetic}. 

The model given by the computable functions thus is the \textit{second  interpretation} of $\mathsf{BIM}$. Our study of this model is a contribution to \textit{intuitionistic recursive analysis}.

In the weak context given by $\mathsf{BIM}$ 
one may study the further assumptions of the intuitionistic mathematician. They fall  into three groups:
\begin{enumerate}
\item Axioms of Countable Choice,
\item Brouwer's Continuity Principle and the Axioms of Continuous Choice, and 
\item Brouwer's Thesis on bars in  Baire space $\omega^\omega$ and the Fan Theorem.
\end{enumerate}
The intuitionistic mathematican is prepared to argue the plausibility of  these assumptions for her  intended interpretation.

 She defends the Axioms of Countable Choice, for instance,  by explaining that  the functions promised by the axioms may be constructed step by step.  
 
She has no argument for the truth of the further assumptions under the second interpretation, where   every function is assumed to be given by an algorithm. It is not clear to her if the Axioms of Countable Choice then are true.  
 
  Brouwer's Continuity Principle and its extensions, the Axioms of Continuous Choice,  certainly fail in the second  interpretation.
  
 The Thesis on bars in $\omega^\omega$ was introduced by Brouwer for the sake of the  Fan Theorem. The Fan Theorem itself, dating from 1924, see \cite{brouwer27}, might be called \textit{the Thesis on Bars in Cantor space $2^\omega$},  see \cite{veldman2006b}, \cite{veldman2008}, and \cite[Subsection 2.3]{veldman2011b}.
 
 In 1950, see \cite{kleene52}, Kleene  saw that, in our second  interpretation, also the Fan Theorem, and, a fortiori, the Thesis on bars in $\omega^\omega$,  do not hold. Actually,  a positively formulated {\it strong negation} of the Fan Theorem  becomes true.  In \cite{veldman2011b}, we called this statement 
  \textit{Kleene's Alternative (to the Fan Theorem)}.

 \subsection{Strong negations}\label{SS:strongnegations}The (strict) \textit{Fan Theorem}, $\mathbf{FT}$, see Subsubsection  \ref{SSS:fan}, is the statement\footnote{The reader may consult Section \ref{S:notation} for the notations used.} $$\forall \alpha [Bar_{2^\omega}(D_\alpha) \rightarrow \exists n[ Bar_{2^\omega}(D_{\overline\alpha n})]],$$ and \textit{Kleene's Alternative (to the Fan Theorem)}, $\mathbf{KA}$, see Subsubsection  \ref{SSS:ka}, is the statement $$\exists \alpha [Bar_{2^\omega}(D_\alpha) \;\wedge\;  \forall n[ \neg Bar_{2^\omega}(D_{\overline\alpha n})]].$$ We  want to call $\mathbf{KA}$ the \textit{strong negation} $\mathbf{\neg ! FT}$ of $\mathbf{FT}$.    In general,  if we decide to call a statement $B$ the \textit{strong negation} of a statement $A$, $B$ will be a statement  more positive than the negation of $A$  that constructively implies the negation of $A$. We do not require that the statement $B$ is completely positive in the sense that the corresponding formula does not contain $\neg$ and $\rightarrow$\footnote{See, for instance, the sentences \ref{SS:omegacantor} and \ref{SS:omega01} and  Subsection \ref{SS:infinitelymanymoves}.}. We do not introduce \textit{strong negation} as a syntactical operation on formulas.  It is important to realize that, once we have understood that statement $A$ is equivalent to statement $B$, it may be the case that statements $C,D$, which have been chosen to be called the strong negations of $A,B$, respectively,  fail to be  equivalent. 
 
 If we have decided to call the formula (denoted by) $B$ the strong negation of the formula (denoted by) $A$, we will write $B=\neg!A$, but note that this notation belongs to the meta-language of $\mathsf{BIM}$. $\neg!$ is \textit{neither} a connective belonging to the language of $\mathsf{BIM}$ \textit{nor} a syntactical operation on formulas.
 
 We shall prove a number of results of the form: \begin{quote} In $\mathsf{BIM}$, $A$ is equivalent to $B$ and $\neg! B$ is equivalent to $\neg ! A$. \end{quote}
 
 When we  do so, we try to explain that the conclusions $A\rightarrow B$ and $\neg ! B\rightarrow \neg ! A$ have a common ground and that also the conclusions $B\rightarrow A$ and $\neg ! A\rightarrow \neg ! B$ have a common ground. 
 \subsection{Contrapositions or reversals} We may compare \textit{Weak K\"onig's Lemma}, $\mathbf{WKL}$, see \ref{SSS:wkl}:  $$\forall \alpha [\forall n[\neg Bar_{2^\omega}(D_{\overline\alpha n})]\rightarrow \exists \gamma\in2^\omega \forall n [ \alpha(\overline{\gamma}
 n) = 0 ]].$$ with the (strict) \textit{Fan Theorem}, $\mathbf{FT}$:$$\forall \alpha [Bar_{2^\omega}(D_\alpha) \rightarrow \exists n[ Bar_{2^\omega}(D_{\overline\alpha n})]].$$ We like to say that $\mathbf{WKL}$ is a \textit{reversal} or \textit{contraposition} of $\mathbf{FT}$ and also that $\mathbf{FT}$ is a \textit{reversal} or \textit{contraposition} of $\mathbf{WKL}$.  
 \\We  like to write: $\mathbf{WKL}=\overleftarrow{\mathbf{FT}}$ and $\mathbf{FT}=\overleftarrow{\mathbf{WKL}}$.
 
 \smallskip In general, if we call a  statement $B$ the \textit{contraposition} or \textit{reversal} $\overleftarrow A$ of a  statement $A$, both $A$ and $B$ will be largely positively formulated statements and the classical mathematician would think $ A$ and $B$ are equivalent, but, constructively, $ A$ and $B$ will have quite different meanings. 
 
 This is clear from the above example as $\mathbf{FT}$ is intuitionistically true (under our first interpretation) and $\mathbf{WKL}$ is false (in both interpretations).  
 
 It may happen also that both $A$ and $B$ are intuitionistically true (at least under the first interpretation), although they make different sense. An important example of this phenomenon is given by $\mathbf{\Pi}^0_1$-$\mathbf{AC}_{\omega,2}$, see Subsection \ref{SS:pi01axomega2}, and $\mathbf{\Sigma}^0_1$-$\overleftarrow{\mathbf{AC}_{\omega,2}} $, see Subsection \ref{SS:sigma01contraxomega2} and Lemma \ref{L:3resolutions}. 
 
 Note that \textit{Kleene's Alternative (to the Fan Theorem)}, $\mathbf{KA}$,
 might be called the strong negation $\neg!\mathbf{WKL}$ of  $\mathbf{WKL}$ as well as the strong negation $\neg !\mathbf{FT}$ of $\mathbf{FT}$. 
 
 We do not claim that, given a statement $A$, there always is a unique candidate for being called the \textit{contraposition} of $A$. We do not introduce \textit{contraposition} as a syntactical operation on formulas and use the term only in certain specific cases.
 It is important to realize that, once we have understood that statement $A$ is equivalent to statement $B$, it may be the case that statements $C,D$, which one would like to call contrapositions of $A,B$, respectively,  fail to be  equivalent. 
\subsection{Non-intuitionistic assumptions} The reader may wonder why we pay attention  to statements that fail in both our models, like Weak K\"onig's Lemma, $\mathbf{WKL}$, and Bishop's Omniscience Principles, $\mathbf{LPO}$, see \ref{SSS:lpo},  and $\mathbf{LLPO}$, see \ref{SSS:llpo}. Doing so, however, we come to understand that certain other statements, being equivalent, in $\mathsf{BIM}$,  to one of them, also do not make sense in either one of our two interpretations.

\subsection{Our aim} As in \cite{veldman2009b} and \cite{veldman2011b}, it is our aim, in this paper, to find  statements that are, in $\mathsf{BIM}$, equivalent to either $\mathbf{FT}$ or $\neg!\mathbf{FT}=\mathbf{KA}$.  
\subsection{The contents of the paper}Apart from this introduction, the paper contains Sections 2-13. 

Section 2 is divided into two Subsections. In Subsection 2.1 we introduce the formal system $\mathsf{BIM}$.   Subsection 2.2 lists a number of assumptions  one might study in the context of $\mathsf{BIM}$. 

In Section 3 we prove that, in $\mathsf{BIM}$,    the $\mathbf{\Sigma}^0_1$-\textit{Separation Principle} $\mathbf{\Sigma}^0_1$-$\mathbf{Sep}$ is  equivalent to  $\mathbf{WKL}$. 

In Section 4 we formulate some special cases of the  \textit{First Axiom of Choice} $\mathbf{AC}_{\omega,\omega}$, among them $\mathbf{\Pi}^0_1$-$\mathbf{AC}_{\omega,2}$, the $\mathbf{\Pi}^0_1$-\textit{Axiom of Countable Binary Choice}.

In Section 5 we introduce $\mathbf{\Sigma}^0_1$-$\overleftarrow{\mathbf{AC}_{\omega,2}}$, a \textit{contraposition} of $\mathbf{\Pi}^0_1$-$\mathbf{AC}_{\omega,2}$, and we prove that, in $\mathsf{BIM}$, $\mathbf{\Sigma}^0_1$-$\overleftarrow{\mathbf{AC}_{\omega,2}}$  is   equivalent to  $\mathbf{FT}$, and  a strong negation of $\mathbf{\Sigma}^0_1$-$\overleftarrow{\mathbf{AC}_{\omega,2}}$ is  equivalent to   $\neg!\mathbf{FT}=\mathbf{KA}$.

 Section 5 thus shows that a  contraposition of $\mathbf{\Pi}^0_1$-$\mathbf{AC}_{\omega,2}$ fails in our second interpretation. This gives us no conclusion about the validity of $\mathbf{\Pi}^0_1$-$\mathbf{AC}_{\omega,2}$ itself in our second interpretation.

In Section 6 we formulate some special cases of the Second Axiom Scheme of Countable Choice $\mathbf{AC}_{\omega,\omega^\omega}$, among them $\mathbf{\Pi}^0_1$-$\mathbf{AC}_{\omega,2^\omega}$, the \textit{$\mathbf{\Pi}^0_1$-Axiom  of Countable Compact Choice}.

In Section 7 we introduce $\mathbf{\Sigma}^0_1$-$\overleftarrow{\mathbf{AC}_{\omega,2^\omega}}$,  a contraposition of $\mathbf{\Pi}^0_1$-$\mathbf{AC}_{\omega,2^\omega}$, and we prove that,  in $\mathsf{BIM}$, $\mathbf{\Sigma}^0_1$-$\overleftarrow{\mathbf{AC}_{\omega,2^\omega}}$  is  equivalent to $\mathbf{FT}$, and  a strong negation of $\mathbf{\Sigma}^0_1$-$\overleftarrow{\mathbf{AC}_{\omega,2^\omega}}$ is  equivalent to $\neg!\mathbf{FT}$.

   In Section 8  we introduce $\mathbf{\Sigma}^0_1$-$\overleftarrow{\mathbf{AC}_{2,2^\omega}}$, a contraposition of a statement provable in $\mathsf{BIM}$, to wit, the $\mathbf{\Pi}^0_1$-\textit{``axiom''} of \textit{Twofold Compact Choice}. 
   
   We prove that,   in $\mathsf{BIM}+\mathbf{\Pi}^0_1$-$\mathbf{AC}_{\omega,2}$, $\mathbf{\Sigma}^0_1$-$\overleftarrow{\mathbf{AC}_{2,2^\omega}}$ is  equivalent to $\mathbf{FT}$.  There is no  companion result for $\neg!\mathbf{FT}$.

   In Section 9 we consider finite and infinite games. We explain in what sense we want to call such games \textit{$I$-determinate} or \textit{$II$-determinate}. We see that $\mathbf{\Sigma}^0_1$-$\overleftarrow{\mathbf{AC}_{\omega,2}}$ can be read as the statement that certain 2-move games are  $I$-determinate. We  prove: in $\mathsf{BIM}$, $\mathbf{FT}$ is equivalent to the statement that every subset of Cantor space $2^\omega$ is (weakly) $I$-determinate, and  $\neg!\mathbf{FT}$ is equivalent to the statement that there exists an open subset of $2^\omega$ that positively fails to be $I$-determinate.  
   
   In Section 10 we consider   a \textit{Uniform Contrapositive Intermediate Value Theorem} $\overleftarrow{\mathbf{UIVT}}$ and we prove: in $\mathsf{BIM}$, $\mathbf{FT}$ is equivalent to  $\overleftarrow{\mathbf{UIVT}}$ and $\neg !\mathbf{FT}$ is equivalent to a strong negation of $\overleftarrow{\mathbf{UIVT}}$.
   
    In Section 11 we see that, if one formulates the  compactness theorem for classical propositional logic carefully and contrapositively, one obtains a statement that, in $\mathsf{BIM}$, is equivalent to $\mathbf{FT}$. $\neg!\mathbf{FT}$ is equivalent to a strong negation of this statement.
    
    In Section 12 we ask the reader's attention for the \textit{Approximate-Fan Theorem} $\mathbf{AppFT}$, a statement  stronger than $\mathbf{FT}$. We did so already in \cite[Subsection 10.2]{veldman2011b}, see also \cite{veldman2011c}. $\mathbf{AppFT}$ is studied further in \cite{veldman2011c}.
   
   Section 13 contains a list of defined notions. This Section  may be used as a reference by the reader of the preceding Sections.

\subsection{Acknowledgement}    I  thank the referees of the paper and its earlier versions. I appreciate their great effort very much.  Their thorough comments and  criticisms were very useful and led to numerous improvements. 
    I  thank U.~Kohlenbach for providing some references and for making me repair a mistaken observation in Section \ref{S:ivt}.

\section{The formal system $\mathsf{BIM}$}\label{S:BIM}

 \subsection{The basic axioms}
 $\mathsf{BIM}$, introduced in \cite[Section 6]{veldman2011b},  has two kinds of variables, \textit{numerical} variables $m,n,p,\ldots$,
 whose intended range is the set  $\omega$ of the natural numbers, and \textit{function}
 variables $\alpha, \beta, \gamma, \ldots$, whose intended range is the set
 $\omega^\omega$ of
 all infinite sequences of natural numbers. There is a numerical constant $0$. There are five unary function
 constants: $Id$, a name for the identity function, $\underline{0}$, a name for the zero function,  $S$, a name for
 the successor function, and $K$, $L$, names for the projection
 functions. There is one binary function symbol $J$, a name for the pairing
 function on $\omega$. From these symbols \textit{numerical terms} are formed in the usual 
 way. The basic terms are the numerical variables and the numerical constant and other terms are obtained from earlier constructed terms by the use of
 a function symbol with parentheses indicating function application.  The function constants $\mathit{Id}$, $\underline 0$, $S$, $K$ and $L$ and the function variables are  the only 
 \textit{function 
 terms}. 
 
 $\mathsf{BIM}$ has two equality symbols, $=_0$ and $=_1$. The first symbol may be placed
 between numerical terms and the second one between function terms.
 When confusion seems improbable we simply write $=$ and not $=_0$ or $=_1$. The usual axioms for equality are part of $\mathsf{BIM}$.  A
 \textit{basic formula} is an equality between numerical terms or an equality
 between function terms. A \textit{basic formula in the strict sense} is an
 equality between numerical terms. We obtain the formulas of the theory from the
 basic formulas by using the connectives, the numerical quantifiers and the
 function quantifiers. 
 
 The logic of the theory is  intuitionistic
 logic.
 
 Our first axiom is
 \begin{axiom}[Axiom of Extensionality]\label{ax:ext}
 
 \[\forall \alpha \forall \beta [ \alpha =_1 \beta  \leftrightarrow \forall n [
 \alpha(n) =_0 \beta(n) ] ] \]
 \end{axiom}
 The Axiom of Extensionality guarantees that every formula will be provably
 equivalent to a formula built up by means of connectives and quantifiers from
 basic formulas in the strict sense.
 
 The second axiom is \textit{the axiom on  the unary function constants} $Id$, $\underline 0$, $S$, $K,L$,   \textit{and the binary function constant} $J$.
 
\begin{axiom}\label{ax:const}\[\forall n[Id(n) = n]\;\wedge\]
 \[ \forall n [ \neg(S(n) = 0) ] \;\wedge\; \forall m \forall n [ S(m) = S(n)
 \rightarrow m = n ]\;\wedge \]\[ 
\forall n [ \underline{0}(n) = 0]\;\wedge\]\[ \forall m \forall n [ K\bigl(J(m,n)\bigr) = m \;\wedge\; L\bigl(J(m,n)\bigr) = n \;\wedge\; J\bigl(K(n),L(n)\bigr)=n]\]
 \end{axiom}

 Thanks to the presence of the pairing function we may treat binary, ternary and
 other non-unary operations on $\omega$ as unary functions. ``$\alpha(m,n,p)$'',
 for instance, will be an abbreviation of ``$\alpha\bigl(J(J(m,n),p)\bigr)$''.
  
  We also introduce the following notation: for each $n$, $n':= K(n)$ and $n'':= L(n)$, and, for all $m,n,$
  $(m,n) := J(m,n)$.
  The last part of  Axiom \ref{ax:const} now reads as follows: $\forall m\forall n[(m,n)'=m \;\wedge\; (m,n)''=n \;\wedge\; (n',n'')=n]$.
  
 \medskip
 The next axiom\footnote{A referee made us see that this Axiom 3, as formulated in \cite{veldman2011b}, is a little bit too weak.} asks for the \textit{closure of the set $\omega^\omega$ under composition, pairing, primitive recursion and unbounded search}.
 \begin{axiom}\label{ax:recop}
 \[\forall \alpha \forall \beta \exists \gamma \forall n [ \gamma(n) =
 \alpha \bigl(\beta(n)\bigr) ]\;\wedge\]
 \[\forall \alpha \forall \beta  \exists \gamma\forall n[\gamma(n) = \bigl(\alpha(n), \beta(n)\bigr)]\;\wedge\]
   \[\forall \alpha \forall \beta \exists \gamma \forall m \forall n [ \gamma(m,0)
 = \alpha(m) \wedge \gamma \bigl(m,S(n)\bigr) = \beta\bigl(m,n,\gamma(m,n)\bigr) ]\;\wedge\]
 \[\forall \alpha  [ \forall n \exists m [ \alpha(n,m) = 0 ] \rightarrow
 \exists \gamma \forall n [ \alpha\bigl(n, \gamma(n)\bigr) = 0 ] ]\]
 
 \end{axiom}

 We also need the \textit{Axiom Scheme of Induction}: 
 \begin{axiom}\label{ax:ind}

 \[(P(0) \wedge \forall n [ P(n) \rightarrow P\bigl(S(n)\bigr)]) \rightarrow
 \forall n [P(n)]\]
 \end{axiom}
  
  Instances of this axiom scheme are obtained by 
  substituting a formula \\$\phi=\phi(m_0, m_1, \ldots, m_{k-1}, \alpha_0,\alpha_1, \ldots, \alpha_{l-1}, n)$ for $P$ and taking the universal closure of the resulting formula. The reader should understand the further axiom schemes we are to mention in this paper in the same way.
    
    \smallskip
  The axioms 1-4, together with the usual axioms for equality,  define the system $\mathsf{BIM}$.
  
  Note  $\mathsf{BIM}$ has the full induction scheme whereas $\mathsf{RCA}_0$ has only $\Sigma^0_1$-induction, see \cite[Definition II.1.5]{Simpson}, a fact that is indicated by the suffix $_0$.  We did not study the possibility of restricting induction likewise and we do not answer the question if our results might have been obtained in a system that probably should be called $\mathsf{BIM}_0$.

We form a conservative extension of $\mathsf{BIM}$ by adding  constants for all primitive recursive functions and relations 
 and making  their defining
 equations into axioms.  Primitive recursive relations are present via their characteristic functions. `$x<y$', for instance, will be short for: `$\chi_<(x,y)\neq 0$'. 
 Somewhat loosely, we also denote this conservative extension of $\mathsf{BIM}$ by the acronym $\mathsf{BIM}$ although one might decide to use the acronym $\mathsf{BIM}^+$, see \cite{vafeiadou2018}. 
 
 \smallskip
 $\mathsf{BIM}$ 
  may be compared  to the system \textbf{H} introduced in
  \cite{Howard-Kreisel} and to the system $\mathbf{EL}$
 occurring in \cite{Troelstra-van Dalen} and to the system $\mathbf{IRA}$, proposed by J.R.~Moschovakis and G.~Vafeiadou, see \cite{moschovakisvafeiadou12}. A precise proof of the fact that $\mathsf{BIM}$ and these systems are essentially equivalent may be found in \cite{vafeiadou2018}.

\subsection{Possible further assumptions}\label{SS:fa}

\subsubsection{First Axiom Scheme of Countable Choice, $\mathbf{AC}_{\omega,\omega} (=\mathbf{AC}_{0,0})$:}
$$\forall n\exists m[R(n,m)] \rightarrow \exists \gamma\forall n[R\bigl(n,\gamma(n)\bigr)].$$

The intuitionistic mathematician accepts $\mathbf{AC_{\omega,\omega}}$. If $\forall n\exists m[R(n,m)]$, she builds the promised $\gamma$ step by step, first choosing $\gamma(0)$ such that $R\bigl(0,\gamma(0)\bigr)$, then choosing $\gamma(1)$ such that $R\bigl(1,\gamma(1)\bigr)$, and so on. In her view, there is no need to give a finite description or algorithm that determines the infinitely many values of $\gamma$ at one stroke.
\subsubsection{Second Axiom Scheme of Countable Choice, $\mathbf{AC}_{\omega,\omega^\omega}=\mathbf{AC}_{0,1}$:}
$$\forall n \exists \gamma[R(n,\gamma)]\rightarrow  \exists \gamma\forall n[R(n, \gamma^{\upharpoonright n} )].\footnote{As for the notations used, the reader is advised to consult Section \ref{S:notation}, in particular Subsections \ref{SS:sequences}, \ref{SS:decidenum} and \ref{SS:fans}. For the notation $\gamma^{\upharpoonright n}$ see Subsection \ref{SS:subs}.}$$

  The intuitionistic mathematician  accepts $\mathbf{AC}_{\omega, \omega^\omega}$. If $\forall n \exists \gamma[R(n, \gamma)]$, she first starts a project for
  building $\gamma^{\upharpoonright 0}$ with the property $R(0,\gamma^{\upharpoonright 0})$ and determines $\gamma^{\upharpoonright 0}(0)$, she
  then starts a project for building $\gamma^{\upharpoonright 1}$ with the property $R(1,\gamma^{\upharpoonright 1})$
  and determines $\gamma^{\upharpoonright 1}(0)$, and also, continuing the project started earlier, $\gamma^{\upharpoonright 0}(1)$, she then starts a project for building
  $\gamma^{\upharpoonright 2}$ with the property $R(2,\gamma^{\upharpoonright 2})$ and determines $\gamma^{\upharpoonright 2}(0)$ and also, continuing the projects started earlier,
  $\gamma^{\upharpoonright 1}(1)$ and $\gamma^{\upharpoonright 0}(2)$, $\ldots$.

\subsubsection{The Fan Theorem as an Axiom Scheme, $\mathbf{FAN}$:} $$\forall\beta[\bigl(Fan(\beta)\;\wedge\;Bar_{\mathcal{F}_\beta}(B)\bigr)\rightarrow \exists a[D_a\subseteq B\;\wedge\;Bar_{\mathcal{F}_\beta}(D_a)]].$$
 $\beta$ is an \textit{explicit fan-law} if and only if $Fan(\beta)$ and, in addition,  $\exists \gamma \forall s\forall m[\beta(s\ast\langle m \rangle)=0\rightarrow  m\le\gamma(s)]$. We then write $Fan^+(\beta)$. 
If $Fan^+(\beta)$, then $\mathcal{F}_\beta$ is called an \textit{explicit fan}.

 \begin{lemma}\label{L:explicitfan} $\mathsf{BIM}\vdash\forall \beta[Fan^+(\beta)\leftrightarrow \bigl(Spr(\beta)\;\wedge\;\exists \gamma \forall n\forall s\in\omega^n[\beta(s)=0\rightarrow s\le \gamma(n)]\bigr)]$. \end{lemma} \begin{proof} Let $\beta$ be given such that $Fan^+(\beta)$. Find $\gamma$ such that $\forall s \forall m[\beta(s\ast\langle m \rangle)=0\rightarrow m\le \gamma(s)]$. Define $\delta$ such that $\delta(0)=0=\langle\;\rangle$, and, for each $n$, $\delta(n+1) = \max(\{s\ast\langle m\rangle\mid \beta\bigl(s\ast\langle m \rangle)=0 \;\wedge\; s\le \delta(n)\;\wedge\;m\le \gamma(s)\}\bigr)$. One proves by induction that $\forall n\forall s\in \omega^n[\beta(s)=0\rightarrow s\le \delta(n)]$. 
  
  \smallskip Conversely, let $\beta, \gamma$ be given such that $Spr(\beta)\;\wedge\;\forall n\forall s\in \omega^n[\beta(s)=0\rightarrow s\le \gamma(n)]$. Define $\delta$ such that, for each $n$, for each $s$ in $\omega^n$ such that $\beta(s)=0$, $\delta(s)=\max\bigl(\{m\mid \beta(s\ast\langle m\rangle)=0\;\wedge\; s\ast\langle m \rangle \le \gamma(n+1) \}\bigr)$. Note that $\forall s \forall m[\beta(s\ast\langle m\rangle)=0\rightarrow m\le \delta(s)]$ and conclude that $Fan^+(\beta)$. \end{proof}
\subsubsection{The (strict) Fan Theorem, $\mathbf{FT}$:}\label{SSS:fan}
\[\forall \alpha[\mathit{Bar}_{2^\omega}(D_\alpha)
  \rightarrow \exists m[\mathit{Bar}_{2^\omega}(D_{\overline \alpha m})]], \;or, \;equivalently,\]
\[\forall \beta[Fan^+(\beta)\rightarrow\forall \alpha[\mathit{Bar}_\mathcal{\mathcal{F}_\beta}(D_\alpha)
  \rightarrow \exists m[\mathit{Bar}_\mathcal{\mathcal{F}_\beta}(D_{\overline \alpha m})]]]\;\mathit{or,}\;\mathit{equivalently},\]
\[\forall \beta[Fan^+(\beta)\rightarrow\forall \alpha[\mathit{Bar}_\mathcal{\mathcal{F}_\beta}(D_\alpha)
  \rightarrow \exists m\forall \gamma\in\mathcal{F}_\beta\exists n\le m[\overline \gamma n \in D_\alpha]]].\]
  
  \begin{theorem}\label{T:{thinfan}}$\mathsf{BIM}\vdash \mathbf{FT}\leftrightarrow $ 
  
  $\forall \alpha[\bigl(Thinbar_{2^\omega}(D_\alpha)\;\wedge\; D_\alpha\subseteq 2^{<\omega}\bigr)\rightarrow \exists n\forall m>n[m\notin D_\alpha]]$. \end{theorem}
  \begin{proof} The proof is left to the reader. \end{proof}
  \subsubsection{The strengthened (strict) Fan Theorem, $\mathbf{FT}^+$:}\label{SSS:fan+}

\[\forall \beta[Fan(\beta)\rightarrow\forall \alpha[\mathit{Bar}_\mathcal{\mathcal{F}_\beta}(D_\alpha)
  \rightarrow \exists m[\mathit{Bar}_\mathcal{\mathcal{F}_\beta}(D_{\overline \alpha m})]]]\;\mathit{or,}\;\mathit{equivalently},\]
\[\forall \beta[Fan(\beta)\rightarrow\forall \alpha[\mathit{Bar}_\mathcal{\mathcal{F}_\beta}(D_\alpha)
  \rightarrow \exists m\forall \gamma\in\mathcal{F}_\beta\exists n\le m[\overline \gamma n \in D_\alpha]]].\]
\begin{theorem}\label{T:{thinfan+}}$\mathsf{BIM}\vdash \mathbf{FT}^+\leftrightarrow \\\forall \beta[Fan(\beta)\rightarrow\forall\alpha[\bigl(Thinbar_{\mathcal{F}_\beta}(D_\alpha)\;\wedge\; \forall s \in D_\alpha[\beta(s)=0]\bigr)\rightarrow \exists n\forall m>n[m\notin D_\alpha]]]$. \end{theorem}
\begin{proof} The proof is left to the reader. \end{proof}
  Note that  $\mathsf{BIM}+\mathbf{FAN}\vdash \mathbf{FT}^+$.

 \subsubsection{Brouwer's Thesis: Bar Induction as an Axiom Scheme, $\mathbf{BARIND}$:}\label{SS:barind}
 \[\bigl(Bar_{\omega^\omega}(B) \;\wedge\; \forall s[s\in B\rightarrow s\in E]\;\wedge\;\forall s[\forall n[s\ast\langle n \rangle \in E]\leftrightarrow s\in E]\bigr)\rightarrow \langle \;\rangle \in E.\]
$E\subseteq\omega$ is called {\it inductive} if and only if $\forall s[\forall n[s\ast\langle n \rangle \in E]\rightarrow s\in E]$ and {\it monotone} if and only if $\forall s\forall n[s\in E\rightarrow s\ast\langle n \rangle \in E]$. 

 Brouwer derived $\mathbf{FAN}$ from  $\mathbf{\textbf{BARIND}}$, 
see \cite[Subsections 2.2 and 2.3]{veldman2011b}. We repeat the proof, in order to prepare the reader for Theorem \ref{T:almftscheme}.

\begin{theorem}\label{T:ftscheme} $\mathsf{BIM}+\mathbf{BARIND}\vdash \mathbf{FAN}$. \end{theorem}

\begin{proof} Let $\beta$ be given such that $Fan(\beta)$ and $\beta(\langle\;\rangle)=0$.\footnote{If $\beta(\langle\;\rangle)\neq 0$, then $\mathcal{F}_\beta=\emptyset$ and there is nothing to prove.}  Assume $Bar_{\mathcal{F}_\beta}(B)$.
Define $B':=B\cup\{s\mid \beta(s)\neq 0\}$.
We will prove that $Bar_{\omega^\omega}(B')$. 
Let $\gamma$ be given. Define $\gamma^\ast$ such that, for each $n$, if $\beta\bigl(\overline\gamma(n+1)\bigr)=0$, then $\gamma^\ast(n)=\gamma(n)$, and, if not, then $\gamma^\ast(n)=\mu p[\beta(\overline {\gamma^\ast} n\ast \langle p\rangle)=0]$. Note that $\gamma^\ast \in \mathcal{F}_\beta$ and find $n$ such that $\overline{\gamma^\ast}n\in B$. \textit{Either}  $\overline \gamma n=\overline{\gamma^\ast}n$ and  $\overline \gamma n \in B$ \textit{or} $\overline \gamma n\neq\overline{\gamma^\ast}n$ and $\beta(\overline\gamma n)\neq 0$. In both cases, $\overline \gamma n \in B'$.
We thus see that $\forall \gamma \exists n[\overline \gamma n \in B']$, i.e. $Bar_{\omega^\omega}(B')$. 

 Let $E$ be the set of all $s$ such that {\it either} $\beta(s)\neq 0$ {\it or} $\beta(s) = 0$ and $\exists a[ D_a \subseteq B \;\wedge\; Bar_{\mathcal{F}_\beta\cap s}(D_a)]$.

 For every $s$, if $\beta(s) =0$ and $s\in B$, define $a:=\overline{\underline 0}s\ast\langle 1 \rangle$ and note that $\{s\}=D_a\subseteq B$    and $Bar_{\mathcal{F}_\beta\cap s}(D_a)$. Conclude that $B\subseteq E$. 

 Now let $s$ be given such that $\forall m[s\ast\langle m \rangle \in E]$. Find $n$ such that $\forall m\ge n[\beta(s\ast\langle m\rangle) \neq 0]$. Find $b$ in $\omega^n$ such that $\forall m<n[\beta(s\ast\langle m \rangle)=0 \rightarrow \bigl(D_{b(m)}\subseteq B\;\wedge \;Bar_{\mathcal{F}_\beta\cap s\ast\langle m \rangle}(D_{b(m)})\bigr)]$ and $\forall m<n[\beta(s\ast\langle m \rangle)\neq 0\rightarrow b(m)=\langle\;\rangle]$. Find $a$ such that $p:=length(a)=\max_{m<n}length\bigl(b(m)\bigr)$ and $\forall t<p[a(t)\neq 0\leftrightarrow \exists m<n[\bigl(b(m)\bigr)(t)\neq 0]]$. Note that $D_a\subseteq B$ and $Bar_{\mathcal{F}_\beta\cap s}(D_a)$, and  conclude that $s\in E$.
We thus see that $\forall s[\forall m[s\ast\langle m \rangle\in E]\rightarrow s \in E]$, i.e. $E$ is inductive.

Note that $\forall s\forall m[s\in E\rightarrow s\ast\langle m \rangle \in E]$, i.e. $E$ is monotone.

Using $\mathbf{BARIND}$,  conclude that $\langle\;\rangle\in E$, i.e. $\exists a[ D_a \subseteq B \;\wedge\; Bar_{\mathcal{F}_\beta}(D_a)]$. 
\end{proof}

 \subsubsection{Church's Thesis, $\mathbf{CT}$:}
\[\exists \tau \exists \psi \forall \alpha \exists e\forall n \exists z[z=\mu i[\tau(e, n, i )\neq 0] \;\wedge\;  \psi(z) = \alpha(n)].\]

Kleene has shown  that $\mathbf{CT}$ is true in the model of $\mathsf{BIM}$ given by the computable functions.  He provided \textit{K\'almar-elementary} functions $\tau,\psi$ satisfying the above conditions. 
Note that our formulation of $\mathbf{CT}$ is cautious and somewhat weaker than the usual one. We do not require that the set $\{(e,n,z)\mid\tau(e,n,z) \neq 0\}$ coincides with Kleene's set $T$, but only ask that the set behaves appropriately. A similar `abstract' approach to Church's Thesis has been advocated by F.~Richman, see \cite{richman83} and \cite[Chapter 3, Section 1]{bridgesrichman}.

\subsubsection{Kleene's Alternative (to the Fan Theorem), $\neg!\mathbf{FT}$:}\label{SSS:ka} 
 \[ \exists \alpha[ \mathit{Bar}_{2^\omega}(D_\alpha)\;\wedge\; \forall m [ \neg \mathit{Bar}_{2^\omega}(D_{\overline \alpha m})]].\]

\begin{theorem}\label{T:ctka} $\mathsf{BIM} + \mathbf{CT} \vdash \neg!\mathbf{FT}$. \end{theorem}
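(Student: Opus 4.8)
The plan is to carry out, inside $\mathsf{BIM}+\mathbf{CT}$, the classical construction of Kleene's tree: a decidable subtree $T$ of $\mathit{Bin}$ that is \emph{infinite} and yet has \emph{no branch in $\mathcal{C}$}. Having such a $T$, I would take for $D_\alpha$ the collection of binary sequence numbers lying \emph{outside} $T$. The branchlessness of $T$ — this is the one place where $\mathbf{CT}$ enters — will say exactly that $D_\alpha$ is a bar in $\mathcal{C}$, and the infinitude of $T$ (provable in $\mathsf{BIM}$ alone) will say that no finite subset of $D_\alpha$ is a bar in $\mathcal{C}$. Together these two facts give $\mathbf{KA}$, witnessed by the $\alpha$ so defined.

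For the construction, fix witnesses $\tau,\psi$ for $\mathbf{CT}$, and for each $n$ let
\[A_n:=\{e<n\mid\exists i<n[\tau(e,e,i)=1\wedge\forall j<i[\tau(e,e,j)\neq1]\wedge\psi(i)=0]\}\]
\[B_n:=\{e<n\mid\exists i<n[\tau(e,e,i)=1\wedge\forall j<i[\tau(e,e,j)\neq1]\wedge\psi(i)=1]\}\]
(the ``stage $n$'' approximations to the sets $\{e\mid\{e\}(e)=0\}$ and $\{e\mid\{e\}(e)=1\}$). These are decidable, with $A_n\cap B_n=\emptyset$ and $A_n\subseteq A_{n+1}$, $B_n\subseteq B_{n+1}$. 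Let $T$ be the set of all $s\in\mathit{Bin}$ with $\forall e<\mathit{length}(s)[(e\in A_{\mathit{length}(s)}\rightarrow s(e)=1)\wedge(e\in B_{\mathit{length}(s)}\rightarrow s(e)=0)]$. Monotonicity of the approximations gives that $T$ is decidable and closed under initial segments, and for each $N$ the binary sequence number of length $N$ taking value $1$ at $e$ precisely when $e\in A_N$ belongs to $T$; so $T$ is infinite. Finally take $\alpha$ with $D_\alpha=\{a\in\mathit{Bin}\mid a\notin T\}$.

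Checking that no finite subset of $D_\alpha$ is a bar in $\mathcal{C}$ is the easy half, provable in $\mathsf{BIM}$ alone. Given $m$, observe that every $a$ in $D_{\overline\alpha m}=\{a\in D_\alpha\mid a<m\}$ has $\mathit{length}(a)\le m$, by the coding of finite sequences. Choose $s$ in $T$ of length $m$ and let $\gamma$ in $\mathcal{C}$ extend $s$ by zeros. Any initial segment $\overline\gamma n$ with $\overline\gamma n<m$ then has $n\le m$, so $\overline\gamma n=\overline s n$, which lies in $T$ because $T$ is closed under initial segments; hence $\overline\gamma n\notin D_\alpha$. So $\gamma$ does not meet $D_{\overline\alpha m}$, and $\mathit{Bar}_{\mathcal{C}}(D_{\overline\alpha m})$ fails.

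The real work, and the reason $\mathbf{CT}$ is indispensable, is showing $\mathit{Bar}_{\mathcal{C}}(D_\alpha)$, i.e.\ that every $\gamma\in\mathcal{C}$ meets $D_\alpha$. Given $\gamma$, let $c$ be an index for $\gamma$ supplied by $\mathbf{CT}$; then the computation of $\gamma(c)$ converges, so there is $z$ with $\tau(c,c,z)=1$ minimal and $\psi(z)=\gamma(c)$. Put $N:=\max(c,z)+1$. If $\gamma(c)=0$ then $c\in A_N$, and if $\gamma(c)=1$ then $c\in B_N$; in either case the condition defining $T$ is violated at $e=c$ by the string $\overline\gamma N$, so $\overline\gamma N\notin T$, that is, $\overline\gamma N\in D_\alpha$, and $\gamma$ meets $D_\alpha$. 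I expect this step to be the delicate one: the bounding index $N$ must be \emph{extracted explicitly} from the halting computation — rather than obtained by refuting ``$\forall n[\overline\gamma n\in T]$'' — so that the argument stays intuitionistically valid and appeals to no form of Markov's Principle. Once both halves are established, $\mathbf{KA}$ follows.
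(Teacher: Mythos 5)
Your proof is correct and takes essentially the same route as the paper: your $D_\alpha$ is (up to the inessential difference of imposing no constraint when $\psi(z)\ge 2$) exactly the complement of the Kleene diagonal tree the paper uses, with the same witness $\max(c,z)+1$ for showing $\mathit{Bar}_\mathcal{C}(D_\alpha)$ and the same device for refuting barhood of each $D_{\overline\alpha m}$, namely an element of the tree of length $m$ extended by zeros together with the fact that $a\ge\mathit{length}(a)$. No gaps.
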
 \begin{proof}

 Let $\tau, \psi$ be as in $\mathbf{CT}$. Define $\alpha$  such that, for all $m$, for all $c$ in $2^{<\omega}$ such that $length(c)=m$, $$\alpha(c) = 0\leftrightarrow \forall e<m\forall z < m[  z=\mu i[ \tau( e, e, i)\neq 0]\rightarrow c(e) = 1-\psi(z)].$$ 

 Let $\gamma$ in $2^\omega$ be given.   Find $e$ such that, for all $n$, $\gamma(n) = \psi(\mu i[\tau(  e,n,i)\neq 0])$.  Define  $z=\mu i[\tau( e, e, i )\neq 0]$.  Note that $ \gamma(e)=\psi(z)$. Define $m := \max(e, z) +1$ and note that  $\alpha(\overline \gamma m) \neq 0$.
 Conclude that $\forall \gamma \in 2^\omega\exists m[\alpha(\overline \gamma m)\neq 0]$, i.e.  $\mathit{Bar}_{2^\omega}(D_\alpha)$. 
 
 Let $m$ be given. Find $c$ in $2^{<\omega}$ such that  $length(c)=m$ and $\forall e < m\forall z <m[z=\mu i[\tau( e, e, i)\neq 0]\rightarrow c(e) = 1-\psi(z)]$. Note that  $\forall n \le m[\alpha(\overline c n) = 0]$. Define $\gamma:=c\ast\underline 0$ and  note that  $\forall n [c\ast\underline{\overline 0} n > m]$ and conclude that $\neg \exists k[\overline\gamma k<m \;\wedge\;\alpha(\overline \gamma k)\neq 0]$ and $\neg\mathit{Bar}_{2^\omega}(D_{\overline \alpha m})$. Conclude that $\forall m[\neg Bar_{2^\omega}(D_{\overline \alpha m})]$.  \end{proof}
 
 Theorem \ref{T:ctka}
 is due to Kleene, see \cite[\S3]{kleene52} and \cite[Lemma 9.8]{Kleene-Vesley}. There is a proof in \cite[vol. I, Chapter 4, Subsection 7.6]{Troelstra-van Dalen}.  In \cite[Section 3]{veldman2011b} one finds two more proofs.
 
 We do not know the answer to the question if $\mathsf{BIM}+\neg!\mathbf{FT}\vdash \mathbf{CT}$. 

\subsubsection{Brouwer's (unrestricted) Continuity Principle as an Axiom Scheme, $\mathbf{BCP}$:}
 $$\forall \alpha \exists n[\alpha R n]\rightarrow\forall \alpha \exists m \exists n \forall \beta[ \overline \alpha m  \sqsubset \beta \rightarrow \beta R n].$$

Brouwer used this principle for the first time in 1918, see \cite[Section 1, page 13]{brouwer18}. The intuitionistic mathematician believes  the axiom to be plausible. 
She argues as follows. 
If $\forall \alpha\exists n[\alpha Rn]$, I must be able, given any $\alpha$, to find effectively an $n$ as promised, \textit{also if the values of $\alpha$ are disclosed one by one and nothing is told about a law governing the development of $\alpha$ as a whole}. I will decide upon $n$ at some point of time and, at that point of time, only finitely many values of $\alpha$, say $\alpha(0), \alpha(1), \ldots,\alpha(m-1)$, will be known to me. The number $n$ will satisfy any infinite sequence that is a continuation of $\alpha(0), \alpha(1), \ldots,\alpha(m-1)$.

The Continuity Principle is revolutionary and changes one's mathematical perspective, see \cite{veldman2001}.

The classical mathematician may ask for a consistency proof for  $\mathbf{BCP}$.  Kleene proved, using realizability methods,  that his formal system $\mathbf{FIM}$ for intuitionistic analysis,  actually an extension of $\mathsf{BIM}+\mathbf{FT}+\mathbf{BCP}$,  is (simply) consistent, see \cite[Chapter II, Subsection 9.2]{Kleene-Vesley}. Kleene's consistency proof should convince  both the classical mathematician and the constructive mathematician, should the  latter accept $\mathbf{BARIND}$ but be plagued by doubts concerning $\mathbf{BCP}$. 
It is not difficult to see that  $\mathsf{BIM}+ \mathbf{CT} + \mathbf{BCP}$ is inconsistent, as it implies $\forall \alpha \exists m\forall \beta[\overline \beta m = \overline \alpha m \rightarrow \beta = \alpha]$, see \cite[vol. I, Chapter 4, Theorem 6.7]{Troelstra-van Dalen}. 
 
 \smallskip
 The next two axioms strengthen $\mathbf{BCP}$. Kleene's consistency proof extends to these stronger forms of the Continuity Principle.
 
 \subsubsection{The First Axiom Scheme of Continuous Choice, $\mathbf{AC}_{\omega^\omega,\omega} (=\mathbf{AC}_{1,0}$):}\label{SS:facc}
 $$\forall \alpha \exists n[\alpha R n]\rightarrow\exists \varphi:\omega^\omega\rightarrow\omega\forall \alpha  [\alpha R \varphi(\alpha)].$$
 
  \subsubsection{The Second Axiom Scheme of Continuous Choice, $\mathbf{AC}_{\omega^\omega,\omega^\omega} (=\mathbf{AC}_{1,1}$):}\label{SSS:ac11}
 $$\forall \alpha \exists \beta[\alpha R \beta]\rightarrow\exists \varphi:\omega^\omega\rightarrow\omega^\omega\forall \alpha  [\alpha R \varphi|\alpha].$$
 \subsubsection{Weak K\"onig's Lemma, $\mathbf{WKL}$:}\label{SSS:wkl} \[\forall \alpha [\forall n\exists a \in  2^{<\omega}[length(a)= n\;\wedge\;\forall m\le n[\alpha(\overline a m)=0]\rightarrow \exists \gamma\in 2^\omega \forall n [ \alpha(\overline{\gamma}
 n) = 0 ]],\] \[or,\; equivalently, \;\forall \alpha [\forall n[\neg Bar_{2^\omega}(D_{\overline\alpha n})]\rightarrow \exists \gamma\in 2^\omega \forall n [ \overline{\gamma}
 n\notin D_\alpha  ]].\] 
 
$\mathbf{WKL}$, as a classical theorem, dates from 1927, see \cite{koenig}. It is a contraposition of $\mathbf{FT}$ and,  from a classical point of view, the two are equivalent.
 The following result may be found in \cite{ishihara3}, and also  in \cite{kohlenbach3} and  \cite{ishihara2}.
 \begin{theorem}\label{T:wklft} $\mathsf{BIM}\vdash \mathbf{WKL} \rightarrow \mathbf{FT}$. \end{theorem}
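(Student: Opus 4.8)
The plan is to derive the Fan Theorem $\mathbf{FT}$ from Weak K\"onig's Lemma $\mathbf{WKL}$ by a direct contrapositive argument. Suppose $\alpha$ is such that $\mathit{Bar}_\mathcal{C}(D_\alpha)$, i.e.\ every $\gamma$ in $\mathcal{C}$ meets $D_\alpha$. We must produce $m$ with $\mathit{Bar}_\mathcal{C}(D_{\overline\alpha m})$. Arguing by contradiction, assume that for every $m$ we have $\neg\mathit{Bar}_\mathcal{C}(D_{\overline\alpha m})$. Then the hypothesis of $\mathbf{WKL}$ is satisfied, so $\mathbf{WKL}$ hands us a $\gamma$ in $\mathcal{C}$ with $\forall n[\alpha(\overline\gamma n) = 0]$, that is, $\gamma$ does \emph{not} meet $D_\alpha$. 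This contradicts $\mathit{Bar}_\mathcal{C}(D_\alpha)$. Hence $\neg\neg\exists m[\mathit{Bar}_\mathcal{C}(D_{\overline\alpha m})]$.

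The remaining step — and the one that requires care in an intuitionistic setting — is to pass from $\neg\neg\exists m[\mathit{Bar}_\mathcal{C}(D_{\overline\alpha m})]$ to $\exists m[\mathit{Bar}_\mathcal{C}(D_{\overline\alpha m})]$. In general double negation elimination is not available, so one cannot simply discharge the $\neg\neg$. The key observation is that the matrix $\mathit{Bar}_\mathcal{C}(D_{\overline\alpha m})$, although it looks like a $\mathbf{\Pi}^0_1$ statement ($\forall\gamma\in\mathcal{C}\ldots$), is in fact decidable once we note that $D_{\overline\alpha m}$ is a \emph{finite} set: $\mathit{Bar}_\mathcal{C}(D_{\overline\alpha m})$ holds iff every binary sequence number of length $m$ meets $D_{\overline\alpha m}$, and this is a bounded (hence decidable by the primitive recursive machinery of $\mathsf{BIM}$) condition on $m$. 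Thus $\exists m[\mathit{Bar}_\mathcal{C}(D_{\overline\alpha m})]$ is a $\mathbf{\Sigma}^0_1$ statement, i.e.\ equivalent to one of the form $\exists m[\beta(m)\neq 0]$ for a suitable $\beta$ obtained from $\alpha$ by the closure axioms of $\mathsf{BIM}$; and for such statements $\neg\neg\exists m[\beta(m)\neq 0] \rightarrow \exists m[\beta(m)\neq 0]$ is provable in $\mathsf{BIM}$ (Markov's principle is \emph{not} needed here — one does not have a decidable matrix under a bare negation, one has an actual $\neg\neg$ of a $\mathbf{\Sigma}^0_1$ formula, and this is handled by the usual intuitionistic manipulation: to refute $\forall m[\beta(m) = 0]$ it suffices to observe... actually one does need a short argument). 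Let me restate this more carefully: the clean route is to reduce everything, before invoking $\mathbf{WKL}$, to a statement about a single decidable bar, so that the conclusion comes out positively.

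Concretely, I would reorganize as follows. Given $\alpha$ with $\mathit{Bar}_\mathcal{C}(D_\alpha)$, first replace $D_\alpha$ by an equivalent bar that is "closed upward" along $\sqsubseteq$ and then define $\beta$ by $\beta(m) = 0$ iff $\mathit{Bar}_\mathcal{C}(D_{\overline\alpha m})$ fails (this $\beta$ exists in $\mathcal{N}$ by Axiom \ref{ax:recop}, since the defining condition is primitive recursive in $\alpha$). If $\forall m[\beta(m) = 0]$ were the case, $\mathbf{WKL}$ would yield $\gamma\in\mathcal{C}$ missing $D_\alpha$, contradicting the bar property; so $\neg\forall m[\beta(m) = 0]$, hence $\exists m[\beta(m)\neq 0]$ fails to be refutable. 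To get it outright, apply $\mathbf{WKL}$ once more in the form that, if $\forall m[\neg\mathit{Bar}_\mathcal{C}(D_{\overline\alpha m})]$ then there is a path, but combine it at the right moment: actually the cleanest is that $\mathbf{WKL}$ as stated already delivers a path whenever \emph{all} finite restrictions fail, and the failure of \emph{all} finite restrictions is exactly $\forall m[\beta(m)=0]$; so from $\mathit{Bar}_\mathcal{C}(D_\alpha)$ we get directly $\neg\forall m[\beta(m)=0]$, and since for a decidable predicate $\neg\forall m[\beta(m)=0]$ intuitionistically does \emph{not} give $\exists m[\beta(m)\neq0]$ in general, I anticipate that the main obstacle is precisely this constructive subtlety, and that the actual proof (following Ishihara) sidesteps it by not using $\mathbf{WKL}$ as a black box but rather a version of it, or by an additional application of the closure axioms to a suitably modified tree so that the witness $m$ emerges from an unbounded search governed by the fourth clause of Axiom \ref{ax:recop}. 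I would present the argument in that last form: encode the search for a bounding $m$ as an unbounded search, show it terminates using $\mathbf{WKL}$ contrapositively, and read off $m$.
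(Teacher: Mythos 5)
You have correctly located the constructive obstacle: the naive contrapositive use of $\mathbf{WKL}$ only yields $\neg\neg\exists m[\mathit{Bar}_\mathcal{C}(D_{\overline\alpha m})]$, and since $\mathit{Bar}_\mathcal{C}(D_{\overline\alpha m})$ is a decidable property of $m$, discharging the double negation is exactly an instance of Markov's principle, which is not available in $\mathsf{BIM}$. But your proposed repair does not close this gap. The minimization clause of Axiom \ref{ax:recop} may only be invoked after one has \emph{positively} proved that the search terminates, i.e.\ after one has $\exists m[\mathit{Bar}_\mathcal{C}(D_{\overline\alpha m})]$ itself; showing termination ``contrapositively'' via $\mathbf{WKL}$ gives again only the double negation, so your last paragraph is circular. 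At no point does your argument produce the witness $m$, and your gesture toward ``a suitably modified tree'' is not accompanied by any definition of that tree, which is precisely where the content of the theorem lies.

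The missing idea (Ishihara's) is to apply $\mathbf{WKL}$ not to $\alpha$ -- whose hypothesis $\forall m[\neg\mathit{Bar}_\mathcal{C}(D_{\overline\alpha m})]$ you are in no position to assert -- but to a pruned $\beta$, definable from $\alpha$ by a bounded search: for $s$ in $\mathit{Bin}$, let $\beta(s)\neq 0$ iff $\alpha(s)\neq 0$ \emph{and} there exists $t$ in $\mathit{Bin}$ with $\mathit{length}(t)=\mathit{length}(s)$ and $\forall j\le \mathit{length}(t)[\alpha(\overline t j)=0]$. For this $\beta$ one proves outright in $\mathsf{BIM}$ that $\forall m[\neg\mathit{Bar}_\mathcal{C}(D_{\overline\beta m})]$: given $m$, take the largest $k\le m$ for which some $t$ in $\mathit{Bin}$ of length $k$ satisfies $\forall j\le k[\alpha(\overline t j)=0]$; then $t\ast\underline 0$ misses $D_{\overline\beta m}$, since its initial segments of length $\le k$ miss $D_\alpha\supseteq D_\beta$, those of length between $k$ and $m$ get $\beta$-value $0$ by maximality of $k$, and longer ones are sequence numbers exceeding $m$ and hence lie outside $D_{\overline\beta m}$. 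So $\mathbf{WKL}$ applies with its hypothesis established positively and yields $\gamma\in\mathcal{C}$ with $\forall n[\beta(\overline\gamma n)=0]$. Since $D_\alpha$ is a bar, find $n$ with $\alpha(\overline\gamma n)\neq 0$; because $\beta(\overline\gamma n)=0$, the side condition must fail at length $n$, i.e.\ \emph{every} $t$ in $\mathit{Bin}$ of length $n$ meets $D_\alpha$ within $n$ steps, and from this one reads off $m$ with $\mathit{Bar}_\mathcal{C}(D_{\overline\alpha m})$. The desired finite sub-bar thus emerges from a direct positive argument, with no double-negation elimination and no appeal to Markov's principle anywhere.
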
 \begin{proof} Assume $\mathbf{WKL}$. 
 
 Let $\alpha$ be given such that  $\mathit{Bar}_{2^\omega}(D_\alpha)$. We intend to prove that $\exists n[Bar_{2^\omega}(D_{\overline \alpha n})]$. 
 Define $\alpha^\ast$ such that $\forall s \in 2^{<\omega}[\alpha^\ast(s)= 0 \leftrightarrow \forall t\sqsubseteq s[\alpha(t)= 0]]$.
  Define $\alpha^{\ast\ast}$ such that,  for all $s$ in $2^{<\omega}$, $\alpha^{\ast\ast}(s)= 0$ if and only if \textit{either} $\alpha^\ast(s)= 0$ \textit{or} $\neg\exists t \in 2^{<\omega}[length(t)=length(s)\;\wedge\;\alpha^\ast(t)=0]$. 
Note that,  for each $n$, there exists $s$ in $2^{<\omega}$ such that $length(s)=n$ and $\alpha^{\ast\ast}(s)=0$.
Applying $\mathbf{WKL}$, find $\gamma$ in $2^\omega$ such that $\forall n[\alpha^{\ast\ast}(\overline \gamma n)=0]$.
    Find $n$ such that $\alpha(\overline \gamma n)\neq 0$.  
    Conclude that  $\neg \exists t\in 2^{<\omega}[length(t)=n\;\wedge\;\alpha^\ast(t)=0]$ and $\forall \delta\in2^\omega\exists j\le n[\alpha(\overline \delta j)\neq 0]$ and  $\exists m[Bar_{2^\omega}(D_{\overline\alpha m})]$. 
    
   We thus see that $\forall \alpha[Bar_{2^\omega}(D_\alpha)\rightarrow \exists m[Bar_{2^\omega}(D_{\overline \alpha m})]]$,  i.e. $\mathbf{FT}$. 
  \end{proof}
\subsubsection{Weak K\"onig's Lemma with a uniqueness condition, $\mathbf{WKL!}$:}

\[\forall \alpha[\bigl(\forall m[\neg Bar_2^\omega(D_{\overline \alpha m})]\;\wedge\]\[\forall \gamma\in 2^\omega\forall\delta\in2^\omega[\gamma\perp\delta\rightarrow \exists m[\alpha(\overline\gamma m)\neq 0\;\vee\;\alpha(\overline\delta m)\neq 0]]\bigr)\rightarrow\exists \gamma\forall n[\alpha(\overline \gamma n)=0]].\] 
 The next two theorems, apart from being of interest in themselves, are useful for the discussion in Subsubsection \ref{SSS:bickford}. 
  \begin{theorem}\label{T:wkl!ft} $\mathsf{BIM}\vdash \mathbf{WKL!}\rightarrow \mathbf{FT}$. \end{theorem}
 
\begin{proof} \footnote{The equivalence of $\mathbf{FT}$ and $\mathbf{WKL!}$ is a result due to J. Berger and H. Ishihara, see \cite{bergerishihara}. Another proof has been given by   H. Schwichtenberg, see \cite{schwichtenberg}. }
 Assume $\mathbf{WKL!}$. 
 
 Let $\alpha$ be given such that $Bar_{2^\omega}(D_\alpha)$. We will prove that $\exists m[Bar_{2^\omega}(D_{\overline\alpha m})]$. 
 If $\alpha(0)=\alpha(\langle \;\rangle)\neq 0$, then $D_{\overline\alpha 1}=\{\langle \;\rangle\}$ and $Bar_2^\omega(D_{\overline \alpha 1})$, and we are done. 
  Now assume that $\alpha(\langle\;\rangle)=0$, i.e. $\langle\;\rangle\notin D_\alpha$. 
 Define $\alpha^\ast$ such that $\forall s \in 2^{<\omega}[\alpha^\ast(s)= 0 \leftrightarrow \forall t\sqsubseteq s[\alpha(t)= 0]]$. 
 Define $\alpha^{\ast\ast}$ such that $\alpha^{\ast\ast}(\langle\;\rangle)=0$ and, , for all $s$ in $2^{<\omega}\setminus\{\langle \;\rangle\}$, $\alpha^{\ast\ast}(s)= 0$ if and only if \textit{either} $\alpha^\ast(s)= 0$ and $\forall t\in 2^{<\omega}[\bigl(length(t)=length(s)\;\wedge\; \alpha^\ast(t)= 0\bigr)\rightarrow s\le_{lex}t]$ \textit{or} $\neg\exists t \in 2^{<\omega}[length(t)=length(s) \;\wedge\;\alpha^\ast(t)=0]$ and $\exists t [\alpha^{\ast\ast}(t)=0 \;\wedge\; s = t\ast\langle 0\rangle]$. 
 Using induction, one proves that,  for each $n$, there is exactly one $s$ in $2^{<\omega}$ such that $length(s)=n$ and $\alpha^{\ast\ast}(s)=0$. 
   Let $\gamma, \delta$ in $2^\omega$ be given such that $\gamma \perp \delta$.  Find $n$ such that $\overline \gamma n\perp\overline \delta n$ and note that $\alpha^{\ast\ast}(\overline \gamma n)\neq 0\;\vee\; \alpha^{\ast\ast}(\overline\delta n)\neq 0$. 
   We thus see that $\forall \gamma \in 2^\omega\forall \delta\in 2^\omega[\gamma \perp \delta \rightarrow \exists n[\alpha^{\ast\ast}(\overline \gamma n)\neq 0\;\vee\; \alpha^{\ast\ast}(\overline\delta n)\neq 0]]$. Applying $\mathbf{WKL!}$, find $\gamma$ in $2^\omega$ such that $\forall n[\alpha^{\ast\ast}(\overline \gamma n)=0]$.
    Find $n$ such that $\alpha(\overline \gamma n)\neq 0$.  
   Conclude that $\neg \exists t\in 2^{<\omega}[length(t)=n \;\wedge\;\alpha^\ast(t)=0]$ and $\forall \delta\in 2^\omega\exists j\le n[\alpha(\overline \delta j)\neq 0]$ and  $\exists m[Bar_{2^\omega}(D_{\overline\alpha m})]$.

    We thus see that $\forall \alpha[Bar_{2^\omega}(D_\alpha)\rightarrow \exists m[Bar_{2^\omega}(D_{\overline \alpha m})]]$,  i.e. $\mathbf{FT}$. 
  \end{proof}
  \begin{theorem}\label{T:ftwkl!} $\mathsf{BIM}\vdash \mathbf{FT}\rightarrow \mathbf{WKL!}$. \end{theorem}

   \begin{proof} Assume $\mathbf{FT}$. 
   
   Let $\alpha$ be given such that $\forall n [\neg Bar_{2^\omega}(D_{\overline\alpha n})]]$ and $\forall \gamma \in 2^\omega\forall \delta \in 2^\omega[\gamma \perp \delta\rightarrow \exists n[\alpha(\overline \gamma n)
   \neq 0\;\vee\;\alpha(\overline\delta n)\neq 0]]$. We will prove that   $\exists \gamma \in 2^\omega\forall n[\alpha(\overline \gamma n)=0]$. Define $\alpha^\ast$ such that $\forall s \in 2^{<\omega}[\alpha^\ast(s)=0\leftrightarrow \forall t\sqsubseteq s[\alpha(t)=0]]$. 
  Let $s$ in $2^{< \omega}$ be given.  Note that $\forall \gamma\in 2^\omega\forall \delta\in 2^\omega\exists k[\alpha^\ast(s\ast\langle 0\rangle\ast\overline \gamma k)\neq 0\;\vee\;\alpha^\ast(s\ast\langle 1\rangle\ast\overline \delta k)\neq 0]$. Conclude that $\forall \gamma\in 2^\omega\exists k[\alpha^\ast(s\ast\langle 0\rangle\ast\overline{ \gamma^{\upharpoonright 0}} k)\neq 0\;\vee\;\alpha^\ast(s\ast\langle 1\rangle\ast\overline{ \gamma^{\upharpoonright 1}} k)\neq 0]$.
   Applying $\mathbf{FT}$, find $m$ such that  $\forall \gamma\in 2^\omega\exists k\le m[\alpha^\ast(s\ast\langle 0\rangle\ast\overline{ \gamma^{\upharpoonright 0}} k)\neq 0\;\vee\;\alpha^\ast(s\ast\langle 1\rangle\ast\overline{ \gamma^{\upharpoonright 1}} k)\neq 0]$ and $\forall \gamma\in 2^\omega\forall \delta\in 2^\omega\exists k\le m[\alpha^\ast(s\ast\langle 0\rangle\ast\overline \gamma k)\neq 0\;\vee\;\alpha^\ast(s\ast\langle 1\rangle\ast\overline \delta k)\neq 0]$.
   Conclude that $\forall c\in 2^{<\omega}\forall d\in 2^{<\omega}[length(c)=length(d)=m \rightarrow\alpha^\ast(s\ast\langle 0\rangle\ast c)\neq 0\;\vee\;\alpha^\ast(s\ast\langle 1\rangle\ast d)\neq 0]$.
   Conclude that $\forall c\in 2^{<\omega}[length(c)=m\rightarrow\alpha^\ast(s\ast\langle 0\rangle\ast c)\neq 0]\;\vee\;\forall d\in 2^{<\omega}[length(d)=m\rightarrow\alpha^\ast(s\ast\langle 1\rangle\ast d)\neq 0]$.
   
   \smallskip This last step is justified by the fact that $\mathsf{BIM}$ proves the following scheme\footnote{This follows from our Lemma \ref{L:comb0}.}:
    \\ $\forall n[\forall k<n\forall l<n[A(k)\;\vee\;B(l)]\rightarrow(\forall k<n[A(k)]\vee\forall l<n[B(l)])]$.

   \smallskip
   Now define $\zeta$ such that, for every $s$ in $2^{<\omega}$, $\zeta(s)=\mu k[\forall c\in 2^{<\omega}[length(c)=k\rightarrow \alpha^\ast(s\ast\langle 0\rangle\ast c)\neq 0]\;\vee\;\forall d\in 2^{<\omega}[length(d)=k\rightarrow \alpha^\ast(s\ast\langle 1\rangle\ast d)\neq 0]$. 
   Then define $\delta$ in $2^\omega$ such that, for every $s$ in $2^{<\omega}$, $\delta(s)=1$  if $\forall c\in 2^{<\omega}[length(c)=\zeta(s)\rightarrow \alpha^\ast (s\ast\langle 0\rangle \ast c)\neq 0]$. 
    Finally, define $\gamma$ such that $\forall n[\gamma(n)=1-\delta(\overline \gamma n)]$. 
  Using the fact that $\forall n[ \neg Bar_{2^\omega}(D_{\overline \alpha n})]$, one may prove, by induction,  that, for each $n$, $\forall \varepsilon \in 2^\omega[\varepsilon \perp \overline \gamma n \rightarrow \exists p[\alpha^\ast(\overline \varepsilon p)\neq 0]]$ and $\forall m\exists p>m[p\in 2^{<\omega}\;\wedge\;\overline \gamma n\sqsubseteq p \;\wedge\;\alpha^\ast(p) =0]$.  
    Conclude that $\forall n[\alpha^\ast(\overline \gamma n)=0]$ and $\forall n[\alpha(\overline \gamma n)=0]$.
   
   We thus see that, for each $\alpha$, if $\forall n[\neg Bar_{2^\omega}(D_{\overline \alpha n})]$ and $\forall \gamma \in 2^\omega\forall \delta \in 2^\omega[\gamma\perp\delta\rightarrow \exists n[\alpha(\overline\gamma n)\neq 0\;\vee\;\alpha(\overline \delta n)\neq 0]]$, then $\exists \gamma\in 2^\omega\forall n[\alpha(\overline \gamma n)=0]$,  i.e. $\mathbf{WKL!}$. \end{proof}

\subsubsection{The Lesser Limited Principle of Omniscience, $\mathbf{LLPO}$:}\label{SSS:llpo}\footnote{\smallskip $\mu n[P(n)]=k\;\;\leftrightarrow \;\;\bigl(P(k)\;\wedge\;\forall n<k[\neg P(n)]\bigr)$. \\Conclude that $\mu n[\alpha(n)\neq 0] \neq k \;\leftrightarrow \;\bigl(\alpha(k) = 0 \;\vee \;\exists i<k[\alpha(i)\neq 0]\bigr)$. } \[\forall \alpha\exists i<2\forall p[2p+i\neq \mu n[\alpha(n)\neq 0]].\]
\begin{theorem}\label{T:nonllpo} $\mathsf{BIM}\vdash \mathbf{BCP} \rightarrow \neg \mathbf{LLPO}$. \end{theorem}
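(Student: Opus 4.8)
The plan is to assume $\mathbf{BCP}$ together with $\mathbf{LLPO}$ and derive a contradiction, working at the all-zero sequence $\underline 0$. Write $P(\alpha)$ for the first disjunct of $\mathbf{LLPO}$, that is, $\forall p[\overline\alpha(2p) = \underline{\overline 0}(2p) \rightarrow \alpha(2p) = 0]$, and $Q(\alpha)$ for the second disjunct $\forall p[\overline\alpha(2p+1) = \underline{\overline 0}(2p+1) \rightarrow \alpha(2p+1) = 0]$, so that $\mathbf{LLPO}$ says exactly $\forall\alpha[P(\alpha) \vee Q(\alpha)]$. Then I would define a subset $\mathcal{A}$ of $\mathcal{N} \times \mathbb{N}$ by letting $\alpha \mathcal{A} n$ hold if and only if either $n = 0$ and $P(\alpha)$, or $n \neq 0$ and $Q(\alpha)$; from $\mathbf{LLPO}$ we get $\forall\alpha\exists n[\alpha\mathcal{A}n]$, and note in particular $\underline 0\mathcal{A}0$, since $P(\underline 0)$ is trivially true.

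First I would apply $\mathbf{BCP}$ to $\mathcal{A}$ at the sequence $\underline 0$: this yields $m$ and $n$ with $\forall\beta[\overline\beta m = \underline{\overline 0}m \rightarrow \beta\mathcal{A}n]$. The two test sequences I would then feed in are $\beta_0$, with $\beta_0(2m) = 1$ and $\beta_0(k) = 0$ for $k \neq 2m$, and $\beta_1$, with $\beta_1(2m+1) = 1$ and $\beta_1(k) = 0$ for $k \neq 2m+1$; their existence in $\mathcal{N}$ is immediate from Axiom~\ref{ax:recop}. Both agree with $\underline 0$ on their first $m$ values, so $\beta_0\mathcal{A}n$ and $\beta_1\mathcal{A}n$. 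A short computation, instantiating the universal quantifier in $P$, resp. $Q$, at $p = m$, shows $\neg P(\beta_0)$ and $\neg Q(\beta_1)$: in each case the relevant initial segment is all zero while the value at $2m$, resp.\ $2m+1$, equals $1$, so $P(\beta_0)$, resp.\ $Q(\beta_1)$, would force $1 = 0$, contradicting $S(0) \neq 0$ from Axiom~\ref{ax:const}. From $\beta_0\mathcal{A}n$ and $\neg P(\beta_0)$ we conclude $n \neq 0$; then from $\beta_1\mathcal{A}n$ and $n \neq 0$ we obtain $Q(\beta_1)$, contradicting $\neg Q(\beta_1)$. Hence $\neg\mathbf{LLPO}$. (One may equivalently split into the cases $n = 0$ and $n \neq 0$ at the outset and use only one of $\beta_0,\beta_1$ in each case; the bookkeeping is the same.)

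There is no deep obstacle here: this is the familiar Brouwerian refutation of $\mathbf{LLPO}$ from weak continuity. The only points needing care are reading the (slightly abbreviated) statement of $\mathbf{LLPO}$ as the disjunction $\forall\alpha[P(\alpha)\vee Q(\alpha)]$; checking that $\mathcal{A}$, being given by a formula, is a legitimate instance of the $\mathbf{BCP}$ scheme and that $\forall\alpha\exists n[\alpha\mathcal{A}n]$ indeed holds; and verifying that $\beta_0,\beta_1$ genuinely refute $P$, resp.\ $Q$. All of these are routine in $\mathsf{BIM}$.
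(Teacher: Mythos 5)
Your proof is correct and follows essentially the same route as the paper: apply $\mathbf{BCP}$ at $\underline 0$ to the relation coding which $\mathbf{LLPO}$-disjunct holds (the paper leaves this relation implicit, you spell it out as $\mathcal{A}$), obtain a neighbourhood $\underline{\overline 0}m$ on which one disjunct holds uniformly, and refute it with the sequences having a single $1$ at position $2m$, resp. $2m+1$ — exactly the paper's $\overline{\underline 0}(2m)\ast\langle 1\rangle\ast\underline 0$ and $\overline{\underline 0}(2m+1)\ast\langle 1\rangle\ast\underline 0$.
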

\begin{proof} Assume $\mathbf{LLPO}$. 
Using $\mathbf{BCP}$, find $m$,$i$ such that $i<2$ and   $\forall \alpha[\overline{\underline 0}m\sqsubset \alpha\rightarrow \forall p[2p+i\neq\mu n[\alpha(n)\neq 0]]]$.
 Define $\alpha:= \overline{\underline 0}(2m+i)\ast \langle 1 \rangle \ast \underline 0$ and note that $\overline{\underline 0} m\sqsubset \alpha$ and $2m+i=\mu n[\alpha(n)\neq 0]$.  Contradiction. \end{proof}

 \begin{theorem}\footnote{This result is due to H. Ishihara, see \cite{ishihara3}.} \label{T:wklllpo} $\mathsf{BIM} \vdash \mathbf{WKL} \rightarrow \mathbf{LLPO}$. \end{theorem}

\begin{proof} Assume $\mathbf{WKL}$. Let $\alpha$ be given. Define $\beta$ such that, for every $s$, $\beta(s) = 0$ if and only if $\exists q\exists i <2[s=\overline{\underline i}q \;\wedge\;\forall p[2p+i< q \rightarrow 2p+i\neq\mu n[\alpha(n)\neq 0]]]$.   Note that $\forall m\exists i<2\forall q\le m[\beta(\overline{\underline i} q)=0]$. Using $\mathbf{WKL}$, find $\gamma$ such that $\forall n[\beta(\overline \gamma n)=0]$. Define $i:=\gamma(0)$ and conclude: $\gamma = \underline i$ and   $\forall p[ 2p+i\neq\mu n[\alpha(n)\neq 0]]$. We thus see that $\forall \alpha \exists i<2\forall p[ 2p+i\neq\mu n[\alpha(n)\neq 0]]$, i.e. $\mathbf{LLPO}$.  \end{proof}

Using a weak axiom of countable choice, one may also prove $\mathbf{LLPO} \rightarrow \mathbf{WKL}$, see Theorem \ref{T:acllpowkl}.

\subsubsection{The Limited Principle of Omniscience, $\mathbf{LPO}$:}\label{SSS:lpo} $$\forall \alpha[\exists n[\alpha(n) \neq 0] \;\vee\; \forall n[\alpha(n) = 0]].$$
 
 $\mathbf{LPO}$ and $\mathbf{LLPO}$ were introduced by E. Bishop, see \cite[Chapter 1, Section 1]{bridgesrichman}. The following result is not difficult and well-known, see  \cite[Section 2.6]{mandelkern} and  \cite[Theorem 3.1]{akama}.
\begin{theorem}\label{T:lpollpo} $\mathsf{BIM} \vdash \mathbf{LPO} \rightarrow \mathbf{LLPO}$. \end{theorem}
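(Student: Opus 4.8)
The plan is to feed $\alpha$ itself to $\mathbf{LPO}$ and then split on the resulting dichotomy. So, given $\alpha$, I would apply $\mathbf{LPO}$ to obtain $\exists n[\alpha(n)\neq 0]\;\vee\;\forall n[\alpha(n)=0]$. If $\forall n[\alpha(n)=0]$, there is nothing to prove: then $\forall p[\alpha(2p)=0]$, so the first disjunct of the relevant instance of $\mathbf{LLPO}$, namely $\forall p[\overline\alpha(2p)=\underline{\overline 0}(2p)\rightarrow\alpha(2p)=0]$, holds outright, its hypothesis being simply discarded.

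If instead $\exists n[\alpha(n)\neq 0]$, I would invoke the least-number principle --- available in $\mathsf{BIM}$ through the unrestricted induction scheme (Axiom~\ref{ax:ind}), since $D_\alpha$ is a decidable subset of $\mathbb{N}$ --- to obtain the least $n_0$ with $\alpha(n_0)\neq 0$, so that $\alpha(m)=0$ for all $m<n_0$. Using decidability of parity I would then distinguish two cases. If $n_0$ is odd, I claim the first disjunct holds: given $p$, the number $2p$ is even, hence $2p\neq n_0$, so decidably $2p<n_0$ or $2p>n_0$; in the first case $\alpha(2p)=0$ by minimality of $n_0$, and in the second case $\alpha(n_0)\neq 0$ with $n_0<2p$ witnesses $\overline\alpha(2p)\neq\underline{\overline 0}(2p)$, making the implication vacuously true. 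If $n_0$ is even, the same argument with $2p+1$ in place of $2p$ yields the second disjunct $\forall p[\overline\alpha(2p+1)=\underline{\overline 0}(2p+1)\rightarrow\alpha(2p+1)=0]$.

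There is no genuine obstacle here; the only step deserving a word of care is the appeal to the least-number principle, but this is a routine consequence of Axiom~\ref{ax:ind} once one notes that $D_\alpha$ is decidable. One can in fact bypass it altogether: replace $\alpha$ by the $\beta\in\mathcal{N}$ --- constructible by the closure properties of $\mathcal{N}$ in Axiom~\ref{ax:recop} --- determined by $\beta(n)\neq 0\leftrightarrow\bigl(\alpha(n)\neq 0\;\wedge\;\forall m<n[\alpha(m)=0]\bigr)$, and apply $\mathbf{LPO}$ to $\beta$ instead. Any witness for $\exists n[\beta(n)\neq 0]$ is automatically the least nonzero index of $\alpha$, whence the parity argument above applies; and $\forall n[\beta(n)=0]$ delivers both disjuncts of $\mathbf{LLPO}$ at once, since from $\beta(2p)=0$ together with $\overline\alpha(2p)=\underline{\overline 0}(2p)$ and the decidability of $\alpha(2p)\neq 0$ one reads off $\alpha(2p)=0$, and likewise for odd indices. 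Either route keeps the argument entirely within $\mathsf{BIM}$.
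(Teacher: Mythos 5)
Your proposal is correct and follows essentially the same route as the paper: apply $\mathbf{LPO}$ to $\alpha$ itself, handle the case $\forall n[\alpha(n)=0]$ trivially, and otherwise take the least index $n_0$ with $\alpha(n_0)\neq 0$ and decide which disjunct of $\mathbf{LLPO}$ holds according to the parity of $n_0$. Your extra remarks on securing the least-number principle (or bypassing it via the auxiliary $\beta$) only make explicit what the paper leaves implicit.
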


\begin{proof} Let $\alpha$ be given.   Apply $\mathbf{LPO}$ and distinguish two cases.

\textit{Case (1)}. $\exists n[\alpha(n) \neq 0]$.  Find $q,k$ such that $k<2$  and $ 2q+k=\mu n[\alpha(n)\neq 0]$. Conclude:   $\forall p[2p+1-k\neq\mu n[\alpha(n)\neq 0]]$.

\textit{Case (2)}. $\forall n[\alpha(n) = 0]$. Then $\forall k<2\forall p[2p+k\neq\mu n[\alpha(n)\neq 0]]$.  \end{proof}

The following Lemma shows that, in $\mathsf{BIM}+\mathbf{BCP}$, not every closed subset of $\omega^\omega$ is a spread, see also 
 \cite[Theorem 2.10 (vi)]{veldman2022}. 
 
 \begin{lemma}\label{L:closedspread} $\mathsf{BIM}\vdash \forall \beta \exists \gamma[Spr(\gamma)\;\wedge\; \mathcal{F}_\beta=\mathcal{F}_\gamma]\rightarrow \mathbf{LPO}$. \end{lemma}
 
 \begin{proof} Assume $\forall \beta \exists \gamma[Spr(\gamma)\;\wedge\; \mathcal{F}_\beta=\mathcal{F}_\gamma]$. We will prove $\mathbf{LPO}$. Let $\alpha$ be given. Define $\beta$ such that $\beta(0)=0$ and $\forall n \forall s[\beta(\langle n \rangle \ast s)=0 \leftrightarrow \alpha(n)\neq 0]$. Assume we find $\gamma$ such that $Spr(\gamma)$ and $\mathcal{F}_\beta=\mathcal{F}_\gamma$. If $\gamma(0)=0$, then $\exists n[\alpha(n)\neq 0]$ and, if $\gamma(0)\neq 0$, then $\forall n[\alpha(n)=0]$. We thus see $\forall \alpha[\exists n[\alpha(n)\neq 0]\;\vee\; \forall n[\alpha(n)=0]]$, i.e. $\mathbf{LPO}$.\end{proof}
 
 \subsubsection{\it Markov's principle, $\mathbf{MP}$:}\label{SSS:markov} $$\forall \alpha[\neg\neg\exists n[\alpha(n)\neq 0]\rightarrow \exists n[\alpha(n)\neq 0]].$$
For some discussion of this principle, see \cite[Volume I, Chapter 4, Section 5]{Troelstra-van Dalen}. In this paper, the principle figures only in Subsection \ref{SS:finiteness}. 
 
 \smallskip
We would like to make a philosophical observation. For a constructive mathematician, the assumptions $\mathbf{LPO}$, $\mathbf{WKL}$, $\mathbf{LLPO}$ and $\mathbf{MP}$ make no sense, as she does not know a situation in which these assumptions are true. Theorem \ref{T:wklllpo}: $\mathbf{WKL} \rightarrow \mathbf{LLPO}$ concludes something which is never true from something which is never true. Nevertheless, the proof of Theorem \ref{T:wklllpo} makes sense. It shows us how to find, given any $\alpha$, a suitable $\beta$ such that if $\beta$  has the $\mathbf{WKL}$-property, then $\alpha$ has the $\mathbf{LLPO}$-property. This part of the argument does not use the assumption that every $\beta$ has the $\mathbf{WKL}$-property. Theorems \ref{T:wklft} and \ref{T:lpollpo} deserve a similar comment.

The reader may find more information on the axioms of intuitionistic analysis in
 \cite{brouwer27}, \cite{brouwer54}, \cite{heyting}, \cite{Kleene-Vesley}, \cite{Howard-Kreisel}, \cite{Troelstra-van Dalen} and \cite{veldman2021}.
 
\section{The $\mathbf{\Sigma}^0_1$-separation principle}\label{S:sepprinc}
\subsection{}In classical reverse mathematics, weak K\"onig's Lemma is equivalent to a principle called 
 \textit{ $\Sigma^0_1$-separation}, see \cite[Lemma IV.4.4]{Simpson}.
  We call $X\subseteq\omega$ \textit{enumerable} or $\mathbf{\Sigma}^0_1$ if and only if $\exists \alpha[X= E_\alpha]$.The following statement, formulated  in the intuitionistic language of $\mathsf{BIM}$, comes close to the just-mentioned classical  principle.

\smallskip
$\mathbf{\Sigma}^0_1$-\textit{separation principle}, $\mathbf{\Sigma}_1^0$-$\mathbf{Sep}$:
$$\forall \alpha[\neg \exists n\forall i<2[(n,i)\in E_\alpha ]\rightarrow\exists\gamma\forall n[  \bigl(n,\gamma(n)\bigr)\notin E_\alpha]].$$

The next Theorem seems  to confirm the just-mentioned classical result.
\begin{theorem}\label{T:sep=wkl}  $\mathsf{BIM}\vdash \mathbf{WKL} \leftrightarrow \mathbf{\Sigma}^0_1$-$\mathbf{Sep}$.

\end{theorem}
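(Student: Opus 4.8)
The plan is to prove the two implications separately. In both it helps to use the reformulation of $\mathbf{WKL}$ recorded just above the theorem, $\mathbf{WKL}\leftrightarrow\forall\alpha[\forall n[0\notin\mathit{Sec}_n(D_\alpha)]\rightarrow\exists\gamma\in\mathcal{C}\forall n[\overline\gamma n\notin D_\alpha]]$, together with the elementary facts about the sets $\mathit{Sec}_n$ collected in Subsection \ref{SS:secured}.

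For the implication from $\mathbf{WKL}$ to $\mathbf{\Sigma^0_1}$-$\mathbf{Sep}$, let $\alpha$ be given with $\neg\exists n[n\in E_{\alpha^0}\wedge n\in E_{\alpha^1}]$; the idea is to take for the separator a path through the ``tree of binary strings not yet seen to be inconsistent''. I would define $\beta$ so that, for $s\in\mathit{Bin}$ of length $\ell$, $\beta(s)=0$ exactly when $\forall n<\ell[(s(n)=0\rightarrow n\notin E_{\overline{\alpha^0}\ell})\wedge(s(n)=1\rightarrow n\notin E_{\overline{\alpha^1}\ell})]$, and $\beta(s)\neq0$ for $s\notin\mathit{Bin}$; this is legitimate since the relations $n\in E_{\overline{\alpha^i}\ell}$ are decidable. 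For each $\ell$ the string $c$ of length $\ell$ with $c(n)=1\leftrightarrow n\in E_{\overline{\alpha^0}\ell}$ satisfies $\beta(\overline c k)=0$ for all $k\leq\ell$ — for $\beta(c)=0$ one uses disjointness of $E_{\overline{\alpha^0}\ell}$ and $E_{\overline{\alpha^1}\ell}$, and for the initial segments one uses $E_{\overline{\alpha^i}k}\subseteq E_{\overline{\alpha^i}\ell}$ — so that $c\ast\underline{0}$, inserted into the property of $\mathit{Sec}_n$ from Subsection \ref{SS:secured}, yields $\forall n[0\notin\mathit{Sec}_n(D_\beta)]$. Then $\mathbf{WKL}$ supplies $\gamma\in\mathcal{C}$ with $\beta(\overline\gamma n)=0$ for all $n$. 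If $n\in E_{\alpha^0}$, choosing $\ell>n$ past a witness gives $n\in E_{\overline{\alpha^0}\ell}$, and $\beta(\overline\gamma\ell)=0$ forbids $\gamma(n)=0$, so $\gamma(n)=1$ and $n\in D_\gamma$; symmetrically $n\in E_{\alpha^1}$ forces $\gamma(n)=0$. Hence $\gamma$ separates $E_{\alpha^0}$ from $E_{\alpha^1}$.

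For the converse, let $\alpha$ be given with $\forall n[0\notin\mathit{Sec}_n(D_\alpha)]$ and write $B:=D_\alpha$; since $B$ is decidable, so is $\sigma\in\mathit{Sec}_n(B)$, uniformly in $(\sigma,n)$. I would introduce the enumerable sets $A_0:=\{\sigma\mid\exists n[\sigma\ast\langle 0\rangle\in\mathit{Sec}_n(B)\wedge\sigma\ast\langle 1\rangle\notin\mathit{Sec}_n(B)]\}$ and, symmetrically, $A_1$ with the r\^oles of $0$ and $1$ exchanged; informally $\sigma\in A_i$ says that the $(1-i)$-child of $\sigma$ is still alive when the $i$-child first dies, and using $\mathit{Sec}_n(B)\subseteq\mathit{Sec}_{n+1}(B)$ one checks $A_0\cap A_1=\emptyset$. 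Applying $\mathbf{\Sigma^0_1}$-$\mathbf{Sep}$ to the pair $(A_0,A_1)$ yields $\delta\in\mathcal{C}$ with $\sigma\in A_0\rightarrow\delta(\sigma)=1$ and $\sigma\in A_1\rightarrow\delta(\sigma)=0$. Now let $\gamma\in\mathcal{C}$ be given by the recursion $\gamma(n):=\delta(\overline\gamma n)$ and prove by induction on $n$ that $\overline\gamma n\notin\mathit{Sec}(B)$ (where $\mathit{Sec}(B)=\bigcup_k\mathit{Sec}_k(B)$). The base case $n=0$ is the hypothesis. For the step, put $\sigma:=\overline\gamma n$, assume $\sigma\notin\mathit{Sec}(B)$, fix $k$, suppose $\sigma\ast\langle\delta(\sigma)\rangle\in\mathit{Sec}_k(B)$, and decide whether the sibling $\sigma\ast\langle 1-\delta(\sigma)\rangle$ lies in $\mathit{Sec}_k(B)$: if it does, then $\sigma\in\mathit{Sec}_{k+1}(B)\subseteq\mathit{Sec}(B)$, contradicting the inductive hypothesis; if it does not, then $k$ witnesses $\sigma\in A_{\delta(\sigma)}$, and since $\delta$ puts $A_0$ inside $D_\delta$ and $A_1$ outside, $\delta(\sigma)$ would have to equal $1-\delta(\sigma)$, again absurd. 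So $\overline\gamma(n+1)\notin\mathit{Sec}(B)$; in particular $\overline\gamma n\notin\mathit{Sec}_0(B)$, i.e.\ $\alpha(\overline\gamma n)=0$, for every $n$, which gives $\mathbf{WKL}$.

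The first implication is routine apart from one temptation to avoid: one must \emph{not} build the separator directly by padding a consistent string with zeros, because a later enumeration event can spoil such a $\gamma$, so one genuinely has to pass through $\mathbf{WKL}$. The real obstacle is the constructivity of the second implication. Classically one would walk down the ``tree'' always moving to a child that survives forever; but ``a surviving node has a surviving child'' is a disjunction of two $\mathbf{\Pi^0_1}$ statements and is not intuitionistically available, and bridging precisely this gap is the r\^ole of $\mathbf{\Sigma^0_1}$-$\mathbf{Sep}$. Two things then need care: the definition of $A_0,A_1$ must make these sets \emph{provably} disjoint — hence the ``whoever-dies-first'' formulation via the decidable sets $\mathit{Sec}_n(B)$, and hence the need for $\mathit{Sec}_n(B)\subseteq\mathit{Sec}_{n+1}(B)$; and the inductive step must be arranged so that, once $k$ is fixed and $\sigma\ast\langle\delta(\sigma)\rangle\in\mathit{Sec}_k(B)$ is assumed, every remaining question is a decidable question about $\mathit{Sec}_k(B)$, so that it reduces to a finite case distinction rather than to the forbidden disjunction. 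Working throughout with the sets $\mathit{Sec}_n$ instead of with the $\mathbf{\Pi^0_1}$ predicate ``$\sigma$ survives to every level'' is what makes this go.
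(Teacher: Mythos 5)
Your proposal is correct and takes essentially the same route as the paper: the same tree of partial separation attempts for $\mathbf{WKL}\rightarrow\mathbf{\Sigma^0_1}$-$\mathbf{Sep}$, and, for the converse, the same pair of enumerable sets (``the $i$-child is secured at a stage where its sibling is not'') separated by $\mathbf{\Sigma^0_1}$-$\mathbf{Sep}$, with the path in $\mathcal{C}$ steered by the separating function. The only deviation is cosmetic: you run a forward induction showing every node of the path lies outside $\mathit{Sec}(D_\alpha)$, whereas the paper supposes the path meets the bar and secures the root by backwards induction; the two arguments are interchangeable.
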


\begin{proof} 
(i) Assume $\mathbf{WKL}$. We will prove $\mathbf{\Sigma}^0_1$-$\mathbf{Sep}$.

Let $\alpha$ be given such that $\neg\exists n\forall i<2[(n,i)\in E_\alpha]$. 
Define $\beta$  such that
$ \forall a\in 2^{<\omega}[\beta(a) = 0\leftrightarrow\forall m <length(a)[\bigl(m,a(m)\bigr)\notin E_{\overline \alpha n} ]]
$. Let $n$ be given.  Note that $\forall m<n\exists i<2[(m,i)\notin E_{\overline \alpha n}]$ and find $a$ in $2^{<\omega}$ such that $length(a)=n$ and $\forall m<n[\bigl(m,a(m)\bigr)\notin E_{\overline \alpha n}]$.  
Conclude that $\forall j \le m [\beta(\overline a j) = 0]$. We thus see that $\forall n\exists a\in 2^{<\omega}[length(a)=n \;\wedge\;\forall m\le n[\beta(\overline a n)=0]]$.  
Using $\mathbf{WKL}$, find $\gamma$ in $2^\omega$ such that  $\forall n[\beta(\overline \gamma n) = 0]$. 
Conclude that $\forall n \forall m<n[\bigl(m, \gamma(m)\bigr)\notin E_{\overline \alpha n}]$ and    $\forall n[\bigl(n,\gamma(n)\bigr) \notin  E_{\alpha}]$. We thus see that $\forall \alpha[\neg \exists n\forall i<2[(n,i)\in E_\alpha]\rightarrow \exists \gamma\forall n[\bigl(n, \gamma(n)\bigr)\notin E_\alpha]$, i.e. $\mathbf{\Sigma}^0_1$-$\mathbf{Sep}$.

\smallskip
 (ii) Assume $\mathbf{\Sigma}^0_1$-$\mathbf{Sep}$. We will prove $\mathbf{WKL}$.
  
 Let $\alpha$ be given such that $\forall m [\neg Bar_{2^\omega}(D_{\overline \alpha m})]$.
 Define $\beta$ such that, for both $i<2$, for all $n$, for all  $s$, \textit{if} $s\in 2^{<\omega}$ and    $Bar_{2^\omega\cap s\ast\langle i \rangle}(D_{\overline \alpha n})$ and $ \neg Bar_{2^\omega\cap s \ast\langle 1-i \rangle} (D_{\overline \alpha n})$, \textit{then} $\beta(n,s) = (s,i)+1$, and, \textit{if not}, \textit{then} $\beta(n,s) =0$.
  Note that $E_\beta$ is the set of all $(s,i)$ such that $  s \in 2^{<\omega}$ and  $ i<2$ and  $\exists n[Bar_{2^\omega\cap s\ast\langle i \rangle}(D_{\overline \alpha n})\;\wedge\;\neg Bar_{2^\omega\cap s\ast\langle 1-i \rangle}(D_{\overline \alpha n})]$.
  Note that, for all $s$ in $2^{<\omega}$, for all $i<2$, if $(s,i)\notin E_\beta$, then $\forall n[Bar_{2^\omega\cap s\ast\langle i \rangle}(D_{\overline \alpha n})\rightarrow Bar_{2^\omega\cap s\ast\langle 1-i \rangle}(D_{\overline \alpha n})]$.
Note that $\neg \exists s\forall i <2[(s,i)\in E_\beta]$.
Using $\mathbf{\Sigma}^0_1$-$\mathbf{Sep}$, find  $\gamma$ in $2^\omega$ such that  $\forall s[\bigl(s,\gamma(s)\bigr) \notin E_{\beta}]$. 
Define $\delta$ in $2^\omega$ such that $\forall n[\delta(n)= \gamma(\overline{\delta}n)]$.

Suppose we find $n$ such that $\alpha(\overline {\delta}n) \neq 0$.  Define $q:=\overline \delta n +1$ and note that $\overline \delta n \in D_{\overline \alpha q}$, i.e. $Bar_{2^\omega\cap\overline \delta n}(D_{\overline \alpha q})$. 
We  prove, using backwards induction, that  $\forall j\le n[Bar_{2^\omega\cap\overline \delta j}(D_{\overline \alpha q})]$.
Observe that $Bar_{2^\omega\cap\overline \delta n}(D_{\overline \alpha q})$.
Now suppose $j+1 \le n$ and $Bar_{2^\omega\cap\overline \delta (j+1)}(D_{\overline \alpha q})$. Note that $\overline\delta (j+1)=\overline\delta j\ast\langle \gamma(\overline \delta j)\rangle$. Also $\bigl(\overline \delta j, \gamma(\overline \delta j)\bigr)\notin E_\beta$. Conclude that $Bar_{2^\omega\cap \overline \delta j\ast\langle 1-\gamma(\overline\delta(j)\rangle} (D_{\overline \alpha q})$ and  $Bar_{2^\omega\cap \overline \delta j} (D_{\overline \alpha q})$.
 This completes the proof of the induction step.
 After $n$ steps we find that $Bar_{2^\omega}(D_{\overline \alpha q})$. This contradicts the assumption $\forall m[\neg Bar_{2^\omega}(D_{\overline \alpha m})]$.
 Conclude that $\forall n[\alpha(\overline{\delta}n) = 0]$.

 We thus see that $\forall \alpha[\forall n[\neg Bar_{2^\omega}(D_{\overline\alpha n})]\rightarrow \exists \delta\in 2^\omega\forall n[\alpha(\overline \delta n)=0]]$, i.e. $\mathbf{WKL}$.  
\end{proof}

Theorem \ref{T:sep=wkl} shows that $\mathbf{\Sigma}^0_1$-$\mathbf{Sep}$, like $\mathbf{WKL}$,  is not constructive, see Theorem \ref{T:wklllpo}. It is not true in intuitionistic analysis and it also fails
 in the model of $\mathsf{BIM}$ given by the recursive functions.

\section{$\mathbf{AC}_{\omega,\omega}$, some special cases}

 The following restricted version of $\mathbf{AC_{\omega,\omega}}$ is  provable in  $\mathsf{BIM}$  as it is a   consequence of Axiom \ref{ax:recop}, see Section \ref{S:BIM}. 
 
\subsection{}\textit{Minimal Axiom of Countable Choice}, $\mathbf{\Delta}^0_1$-$\mathbf{AC_{\omega,\omega}}$:
 \[\forall \alpha [ \forall n \exists m [ \alpha(n,m) = 0 ] \rightarrow
 \exists \gamma \forall n [ \alpha\bigl(n, \gamma(n)\bigr) = 0  ] ].\]

\subsection{}\textit{Axiom Scheme of Countable Unique Choice}, $\mathbf{AC}_{\omega,\omega}!=\mathbf{AC}_{0,0}!$: \[\forall n\exists!m[R(n,m)]\rightarrow\exists\gamma\forall n[R\bigl(n,\gamma(n)\bigr)].\]
where `$\forall n\exists!m[R(n,m)]$' abbreviates `$\forall n\exists m[R(n,m)\;\wedge\;\forall p[R(n,p)\rightarrow m=p]]$'. 

$\mathbf{AC}_{\omega,\omega}!$ is not a theorem of $\mathsf{BIM}$, see Subsection \ref{SSS:countbincunpr} and \cite{vafeiadou2018}.
 \subsection{}$\mathbf{\Sigma}^0_1$-\textit{First Axiom of Countable Choice}, $\mathbf{\Sigma}^0_1$-$\mathbf{AC}_{\omega,\omega}$:                                                                                                                                                                
\[\forall \alpha[\forall n \exists m[(n,m) \in E_{\alpha}] \rightarrow \exists \gamma \forall n[\bigl(n,\gamma(n)\bigr) \in E_{\alpha}]].\]
 \begin{theorem}\label{T:acoosigma01} $\mathsf{BIM}\vdash \mathbf{\Sigma}^0_1$-$\mathbf{AC}_{\omega,\omega}$. \end{theorem} \begin{proof}
Assume $\forall n \exists m[(n,m) \in E_{\alpha}]$. Then $\forall n \exists m \exists p[\alpha(p) = (n,m)+1]$.
Find $\delta$ such that $\forall n[\delta(n)=\mu q[\alpha(q')=(n, q'')+1]]$.
Define $\gamma$ such that $\forall n[\gamma(n) = \delta''(n)]$ and note that
$\forall n[\bigl(n,\gamma(n)\bigr) \in E_{\alpha}]$.\end{proof}

\smallskip
We call $X\subseteq\omega$  \textit{co-enumerable} or $\mathbf{\Pi}^0_1$ if and only if there exists $\alpha$ such that
$X= \omega\setminus E_\alpha$.

 \subsection{}$\mathbf{\Pi}_1^0$-\textit{First Axiom of Countable Choice}, $\mathbf{\Pi}^0_1$-$\mathbf{AC_{\omega,\omega}}$:
 \[\forall \alpha [ \forall n \exists m[(n, m) \notin E_{\alpha}] \rightarrow \exists \gamma \forall n [\bigl(n,\gamma(n)\bigr) \notin E_{\alpha}]].\] 

 $\mathbf{\Pi}^0_1$-$\mathbf{AC}_{\omega,\omega}$ is unprovable in $\mathsf{BIM}$, see Subsection \ref{SSS:countbincunpr}.
 In \cite[Section 6]{veldman2011b}, we introduced the following special case of this axiom. 
 \subsection{}\textit{Weak} $\mathbf{\Pi}^0_1$-\textit{First Axiom of Countable Choice}\label{SS:weakac}, \textit{Weak-}$\mathbf{\Pi}^0_1$-$\mathbf{AC}_{\omega,\omega}$:\[\forall \alpha[\forall m \exists n \forall p \ge n[\alpha(m,p) \neq 0] \rightarrow \exists \gamma \forall m \forall p \ge \gamma(m)[\alpha(m,p) \neq 0]].\]
\textit{Weak-}$\mathbf{\Pi}^0_1$-$\mathbf{AC}_{\omega,\omega}$ follows from  $\mathbf{AC}_{\omega,\omega}!=\mathbf{AC}_{0,0}!$. 
\textit{Weak-}$\mathbf{\Pi}^0_1$-$\mathbf{AC}_{\omega,\omega}$ is a special case of the axiom scheme $AC_{0,0}^m$ introduced in \cite[Subsection 3.1]{moschovakisvafeiadou12}. We suspect that, in $\mathsf{BIM}$, \textit{Weak-}$\mathbf{\Pi}^0_1$-$\mathbf{AC}_{\omega,\omega}$ does not imply $\mathbf{\Pi}^0_1$-$\mathbf{AC}_{\omega,\omega}$, but we have no proof.\footnote{J. R. Moschovakis made me see that S. Weinstein proved (arguing classically)  a result from which one may conclude that, indeed,  this implication does not hold, see \cite[Theorem 3.10]{weinstein}.}
\begin{theorem}\label{T:fanfan+}\begin{enumerate}[\upshape (i)]\item   $\mathsf{BIM}+$\textit{Weak-}$\mathbf{\Pi}^0_1$-$\mathbf{AC}_{\omega,\omega}\vdash \forall \beta[Fan(\beta)\rightarrow Fan^+(\beta)]$. \item $\mathsf{BIM}+$\textit{Weak-}$\mathbf{\Pi}^0_1$-$\mathbf{AC}_{\omega,\omega}\vdash \mathbf{FT}\rightarrow\mathbf{FT}^+$.\end{enumerate}\end{theorem} \begin{proof} The proof is left to the reader. \end{proof}
\smallskip One may also study  statements one obtains from $\mathbf{AC}_{\omega,\omega}$ by limiting the number of alternatives one has at each choice. 
 
\subsection{} \textit{  Axiom Scheme of Countable Binary Choice}, $\mathbf{AC}_{\omega,2}$:
  $$\forall n\exists m<2[R(n,m)] \rightarrow \exists \gamma \in 2^\omega \forall n[R\bigl(n,\gamma(n)\bigr)].$$

\smallskip
Here is a  restricted version of $\mathbf{AC}_{\omega,2}$:

\subsection{}\label{SS:pi01axomega2} $\mathbf{\Pi}^0_1$-\textit{Axiom of Countable Binary 
Choice}, $\mathbf{\Pi}^0_1$-$\mathbf{AC}_{\omega,2}$:
\[\forall \alpha  [\forall n\exists m <2[(n,m) \notin E_{\alpha} ] \rightarrow 
   \exists \gamma \in 2^\omega\forall n[\bigl(n,\gamma(n)\bigr) \notin E_{\alpha}]].\]

A result related to the following Theorem has been proven by U.~Kohlenbach, see \cite[Theorem 3]{kohlenbach}. A similar result is  mentioned  in \cite[Subsection 2.2]{akama}.

\begin{theorem}\label{T:acllpowkl} $\mathsf{BIM}\vdash(\mathbf{\Pi}^0_1$-$\mathbf{AC}_{\omega,2}\;\wedge\; \mathbf{LLPO}) \leftrightarrow \mathbf{WKL}$.\end{theorem}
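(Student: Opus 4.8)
The plan is to use the reformulation $\mathsf{BIM}\vdash \mathbf{WKL}\leftrightarrow\forall \alpha[ \forall n[0 \notin \mathit{Sec}_n(D_\alpha)]\rightarrow\exists \gamma \in \mathcal{C}\forall n[\overline \gamma n \notin D_\alpha ]]$ recorded in Subsection \ref{SS:secured}, together with the monotonicity $\forall n[\mathit{Sec}_n(B)\subseteq\mathit{Sec}_{n+1}(B)]$ and the decidability, uniform in $n$, of each $\mathit{Sec}_n(D_\alpha)$. So I assume $\mathbf{LLPO}$, let $\alpha$ be given with $\forall n[0\notin\mathit{Sec}_n(D_\alpha)]$, and set out to build a branch through the tree of binary sequence numbers not secured by $D_\alpha$. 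For $s$ in $\mathit{Bin}$ and $i<2$ I would put $Q_i(s):=\exists n\bigl[s\ast\langle i\rangle\in\mathit{Sec}_n(D_\alpha)\;\wedge\;s\notin\mathit{Sec}_{n+1}(D_\alpha)\bigr]$, a $\mathbf{\Sigma^0_1}$-statement uniformly in $(s,i)$, and fix $\varepsilon$ with $E_{\varepsilon^s}=\{i<2\mid Q_i(s)\}$ for $s$ in $\mathit{Bin}$ and $E_{\varepsilon^s}=\emptyset$ otherwise. The first point is that $\neg\bigl(Q_0(s)\wedge Q_1(s)\bigr)$ for \emph{every} $s$ in $\mathit{Bin}$: from $s\ast\langle 0\rangle\in\mathit{Sec}_{n_0}(D_\alpha)$, $s\notin\mathit{Sec}_{n_0+1}(D_\alpha)$, $s\ast\langle 1\rangle\in\mathit{Sec}_{n_1}(D_\alpha)$ and $s\notin\mathit{Sec}_{n_1+1}(D_\alpha)$ one deduces, deciding whether $n_0\le n_1$ or $n_1\le n_0$ and using monotonicity, that both children of $s$ lie in $\mathit{Sec}_{\max(n_0,n_1)}(D_\alpha)$, hence $s\in\mathit{Sec}_{\max(n_0,n_1)+1}(D_\alpha)$, contradicting one of the conjuncts $s\notin\mathit{Sec}_{n_j+1}(D_\alpha)$. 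The extra conjunct in $Q_i$ is inserted precisely so that this holds at every node; note also that when $s$ is not secured, $Q_i(s)$ just says "$s\ast\langle i\rangle$ is secured", so $\neg Q_i(s)$ then says "$s\ast\langle i\rangle$ is not secured".

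The key step is to obtain $\forall s\exists i<2[\neg Q_i(s)]$, i.e.\ $\forall s\exists i<2[i\notin E_{\varepsilon^s}]$. Fix $s$ in $\mathit{Bin}$ and define $\beta_s$ in $\mathcal{C}$ by: $\beta_s(2n)=1$ iff $s\ast\langle 0\rangle\in\mathit{Sec}_n(D_\alpha)$ and $s\notin\mathit{Sec}_{n+1}(D_\alpha)$, and $\beta_s(2n+1)=1$ iff $s\ast\langle 1\rangle\in\mathit{Sec}_n(D_\alpha)$ and $s\notin\mathit{Sec}_{n+1}(D_\alpha)$, so that $Q_0(s)\leftrightarrow\exists n[\beta_s(2n)=1]$ and $Q_1(s)\leftrightarrow\exists n[\beta_s(2n+1)=1]$. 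Now apply $\mathbf{LLPO}$ to $\beta_s$. If the first disjunct $\forall p[\overline{\beta_s}(2p)=\underline{\overline 0}(2p)\rightarrow\beta_s(2p)=0]$ holds, I claim $\neg Q_0(s)$: assuming $Q_0(s)$, pick $p$ least with $\beta_s(2p)=1$; the $p$-th instance of the disjunct forces $\overline{\beta_s}(2p)\neq\underline{\overline 0}(2p)$, and since by minimality of $p$ all even entries of $\beta_s$ below $2p$ are $0$, some odd entry $\beta_s(2n+1)$ with $n<p$ equals $1$, so $Q_1(s)$ holds, contradicting $\neg(Q_0(s)\wedge Q_1(s))$. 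Symmetrically, the second disjunct of $\mathbf{LLPO}$ gives $\neg Q_1(s)$. Hence $\exists i<2[\neg Q_i(s)]$; for $s\notin\mathit{Bin}$ this holds trivially.

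Then I would apply $\mathbf{\Pi^0_1}$-$\mathbf{AC}_{0,0}^{<2}$, in its first form, to $\varepsilon$, obtaining $c$ in $\mathcal{C}$ with $\forall s[c(s)\notin E_{\varepsilon^s}]$, i.e.\ $\forall s\in\mathit{Bin}[\neg Q_{c(s)}(s)]$. Define $\delta$ in $\mathcal{C}$ by the recursion $\delta(n)=c(\overline\delta n)$; this is legitimate, and $\delta\in\mathcal{C}$ since $c\in\mathcal{C}$. By induction on $n$, each $\overline\delta n$ is not secured: $\overline\delta 0$ is not secured by assumption, and if $\overline\delta n$ is not secured, then, as remarked, $\neg Q_{c(\overline\delta n)}(\overline\delta n)$ says that $\overline\delta n\ast\langle c(\overline\delta n)\rangle=\overline\delta(n+1)$ is not secured. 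In particular $\overline\delta n\notin\mathit{Sec}_0(D_\alpha)$, hence $\overline\delta n\notin D_\alpha$, for every $n$, which is exactly the conclusion required by the secured form of $\mathbf{WKL}$.

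The main obstacle is the second paragraph: one has to design $\beta_s$ so that the two non-constructive alternatives supplied by $\mathbf{LLPO}$ really do translate into the two $\mathbf{\Pi^0_1}$-alternatives "$s\ast\langle 0\rangle$ is not secured" and "$s\ast\langle 1\rangle$ is not secured", using only the negative hypothesis $\neg(Q_0(s)\wedge Q_1(s))$ rather than a genuine decidable dichotomy. Everything else — the arithmetic of $\mathit{Sec}$, the single appeal to $\mathbf{\Pi^0_1}$-$\mathbf{AC}_{0,0}^{<2}$, and the final induction — is routine.
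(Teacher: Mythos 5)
Your proof is correct and follows essentially the same route as the paper's: per node $s$ you apply $\mathbf{LLPO}$ to an interleaved $0$-$1$ sequence recording at which levels the two successors of $s$ become secured, then make a single appeal to $\mathbf{\Pi^0_1}$-$\mathbf{AC_{0,0}^{<2}}$ to select, for every $s$ in $\mathit{Bin}$, a successor direction, and finally follow the selected directions to build the branch avoiding $D_\alpha$. The only differences are cosmetic: the paper's chosen $\mathbf{\Pi^0_1}$-property is the sibling implication $\forall n[s\ast\langle i\rangle\in\mathit{Sec}_n(D_\alpha)\rightarrow s\ast\langle 1-i\rangle\in\mathit{Sec}_n(D_\alpha)]$, verified at the end by a backwards-induction reductio, whereas your extra conjunct $s\notin\mathit{Sec}_{n+1}(D_\alpha)$ lets you maintain the invariant that $\overline\delta n$ is unsecured by a direct forward induction.
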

 \begin{proof} (i)  Assume, in $\mathsf{BIM}$, $\mathbf{\Pi}^0_1$-$\mathbf{AC}_{\omega,2}$ and $\mathbf{LLPO}$. It suffices to prove $\mathbf{\Sigma}^0_1$-$\mathbf{Sep}$, as, according to Theorem \ref{T:sep=wkl},  $\mathsf{BIM}\vdash \mathbf{\Sigma}^0_1$-$\mathbf{Sep}\leftrightarrow \mathbf{WKL}$.
 
 Let $\alpha$ be given such that  $\forall n\neg\forall m<2[(n,m)\in E_\alpha]$. Let $n$ be given.  Define $\beta$ such that $\forall q\forall i<2[\beta(2q+i) \neq 0\leftrightarrow q=\mu p[\alpha(p)=(n,i)+1]]$.  Apply $\mathbf{LLPO}$ and find $i<2$ such that  $\forall q[2q+i\neq\mu m[\beta(m)\neq 0]]$. Assume $(n,i)\in E_\alpha$. Then $(n, 1-i)\notin E_\alpha$ and $\neg \exists p[\alpha(p)=(n, 1-i)+1]$ and $\forall q[\beta(2q+1-i)=0]$. Find $q:=\mu p[\alpha(p)= (n,i)+1]$. Note $\beta(2q+i)\neq 0$ and $\forall m<2q+i[\beta(m)=0]$, so $2q+i=\mu m[\beta(m)\neq 0]$. Contradiction.  Conclude that $(n,i)\notin E_\alpha$. 
  Conclude that $\forall n\exists i<2[(n,i)\notin E_\alpha]$. 
 Apply $\mathbf{\Pi}^0_1$-$\mathbf{AC}_{\omega,2}$ and find $\gamma$ in $2^\omega$ such that $\forall n[\bigl(n,\gamma(n)\bigr)\notin E_\alpha]$.
  Conclude that   $\forall\alpha[\forall n[\neg \forall m<2[(n,m)\in E_\alpha]\rightarrow \exists\gamma\in2^\omega\forall n[\bigl(n,\gamma(n)\bigr))\notin E_\alpha]]$, i.e. $\mathbf{\Sigma}^0_1$-$\mathbf{Sep}$. 
   
   \smallskip (ii)
   Note that $\neg(P\;\wedge\;Q)\leftrightarrow \neg\neg(\neg P\;\vee\;\neg Q)$ is a valid scheme of intuitionistic logic.  Conclude that $\mathsf{BIM}\vdash \mathbf{\Sigma}^0_1$-$\mathbf{Sep}\leftrightarrow\bigl(\forall \alpha[\forall n\neg\neg \exists m < 2[(n,m) \notin E_{\alpha} ] \rightarrow \exists \gamma \in 2^\omega\forall n[ \bigl(n,\gamma(n)\bigr) \notin E_{\alpha}]]\bigr).$
Conclude that  $\mathsf{BIM}\vdash\mathbf{\Sigma}^0_1$-$\mathbf{Sep}\rightarrow \mathbf{\Pi}^0_1$-$\mathbf{AC}_{\omega,2}$, and, using Theorem \ref{T:sep=wkl}, that $\mathsf{BIM}\vdash\mathbf{WKL}\rightarrow \mathbf{\Pi}^0_1$-$\mathbf{AC}_{\omega,2}$. The conclusion $\mathsf{BIM}\vdash \mathbf{WKL}\rightarrow \mathbf{LLPO}$ has been drawn in Theorem \ref{T:wklllpo}. \end{proof} From a constructive point of view, $\mathbf{\Sigma}^0_1$-$\mathbf{Sep}$, or equivalently, $\mathbf{WKL}$, is an axiom of countable choice that is formulated too strongly.

   \subsection{$\mathbf{AC}_{\omega,2}!$ and $\mathbf{\Pi}^0_1$-$\mathbf{AC}_{\omega,2}$ are unprovable in $\mathsf{BIM}$}\label{SSS:countbincunpr}$\;$\\
 Note that the theory $\mathsf{BIM} +\mathbf{CT}$ may be translated  into intuitionistic arithmetic $\mathsf{HA}$, by interpreting functions from $\omega$ to $\omega$ as indices of total computable functions.
 The {\it negative translation} due to G\"odel and Gentzen, see \cite[vol. I, Ch. 3, Subsection 3.4]{Troelstra-van Dalen},  shows that first order classical (Peano) arithmetic $\mathsf{PA}$, the theory that results from $\mathsf{HA}$  by adding the axiom scheme $X\;\vee\neg X$, is consistent. It follows that also the theory $\mathsf{BIM}+\mathbf{CT}$  remains consistent upon adding the axiom scheme $X \;\vee\;\neg X$. 
 
 \subsubsection{} Note that the theory $\mathsf{BIM}+\mathbf{CT}+\; X\vee\neg X\;+\mathbf{AC}_{\omega,2}!$ is inconsistent. The argument is as follows. 
 
 Let $\tau, \psi$ be as in $\mathbf{CT}$.
 Define $H:=\{n\mid \exists z[\tau(n,n,z)\neq 0]\}$. Using classical logic, conclude: $\forall n\exists! i<2[i=0\leftrightarrow n\in H]$. Using $\mathbf{AC}_{\omega, 2}!$, find $\alpha$ such that $\forall n[\alpha(n)\neq 0\leftrightarrow n \in H]$. Define $\beta$ such that, for each $n$, if $\alpha(n)\neq 0$, then $\beta(n) = \psi(\mu z[\tau(n,n,z)\neq 0])+1$, and, if $\alpha(n)=0$, then $\beta(n)=0$.  Using $\mathbf{CT}$, find $n_0$ such that $\forall n[\beta (n)=\psi(\mu z[\tau(n_0, n,z)\neq 0])]$. Note that $\alpha(n_0)\neq 0$ and: $\beta(n_0)=\beta(n_0)+1$. Contradiction. 
 
  Conclude that,  if $\mathsf{HA}$ is consistent, then $\mathbf{AC}_{\omega, 2}!$ is not derivable in $\mathsf{BIM}$.
 
 \subsubsection{}  Theorem \ref{T:acllpowkl} implies that $\mathsf{BIM} + \neg!\mathbf{FT} + \mathbf{LLPO} + \mathbf{\Pi}^0_1$-$\mathbf{AC}_{\omega,2}$ is not consistent, as $\neg!\mathbf{FT}$ contradicts $\mathbf{WKL}$. Conclude that $\mathsf{BIM} + \neg!\mathbf{FT} + \mathbf{LLPO}\vdash \neg \mathbf{\Pi}^0_1$-$\mathbf{AC}_{\omega,2}$.
   
 On the other hand, $\mathsf{BIM} + \neg!\mathbf{FT} + \mathbf{LLPO}$ is consistent, as it is a subsystem of $\mathsf{BIM} + \mathbf{CT}+ \;X\vee\neg X$. It follows that $\mathbf{\Pi}^0_1$-$\mathbf{AC}_{\omega,2}$ is not derivable in $\mathsf{BIM}+\neg!\mathbf{FT}+\mathbf{LLPO}$ and, a fortiori,  not derivable in $\mathsf{BIM}$. The stronger axiom  $\mathbf{\Pi}^0_1$-$\mathbf{AC}_{\omega,\omega}$  and  the even stronger axiom
 $\mathbf{\Pi}^0_1$-$\mathbf{AC}_{\omega,\omega^\omega}$, to be introduced in   Section 6,   also are not derivable in $\mathsf{BIM}$. 
 
 From Theorem \ref{T:acllpowkl}, we also conclude that $\mathsf{BIM} + \neg!\mathbf{FT}+ \mathbf{\Pi}^0_1$-$\mathbf{AC}_{\omega,2}\vdash \neg \mathbf{LLPO}$.
 
   In the context of $\mathsf{HA}$, 
\textit{Church's Thesis}   is  sometimes introduced as an  axiom scheme, $\mathsf{CT}_0$, see \cite[vol. I, Ch. 4, Sect. 3]{Troelstra-van Dalen}:
\[\forall n \exists m[A(n,m)] \rightarrow \exists e \forall n \exists z[T(\langle e,n,z\rangle) \;\wedge\; \forall i<z[\neg T(\langle e, n, i \rangle)] \;\wedge \; A\bigl(n,(U(z)\bigr)]. \footnote{$T$ is Kleene's $T$-predicate and $U$ his result-extracting function.}\]  It is not difficult to see that   $\mathsf{BIM}+ \mathbf{CT}+\mathbf{AC}_{\omega,\omega}$ and   also $\mathsf{BIM}+ \mathbf{CT}+\mathbf{AC}_{\omega,\omega^\omega}$ translate into $\mathsf{HA}+\mathsf{CT}_0$. 
There is no straightforward model for $\mathsf{HA} +\mathsf{CT_0}$ but, 
using 
\textit{realizability}, one may show that, if $\mathsf{HA}$ is consistent, then so is $\mathsf{HA}+\mathsf{CT_0}$, see \cite[vol. I, Ch. 4, Sect. 4]{Troelstra-van Dalen}. 
 It follows that, if $\mathsf{HA}$ is consistent, then $\mathsf{BIM}+\neg!\mathbf{FT}+\mathbf{\Pi}^0_1$-$\mathbf{AC}_{\omega,2}$ is consistent.
 
The following axiom scheme may be compared to the axiom $\mathrm{BC}_{0,0}$ of {\it bounded countable choice} occurring in \cite[Section 3.2]{moschovakisvafeiadou12}. 
\subsection{} \textit{  Axiom Scheme of Countable Finite Choice}, $\mathbf{AC_{\omega,<\omega}}$:
$$\forall \beta[\forall n\exists m \le \beta(n)[ R(n,m)] \rightarrow \exists \gamma \forall n[\gamma(n) \le \beta(n) \;\wedge\; R\bigl(n,\gamma(n)\bigr)]].$$
  \subsection{} $\mathbf{\Pi}^0_1$-\textit{Axiom of Countable Finite 
Choice}, $\mathbf{\Pi}^0_1$-$\mathbf{AC}_{\omega,<\omega}$:
\[ \forall \alpha  [\forall n\exists m \le \alpha^{\upharpoonright 0}(n)[(n,m) \notin E_{\alpha^{\upharpoonright 1}}] \rightarrow 
   \exists \gamma \forall n[\gamma(n)\le \alpha^{\upharpoonright 0}(n)\;\wedge\;\bigl(n,\gamma(n)\bigr) \notin E_{\alpha^{\upharpoonright 1}}]].\]
   
We conjecture that  $\mathbf{\Pi}^0_1$-$\mathbf{AC}_{\omega,<\omega}$ is not provable in $\mathsf{BIM}+ \mathbf{\Pi}^0_1$-$\mathbf{AC}_{\omega,2}$, but we have no proof of this conjecture. There might be many  statements intermediate in strength like $ \mathbf{\Pi}^0_1$-$\mathbf{AC}_{\omega,3}$, the  $\mathbf{\Pi}^0_1$-Axiom of Countable Ternary Choice.

\section{Contrapositions of some special cases of $\mathbf{AC}_{\omega,\omega}$}\label{S:ccc}

The following statement is a contraposition of $\mathbf{AC}_{\omega,\omega}$:

\subsection{}\textit{First Axiom Scheme of Reverse  Countable Choice}, 
$\overleftarrow{\mathbf{AC}_{\omega,\omega}}$: 
 $$\forall\gamma\exists n[R\bigl(n,\gamma(n)\bigr)] \rightarrow \exists n \forall m[R(n,m)].$$

\smallskip
A special case is: \subsection{} \textit{ Minimal Axiom of  Reverse Countable Choice},
 $\mathbf{\Delta}^0_1$-$\overleftarrow{\mathbf{AC}_{\omega,\omega}}:$
$$\forall \alpha[\forall \gamma \exists n[\bigl(n, \gamma(n)\bigr)\in D_\alpha] \rightarrow \exists n \forall m[(n,m)\in D_\alpha]].$$
The following result may be found  in \cite[Section 2]{veldman82}.\begin{theorem}\label{T:delta01reverseac00implieslpo} $\mathsf{BIM}+ \mathbf{\Delta}^0_1$-$\overleftarrow{\mathbf{AC}_{\omega,\omega}}\vdash \mathbf{LPO}$. \end{theorem}

\begin{proof}
Assume $\mathbf{\Delta}^0_1$-$\overleftarrow{\mathbf{AC}_{\omega,\omega}}$.  We will prove $\mathbf{LPO}$. 

Let $\beta$ be given. Define $\alpha$ such that $\forall n\forall m[\alpha(n, m) = 0\leftrightarrow (\overline \beta n= \underline{\overline 0}n\;\wedge\; \overline \beta m\neq \underline{\overline 0}m)]$. 
Note that, for every $\gamma$, \textit{either} $\overline \beta \bigl(\gamma(0)\bigr)= \overline{\underline 0}\bigl(\gamma(0)\bigr)$  and $\alpha\bigl(0, \gamma(0)\bigr) \neq 0$, i.e. $\bigl(0, \gamma(0)\bigr)\in D_\alpha$, \textit{or} $\overline \beta \bigl(\gamma(0)\bigr)\neq\overline{\underline 0}\bigl(\gamma(0)\bigr)$, and $\alpha\bigl(\gamma(0), \gamma(\gamma(0))\bigr)\neq 0$, i.e. $\bigl(\gamma(0), \gamma(\gamma(0))\bigr)\in D_\alpha$. 
 Conclude that $\forall \gamma \exists n[\bigl(n, \gamma(n)\bigr) \in D_\alpha]$.
   Applying $\mathbf{\Delta}^0_1$-$\overleftarrow{\mathbf{AC}_{\omega,\omega}}$,  find $n$ such that $\forall m[(n,m) \in D_\alpha]$. 
  \textit{Either} $\overline \beta n \neq \overline{\underline 0}n$ and $\exists j [\beta(j) \neq 0]$, \textit{or}
 $\overline \beta n = \overline{\underline 0}n$. In the latter case,  for each $m$, $\overline \beta m = \overline{\underline 0}m$ and
 $\forall j[\beta(j) = 0]$. We thus see $\forall\beta[\exists n[\beta(n)\neq 0]\;\vee\;\forall n[\beta(n)=0]]$, i.e. $\mathbf{LPO}$. \end{proof}

 \subsection{}\textit{Axiom Scheme of   Reverse Countable Binary Choice,}
 $\overleftarrow{\mathbf{AC}_{\omega,2}}: $
  $$\forall\gamma \in 2^\omega\exists n[R\bigl(n,\gamma(n)\bigr)] \rightarrow \exists n\forall i<2[R(n,i)].$$

In \cite[Section 4]{veldman82}, $\overleftarrow{\mathbf{AC}_{\omega,2}}$ has been  shown to be  a consequence
 of $\mathbf{FT}+\mathbf{AC}_{\omega, \omega^\omega}$.

 We introduce a restricted version: 

$\mathbf{\Delta}^0_1$-\textit{Axiom of Reverse Countable Binary Choice}, 
$\mathbf{\Delta}^0_1$-$\overleftarrow{\mathbf{AC}_{\omega,2}}$:  \[\forall \alpha[ \forall \gamma \in 2^\omega \exists n[\bigl(n,\gamma(n)\bigr) \in D_{\alpha}]
 \rightarrow \exists n\forall i<2[ (n,i) \in D_{\alpha}]].\]
 
 \begin{theorem} $\mathsf{BIM}\vdash \mathbf{\Delta}^0_1$-$\overleftarrow{\mathbf{AC}_{\omega,2}}$. \end{theorem}
\begin{proof} Let $\alpha$ be given such that $\forall \gamma \in 2^\omega \exists n[\bigl(n,\gamma(n)\bigr) \in D_{\alpha}]$.  
 Define $\gamma$ in
$2^\omega$ such that $\forall n[(n,0)\in D_\alpha\leftrightarrow\gamma(n) = 1]$.   Find $n$ such
that $\bigl(n,\gamma(n)\bigr)\in D_\alpha$. Note that $\gamma(n) = 1$ and $\forall i<2[(n,i) \in D_\alpha]$. \end{proof}

 We now introduce a less restricted version:

\subsection{}\label{SS:sigma01contraxomega2}\textit{$\mathbf{\Sigma}_1^0$-Axiom of Reverse Countable Binary Choice,
 $\mathbf{\Sigma}^0_1$-$\overleftarrow{\mathbf{AC}_{\omega,2}}$}:  \[\forall \alpha[ \forall \gamma \in 2^\omega \exists n[\bigl(n,\gamma(n)\bigr) \in E_{\alpha}]
 \rightarrow \exists n\forall i<2[ (n,i) \in E_{\alpha}]].\]

We define a formula that we want to call its \textit{strong negation}:
\subsection{}$\neg !(\mathbf{\Sigma}^0_1$-$\overleftarrow{\mathbf{AC}_{\omega,2}})$:
$$\exists \alpha[ \forall \gamma \in 2^\omega \exists n[\bigl(n,\gamma(n)\bigr) \in E_{\alpha}]
 \;\wedge\; \neg \exists n\forall i<2[(n, i) \in E_\alpha] ].$$
 
 Note that $\mathsf{BIM}$ proves \\$\forall\alpha[\neg \exists n\forall i<2[(n, i) \in E_\alpha]\leftrightarrow \forall n\forall p\forall q[\alpha(p) = (n,0)+1\rightarrow  \alpha(q)\neq (n,1)+1]]$. 
 
\begin{lemma}\label{L:3resolutions}
 $\mathsf{BIM}$ proves the following:
\begin{enumerate}[\upshape(i)]
 \item  $\mathbf{FT} \rightarrow \mathbf{\Sigma}^0_1$-$\overleftarrow{\mathbf{AC}_{\omega,2}} $ and 
 $\neg !(\mathbf{\Sigma}^0_1$-$\overleftarrow{\mathbf{AC}_{\omega,2}}) \rightarrow \neg!\mathbf{FT}$.
\item  $\mathbf{\Sigma}^0_1$-$\overleftarrow{\mathbf{AC}_{\omega,2}}\rightarrow \mathbf{FT}$ and
$ \neg!\mathbf{FT} \rightarrow\neg !(\mathbf{\Sigma}^0_1$-$\overleftarrow{\mathbf{AC}_{\omega,2}})$.
\end{enumerate}

\end{lemma}
\begin{proof} (i)  
 We prove, in $\mathsf{BIM}$, that, for each $\alpha$, there exists $\beta$ such that
$$(1)\;\forall \gamma \in 2^\omega \exists n[\bigl(n,\gamma(n)\bigr) \in E_{\alpha}]  \rightarrow Bar_{2^\omega}(D_\beta)\;\mathrm{and}$$ $$(2)\;\exists m[Bar_{2^\omega}(D_{\overline\beta m})] \rightarrow\exists n\forall i<2[(n,i)\in E_\alpha ].$$ The two promised conclusions then follow easily.

 Let  $\alpha$   be given. Define $\beta$ such that $$\forall a \in 2^{<\omega}[\beta(a) \neq 0\leftrightarrow \exists n<length(a)[\bigl(n, a(n)\bigr)\in E_{\overline \alpha length(a)}]].$$

(1) Assume $ \forall \gamma \in 2^\omega \exists n [ \bigl(n,\gamma(n)\bigr) \in E_{\alpha}]$. Let $\gamma$ in $2^\omega$ be given. Find $n,p$ such that $ \bigl(n,\gamma(n)\bigr)\in E_{\overline \alpha p}$ and $n<p$. Note that  $\beta(\overline \gamma p) \neq 0$.
We thus see that $\forall \gamma\in2^\omega\exists p[\beta(\overline\gamma p)\neq 0]$, i.e. $Bar_{2^\omega}(D_\beta)$.

(2)  Let 
 $m$ be given such that $Bar_{2^\omega}(D_{\overline \beta m})$.  Note that $\forall a \in 2^{<\omega}[length(a) <a]$ and $\forall \gamma \in 2^\omega \exists n<m [\beta(\overline \gamma n)\neq 0 ]$. It follows that $\forall a\in 2^{<\omega}[length(a)=m\rightarrow \exists n<m[\beta(\overline a n)\neq 0]]$. 
 Assume $\forall n<m\exists i<2[ 
 (n,i)\notin E_{\overline\alpha m}]$.
Define $a$ in  $2^{<\omega}$ such that $length(a)=m$ and $ \forall n<m[(n,0)\notin E_{\overline\alpha m}\leftrightarrow a(n)=0]$.  Conclude that  $\forall n<m[\bigl(n, a(n)\bigr)\notin E_{\overline \alpha m}]$ and  $\forall n \le m m[\beta(\overline a n)= 0]$. 
Contradiction. 
Conclude that $\exists n<m\forall i<2[(n,i)\in E_{\overline \alpha m}\subseteq E_\alpha]$.

\medskip (ii)\footnote{The reader should compare this proof to the proof of Theorem \ref{T:sep=wkl}(ii).}  We prove, in $\mathsf{BIM}$: for each $\alpha$, there exists $\beta$ such that  
$$(1)\;Bar_{2^\omega}(D_\alpha)\rightarrow \forall \gamma \in 2^\omega\exists n[\bigl( n, \gamma(n) \bigr) \in E_{\beta}]\;\mathrm{and}$$ $$(2)\;\exists n\forall i<2[(n,i)\in E_\beta ]\rightarrow \exists m[Bar_{2^\omega}(D_{\overline \alpha m})].$$ The two promised conclusions then follow easily.  

\smallskip 
Let $\alpha$ be given. Define $\beta$ such that, for all $n$,    for every $s$ in $2^{<\omega}$, for all $i<2$,  \textit{if}  \textit{either} (a) $Bar_{2^\omega}(D_{\overline\alpha n})$, \textit{or}  (b) $Bar_{2^\omega\cap s\ast\langle i \rangle}(D_{\overline \alpha n})$  and \textit{not} $Bar_{2^\omega\cap s\ast\langle 1- i \rangle}(D_{\overline \alpha n})$, 
  \textit{then} $\beta(n, s\ast\langle i \rangle ) = (s,i)+1$, and,  (c) \textit{if} both (a) and (b) fail, \textit{then} $\beta(n,s\ast\langle i \rangle )=0$. Furthermore,  for all $n$, for all $s$, if  $s\notin 2^{<\omega}$, then $\beta(n,s)=0$. 
 Note that $E_\beta$ is the set of all pairs $(s, i)$ such that $s\in 2^{<\omega}$ and $i<2$ and either $\exists n[Bar_{2^\omega}(D_{\overline \alpha n})]$ or $\exists n[Bar_{2^\omega\cap s\ast\langle i\rangle}(D_{\overline\alpha n})\;\wedge \neg  Bar_{2^\omega\cap s\ast\langle 1- i\rangle}(D_{\overline\alpha n})]$.
 Note that, for all $s$ in $Bin$, if $\forall i<2[(s,i) \in E_\beta]$, then $\exists n[Bar_2^\omega(D_{\overline\alpha n})]$.

(1) Assume $Bar_{2^\omega}(D_\alpha)$. We will prove that $\forall \gamma\in 2^\omega \exists n[\bigl(n,\gamma(n)\bigr)\in E_\beta]$.
 Let $\gamma$ in $2^\omega$ be given. Define $\delta$ in  $2^\omega$ such that $\forall n [\delta(n) = \gamma(\overline{\delta}n)]$. Find $n$ such that 
 $\alpha(\overline{\delta}n) \neq 0$. Define $q:=\overline \delta n +1$ and note: $\overline \delta n\in D_{\overline \alpha q}$. 
 We claim that 
 for all $j \le n$, either $\exists i \le n[\bigl(\overline \delta i, \gamma(\overline \delta i)\bigr) \in E_{\beta}]$, or $Bar_{2^\omega\cap\overline\delta j}(D_{\overline \alpha q})$.
We prove this claim by backwards induction, starting from $j = n$. Note that  
 $Bar_{2^\omega\cap \overline{\delta}n}(D_{\overline \alpha q})$. Now assume  $j < n$ and $Bar_{2^\omega\cap \overline\delta(j+1)}(D_{\overline \alpha q})$, i.e. $Bar_{2^\omega\cap \overline{\delta}(j)\ast \langle \delta(j) \rangle}(D_{\overline \alpha q})$. Find out if also $Bar_{2^\omega\cap \overline{\delta}(j)\ast \langle 1-\delta(j) \rangle}(D_{\overline \alpha q})$.  
 \textit{If so}, then $Bar_{2^\omega\cap \overline{\delta}(j)}(D_{\overline \alpha q})$, and, \textit{if not}, 
  then $\beta\bigl(q,\overline{\delta}(j+1)\bigr)= \bigl(\overline \delta j, \delta(j)\bigr)  + 1$, and $\bigl(\overline \delta j, \delta(j)\bigr)=\bigl(\overline\delta j, \gamma(\overline \delta (j)\bigr)\in E_\beta$. 
  We may conclude that either $\exists j\le n[(\overline \delta j, \delta(j)\bigr)\in E_\beta]$ or $Bar_{2^\omega}(D_{\overline \alpha q})$.
Note that, if  $Bar_{2^\omega}(D_{\overline \alpha q})$, then $\forall s \in 2^{<\omega}\forall i<2[\beta(q,s\ast\langle i \rangle) = (s, i) +1]$ and $\forall s\in 2^{<\omega}[\bigl(s, \gamma(s)\bigr) \in E_\beta]$. 
We thus see that $\forall\gamma \in 2^\omega \exists s[ \bigl(s,\gamma(s)\bigr) \in E_{\beta}]$.

(2)  Assume that $\exists n\forall i<2[(n,i)\in E_\beta]$.   Conclude, using the observation we made just after the definition of $\beta$, that   $\exists n[Bar_{2^\omega}(D_{\overline \alpha n})]$. 
\end{proof}
 \begin{theorem}\label{T:contrac<2}

 $\mathsf{BIM}$ proves: $\mathbf{FT} \leftrightarrow \mathbf{\Sigma}^0_1$-$\overleftarrow{\mathbf{AC}_{\omega,2}}$
and:  $ \neg!\mathbf{FT} \leftrightarrow \neg !(\mathbf{\Sigma}^0_1$-$\overleftarrow{\mathbf{AC}_{\omega,2}})$.

\end{theorem}

\begin{proof} Use Lemma \ref{L:3resolutions}.\end{proof}
 
\subsection{}\label{SS:cfc}\textit{Axiom Scheme of Reverse Countable Finite Choice}, $\overleftarrow{\mathbf{AC}_{\omega,<\omega}}$:
  $$\forall \beta[ \forall  \gamma \exists n[\gamma(n)\le \beta(n) \rightarrow  R\bigl(n,\gamma(n)\bigr)]\rightarrow \exists n \forall m \le \beta(n)[R(n,m) ]].$$

\smallskip
$\overleftarrow{\mathbf{AC}_{\omega,<\omega}}$ may be concluded from\footnote{See Subsection \ref{SS:facc}.} $\mathbf{FT}+\mathbf{AC}_{\omega^\omega,\omega}$,  by a slight extension of the argument given in \cite[Section 4]{veldman82}.

 \smallskip The following is a restricted version:

$\mathbf{\Sigma}^0_1$-$\overleftarrow{\mathbf{AC}_{\omega,<\omega}}$:
\[\forall \alpha [\forall \gamma \exists n[\gamma(n)\le\alpha^{\upharpoonright 0}(n)\rightarrow\bigl(n, \gamma(n)\bigr) \in E_{\alpha^{\upharpoonright 1}}]\rightarrow \exists n\forall i \le \alpha^{\upharpoonright 0}(n)[ (n,i )\in E_{\alpha^{\upharpoonright 1}}]].\]
We introduce a strong negation of this restricted version:

$\neg!(\mathbf{\Sigma}^0_1$-$\overleftarrow{\mathbf{AC_{\omega,<\omega}}})$:
\[\exists \alpha [\forall \gamma \exists n[\gamma(n)\le\alpha^{\upharpoonright 0}(n)\rightarrow\bigl(n,\gamma(n)\bigr)  \in E_{\alpha^{\upharpoonright 1}}]\;\wedge\; \neg\exists n\forall i \le \alpha^{\upharpoonright 0}(n)[(n, i) \in E_{\alpha^{\upharpoonright 1}}]].\]

Note that $\mathsf{BIM}$ proves that $\forall\alpha[\neg \exists n\forall i\le \alpha^{\upharpoonright 0}(n)[(n, i) \in E_\alpha]\leftrightarrow$

$ \forall n\forall t\in\omega^{\alpha^{\upharpoonright 0}(n)+1}\exists i\le \alpha^{\upharpoonright 0}(n)[\alpha\bigl(t(i)\bigr) \neq  (n,i)+1]]$.

 \begin{lemma}\label{L:b3}  $\mathsf{BIM}$ proves:
 \begin{enumerate}[\upshape (i)] \item $ \mathbf{\Sigma}^0_1$-$\overleftarrow{\mathbf{AC_{\omega,<\omega}}} \rightarrow \mathbf{\Sigma}^0_1$-$\overleftarrow{\mathbf{AC}_{\omega,2}}$ and $ \neg !(\mathbf{\Sigma^0_1}$-$\overleftarrow{\mathbf{AC}_{\omega,2}}) \rightarrow \neg !(\mathbf{\Sigma}^0_1$-$\overleftarrow{\mathbf{AC}_{\omega,<\omega}})$
\item $ \mathbf{FT}\rightarrow \mathbf{\Sigma}^0_1$-$\overleftarrow{\mathbf{AC_{\omega,<\omega}}}$ and 
 $  \neg !(\mathbf{\Sigma}^0_1$-$\overleftarrow{\mathbf{AC_{\omega,<\omega}}}) \rightarrow\neg!\mathbf{FT}$.
\end{enumerate}
\end{lemma}

\begin{proof}(i) 
We prove, in $\mathsf{BIM}$: for each $\alpha$, there exists $\beta$ such that $$(1)\; \forall \gamma \in 2^\omega\exists n[\bigl(n,\gamma(n)\bigr)\in E_{\alpha}]\rightarrow \forall \gamma \exists n[\gamma(n)\le\beta^{\upharpoonright 0}(n)\rightarrow\bigl(n, \gamma(n)\bigr) \in E_{\beta^{\upharpoonright1}}]\;\mathrm{and}$$
$$ (2)\; \exists n \forall i \le \beta^{\upharpoonright 0}(n)[(n,i)\in E_{\beta^{\upharpoonright 1}}]\rightarrow \exists n \forall i<2[(n,i)\in E_{\alpha}].$$ The two promised conclusions then follow easily.

Let $\alpha$ be given.  Define $\beta$ such that $\beta^
{\upharpoonright 0}=\underline 1$ and $\beta^{\upharpoonright 1} =\alpha$. 

(1) Assume $\forall \gamma \in 2^\omega\exists n[\bigl(n,\gamma(n)\bigr)\in E_{\alpha}]$. Let $\gamma$ be given. Define $\gamma^\ast$ such that $\forall n[\gamma^\ast(n) =\min\bigl(1, \gamma(n)\bigr)]$. Note: $\gamma^\ast \in 2^\omega$ and find $n$ such that $\bigl(n,\gamma^\ast(n) \bigr)\in  E_{\alpha}$. If $\gamma^\ast(n) \neq\gamma(n)$, then $\gamma(n)>1=\beta^{\upharpoonright 0}(n)$, and if $\gamma^\ast(n) =\gamma(n)$, then $\bigl(n,\gamma(n)\bigr)\in E_{\alpha}$. 
Conclude that $\forall \gamma \exists n[\gamma(n)\le\beta^{\upharpoonright 0}(n)\rightarrow\bigl(n, \gamma(n)\bigr) \in E_{\beta^{\upharpoonright 1}}]$.

 (2)  Let $n$ be given such that  $\forall i \le \beta^{\upharpoonright 0}(n)[(n,i)\in E_{\beta^{\upharpoonright 1}}]$. Conclude that $\forall i<2[(n,i)\in E_{\alpha}]$.

\smallskip
(ii)\footnote{The argument extends the argument for Lemma \ref{L:3resolutions}(i).} 
 We prove, in $\mathsf{BIM}$: for each $\alpha$, there exists $\beta$ such that
  $$(1)\; \forall \gamma \exists n[\gamma(n)\le \alpha^{\upharpoonright 0}(n)\rightarrow \bigl(n,\gamma(n)\bigr) \in     E_{\alpha^{\upharpoonright 1}}] \rightarrow Bar_{2^\omega}(D_\beta)\;\mathrm{and}$$ $$(2)\; \exists m[ Bar_{2^\omega}D_{\overline \beta m})]  \rightarrow \exists n\forall m \le \alpha^{\upharpoonright 0}(n)[(n, m) \in   E_{\alpha^{\upharpoonright 1}}].$$
 The two promised conclusions then follow easily.
 
\smallskip
 Define $Cod_2:\omega\rightarrow2^{<\omega}$ such that $Cod_2(\langle \;\rangle)=\langle \; \rangle$ and $\forall s\forall n[Cod_2(s\ast\langle n \rangle) = Cod_2(s)\ast \underline{\overline 0}n \ast\langle 1 \rangle]$.
  Note that $\forall t\in 2^\omega\exists s\exists i[ t= Cod_2(s) \ast \underline{\overline 0}i]$. 
   Let $\alpha$ be given.  Define $\beta$  such that, for all $s, i$,
$\beta\bigl(Cod_2(s)\ast\underline{\overline 0}i\bigr) \neq 0$ if and only if  $\exists n < \mathit{length}(s)[s(n) >\alpha^{\upharpoonright 0}(n)\;\vee\; \bigl(n,s(n)\bigr)\in E_{\overline{\alpha^{\upharpoonright 1}}length(s)}\;\vee\;\alpha^{\upharpoonright 0}\bigl(\mathit{length}(s)\bigr)< i]$. 

 (1) Assume that $\forall \gamma \exists n[\gamma(n) \le\alpha^{\upharpoonright 0}(n) \rightarrow  \bigl(n,\gamma(n)\bigr) \in E_{\alpha^{\upharpoonright 1}}]$.
Assume that $\delta\in2^\omega$.  Define $\gamma$,   by induction, such that, for each $n$, \textit{if} $\exists i\le\alpha^{\upharpoonright 0}(n)[Cod_2(\overline \gamma n\ast\langle i\rangle)\sqsubset \delta]$, then $\gamma(n)=\mu i[Cod_2(\overline\gamma n\ast\langle i\rangle)\sqsubset \delta]$, and,  \textit{if not}, then $\gamma(n)=0$.  
Note that $\forall n[\gamma(n)\le\alpha^{\upharpoonright 0}(n)]$. 
Find $n,p$ such that  $\bigl(n,\gamma(n)\bigr) \in E_{\overline{\alpha^{\upharpoonright 1}}p}$.  Define $q:=\max (n,p)$.
 Note that $\beta\bigl(Cod_2(\overline \gamma(q+1))\bigr)\neq 0$ and distinguish two cases. 
 \textit{Case (a)}.  $c:=Cod_2(\overline \gamma(q+1))\sqsubset\delta$ and $\beta(c)\neq 0$. 
 \textit{Case (b)}. $\exists m\le q\forall i\le \alpha^{\upharpoonright 0}(m)[Cod_2(\overline\gamma m\ast\langle i \rangle)\perp\delta]$. 
 Find $m_0:=\mu m\le q\forall i\le \alpha^{\upharpoonright 0}(m)[Cod_2(\overline\gamma m\ast\langle i \rangle)\perp\delta]$ and note that $d:=Cod_2(\overline \gamma m_0)\ast \underline{\overline 0}\bigl(\alpha^{\upharpoonright 0}(m_0)+1\bigr)\sqsubset \delta$ and $\beta(d) \neq 0$.
 In both  cases $\exists p[\beta(\overline \delta p) \neq 0]$.
 We thus see that $\forall \delta \in 2^\omega\exists p[\beta(\overline \delta p)\neq 0]$, i.e. $Bar_{2^\omega}(D_\beta)$.

(2) Let $m$ be given such that  $Bar_{2^\omega}(D_{\overline \beta m})$. Suppose that  

$\forall i<m \exists j \le \alpha^{\upharpoonright 0}(i)[(i,j)\notin E_{\overline{ \alpha^1}m}]]$. Find $s$ such that
$\mathit{length}(s) = m$ and $\forall i<m[s(i)\le \alpha^{\upharpoonright 0}(i)\;\wedge\;\bigl(i,s(i)\bigr)\notin E_{\overline{ \alpha^{\upharpoonright 1}}m}]$. Note that $Cod_2(s)>m$ and $\forall t\sqsubseteq Cod_2(s)[\beta(t)=0 ]$, so $\neg \mathit{Bar}_{2^\omega}(D_{\overline \beta m})$. 
Contradiction.
Conclude that   

$\exists i <m\forall j\le\alpha^{\upharpoonright 0}(i)[(i, j) \in E_{\overline{\alpha^{\upharpoonright 1}}m}\subseteq E_{\alpha^{\upharpoonright 1}}]$.
\end{proof}

\begin{theorem} $\mathsf{BIM}$ proves: 
$ \mathbf{FT} \leftrightarrow \mathbf{\Sigma}^0_1$-$\overleftarrow{\mathbf{AC}_{\omega,<\omega}}$  and 
$ \neg!\mathbf{FT} \leftrightarrow \neg !(\mathbf{\Sigma}^0_1$-$\overleftarrow{\mathbf{AC}_{\omega,<\omega}})$.

\end{theorem}
\begin{proof} These statements follow from Lemma \ref{L:b3} and Theorem \ref{T:contrac<2}. \end{proof} 

\subsection{No Double Negation Shift}\label{SSS:ndns} \hfill

Assume $\neg!\mathbf{FT}$. Using $\neg !(\mathbf{\Sigma}^0_1$-$\overleftarrow{\mathbf{AC}_{\omega, 2}})$, find $\alpha$  such that $\forall \gamma \in 2^\omega \exists n[ \bigl(n,\gamma(n)\bigr) \in E_{\alpha}]$ and $\neg \exists n\forall m<2[(n,m)\in E_\alpha ]$.
 Then, for each $n$, $\neg \forall m <2[(n,m)\in E_\alpha]$ and $\neg \forall m<2[\neg\neg\bigl((n,m)\in E_\alpha\bigr)]$ and $\neg\neg\exists m<2[(n,m)\notin E_\alpha]$.
 Conclude  that $\forall n\neg \neg\exists m<2[(n,m) \notin E_{\alpha}]$.
 Note that $\neg \exists \gamma \in 2^\omega\forall n[ \bigl(n,\gamma(n)\bigr) \notin E_{\alpha}]$.
  Using  $\mathbf{\Pi}^0_1$-$\mathbf{AC}_{\omega,2}$,  conclude that $\neg \forall n\exists m<2[(n,m)\notin E_\alpha]$. We thus see that if we assume both $\neg!\mathbf{FT}$ and  $\mathbf{\Pi}^0_1$-$\mathbf{AC}_{\omega,2}$  we can find $\mathbf{\Pi}^0_1$-subsets $P=\{n\mid (n,0)\notin E_\alpha\}$ and $Q=\{n\mid (n,1)\notin E_\alpha\}$ of $\omega$ such that 
$\forall n[\neg \neg\bigl(P(n) \vee Q(n)\bigr)]$ and $\neg \forall n[P(n) \vee Q(n)]$. 
S. Kuroda's scheme of Double Negation Shift $\forall n[\neg\neg T(n)]\rightarrow \neg\neg\forall n[T(n)]$ (see \cite[page 45]{kuroda} and \cite[page 105]{heyting}) thus is refuted.

In \cite[vol. I, Chapter 4, Proposition 3.4, Corollary 1]{Troelstra-van Dalen},  the same conclusion is obtained in $\mathsf{HA}$ from $\mathsf{CT}_0$.

\section{$\mathbf{AC}_{\omega,\omega^\omega}$, some special cases}\label{SS:choice2}

\subsection{} $\mathbf{\Sigma}^0_1$-\textit{Second Axiom of Countable Choice}, $\mathbf{\Sigma}^0_1$-$\mathbf{AC}_{\omega,\omega^\omega}$:
\[\forall \alpha[\forall n \exists \gamma[\gamma \in \mathcal{G}_{\alpha^{\upharpoonright n}}] \rightarrow \exists \gamma \forall n[\gamma^{\upharpoonright n} \in
\mathcal{G}_{\alpha^{\upharpoonright n}}]].\]

\begin{theorem} $\mathsf{BIM}\vdash \mathbf{\Sigma}^0_1$-$\mathbf{AC}_{\omega,\omega^\omega}$. \end{theorem} \begin{proof}  Assume $\forall n \exists \gamma[ \gamma \in \mathcal{G}_{\alpha^{\upharpoonright n}}]$. Then $\forall n \exists s[\alpha^{\upharpoonright n}(  s)\neq 0]$. Find $\delta$ such that $\forall n[\delta(n)=\mu s[\alpha^{\upharpoonright n}( s) \neq 0]$]. Find $\gamma$ such that $\forall n[\gamma^{\upharpoonright n} =  \delta(n) \ast \underline 0]$. Note that
$\forall n[ \gamma^{\upharpoonright n} \in \mathcal{G}_{\alpha^{\upharpoonright n}}]$. \end{proof}

\subsection{} $\mathbf{\Pi}^0_1$-\textit{Second Axiom of Countable Choice}, $\mathbf{\Pi}^0_1$-$\mathbf{AC}_{\omega,\omega^\omega}$:
\[\forall \alpha[\forall n \exists \gamma[ \gamma \notin \mathcal{G}_{\alpha^{\upharpoonright n}}] \rightarrow \exists \gamma \forall n [\gamma^{\upharpoonright n} \notin \mathcal{G}_{\alpha^{\upharpoonright n}}]].\]

 \begin{theorem}\label{T:pi01acNn} $\mathsf{BIM}\vdash \mathbf{\Pi}^0_1$-$\mathbf{AC}_{\omega,\omega^\omega} \rightarrow \mathbf{\Pi}^0_1$-$\mathbf{AC}_{\omega,\omega}$. \end{theorem}\begin{proof} Let $\alpha$ be given such that $ \forall n \exists m  [(n, m) \notin E_{\alpha}]$. 
Define $\beta$ such that

$\forall n \forall a[\beta^{\upharpoonright n}(a) \neq 0 \leftrightarrow \exists m\exists b  \exists p\le a[ \alpha(p)=(n,m)+1\;\wedge\;a=\langle m\rangle\ast b]] $. 
Note that $\forall n \forall m[(n,m)\in E_\alpha\leftrightarrow \forall \gamma[\langle m\rangle\ast\gamma \in \mathcal{G}_{\beta^{\upharpoonright n}}]]$. 
Conclude that $\forall n \exists \gamma [\gamma \notin \mathcal{G}_{\beta^{\upharpoonright n}}]$. Using $\mathbf{\Pi}^0_1$-$\mathbf{AC}_{\omega,\omega^\omega}$, find $\gamma$ such that $\forall n[\gamma^{\upharpoonright n} \notin \mathcal{G}_{\beta^{\upharpoonright n}}]$. Define $\delta$  such that $\forall n[\delta(n)=\gamma^{\upharpoonright n}(0)]$ and note: $\forall n [\bigl( n, \delta(n)\bigr)\notin E_\alpha]$. \end{proof}

One may conclude that $\mathbf{\Pi}^0_1$-$\mathbf{AC}_{\omega,\omega^\omega}$  is unprovable in $\mathsf{BIM}$, see Subsection \ref{SSS:countbincunpr}.

   Not every $\mathbf{\Pi}^0_1$ subset of $\omega^\omega$ is a spread, see Lemma \ref{L:closedspread}. For spreads, which are a special kind of $\mathbf{\Pi}^0_1$ sets, countable choice is easier:
 \begin{theorem}$\mathsf{BIM}\vdash\forall \alpha[\bigl(\forall n[Spr(\alpha^{\upharpoonright n})]\;\wedge\;\forall n \exists\gamma[ \gamma \notin \mathcal{G}_{\alpha^{\upharpoonright n}}]\bigr) \rightarrow \exists \gamma \forall n [\gamma^{\upharpoonright n} \notin \mathcal{G}_{\alpha^{\upharpoonright n}}]]$.\end{theorem}

 \begin{proof} Let $\alpha$ be given such that $\forall n[ Spr(\alpha^{\upharpoonright n})]$ and $\forall n\exists \gamma[ \gamma \notin \mathcal{G}_{\alpha^{\upharpoonright n}}] $. Define  $\gamma$ such that, for each $n$, 
   for each $m$, $\gamma^{\upharpoonright n}(m)=\mu k[\alpha^{\upharpoonright n} \bigl(  (\overline{\gamma^{\upharpoonright n}}m)\ast\langle k \rangle \bigr) = 0]$. \end{proof}

 \subsection{}\textit{ Axiom Scheme of Countable Compact Choice}, $\mathbf{AC}_{\omega,2^\omega}$: $$\forall n \exists \gamma \in 2^\omega[R(n,\gamma)]\rightarrow \exists \gamma \in 2^\omega\forall n[R(n, \gamma^{\upharpoonright n})].$$

  Here   is a   restricted version of $\mathbf{AC}_{\omega,2^\omega}$:
 \subsection{} $\mathbf{\Pi}_1^0$-\textit{Axiom of Countable Compact 
  Choice}, $\mathbf{\Pi}_1^0$-$\mathbf{AC}_{\omega,2^\omega}$:
  \[\forall \alpha[ \forall n \exists \gamma \in 2^\omega [\gamma \notin \mathcal{G}_{\alpha^{\upharpoonright n}}] \rightarrow \exists \gamma \in 2^\omega \forall n[\gamma^{\upharpoonright n} \notin \mathcal{G}_{\alpha^{\upharpoonright n}}]]\]
\begin{theorem} $\mathsf{BIM}\vdash \mathbf{\Pi}^0_1$-$\mathbf{AC}_{\omega,2^\omega}\rightarrow\mathbf{\Pi}^0_1$-$\mathbf{AC}_{\omega, 2}$. \end{theorem}\begin{proof} The proof is almost the same as the proof of Theorem \ref{T:pi01acNn} and is left to the reader.
\end{proof} 
  
  We may conclude: $\mathbf{\Pi}^0_1$-$\mathbf{AC}_{\omega,2^\omega}$ is unprovable in $\mathsf{BIM}$, see Subsection \ref{SSS:countbincunpr}.  
 
 \smallskip
The treatment of real numbers in $\mathsf{BIM}$ is sketched in Subsection \ref{SSS:reals}. 
\subsection{}$\mathbf{\Pi}^0_1$-$\mathbf{AC}_{\omega,[0,1]}$:
  \[\forall \alpha[ \forall n \exists \delta \in [0,1] [\delta \notin \mathcal{H}_{\alpha^{\upharpoonright n}}] \rightarrow \exists \delta \in [0,1]^\omega \forall n[\delta^{\upharpoonright n} \notin \mathcal{H}_{\alpha^{\upharpoonright n}}]].\]
  
  \begin{theorem}\label{T:compactchoice} $\mathsf{BIM}\vdash \mathbf{\Pi}^0_1$-$\mathbf{AC}_{\omega,2^\omega}\leftrightarrow \mathbf{\Pi}_1^0$-$\mathbf{AC}_{\omega,[0,1]}$.\end{theorem}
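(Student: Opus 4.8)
The plan is to prove the two implications separately; in each direction one translates between open subsets of $\mathcal{C}$, which govern $\mathbf{\Pi^0_1}$-$\mathbf{AC_{0,1}^\mathcal{C}}$, and open subsets of $[0,1]$, which govern $\mathbf{\Pi^1_0}$-$\mathbf{AC_{0,1}^{[0,1]}}$.

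For $\mathbf{\Pi^0_1}$-$\mathbf{AC_{0,1}^\mathcal{C}}\rightarrow\mathbf{\Pi^1_0}$-$\mathbf{AC_{0,1}^{[0,1]}}$ I would use the functions $\rho$ and $\psi$ introduced in Subsection \ref{SS:reals}. Given $\alpha$ with $\forall n\exists\gamma\in[0,1][\gamma\notin\mathcal{H}_{\alpha^n}]$, I would put $\beta^n:=\psi|(\alpha^n)$, so that $\forall\gamma\in\mathcal{C}[\gamma\in\mathcal{G}_{\beta^n}\leftrightarrow\rho|\gamma\in\mathcal{H}_{\alpha^n}]$. Since $\rho$ maps $\mathcal{C}$ onto $[0,1]$ and $\mathcal{H}_{\alpha^n}$ respects $=_\mathcal{R}$, the hypothesis yields $\forall n\exists\gamma\in\mathcal{C}[\gamma\notin\mathcal{G}_{\beta^n}]$; applying $\mathbf{\Pi^0_1}$-$\mathbf{AC_{0,1}^\mathcal{C}}$ to $\beta$ produces $\gamma\in\mathcal{C}$ with $\forall n[\gamma^n\notin\mathcal{G}_{\beta^n}]$, and then the element $\delta$ of $[0,1]^\mathbb{N}$ given by $\delta^n:=\rho|(\gamma^n)$ is as required. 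This direction is routine once $\psi$ is in hand.

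The converse, $\mathbf{\Pi^1_0}$-$\mathbf{AC_{0,1}^{[0,1]}}\rightarrow\mathbf{\Pi^0_1}$-$\mathbf{AC_{0,1}^\mathcal{C}}$, is the substantial one, and here $\rho$ is useless because it is far from injective; instead I would use the middle-thirds embedding $D$ of Subsection \ref{SS:reals}. Given $\alpha$ with $\forall n\exists\gamma\in\mathcal{C}[\gamma\notin\mathcal{G}_{\alpha^n}]$, I would define $\beta$ so that $\mathcal{H}_{\beta^n}$ is the open subset of $[0,1]$ generated by the rational intervals $M_3\bigl(D(u)\bigr)$, for \emph{all} $u$ in $\mathit{Bin}$, together with the intervals $\mathit{double}_\mathbb{S}\bigl(D(t)\bigr)$, for those $t$ in $\mathit{Bin}$ with $t\in D_{\alpha^n}$; this $\beta$ is definable in $\mathsf{BIM}$. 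Writing $d(\gamma)$ for the real number $k\mapsto\mathit{double}_\mathbb{S}\bigl(D(\overline\gamma k)\bigr)$, one checks that $d$ maps $\mathcal{C}$ into $[0,1]$, and, using the property ``$D(a)\sqsubset_\mathbb{S}\mathit{double}_\mathbb{S}\bigl(D(b)\bigr)\rightarrow b\sqsubseteq a$'' displayed in Subsection \ref{SS:reals}, that $d(\gamma)\notin\mathcal{H}_{\beta^n}$ whenever $\gamma\in\mathcal{C}$ and $\gamma\notin\mathcal{G}_{\alpha^n}$: the real $d(\gamma)$ lies in the Cantor set, hence avoids every $M_3\bigl(D(u)\bigr)$, while an inclusion $d(\gamma)(k)\sqsubset_\mathbb{S}\mathit{double}_\mathbb{S}\bigl(D(t)\bigr)$ would force $t\sqsubseteq\overline\gamma k$, so $\overline\gamma\bigl(\mathit{length}(t)\bigr)=t\in D_{\alpha^n}$, contradicting $\gamma\notin\mathcal{G}_{\alpha^n}$. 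Thus $\beta$ satisfies the hypothesis of $\mathbf{\Pi^1_0}$-$\mathbf{AC_{0,1}^{[0,1]}}$, and we obtain $x$ in $[0,1]^\mathbb{N}$ with $\forall n[x^n\notin\mathcal{H}_{\beta^n}]$.

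The crux, which I expect to be the main obstacle, is then to recover from each $x^n$ --- effectively in $x^n$, so that the assembled $\gamma$ is an element of $\mathcal{N}$ --- an element $\gamma^n$ of $\mathcal{C}$ with $\gamma^n\notin\mathcal{G}_{\alpha^n}$. Since $x^n\notin\mathcal{H}_{\beta^n}$, the real $x^n$ avoids every open middle third $M_3\bigl(D(u)\bigr)$, and, as each such interval has positive length, a dovetailed search on sufficiently fine rational approximations of $x^n$ \emph{decides}, at the node $\overline{\gamma^n}m$ constructed so far, whether $x^n$ lies (locally) in the left or in the right two thirds of $D\bigl(\overline{\gamma^n}m\bigr)$; this fixes $\gamma^n(m)$. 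I would then verify, by a backwards induction along $\overline{\gamma^n}m$, that the procedure never stalls and that the invariant $\exists k[x^n(k)\sqsubset_\mathbb{S}\mathit{double}_\mathbb{S}\bigl(D(\overline{\gamma^n}m)\bigr)]$ is maintained at every stage: were $x^n$ to fall into the part of $\mathit{double}_\mathbb{S}\bigl(D(\overline{\gamma^n}m)\bigr)$ not covered by the two children's intervals $\mathit{double}_\mathbb{S}\bigl(D(\overline{\gamma^n}m\ast\langle i\rangle)\bigr)$, unwinding the path would place $x^n$ strictly inside the middle third of an ancestor, or strictly below $0$ or above $1$, contradicting $x^n\notin\mathcal{H}_{\beta^n}$ or $x^n\in[0,1]$. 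Given the invariant, no $\overline{\gamma^n}m$ can lie in $D_{\alpha^n}$ (otherwise $x^n$ would meet one of the ``bad'' generating intervals $\mathit{double}_\mathbb{S}\bigl(D(t)\bigr)$), so $\gamma^n\notin\mathcal{G}_{\alpha^n}$, and finally $\exists\gamma\in\mathcal{C}\forall n[\gamma^n\notin\mathcal{G}_{\alpha^n}]$. The delicate point is precisely this no-stalling verification, which is where the two displayed combinatorial facts about $D$, $\mathit{double}_\mathbb{S}$, $L_3$ and $R_3$ (and some care with the boundary cases) are needed.
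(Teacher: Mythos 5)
Your proposal is correct and follows essentially the paper's own proof: the first direction is exactly the paper's argument via $\rho$ and $\psi$, and for the converse the paper defines the very same $\beta$ (all middle thirds $M_3\bigl(D(a)\bigr)$ together with $\mathit{double}_\mathbb{S}\bigl(D(a)\bigr)$ for $a$ with $\alpha^n(a)\neq 0$), uses the same embedding $p \mapsto \mathit{double}_\mathbb{S}\bigl(D(\overline \gamma p)\bigr)$ to verify the hypothesis of $\mathbf{\Pi^0_1}$-$\mathbf{AC_{0,1}^{[0,1]}}$, and then recovers $\gamma^n$ from $\delta^n \notin \mathcal{H}_{\beta^n}$ by reading off, at each level $m$, the unique $a$ in $\mathit{Bin}$ of length $m+1$ with $D(a) \approx_\mathbb{S} \delta^n\bigl(\eta^n(m)\bigr)$ for a sufficiently fine approximation --- which is the same recovery your left/right decision procedure with its invariant carries out.
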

  
  \begin{proof}

First assume $\mathbf{\Pi}^0_1$-$\mathbf{AC}_{\omega,2^\omega}$.

Using Lemma \ref{L:Conto[0,1]}, find $\sigma: 2^\omega \rightarrow [0,1]$ and $\psi:\omega^\omega\rightarrow\omega^\omega$ such that \begin{enumerate}[\upshape (1)] \item  $\forall \delta \in [0,1]  \exists \gamma\in2^\omega[\delta =_\mathcal{R} \sigma|\gamma]$ and  \item   $\forall \alpha\forall \gamma \in 2^\omega[\gamma\in \mathcal{G}_{\psi|\alpha}\leftrightarrow \sigma|\gamma \in \mathcal{H}_\alpha]$. \end{enumerate}

Let $\alpha$ be given such that $\forall n \exists \delta \in [0,1] [\delta \notin \mathcal{H}_{\alpha^{\upharpoonright n}}]$. Then $\forall n \exists \gamma \in 2^\omega[\sigma|\gamma \notin \mathcal{H}_{\alpha^{\upharpoonright n}}]$ and $\forall n \exists \gamma \in 2^\omega[\gamma \notin \mathcal{G}_{\psi|(\alpha^{\upharpoonright n})}]$. Find $\gamma$ in $2^\omega$ such that $\forall n[\gamma^{\upharpoonright n}\notin \mathcal{G}_{\psi|(\alpha^{\upharpoonright n})}]$. Conclude that $\forall n[\sigma|\gamma^{\upharpoonright n}  \notin \mathcal{H}_{\alpha^{\upharpoonright n}}]$ and $\exists \delta\forall n[\delta^{\upharpoonright n}\notin \mathcal{H}_{\alpha^{\upharpoonright n}}]$. We thus see that, for all $\alpha$, if $\forall n \exists \delta \in [0,1] [\delta \notin \mathcal{H}_{\alpha^{\upharpoonright n}}]$, then $\exists \delta\forall n[\delta^{\upharpoonright n}\notin \mathcal{H}_{\alpha^{\upharpoonright n}}]$, i.e. $\mathbf{\Pi}^0_1$-$\mathbf{AC}_{\omega,[0,1]}$

\smallskip
Now assume $\mathbf{\Pi}^0_1$-$\mathbf{AC}_{\omega,[0,1]}$.

Using Lemma \ref{L:Cinto[0,1]}, find $\tau:2^\omega\rightarrow [0,1]$ and $\chi:\omega^\omega\rightarrow\omega^\omega$ such that  \begin{enumerate}[\upshape (1)] \item $\forall \gamma \in 2^\omega\forall\delta\in2^\omega[\gamma\;\#\;\delta\rightarrow \tau|\gamma\;\#_\mathcal{R}\;\tau|\delta]$, 
 and  \item $\forall \alpha \forall \gamma \in 2^\omega[\gamma \in \mathcal{G}_\alpha\leftrightarrow \tau|\gamma \in \mathcal{H}_{\chi|\alpha}]$. and \item $\forall\alpha\forall\delta\in [0,1]^\omega\exists \gamma \in 2^\omega\forall n[\delta^{\upharpoonright n}\;\#_\mathcal{R}\;\tau|(\gamma^{\upharpoonright n}) \rightarrow \delta^{\upharpoonright n} \in \mathcal{H}_{\chi|(\alpha^{\upharpoonright n})}]$. \end{enumerate}
  Let $\alpha$ be given such that $\forall n \exists \gamma \in 2^\omega[\gamma \notin \mathcal{G}_{\alpha^{\upharpoonright n}}]$. 
Conclude that $\forall n\exists \gamma\in2^\omega[\tau|\gamma\notin\mathcal{H}_{\chi|(\alpha^{\upharpoonright n})}]$ and  $\forall n\exists \delta\in[0,1][\delta\notin\mathcal{H}_{\chi|(\alpha^{\upharpoonright n})}]$.
 Using $\mathbf{\Pi}^0_1$-$\mathbf{AC}_{\omega,[0,1]}$, find $\delta$ in $[0,1]^\omega$ such that  $\forall n[\delta^{\upharpoonright n} \notin \mathcal{H}_{\chi|(\alpha^{\upharpoonright n})}]$.  
Using (3),  find $\gamma$ in $2^\omega$ such that $\forall n[\delta^{\upharpoonright n}\;\#_\mathcal{R}\;\tau|(\gamma^{\upharpoonright n}) \rightarrow \delta^{\upharpoonright n} \in \mathcal{H}_{\chi|(\alpha^{\upharpoonright n})}]$. 
 Conclude that $\forall n[\delta^{\upharpoonright n}=_\mathcal{R} \tau|(\gamma^{\upharpoonright n})]]$,
and, using (2):  $\forall n[\gamma^{\upharpoonright n} \notin \mathcal{G}_{\alpha^{\upharpoonright n}}]$.
  Clearly, $\exists \gamma \in 2^\omega\forall n[\gamma^{\upharpoonright n} \notin \mathcal{G}_{\alpha^{\upharpoonright n}}]$. We thus see that, for all $\alpha$, if $\forall n \exists \gamma \in 2^\omega[\gamma \notin \mathcal{G}_{\alpha^{\upharpoonright n}}]$, then $\exists \gamma \in 2^\omega\forall n[\gamma^{\upharpoonright n} \notin \mathcal{G}_{\alpha^{\upharpoonright n}}]$, i.e. $\mathbf{\Pi}^0_1$-$\mathbf{AC}_{\omega,2^\omega}$.
 \end{proof}
 \section{Contrapositions of some special cases of $\mathbf{AC}_{\omega,\omega^\omega} $}
Let us consider the following axiom scheme, the {\it Second Axiom Scheme of Reverse Countable Choice}. \subsection{}
 $\overleftarrow{\mathbf{AC}_{\omega,\omega^\omega}}$:
 $\forall \gamma \in \omega^\omega\exists n[R(n,\gamma^{\upharpoonright n})]\rightarrow \exists n \forall \gamma \in \omega^\omega[R(n,\gamma)]$.

 The  axiom scheme $\overleftarrow{\mathbf{AC}_{\omega,\omega^\omega}}$   implies $\mathbf{\Delta}^0_1$-$\overleftarrow{\mathbf{AC}_{\omega,\omega}}$ and, therefore, $\mathbf{LPO}$, see Theorem \ref{T:delta01reverseac00implieslpo}. 
  Let us consider a restricted version, the   \textit{Axiom Scheme of Reverse   Countable Compact Choice}:
 
 \subsection{}
 $\overleftarrow{\mathbf{AC}_{\omega,2^\omega}}$:
 $\forall \gamma \in 2^\omega\exists n[R(n,\gamma^{\upharpoonright n})]\rightarrow \exists n \forall \gamma \in 2^\omega[R(n,\gamma)]$.

In  \cite{veldman82} it is shown that $\overleftarrow{\mathbf{AC}_{\omega,2^\omega}}$ is a consequence of the First Axiom of Continuous Choice $\mathbf{AC}_{\omega^\omega,\omega}$ and  \textbf{FAN}.
  
  We now   require the relation $R$ to be $\mathbf{\Sigma}^0_1$ and obtain the  $\mathbf{\Sigma}^0_1$-\textit{Axiom of Reverse Countable Compact Choice}:\subsection{}$\mathbf{\Sigma}^0_1$-$\overleftarrow{\mathbf{AC}_{\omega,2^\omega}}$:
  $\forall \alpha[  \forall \gamma \in 2^\omega
  \exists n [\gamma^{\upharpoonright n} \in \mathcal{G}_{\alpha^{\upharpoonright n}}] \rightarrow \exists n[  2^\omega \subseteq \mathcal{G}_{\alpha^{\upharpoonright n}}] ].$

We also introduce a strong negation:  
  
                                                                                                                                                                                                                                                                                                                                                                                                                                                                                                                                                                                                                                                                                                                                                                                                                                                                                                                                                                                                                                                                                                                                                                                                                                                                                                                                                                                                                                                                                                                                                                                                                                                                               \subsection{}\label{SS:omegacantor}$\neg !\bigl(\mathbf{\Sigma}^0_1$-$\overleftarrow{\mathbf{AC}_{\omega,2^\omega}})$:
  $\exists \alpha[  \forall \gamma \in 2^\omega
  \exists n [\gamma^{\upharpoonright n} \in \mathcal{G}_{\alpha^{\upharpoonright n}}]\;\wedge\; \neg \exists n[ 2^\omega \subseteq \mathcal{G}_{\alpha^{\upharpoonright n}}] ].$

\smallskip

We also introduce a `real'  version:
\subsection{}$\mathbf{\Sigma}^0_1$-$\overleftarrow{\mathbf{AC}_{\omega,[0,1]}}$:
  $\forall \alpha[  \forall \delta \in [0,1]^\omega
  \exists n [\delta^{\upharpoonright n} \in \mathcal{H}_{\alpha^{\upharpoonright n}}] \rightarrow \exists n [ [0,1] \subseteq \mathcal{H}_{\alpha^{\upharpoonright n}}]].$

 \smallskip and a strong negation:
 \subsection{}\label{SS:omega01}$\neg !(\mathbf{\Sigma}^0_1$-$\overleftarrow{\mathbf{AC}_{\omega,[0,1]}})$:
  $\exists \alpha [  \forall \delta \in [0,1]^\omega
  \exists n [\delta^{\upharpoonright n} \in \mathcal{H}_{\alpha^{\upharpoonright n}}] \;\wedge\; \neg\exists n[  [0,1] \subseteq \mathcal{H}_{\alpha^{\upharpoonright n}}]].$

\medskip
The treatment of real numbers in $\mathsf{BIM}$ is sketched in Subsection \ref{SSS:reals}.
\begin{lemma}\label{L:b45}
 $\mathsf{BIM}$ proves:
\begin{enumerate}[\upshape(i)]
\item $\mathbf{FT}\rightarrow \mathbf{\Sigma}^0_1$-$\overleftarrow{\mathbf{AC}_{\omega,2^\omega}}$ and 
  $  \neg !(\mathbf{\Sigma}^0_1$-$\overleftarrow{\mathbf{AC}_{\omega,2^\omega}})\rightarrow\neg!\mathbf{FT}$.
\item $\mathbf{\Sigma}^0_1$-$\overleftarrow{\mathbf{AC}_{\omega,2^\omega}}\rightarrow \mathbf{\Sigma}^0_1$-$\overleftarrow{\mathbf{AC}_{\omega,[0,1]}}$ and 
$\neg !(\mathbf{\Sigma}^0_1$-$\overleftarrow{\mathbf{AC}_{\omega,[0,1]}})\rightarrow \neg !(\mathbf{\Sigma}^0_1$-$\overleftarrow{\mathbf{AC}_{\omega,2^\omega}})$

\item$\mathbf{\Sigma}^0_1$-$\overleftarrow{\mathbf{AC}_{\omega,[0,1]}}\rightarrow \mathbf{\Sigma}^0_1$-$\overleftarrow{\mathbf{AC}_{\omega,2}}$ and 
$\neg !(\mathbf{\Sigma}^0_1$-$\overleftarrow{\mathbf{AC}_{\omega,2}})\rightarrow \neg !(\mathbf{\Sigma}^0_1$-$\overleftarrow{\mathbf{AC}_{\omega,[0,1]}})$ \end{enumerate}
\end{lemma}
\begin{proof} 
(i)\footnote{The argument may be compared to the arguments for Lemma \ref{L:3resolutions}(i) and for Lemma \ref{L:b3}(ii).} 
 We prove, in $\mathsf{BIM}$: for each $\alpha$, there exists $\beta$ such that 
 $$\forall \gamma \in 2^\omega \exists n [\gamma^{\upharpoonright n} \in \mathcal{G}_{\alpha^{\upharpoonright n}}] \rightarrow Bar_{2^\omega}(D_\beta)\;\mathrm{and}\;\exists m[Bar_{2^\omega}(D_{\overline\beta m})] \rightarrow \exists n [2^\omega \subseteq \mathcal{G}_{\alpha^{\upharpoonright n}}].$$ 
 The two promised statements then follow easily. 
 
  Let  $\alpha$ be given. Define $\beta$ such that, for every $s$,
  $\beta(s) \neq 0\leftrightarrow \bigl(s \in 2^{<\omega}\;\wedge\; \exists n < \mathit{length}(s)\exists p \le \mathit{length}(s^{\upharpoonright n})[\alpha^{\upharpoonright n}( \overline {s^{\upharpoonright n}}p) \neq 0]\bigr)$.

Assume that  $\forall \gamma \in 2^\omega \exists n \exists p[\alpha^{\upharpoonright n}(\overline{\gamma^{\upharpoonright n}} p)\neq 0]$. Conclude that $\forall \gamma \in 2^\omega\exists n[\beta(\overline \gamma n)\neq 0]$, i.e. $Bar_{2^\omega}(D_\beta)$.

Now let $m$  be given such that $Bar_{2^\omega}(D_{\overline\beta m})$.  Conclude that $\forall s  \in 2^{<\omega}[s>m\rightarrow \exists t[ t\sqsubseteq s \;\wedge\; t\in D_{\overline \beta m}]]$.  We have to prove that, for some $n$, $2^\omega\subseteq \mathcal{G}_{\alpha^{\upharpoonright n}}$, i.e. $\forall \gamma \in 2^\omega\exists p[\overline \gamma p \in D_{\alpha^{\upharpoonright n}}]$.  We will prove the stronger statement that, for some $n<m$, $\forall u \in 2^{<\omega}[length(u)=m\rightarrow\exists p < m[ \overline u p \in D_{\alpha^{\upharpoonright n}}]]$.   We argue by contradiction. Assume that, for each $n<m$, there exists $u$ in $2^{<\omega}$ such that  $length(u)=m$ and $\neg \exists p<m[\overline u p \in D_{\overline{\alpha^{\upharpoonright n}}m}]$.    Let $s$ be an element of $\mathit{Bin}$ such that, for each $n<m$, $s^{\upharpoonright n}\in 2^{<\omega}$ and $length(s^{\upharpoonright n})\ge m$ and $\neg \exists p<m[\overline{s^{\upharpoonright n}}p \in D_{\overline{\alpha^{\upharpoonright n}}m}]$.     Note that  $s>m$ and $\neg \exists t\sqsubseteq s[t\in D_{\overline \beta m}]$ . Contradiction. Thus we see there must exist   $n<m$ such that $\forall u\in 2^{<\omega}[length(u)=m\rightarrow \exists p<m[\overline u p \in D_{\overline{\alpha^{\upharpoonright n}}m}]]$ 
 and  $2^\omega \subseteq \mathcal{G}_{\alpha^{\upharpoonright n}}$.

\smallskip
(ii)\footnote{The argument may be compared to the argument for the first half of Theorem \ref{T:compactchoice}.}  We prove, in $\mathsf{BIM}$: for each $\alpha$, there exists $\beta$ such that 
 $$\forall \delta \in [0,1]^\omega \exists n [\delta^{\upharpoonright n} \in \mathcal{H}_{\alpha^{\upharpoonright n}}] \rightarrow \forall \gamma \in 2^\omega \exists n [\gamma^{\upharpoonright n} \in \mathcal{G}_{\beta^{\upharpoonright n}}]\;\mathrm{and}$$ $$\exists n [2^\omega \subseteq \mathcal{G}_{\beta^{\upharpoonright n}}] \rightarrow \exists n [[0,1] \subseteq \mathcal{H}_{\alpha^{\upharpoonright n}}].$$ 
Using Lemma \ref{L:Conto[0,1]}, find 
 $\sigma:2^\omega\rightarrow [0,1]$ and $\psi:\omega^\omega\rightarrow\omega^\omega$ such that   $\forall \delta \in [0,1]\exists \gamma \in 2^\omega[\sigma|\gamma =_\mathcal{R} \delta]$ and   $\forall \alpha\forall\gamma \in2^\omega[\gamma \in \mathcal{G}_{\psi|\alpha}\leftrightarrow \sigma|\gamma \in \mathcal{H}_{\alpha}]$.

  Let $\alpha$ be given. Define $\beta$ such that, for every $n$, $\beta^{\upharpoonright n} = \psi|(\alpha^{\upharpoonright n})$.
  
  Assume  that $\forall \delta \in [0,1]^\omega\exists n[\delta^{\upharpoonright n} \in \mathcal{H}_{\alpha^{\upharpoonright n}}]$. Then $\forall \gamma \in 2^\omega\exists n[\sigma|(\gamma^{\upharpoonright n}) \in \mathcal{H}_{\alpha^{\upharpoonright n}}]$ and
 $\forall \gamma \in 2^\omega \exists n[\gamma^{\upharpoonright n} \in \mathcal{G}_{\psi|(\alpha^{\upharpoonright n})}]$ and
 $\forall \gamma \in 2^\omega\exists n [\gamma^{\upharpoonright n} \in \mathcal{G}_{\beta^{\upharpoonright n}}]$.
 
 Let $n$ be given such that such that $2^\omega \subseteq \mathcal{G}_{\beta^{\upharpoonright n}}=\mathcal{G}_{\psi|(\alpha^{\upharpoonright n})}$. 
 Note that $\forall \gamma \in 2^\omega[ \sigma|\gamma \in \mathcal{H}_{\alpha^{\upharpoonright n}}]$. Conclude that
   $[0,1] \subseteq \mathcal{H}_{\alpha^{\upharpoonright n}}$.
  
  \smallskip

 (iii) We prove in $\mathsf{BIM}$ that, for each $\alpha$, there exists $\beta$ such that 
 $$\forall \gamma \in 2^\omega \exists n [\bigl(n,\gamma(n)\bigr) \in E_{\alpha}] \rightarrow \\\forall \delta \in [0,1]^\omega \exists n [\delta^{\upharpoonright n} \in \mathcal{H}_{\beta^{\upharpoonright n}}]\;\mathrm{and}$$ $$\exists n [[0,1] \subseteq \mathcal{H}_{\beta^{\upharpoonright n}}] \rightarrow \exists n \forall i<2[(n,i)\in E_{\alpha}].$$ 
  Let $\alpha$ be given. Define $\beta$ such that $\forall n\forall s\in \mathbb{S}[\beta^{\upharpoonright n}(s) \neq 0\leftrightarrow \\\exists i<s[\bigl(\alpha(i) =(n,0)+1\;\wedge\;s''<_\mathbb{Q}1_\mathbb{Q}\bigr)\;\vee\;\bigl(\alpha(i) =(n,1)+1\;\wedge\;0_\mathbb{Q}<_\mathbb{Q}s'\bigr)]]$. 
 
  Note that $\forall n[\bigl((n,0) \in E_{\alpha}\leftrightarrow [0,1) \subseteq \mathcal{H}_{\beta^{\upharpoonright n}}]\bigr)\;\wedge\;\bigl((n,1) \in E_{\alpha}\leftrightarrow (0,1] \subseteq \mathcal{H}_{\beta^{\upharpoonright n}}\bigr)] $.

Assume that $\forall \gamma \in 2^\omega\exists n [ \bigl(n,\gamma(n)\bigr) \in E_{\alpha}]$, and $\delta \in [0,1]^\omega$.
Define $\varepsilon$  such that  $\forall n[\varepsilon(n)=\mu m [0_\mathbb{Q}<_\mathbb{Q}\bigl(\delta^{\upharpoonright n}(m)\bigr)'\;\vee\;\bigl(\delta^{\upharpoonright n}(m)\bigr)''<_\mathbb{Q}1_\mathbb{Q}]$.
 Define $\gamma$ in $2^\omega$ such that $\forall n[\gamma(n) = 0\leftrightarrow\bigl(\delta^{\upharpoonright n}(\varepsilon(n))\bigr)''<_\mathbb{Q}1_\mathbb{Q}]$.
 Note that $\forall n[\gamma(n) = 1\rightarrow 0_\mathcal{R}<_\mathcal{R} \delta^{\upharpoonright n} ]$. 
  Find $n$ such that $\bigl(n,\gamma(n)\bigr) \in E_{\alpha}$ and conclude that 
  \textit{either} $\gamma(n) = 0$ and $\delta^{\upharpoonright n} <_\mathcal{R} 1_\mathcal{R}$ and $[0,1) \subseteq \mathcal{H}_{\beta^{\upharpoonright n}}$, so $\delta^{\upharpoonright n} \in \mathcal{H}_{\beta^{\upharpoonright n}}$, 
  \textit{or} $\gamma(n) =1$ and $0_\mathcal{R} <
  _\mathcal{R}\delta^{\upharpoonright n} $ and $(0,1]\subseteq \mathcal{H}_{\beta^{\upharpoonright n}}$, so, again, $\delta^{\upharpoonright n} \in \mathcal{H}_{\beta^{\upharpoonright n}}$. 
  Conclude that $\forall \delta \in [0,1]^\omega \exists n[\delta^{\upharpoonright n} \in \mathcal{H}_{\beta^{\upharpoonright n}}]$.

Let  $n$ be given such that $[0,1] \subseteq \mathcal{H}_{\beta^{\upharpoonright n}}$. Conclude that $\forall i<2[(n,i)\in E_{\alpha}]$. 
\end{proof}\begin{theorem}\label{T:ccc}
\begin{enumerate}[\upshape (i)]
\item $\mathsf{BIM}\vdash \mathbf{FT} \leftrightarrow \mathbf{\Sigma}^0_1$-$\overleftarrow{\mathbf{AC}_{\omega,2^\omega}}\leftrightarrow \mathbf{\Sigma}^0_1$-$\overleftarrow{\mathbf{AC}_{\omega,[0,1]}}$.
\item  $\mathsf{BIM}\vdash \neg !\mathbf{FT} \leftrightarrow \neg !(\mathbf{\Sigma}^0_1$-$\overleftarrow{\mathbf{AC}_{\omega,2^\omega}})\leftrightarrow \neg !(\mathbf{\Sigma}^0_1$-$\overleftarrow{\mathbf{AC}_{\omega,[0,1]}})$.
\end{enumerate}
\end{theorem}

\begin{proof} These statements follow from Lemmas \ref{L:b45} and  \ref{L:3resolutions}.\end{proof}

\section{On the Contraposition of Twofold Compact Choice}\label{S:refinement} 

 We introduce a limited version of  $\mathbf{\Sigma}^0_1$-$\overleftarrow{\mathbf{AC}_{\omega,2^\omega}}$: \subsection{}$\label{SS:sigma01ac2c}\mathbf{\Sigma}^0_1$-$\overleftarrow{\mathbf{AC}_{2,2^\omega}}$:
  $\forall \alpha[  \forall \gamma \in 2^\omega
  \exists i<2 [\gamma^{\upharpoonright i} \in \mathcal{G}_{\alpha^i}] \rightarrow \exists i<2 [2^\omega \subseteq \mathcal{G}_{\alpha^{\upharpoonright i}}] ].$
  
This statement should be called the $\mathbf{\Sigma}^0_1$-Axiom of \emph{Reverse Twofold Compact Choice}. It is a contraposition of  a special case of the following scheme:
\[\forall i<2 \exists \gamma \in 2^\omega[R(i,\gamma)] \rightarrow 
\exists \gamma \in 2^\omega\forall i<2[R(i,\gamma^{\upharpoonright i})].\]
and the latter  scheme is provable in $\mathsf{BIM}$.

 For each $\alpha$, we define the following statement,  
  called $\mathbf{LLPO}^\alpha$: 
\[\forall\varepsilon[\forall p [2p=\mu m[\varepsilon(m)\neq 0]   \rightarrow \mathit{Bar}_{2^\omega}(D_{\overline \alpha p})]\;\vee\;\]\[ \forall p [2p+1=\mu m[\varepsilon(m)\neq 0]   \rightarrow \mathit{Bar}_{2^\omega}(D_{\overline \alpha p})]].\]
 
\begin{lemma}\label{L:fteq2} \begin{enumerate}[\upshape (i)] \item $\mathsf{BIM}\vdash \mathbf{LLPO}\leftrightarrow \forall \alpha[\mathbf{LLPO}^\alpha]$.\item $\mathsf{BIM}\;+\mathbf{\Sigma}^0_1$-$\overleftarrow{\mathbf{AC}_{2,2^\omega}}
\vdash \forall \alpha[Bar_{2^\omega}(D_\alpha)\rightarrow \mathbf{LLPO}^\alpha]$.  \item $\mathsf{BIM}\;+$  $\mathbf{\Pi}^0_1$-$\mathbf{AC}_{\omega,2}\vdash\forall \alpha[Bar_{2^\omega}(D_\alpha)\rightarrow \mathbf{LLPO}^\alpha]\rightarrow\mathbf{FT}$. \item $\mathsf{BIM}\vdash\mathbf{FT}\rightarrow \mathbf{\Sigma}^0_1$-$\overleftarrow{\mathbf{AC}_{2,2^\omega}}$. \end{enumerate}

 \end{lemma}
 
 \begin{proof} (i) Assume $\mathbf{LLPO}$ and let $\alpha, \varepsilon$ be given.   \textit{Either} $\forall p[2p\neq \mu n[\varepsilon(n)\neq 0]]$ and, therefore,  $\forall\varepsilon[\forall p [2p=\mu m[\varepsilon(m)\neq 0]   \rightarrow \mathit{Bar}_{2^\omega}(D_{\overline \alpha p})]$, \textit{or} $\forall p[2p+1\neq \mu n[\varepsilon(n)\neq 0]]$ and $\forall\varepsilon[\forall p [2p+1=\mu m[\varepsilon(m)\neq 0]   \rightarrow \mathit{Bar}_{2^\omega}(D_{\overline \alpha p})]$. We thus see: $\mathbf{LLPO}^\alpha$. 
 
 For the converse, note that $\mathbf{LLPO}\leftrightarrow \mathbf{LLPO}^{\underline 0}$. 
 
 \smallskip (ii) Let $\alpha$ be given such that $Bar_{2^\omega}(D_\alpha)$. Using $\mathbf{\Sigma}_1^0$-$\overleftarrow{\mathbf{AC}_{2,2^\omega}}$, we will prove $\mathbf{LLPO}^\alpha$. 
  Let $\varepsilon$ be given.
 Define $\eta$  such that, for each $p$,
\begin{enumerate}
\item  if $ \underline{\overline 0}(2p+2)\sqsubset\varepsilon$, then $\eta^{\upharpoonright 0}(p) = \eta^{\upharpoonright 1}(p) = \alpha(p)$, and,
\item   if $ 2p =\mu m [\varepsilon(m) \neq 0 ]$,  then $\forall m \ge p[\eta^{\upharpoonright 0}(m) =0\;\wedge\;\eta^{\upharpoonright 1}(m) = \alpha(m)]$, and
 \item if $  2p+1=\mu m[\varepsilon(m) \neq 0 ]$,  then $\forall m \ge p[\eta^{\upharpoonright 1}(m) =0\;\wedge\;\eta^{\upharpoonright 0}(m) = \alpha(m)]$.
\end{enumerate}
Note that, if $\eta^{\upharpoonright 0} \;\#\; \alpha$, then $\eta^{\upharpoonright 1} = \alpha$.  

 Let $\gamma$ in $2^\omega$ be given. Find $n$ such that $\alpha(\overline{\gamma^{\upharpoonright 0}}n) \neq 0$. {\it Either} $\eta^{\upharpoonright 0}(\overline{\gamma^{\upharpoonright 0}}n) = \alpha(\overline{\gamma^{\upharpoonright 0}}n)\neq 0$, {\it or} $\eta^{\upharpoonright 0} \;\#\; \alpha$ and $\eta^{\upharpoonright 1} = \alpha$ and $\exists m[\eta^{\upharpoonright 1}(\overline{\gamma^{\upharpoonright 1}}m) = \alpha(\overline{\gamma^{\upharpoonright 1}}m) \neq 0]$.
 We thus see that $\forall \gamma \in 2^\omega[\gamma^{\upharpoonright 0}  \in \mathcal{G}_{\eta^{\upharpoonright 0}} \;\vee \;\gamma^{\upharpoonright 1} \in \mathcal{G}_{\eta^{\upharpoonright 1}}]$.
Use $\mathbf{\Sigma}^0_1$-$\overleftarrow{\mathbf{AC}_{2,2^\omega}}$ and find $i<2$ such that $2^\omega\subseteq \mathcal{G}_{\eta^{\upharpoonright i}}$, i.e. $Bar_{2^\omega}(D_{\eta^{\upharpoonright i}})$. 
Let $p$ be given such  that $ 2p+i= \mu m [\varepsilon(m) \neq 0] $. Note that  $\forall m\ge p[\eta^{\upharpoonright i}(m) = 0]$. Conclude that $\mathit{Bar}_{2^\omega}(D_{\overline {\eta^{\upharpoonright i}} p})$ and that $\mathit{Bar}_{2^\omega}(D_{\overline \alpha p})$. 
We thus see that
$\forall p[2p+i= \mu m [\varepsilon(m) \neq 0] \rightarrow\mathit{Bar}_{2^\omega}(D_{\overline \alpha p})]$.
Conclude that $\exists i<2\forall p[2p+i= \mu m [\varepsilon(m) \neq 0] \rightarrow\mathit{Bar}_{2^\omega}(D_{\overline \alpha p})]$, i.e. $\mathbf{LLPO}^\alpha$.

\smallskip (iii) Assume $\forall \alpha[Bar_{2^\omega}(D_\alpha)\rightarrow \mathbf{LLPO}^\alpha]$.  Using $\mathbf{\Pi}^0_1$-$\mathbf{AC}_{\omega,2}$, we will prove $\mathbf{FT}$.

Let $\alpha$ be given such that $Bar_{2^\omega}(D_\alpha)$ and, therefore, $\mathbf{LLPO}^\alpha$.
Let $s$  in $2^{<\omega}$ be given. Define $\varepsilon$ such that,  $\forall i<2\forall n[\varepsilon(2n+i) \neq 0 \leftrightarrow Bar_{2^\omega\cap s\ast \langle i \rangle}(D_{\overline\alpha n})] $. Using $\mathbf{LLPO}^\alpha$, find $i<2$ such that 
$\forall p[2p+i= \mu m [\varepsilon(m) \neq 0] \rightarrow\mathit{Bar}_{2^\omega}(D_{\overline \alpha p})]$.
 Assume we find $n$ such that $Bar_{2^\omega\cap s\ast \langle i\rangle}(D_{\overline \alpha n})$. Then $\varepsilon(2n+i) \neq 0$. Find $p:= \mu j[\varepsilon(j) \neq 0]$. Find $q\le n$ such that $p=2q$ or $p=2q+1$. \textit{Either} $p=2q+i$ 
and $\mathit{Bar}_{2^\omega}(D_{\overline \alpha q})$, \textit{or} $p=2q+1-i$ and $Bar_{2^\omega\cap s\ast\langle 1-i\rangle} (D_{\overline \alpha q})$. In both cases, $Bar_{2^\omega\cap s\ast\langle 1-i\rangle}(D_{\overline \alpha n})$.   We thus see that $\forall n[Bar_{2^\omega\cap s\ast\langle i \rangle} (D_{\overline \alpha n}) \rightarrow Bar_{2^\omega\cap s\ast\langle 1-i \rangle} (D_{\overline \alpha n}) ]$.
Conclude that $\forall s \in 2^{<\omega}\exists i<2\forall n[   Bar_{2^\omega\cap s\ast \langle  i \rangle}( D_{\overline \alpha n})\rightarrow Bar_{2^\omega\cap s\ast\langle 1- i \rangle} (D_{\overline \alpha n})]$.
Now
use  $\mathbf{\Pi}^0_1$-$\mathbf{AC}_{\omega,2}$ and find $\gamma$ in $2^\omega$ such that 
$\forall s \in 2^{<\omega}
 \forall n[ Bar_{2^\omega\cap s\ast \langle \gamma(s) \rangle}( D_{\overline \alpha n})\rightarrow Bar_{2^\omega\cap s\ast\langle 1-\gamma(s) \rangle} (D_{\overline \alpha n})]$.
 Observe that, for each $s$ in $2^{<\omega}$, for all $n$, if $Bar_{2^\omega\cap s\ast \langle \gamma(s) \rangle}( D_{\overline \alpha n})$, then also 
 
 $Bar_{2^\omega\cap s\ast \langle 1- \gamma(s) \rangle}( D_{\overline \alpha n})$,
and, therefore, $Bar_{2^\omega\cap s}( D_{\overline \alpha n})$.
Define $\delta$ in $2^\omega$ such that, for each $n$, $\delta(n) = \gamma(\overline{ \delta} n)$. Find $p$ such that $\alpha(\overline{ \delta} p) \neq 0$ and define $n:=\overline\delta p +1$. Note that $Bar_{2^\omega\cap\overline\delta p}(D_{\overline \alpha n})$.  One now proves, by backwards induction, that,  for each $j \le p$,  $Bar_{2^\omega\cap \overline \delta j}(D_{\overline \alpha n})$. The induction step goes as follows. Assume $j+1 \le n$ and $Bar_{2^\omega\cap \overline{\delta} (j+1)}(D_{\overline \alpha n})$.  As $\overline{ \delta}(j+1) = \overline{ \delta} j \ast \langle  \gamma(\overline{\delta} j)\rangle$, one  conclude that $Bar_{2^\omega\cap \overline{\delta} (j)}(D_{\overline \alpha n})$.
After $n$ steps one sees  $Bar_{2^\omega}(D_{\overline\alpha n})$.
 Conclude that $\forall \alpha[Bar_{2^\omega}(D_\alpha)\rightarrow \exists n[Bar_{2^\omega}(D_{\overline \alpha n})]]$, i.e. $\mathbf{FT}$. 
  
 \smallskip (iv) Assume $\mathbf{FT}$. Use Theorem \ref{T:ccc} and conclude
  $ \mathbf{\Sigma}^0_1$-$\overleftarrow{\mathbf{AC}_{\omega,2^\omega}}$ and its corollary $ \mathbf{\Sigma}^0_1$-$\overleftarrow{\mathbf{AC}_{2,2^\omega}}$. \end{proof}
 
 \begin{theorem}\label{T:fteq2}$\mathsf{BIM}+\mathbf{\Pi}^0_1$-$\mathbf{AC}_{\omega,2}\vdash \mathbf{\Sigma}^0_1$-$\overleftarrow{\mathbf{AC}_{2,2^\omega}}\leftrightarrow \mathbf{FT}$. \end{theorem}
 
 \begin{proof} Use Lemma \ref{L:fteq2}.
 \end{proof}
 
 \subsubsection{}\label{SSS:bickford} Mark Bickford called my attention to the fact that  $\mathbf{\Sigma}^0_1$-$\overleftarrow{\mathbf{AC}_{2, 2^\omega}}$ occurs in \cite[\S 2]{moschovakiswkl} and is called there the \textit{separation principle} $\mathrm{SP}$.

  After having
proved $\mathbf{FT}\rightarrow \mathbf{\Sigma}^0_1$-$\overleftarrow{\mathbf{AC}_{2,2^\omega}}$, see our Lemma \ref{L:fteq2}(iv), the author of \cite{moschovakiswkl} gives a proof
of $\mathbf{FT}\rightarrow \mathbf{WKL!}$ using $\mathbf{\Pi}^0_1$-$\mathbf{AC}_{\omega,\omega}!$ and quotes the result $\mathbf{WKL}!\rightarrow \mathbf{FT}$, see our
Theorem \ref{T:wkl!ft}. Our proof of Theorem \ref{T:ftwkl!} shows that no choice is needed for a
proof of $\mathbf{FT}\rightarrow \mathbf{WKL!}$.

 \smallskip
 
We introduce a `real' version of  $\mathbf{\Sigma}^0_1$-$\overleftarrow{\mathbf{AC}_{2,2^\omega}}$: \subsection{}$\mathbf{\Sigma}^0_1$-$\overleftarrow{\mathbf{AC}_{2,[0,1]}}$:
  \[\forall \alpha[  \forall \delta \in [0,1]^2
  \exists i<2 [\delta^{\upharpoonright i} \in \mathcal{H}_{\alpha^{\upharpoonright i}}] \rightarrow \exists i<2 [ [0,1] \subseteq \mathcal{H}_{\alpha^{\upharpoonright i}}] ].\]

\begin{theorem}\label{T:c0c1}
 $\mathsf{BIM}\vdash \mathbf{\Sigma}^0_1$-$\overleftarrow{\mathbf{AC}_{2,2^\omega}}\leftrightarrow\mathbf{\Sigma}^0_1$-$\overleftarrow{\mathbf{AC}_{2,[0,1]}}$.

\end{theorem}

\begin{proof}
Using Lemma \ref{L:Conto[0,1]}, find  $\sigma: 2^\omega \rightarrow [0,1]$ and $\psi: \omega^\omega\rightarrow\omega^\omega$ such that \\ $\forall \delta \in [0,1]  \exists \gamma[\delta =_\mathcal{R} \sigma|\gamma]$ and  $\forall \alpha\forall \gamma \in 2^\omega[\gamma\in \mathcal{G}_{\psi|\alpha}\leftrightarrow \sigma|\gamma \in \mathcal{H}_\alpha]$.

First assume $ \mathbf{\Sigma}^0_1$-$\overleftarrow{\mathbf{AC}_{2,2^\omega}}$.
Let $\alpha$ be given such that   $\forall \delta \in [0,1]^2\exists i<2[\delta^{\upharpoonright i} \in \mathcal{H}_{\alpha^{\upharpoonright i}}]$. Define $\beta$ such that, for both $i<2$, $\beta^{\upharpoonright i} = \psi|(\alpha^{\upharpoonright i})$.
 Then $\forall \gamma \in 2^\omega\exists i < 2[\sigma|\gamma^{\upharpoonright i} \in \mathcal{H}_{\alpha^{\upharpoonright i}}]$. Conclude that $\forall \gamma \in 2^\omega \exists i <2[\gamma^{\upharpoonright i} \in 
 \mathcal{G}_{\beta^{\upharpoonright i}}]$.
Find $ i <2$ such that  $ 2^\omega\subseteq \mathcal{G}_{\beta^{\upharpoonright i}}$. Conclude
 $ [0,1]\subseteq \mathcal{H}_{\alpha^{\upharpoonright i}}$.
  We thus see $\mathbf{\Sigma}^0_1$-$\overleftarrow{\mathbf{AC}_{2,[0,1]}}$.

 Now assume $\mathbf{\Sigma}^0_1$-$\overleftarrow{\mathbf{AC}_{2,[0,1]}}$.
  Using Lemma \ref{L:Cinto[0,1]}, find $\tau:2^\omega\rightarrow [0,1]$ and $\chi:\omega^\omega\rightarrow\omega^\omega$ 
  such that  \begin{enumerate}[\upshape (1)]\item $\forall \gamma\in
   2^\omega\forall \delta\in2^\omega[\gamma\;\#\;\delta\rightarrow \tau|\gamma\;\#_\mathcal{R}\;\tau|\delta]$, and  
  \item $\forall \alpha \forall \gamma \in 2^\omega[\gamma \in \mathcal{G}_\alpha\leftrightarrow \tau|\gamma\in\mathcal{H}_{\chi|\alpha}]$ and \item $\forall \delta \in [0,1]\exists\gamma\in2^\omega[\delta\;\#_\mathcal{R}\;\tau|\gamma \rightarrow\forall \alpha[\delta\in\mathcal{H}_{\chi|\alpha}]]$. \end{enumerate}
  Let $\alpha$ be given such that $\forall \gamma \in 2^\omega\exists i < 2[\gamma^{\upharpoonright i} \in \mathcal{G}_{\alpha^{\upharpoonright i}} ] $.
  Let $\delta$ in $[0,1]^2$ be given.  
 Find $\gamma$ in $2^\omega$ such that $\forall i<2[\delta^{\upharpoonright i}\;\#_\mathcal{R}\;\tau|(\gamma^{\upharpoonright i}) \rightarrow\delta^{\upharpoonright i}\in\mathcal{H}_{\chi|(\alpha^{\upharpoonright i})}]$. Find $i<2$ such that $\gamma^{\upharpoonright i}\in\mathcal{G}_{\alpha^{\upharpoonright i}}$. Conclude that $\tau|(\gamma^{\upharpoonright i})\in\mathcal{H}_{\chi|\alpha^{\upharpoonright i}}$. Find $s,n$ such that $(\chi|\alpha^{\upharpoonright i})(s)\neq 0$ and $\bigl(\tau|(\gamma^{\upharpoonright i})\bigr)(n)\sqsubset_\mathbb{S} s$. Using Lemma \ref{L:apart}, find $p$ such that \textit{either} $\delta^{\upharpoonright i}(p)\sqsubset_\mathbb{S} s$, and  $\delta^{\upharpoonright i}\in \mathcal{H}_{\chi|(\alpha^i)}$,  \textit{or} $\delta^{\upharpoonright i}(p)\;\#_\mathbb{S}\; \bigl(\tau|(\gamma^{\upharpoonright i})\bigr)(n)$, and,  again, $\delta^{\upharpoonright i}\in \mathcal{H}_{\chi|(\alpha^i)}$.
 We thus see that $\forall \delta \in [0,1]^2\exists i<2[\delta^{\upharpoonright i} \in \mathcal{H}_{\chi|(\alpha^{\upharpoonright i})}]$.
   Applying $\mathbf{\Sigma}^0_1$-$\overleftarrow{\mathbf{AC}_{2,[0,1]}}$,  find $i<2$  such that $[0,1]\subseteq\mathcal{H}_{\chi|(\alpha^{\upharpoonright i})}$.
 Conclude that $\forall \gamma \in 2^\omega[\tau|\gamma\in \mathcal{H}_{\chi|(\alpha^{\upharpoonright i})}]$ and  $2^\omega\subseteq \mathcal{G}_{\alpha^{\upharpoonright i}}$. 
 We thus see $ \mathbf{\Sigma}^0_1$-$\overleftarrow{\mathbf{AC}_{2,2^\omega}}$.
 \end{proof}

\begin{corollary} $\mathsf{BIM} + \mathbf{\Pi}^0_1$-$\mathbf{AC}_{\omega,2}\vdash\mathbf{FT} \leftrightarrow \mathbf{\Sigma}^0_1$-$\overleftarrow{\mathbf{AC}_{2,[0,1]}}$.\end{corollary} \begin{proof} Use Theorems \ref{T:fteq2} and \ref{T:c0c1}. \end{proof}

\subsection{Some logical consequences}\hfill

In this Subsection, we want to formulate the result of Theorem \ref{T:fteq2} in model-theoretic terms and draw an even sharper conclusion.

Tarski's truth definition makes sense intuitionistically as well as classically. For every structure $\mathfrak{A}=(A, \ldots)$, for every  formula  
$\varphi=\varphi(\mathsf{x_0, x_1, \ldots, x_{n-1}})$ in the elementary language of the structure $\mathfrak{A}$, for all $a_0, a_1, \ldots, a_{n-1}$ in $A$,
  \[\mathfrak{A}\models \varphi[a_0, a_1, \dots, a_{n-1}]\] if and only if
  
   the formula $\varphi$ is true in the structure $\mathfrak{A}$, provided we interpret the individual variables $\mathsf{x_0, x_1, \ldots, x_{n-1}}$ by $a_0, a_1, \ldots, a_{n-1}$, respectively.

For every structure $\mathfrak{A}=(A, \ldots)$, for every  sentence $\varphi$ in the elementary language of the structure 
$\mathfrak{A}$,  \[\mathfrak{A}\models \varphi\] if the sentence $\varphi$ is true in the structure $\mathfrak{A}$.

For every $\delta$, we define a proposition $Pr_\delta$, as follows.  $Pr_\delta := \exists n[\delta(n) \neq 0]$.

\begin{theorem}\label{T:ftlogiceqprep}
The following statements are equivalent in $\mathsf{BIM}$:
\begin{enumerate}[\upshape (i)]
\item  $\forall \alpha[(2^\omega, \mathcal{G}_{\alpha^{\upharpoonright 0}}, Pr_{\alpha^{\upharpoonright 1}})\models \mathsf{\forall x [P(x) \vee A] \rightarrow (\forall x[P(x)] \vee A)}$.
\item  $\forall \alpha[(2^\omega, \mathcal{G}_{\alpha^{\upharpoonright 0}}, \mathcal{G}_{\alpha^{\upharpoonright 1}})\models\mathsf{\forall x[P(x) \vee Q(x)] \rightarrow (\forall x[P(x)] \vee \exists x[Q(x)])}$.
\end{enumerate}
\end{theorem}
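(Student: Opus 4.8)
The plan is to prove the two implications $(i)\rightarrow(ii)$ and $(ii)\rightarrow(i)$ directly, by suitable codings, without appeal to $\mathbf{FT}$ or $\mathbf{KA}$ — these are purely ``logical'' equivalences in $\mathsf{BIM}$. The key observation throughout is that a proposition of the form $Pr_\delta=\exists n[\delta(n)\neq 0]$ can be recoded as membership in an open subset of $\mathcal{C}$: given $\delta$, define $\delta^\ast$ by $\delta^\ast(s)\neq 0 \leftrightarrow \exists n<\mathit{length}(s)[\delta(n)\neq 0]$, and then $Pr_\delta \leftrightarrow (\mathcal{C}\subseteq\mathcal{G}_{\delta^\ast}) \leftrightarrow \forall\gamma\in\mathcal{C}[\gamma\in\mathcal{G}_{\delta^\ast}]$, since $\mathcal{G}_{\delta^\ast}$ is either all of $\mathcal{C}$ (if some $\delta(n)\neq 0$) or empty (if not). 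Conversely, a statement $\exists x[Q(x)]$ interpreted over $\mathcal{C}$ as $\exists\gamma\in\mathcal{C}[\gamma\in\mathcal{G}_{\beta}]$ can be collapsed to a $Pr$-proposition: using the fan, search through all binary sequence numbers $s$ of length $n$ to see whether $s\in\mathit{Sec}_n(D_\beta)$ (in the notation of Subsection \ref{SS:secured}); put $\delta(n)\neq 0 \leftrightarrow 0\in\mathit{Sec}_n(D_\beta)$, and note $Pr_\delta \leftrightarrow \exists\gamma\in\mathcal{C}[\gamma\in\mathcal{G}_\beta]$ holds in $\mathsf{BIM}$ without $\mathbf{FT}$, because the equivalence $\exists n[0\in\mathit{Sec}_n(D_\beta)] \leftrightarrow \mathit{Bar}_\mathcal{C}(D_\beta)$ is needed only in the ``$\Leftarrow$'' direction, which is exactly $\mathbf{FT}$, whereas for an \emph{open} set $\mathcal{G}_\beta$ given by an arbitrary $\beta$ one has, without $\mathbf{FT}$: $\mathcal{C}\subseteq\mathcal{G}_\beta$ iff the bar $D_\beta$ has a finite subbar iff $\exists n[0\in\mathit{Sec}_n(D_\beta)]$ — wait, that last step still uses $\mathbf{FT}$. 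So more care is needed here; see the remark below.

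For $(ii)\rightarrow(i)$: given $\alpha$, I want to show $(\mathcal{C},\mathcal{G}_{\alpha^0},Pr_{\alpha^1})$ validates $\mathsf{\forall x[P(x)\vee A]\rightarrow(\forall x[P(x)]\vee A)}$. Replace the atom $A$, i.e. $Pr_{\alpha^1}$, by the open set $\mathcal{G}_{(\alpha^1)^\ast}$ described above, and form $\beta$ with $\beta^0=\alpha^0$ and $\beta^1=(\alpha^1)^\ast$ (suitably shifted so that $\beta^1$ does not depend on $x$, which is automatic since $(\alpha^1)^\ast$ is a single element of $\mathcal{N}$). Then the hypothesis $\forall\gamma\in\mathcal{C}[\gamma\in\mathcal{G}_{\alpha^0}\;\vee\;Pr_{\alpha^1}]$ becomes $\forall\gamma\in\mathcal{C}[\gamma\in\mathcal{G}_{\beta^0}\vee\gamma\in\mathcal{G}_{\beta^1}]$, and instance $(ii)$ applied to $\beta$ yields $\forall\gamma\in\mathcal{C}[\gamma\in\mathcal{G}_{\beta^0}]\;\vee\;\exists\gamma\in\mathcal{C}[\gamma\in\mathcal{G}_{\beta^1}]$; the first disjunct is $\forall x[P(x)]$ and the second is $\exists\gamma\in\mathcal{C}[\gamma\in\mathcal{G}_{(\alpha^1)^\ast}]$, which — because $\mathcal{G}_{(\alpha^1)^\ast}$ is all of $\mathcal{C}$ or empty — is equivalent to $Pr_{\alpha^1}=A$. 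This direction is the easy one, since collapsing $Pr$ to an open set and expanding an open set back to $Pr$ when the set is ``$0$-or-$1$-dimensional'' involves no compactness.

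For $(i)\rightarrow(ii)$, the main obstacle — and the step I expect to be genuinely delicate — is that we must convert an \emph{arbitrary} open $\mathcal{G}_{\alpha^1}$ on the right into a $Pr$-proposition $A$ without using $\mathbf{FT}$. The trick, used elsewhere in the paper (cf. the proof of Lemma \ref{L:3resolutions}(ii) and Theorem \ref{T:fteq2}), is \emph{not} to test $\mathcal{C}\subseteq\mathcal{G}_{\alpha^1}$ but to incorporate a ``diagonal'' $\delta$: given $\alpha$, build $\beta$ whose second coordinate is designed so that, if one applies $(i)$ to it and gets the disjunct $A$, one can read off a uniform finite bound witnessing $\mathcal{C}\subseteq\mathcal{G}_{\alpha^1}$ — i.e. $\exists n[0\in\mathit{Sec}_n(D_{\alpha^1})]$ — and if one gets $\forall x[P(x)]$, one transfers it back to $\forall x[P(x)]$ of the original. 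Concretely, set $\beta^0$ to encode $\forall x[P(x)]$ as before and let $Pr_{\beta^1}$ be $\exists n[0\in\mathit{Sec}_n(D_{\alpha^1})]$, i.e. $\beta^1(n)\neq 0\leftrightarrow 0\in\mathit{Sec}_n(D_{\alpha^1})$. One checks in $\mathsf{BIM}$: $\forall\gamma\in\mathcal{C}[\gamma\in\mathcal{G}_{\alpha^0}\vee\gamma\in\mathcal{G}_{\alpha^1}]$ implies $\forall\gamma\in\mathcal{C}[\gamma\in\mathcal{G}_{\beta^0}\vee Pr_{\beta^1}]$ — here one runs the backwards-induction argument on $\mathit{Sec}_n$ along a single branch, exactly as in the cited proofs, so this implication does \emph{not} need $\mathbf{FT}$. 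Then $(i)$ gives $\forall x[P(x)]$, done, or $Pr_{\beta^1}=\exists n[0\in\mathit{Sec}_n(D_{\alpha^1})]$, which in $\mathsf{BIM}$ implies $\mathit{Bar}_\mathcal{C}(D_{\alpha^1})$ (the easy direction $\exists n[0\in\mathit{Sec}_n(D_\alpha)]\rightarrow\mathit{Bar}_\mathcal{C}(D_\alpha)$ is provable in $\mathsf{BIM}$), i.e. $\forall\gamma\in\mathcal{C}[\gamma\in\mathcal{G}_{\alpha^1}]$, in particular $\exists\gamma\in\mathcal{C}[\gamma\in\mathcal{G}_{\alpha^1}]=\exists x[Q(x)]$. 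So in either case $\forall x[P(x)]\vee\exists x[Q(x)]$, which is $(ii)$ for $\alpha$. I should double-check that the backwards-induction transfer of the hypothesis genuinely avoids $\mathbf{FT}$ — this is the crux and the one computation I would actually write out in full.
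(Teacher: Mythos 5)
Your proof of (ii) $\Rightarrow$ (i) is correct and is essentially the paper's own argument: replace the proposition $Pr_{\alpha^1}$ by an open set that is all of $\mathcal{C}$ or empty, apply (ii), and read the second disjunct back as $Pr_{\alpha^1}$. The gap is in (i) $\Rightarrow$ (ii), exactly at the step you flagged. You need the implication, in $\mathsf{BIM}$: $\forall \gamma \in \mathcal{C}[\gamma \in \mathcal{G}_{\alpha^0} \vee \gamma \in \mathcal{G}_{\alpha^1}] \rightarrow \forall \gamma \in \mathcal{C}[\gamma \in \mathcal{G}_{\beta^0} \vee Pr_{\beta^1}]$ with $\beta^0 = \alpha^0$ and $Pr_{\beta^1} := \exists n[0 \in \mathit{Sec}_n(D_{\alpha^1})]$. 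Specialize $\alpha^0 := \underline 0$: then $\mathcal{G}_{\beta^0}$ is empty, the hypothesis is $\mathit{Bar}_\mathcal{C}(D_{\alpha^1})$, and the conclusion is $\exists n[0 \in \mathit{Sec}_n(D_{\alpha^1})]$; so your claimed lemma, quantified over $\alpha$, is precisely $\mathbf{FT}$ in the formulation of Subsection \ref{SS:secured}, and hence is not a theorem of $\mathsf{BIM}$ (otherwise $\mathsf{BIM} + \mathbf{CT}$ would be inconsistent by Theorem \ref{T:ctka}). The backwards-induction arguments you cite (Lemma \ref{L:3resolutions}(ii), Theorem \ref{T:fteq2}) do not rescue this: there the diagonal branch $\delta$ is built from an extra choice datum (the $\exists$-half of the contraposition statement, or $\mathbf{\Pi^0_1}$-$\mathbf{AC_{0,0}^{<2}}$) telling you, at each node, in which direction securedness transfers; the bare hypothesis $\forall \gamma \in \mathcal{C}[\gamma \in \mathcal{G}_{\alpha^0} \vee \gamma \in \mathcal{G}_{\alpha^1}]$ supplies no such datum, so there is no branch to induct along.

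The source of the trouble is a conflation, already visible in your opening paragraph, of $\exists \mathsf{x}[\mathsf{Q(x)}]$ with $\forall \mathsf{x}[\mathsf{Q(x)}]$: the number $\exists n[0 \in \mathit{Sec}_n(D_\beta)]$ is the (FT-mediated) collapse of $\mathcal{C} \subseteq \mathcal{G}_\beta$, not of $\exists \gamma \in \mathcal{C}[\gamma \in \mathcal{G}_\beta]$. But the existential needs no collapse machinery at all: provably in $\mathsf{BIM}$, $\exists \gamma \in \mathcal{C}[\gamma \in \mathcal{G}_{\alpha^1}] \leftrightarrow \exists s[s \in \mathit{Bin} \;\wedge\; \alpha^1(s) \neq 0]$, since such an $s$ extends by zeros to a point of $\mathcal{C}$ in $\mathcal{G}_{\alpha^1}$ and conversely any witness $\gamma$ yields such an initial segment. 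This right-hand side is itself a $Pr$-proposition. Hence (i) $\Rightarrow$ (ii) is as elementary as the other direction: from $\forall \mathsf{x}[\mathsf{P(x) \vee Q(x)}]$ pass to $\forall \mathsf{x}[\mathsf{P(x)} \vee \exists \mathsf{y}[\mathsf{Q(y)}]]$, i.e.\ $\forall \gamma \in \mathcal{C}[\gamma \in \mathcal{G}_{\alpha^0} \vee A]$ with $A$ the $Pr$-proposition above, apply (i), and read $\forall \mathsf{x}[\mathsf{P(x)}] \vee A$ as $\forall \mathsf{x}[\mathsf{P(x)}] \vee \exists \mathsf{x}[\mathsf{Q(x)}]$. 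This is the paper's proof; no $\mathit{Sec}_n$, no bars, no $\mathbf{FT}$ enter either direction.
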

\begin{proof} (i) $\Rightarrow$ (ii). Let $\alpha$ be given such that $(2^\omega, \mathcal{G}_{\alpha^{\upharpoonright 0}}, \mathcal{G}_{\alpha^{\upharpoonright 1}}) \models \mathsf{\forall x [P(x) \vee Q(x)]}$.  Note that $(2^\omega, \mathcal{G}_{\alpha^{\upharpoonright 0}}, \mathcal{G}_{\alpha^{\upharpoonright 1}}) \models \mathsf{\forall x [P(x) \vee \exists y[Q(y)]]}$. Define $\beta$ such that $\beta^{\upharpoonright 0} = \alpha^{\upharpoonright 0}$ and $\forall n[\beta^{\upharpoonright 1}(n)\neq 0\leftrightarrow \bigl(\alpha^{\upharpoonright 1}(n)\neq 0 \;\wedge\;n\in2^{<\omega}\bigr)]$. Note that $(2^\omega, \mathcal{G}_{\beta^{\upharpoonright 0}}, Pr_{\beta^{\upharpoonright 1}}) \models \mathsf{\forall x [P(x) \vee A]}$. Using (i), conclude that $(2^\omega, \mathcal{G}_{\beta^{\upharpoonright 0}}, Pr_{\beta^{\upharpoonright 1}}) \models \mathsf{\forall x [P(x)] \vee A}$ and that $(2^\omega, \mathcal{G}_{\alpha^{\upharpoonright 0}}, \mathcal{G}_{\alpha^{\upharpoonright 1}}) \models \mathsf{\forall x [P(x)] \vee \exists x[Q(x)]}$.
 
\smallskip
(ii) $\Rightarrow$ (i). Let $\alpha$ be  given such that $(2^\omega, \mathcal{G}_{\alpha^{\upharpoonright 0}}, Pr_{\alpha^{\upharpoonright 1}}) \models \mathsf{\forall x[P(x) \vee A]}$. Define $\beta$ such that $\beta^{\upharpoonright 0} = \alpha^{\upharpoonright 0}$ and  $\forall n[\beta^{\upharpoonright 1}(n)\neq 0\leftrightarrow \bigl(\exists i<n[\alpha^{\upharpoonright 1}(i) \neq 0] \;\wedge\;n \in 2^{<\omega}\bigr)]$. 
Note that, for each $\gamma$ in $2^\omega$, $\gamma \in \mathcal{G}_{\beta^{\upharpoonright 1}}$ if and only if $Pr_{\alpha^{\upharpoonright 1}}$. Conclude that $(2^\omega, \mathcal{G}_{\beta^{\upharpoonright 0}}, \mathcal{G}_{\beta^{\upharpoonright 1}}) \models \mathsf{\forall x[P(x) \vee Q(x)]}$ and, using (ii), that
$(2^\omega, \mathcal{G}_{\beta^{\upharpoonright 0}}, \mathcal{G}_{\beta^{\upharpoonright 1}})\models \mathsf{\forall x[P(x)] \vee \exists x[Q(x)]}$ and that
$(2^\omega, \mathcal{G}_{\alpha^{\upharpoonright 0}}, Pr_{\alpha^{\upharpoonright 1}}) \models \mathsf{\forall x[P(x)] \vee A}$.
\end{proof}
For each $\alpha$, we define  the following statement:
\[\mathbf{LPO}^\alpha:\;\forall \varepsilon[\forall p[\overline{\underline 0}(2p+2)\perp\varepsilon\rightarrow \mathit{Bar}_{2^\omega}(D_{\overline \alpha p} )]\;\vee \;\exists n[\varepsilon (n) \neq 0]].\]

\begin{lemma}\label{L:lpoalpha}\begin{enumerate}[\upshape (i)]
\item $\mathsf{BIM}\vdash \mathbf{LPO}\rightarrow \forall \alpha[\mathbf{LPO}^\alpha]$.
\item $\mathsf{BIM}\vdash \forall \alpha[\mathbf{LPO}^\alpha\rightarrow \mathbf{LLPO}^\alpha]$.

\end{enumerate} \end{lemma}
\begin{proof} The proof is left to the reader. \end{proof} 

\begin{theorem}\label{T:ftlogiceq}
 The following statements are equivalent in $\mathsf{BIM}\;+\;\mathbf{\Pi}^0_1$-$\mathbf{AC}_{\omega,2}$:
 \begin{enumerate}[\upshape(i)]

\item $\mathbf{FT}$.
\item  $\forall \alpha[(2^\omega, \mathcal{G}_{\alpha^{\upharpoonright 0}}, \mathcal{G}_{\alpha^{\upharpoonright 1}})\models \mathsf{\forall x \forall y[P(x) \vee Q(y)] \rightarrow (\forall x[P(x)] \vee \forall y[Q(y)])}] $.
\item  $\forall \alpha[(2^\omega, \mathcal{G}_{\alpha^{\upharpoonright 0}}, Pr_{\alpha^{\upharpoonright 1}})\models\mathsf{\forall x [P(x) \vee A] \rightarrow (\forall x[P(x)] \vee A)}] $.
\item $\forall \alpha[Bar_{2^\omega}(D_\alpha)\rightarrow \mathbf{LPO}^\alpha]$. 
\end{enumerate}\end{theorem}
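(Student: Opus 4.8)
The plan is to establish the cycle (i) $\Rightarrow$ (ii) $\Rightarrow$ (iii) $\Rightarrow$ (i), using Theorem \ref{T:ftlogiceqprep} to pass freely between the two logical schemes involving $Pr_{\alpha^1}$ and involving a second open set $\mathcal{G}_{\alpha^1}$, and using Theorem \ref{T:fteq2} together with Theorem \ref{L:c0c1} to connect statement (ii) to $\mathbf{\Sigma_0^1}$-$\mathbf{Contr}(\mathbf{AC_{<2,1}^\mathcal{C}})$, which is already known to be equivalent to $\mathbf{FT}$ over $\mathsf{BIM}+\mathbf{\Pi^0_1}$-$\mathbf{AC_{0,0}^{<2}}$. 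The equivalence (ii) $\Leftrightarrow$ (iii) is in fact a corollary of Theorem \ref{T:ftlogiceqprep}: statement (iii) there is exactly (iii) here, and one shows that (ii) here is logically equivalent, over $\mathsf{BIM}$, to statement (ii) of Theorem \ref{T:ftlogiceqprep}, by observing that in a structure $(\mathcal{C}, \mathcal{G}_{\alpha^0}, \mathcal{G}_{\alpha^1})$ the scheme $\mathsf{\forall x \forall y[P(x) \vee Q(y)] \rightarrow (\forall x[P(x)] \vee \forall y[Q(y)])}$ and the scheme $\mathsf{\forall x[P(x) \vee Q(x)] \rightarrow (\forall x[P(x)] \vee \exists x[Q(x)])}$ amount to the same thing once one allows oneself to replace $\mathcal{G}_{\alpha^1}$ by the set $\mathcal{C}\setminus\mathcal{G}_{\alpha^1}$ or by a ``flattened'' version that is either all of $\mathcal{C}$ or empty. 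So the bulk of the work is (i) $\Rightarrow$ (ii) and (ii) $\Rightarrow$ (i), i.e. relating $\mathbf{FT}$ to the validity of the scheme $\mathsf{\forall x \forall y[P(x) \vee Q(y)] \rightarrow (\forall x[P(x)] \vee \forall y[Q(y)])}$ in all Cantor-space structures with two open predicates.

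For (i) $\Rightarrow$ (ii): assume $\mathbf{FT}$. Given $\alpha$ with $(\mathcal{C}, \mathcal{G}_{\alpha^0}, \mathcal{G}_{\alpha^1})\models \mathsf{\forall x \forall y[P(x) \vee Q(y)]}$, I want $\forall \gamma\in\mathcal{C}[\gamma\in\mathcal{G}_{\alpha^0}]$ or $\forall\gamma\in\mathcal{C}[\gamma\in\mathcal{G}_{\alpha^1}]$. The hypothesis says $\forall\gamma\in\mathcal{C}\forall\delta\in\mathcal{C}[\gamma\in\mathcal{G}_{\alpha^0}\vee\delta\in\mathcal{G}_{\alpha^1}]$. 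Taking $\gamma=\delta$ gives $\forall\gamma\in\mathcal{C}[\gamma\in\mathcal{G}_{\alpha^0}\vee\gamma\in\mathcal{G}_{\alpha^1}]$, so the decidable bar $B=\{s\in\mathit{Bin}\mid \alpha^0(s)\neq 0\;\vee\;\alpha^1(s)\neq 0\}$ (with a suitable recoding) is a bar in $\mathcal{C}$; by $\mathbf{FT}$ there is $m$ with $\mathit{Bar}_\mathcal{C}(D_{\overline{\beta}m})$ for the associated $\beta$, i.e. a uniform bound $k$ such that every $\gamma\in\mathcal{C}$ already has $\overline\gamma n\in D_{\alpha^0}$ or $\overline\gamma n\in D_{\alpha^1}$ for some $n\le k$. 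Now I run the argument of Theorem \ref{T:fteq2}/Theorem \ref{L:b45}(i) in miniature: the uniformity lets me use $\mathbf{\Sigma_0^1}$-$\mathbf{Contr}(\mathbf{AC_{<2,1}^\mathcal{C}})$-style reasoning — actually, it is cleanest to simply invoke Theorem \ref{T:fteq2} to get $\mathbf{\Sigma_0^1}$-$\mathbf{Contr}(\mathbf{AC_{<2,1}^\mathcal{C}})$ from $\mathbf{FT}$ and $\mathbf{\Pi^0_1}$-$\mathbf{AC_{0,0}^{<2}}$, and then deduce (ii). Indeed, from $\forall\gamma\in\mathcal{C}[\gamma\in\mathcal{G}_{\alpha^0}\vee\gamma\in\mathcal{G}_{\alpha^1}]$ one has in particular $\forall\gamma\in\mathcal{C}\exists i<2[\gamma^i\in\mathcal{G}_{(\alpha^?)^i}]$ for an appropriate relabelling $\alpha^?$ of $\alpha^0,\alpha^1$ ignoring the coordinate-splitting, so $\mathbf{\Sigma_0^1}$-$\mathbf{Contr}(\mathbf{AC_{<2,1}^\mathcal{C}})$ yields $\exists i<2\,\forall\gamma\in\mathcal{C}[\gamma\in\mathcal{G}_{(\alpha^?)^i}]$, which is precisely $\forall\gamma\in\mathcal{C}[\gamma\in\mathcal{G}_{\alpha^0}]\vee\forall\gamma\in\mathcal{C}[\gamma\in\mathcal{G}_{\alpha^1}]$, i.e. (ii).

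For (ii) $\Rightarrow$ (i): assume (ii). I claim (ii) directly delivers $\mathbf{\Sigma_0^1}$-$\mathbf{Contr}(\mathbf{AC_{<2,1}^\mathcal{C}})$, from which $\mathbf{FT}$ follows by Theorem \ref{T:fteq2} (which is available since we are working over $\mathsf{BIM}+\mathbf{\Pi^0_1}$-$\mathbf{AC_{0,0}^{<2}}$). So let $\alpha$ be given with $\forall\gamma\in\mathcal{C}\exists i<2[\gamma^i\in\mathcal{G}_{\alpha^i}]$. Writing $\mathsf{P}$ for $\mathcal{G}_{\alpha^0}$ and $\mathsf{Q}$ for $\mathcal{G}_{\alpha^1}$, the hypothesis says: for every pair $(\gamma^0,\gamma^1)$ obtained by splitting $\gamma\in\mathcal{C}$ into its even and odd subsequences, $\gamma^0\in\mathsf{P}$ or $\gamma^1\in\mathsf{Q}$. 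Since $\gamma\mapsto(\gamma^0,\gamma^1)$ maps $\mathcal{C}$ onto $\mathcal{C}\times\mathcal{C}$, this is exactly $\forall x\in\mathcal{C}\forall y\in\mathcal{C}[x\in\mathsf{P}\vee y\in\mathsf{Q}]$, i.e. $(\mathcal{C},\mathcal{G}_{\alpha^0},\mathcal{G}_{\alpha^1})\models\mathsf{\forall x\forall y[P(x)\vee Q(y)]}$. By (ii), $(\mathcal{C},\mathcal{G}_{\alpha^0},\mathcal{G}_{\alpha^1})\models\mathsf{\forall x[P(x)]\vee\forall y[Q(y)]}$, that is $\forall\gamma\in\mathcal{C}[\gamma\in\mathcal{G}_{\alpha^0}]\vee\forall\gamma\in\mathcal{C}[\gamma\in\mathcal{G}_{\alpha^1}]$, which is $\exists i<2\,\forall\gamma\in\mathcal{C}[\gamma\in\mathcal{G}_{\alpha^i}]$ — the conclusion of $\mathbf{\Sigma_0^1}$-$\mathbf{Contr}(\mathbf{AC_{<2,1}^\mathcal{C}})$. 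Hence $\mathbf{FT}$ by Theorem \ref{T:fteq2}.

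The main obstacle I anticipate is bookkeeping rather than conceptual: one must be careful about the coding of the onto map $\gamma\mapsto(\gamma^0,\gamma^1)$ from $\mathcal{C}$ to $\mathcal{C}\times\mathcal{C}$ and check that it is continuous and surjective in the strict $\mathsf{BIM}$ sense (this is routine from Subsection \ref{SS:subs}), and about passing between ``$Pr_{\alpha^1}$'' and a genuine open predicate — but that passage is precisely what Theorem \ref{T:ftlogiceqprep} already establishes, so (ii) $\Leftrightarrow$ (iii) costs nothing beyond quoting it. The one place requiring a little care is ensuring that the relabelling of $\alpha^0,\alpha^1$ as the two ``coordinates'' of a single $\beta$ with $\beta^0,\beta^1$ the right subsequences is done so that $\mathbf{\Sigma_0^1}$-$\mathbf{Contr}(\mathbf{AC_{<2,1}^\mathcal{C}})$ applies verbatim; this is a one-line definition of $\beta$.
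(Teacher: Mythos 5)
Your treatment of (i) $\Leftrightarrow$ (ii) is sound and is in fact the paper's own route: after composing with the (constructively surjective) splitting map $\gamma \mapsto (\gamma^0,\gamma^1)$ from $\mathcal{C}$ onto $\mathcal{C}\times\mathcal{C}$, statement (ii) is literally $\mathbf{\Sigma_0^1}$-$\mathbf{Contr}(\mathbf{AC_{<2,1}^\mathcal{C}})$, and Theorem \ref{T:fteq2} supplies both implications over $\mathsf{BIM}+\mathbf{\Pi^0_1}$-$\mathbf{AC_{0,0}^{<2}}$.

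The gap is in your dispatch of (iii). You claim that (ii) of the present theorem is ``logically equivalent, over $\mathsf{BIM}$'' to statement (ii) of Theorem \ref{T:ftlogiceqprep}, so that Theorem \ref{T:ftlogiceqprep} settles (ii) $\Leftrightarrow$ (iii) ``at no cost''. Only one direction of that claimed equivalence is cheap. From (ii) one does get (iii): replace the proposition $A = Pr_{\alpha^1}$ by a flattened open set $\mathcal{G}_{\beta^1}$ that contains every point of $\mathcal{C}$ exactly when $A$ holds; then $\mathsf{\forall x[P(x)\vee A]}$ yields $\mathsf{\forall x\forall y[P(x)\vee Q(y)]}$ and (ii) gives $\mathsf{\forall x[P(x)]\vee A}$ --- this is exactly the paper's proof of (ii) $\Rightarrow$ (iii). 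But the converse direction, which you need in order to make (iii) imply (ii) (or (i)), requires passing from the scheme $\mathsf{\forall x[P(x)\vee Q(x)]\rightarrow(\forall x[P(x)]\vee\exists x[Q(x)])}$ to the scheme $\mathsf{\forall x\forall y[P(x)\vee Q(y)]\rightarrow(\forall x[P(x)]\vee\forall y[Q(y)])}$, and neither of your suggested substitutions achieves this: replacing $\mathcal{G}_{\alpha^1}$ by $\mathcal{C}\setminus\mathcal{G}_{\alpha^1}$ is illegitimate (the complement of an open set is not an open set, and the schemes are restricted to open predicates), while flattening only discards information and so only gives the direction already discussed. Indeed, starting from $\mathsf{\forall x\forall y[P(x)\vee Q(y)]}$ and applying the same-variable scheme to $(P,Q)$ and to $(Q,P)$ (even after recoding via $P'(\gamma):=P(\gamma^0)$, $Q'(\gamma):=Q(\gamma^1)$) one only reaches $\mathsf{\forall x[P(x)]\vee\forall y[Q(y)]}\vee(\mathsf{\exists x[P(x)]\wedge\exists y[Q(y)]})$, and the third disjunct is a dead end; the analogous theorems about $(\mathbb{N},D_{\alpha^0},D_{\alpha^1})$ at the end of Section 8 (where the two schemes calibrate to $\mathbf{LLPO}$ and $\mathbf{LPO}$ respectively) show there is no purely logical passage between them. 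This missing step is precisely where the paper does real work: assuming (iii), it first derives $\mathbf{LPO}^\alpha$, from that $\mathbf{LLPO}^\alpha$, and then reruns the backwards-induction argument of Theorem \ref{T:fteq2}(ii)$\Rightarrow$(i), using $\mathbf{\Pi^0_1}$-$\mathbf{AC_{0,0}^{<2}}$, to conclude $\mathbf{FT}$. Your proposal as it stands leaves (iii) $\Rightarrow$ (i)/(ii) unproved.
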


\begin{proof}

(i) $\Rightarrow$ (ii). Note that, in $\mathsf{BIM} + \mathbf{\Pi}^0_1$-$\mathbf{AC}_{\omega, 2}$, $\mathbf{FT}$ implies $\mathbf{\Sigma}^0_1$-$\overleftarrow{\mathbf{AC}_{2,2^\omega}}$:\\
  $\forall \alpha[  \forall \gamma \in 2^\omega
  \exists i<2 [\gamma^{\upharpoonright i} \in \mathcal{G}_{\alpha^{\upharpoonright i}}] \rightarrow \exists i<2 [2^\omega \subseteq \mathcal{G}_{\alpha^{\upharpoonright i}}] ]$,  see Theorem \ref{T:fteq2}. 

\smallskip
(ii) $\Rightarrow$ (iii). Let $\alpha$ be given such that $(2^\omega, \mathcal{G}_{\alpha^{\upharpoonright 0}}, Pr_{\alpha^{\upharpoonright 1}})\models\mathsf{\forall x [P(x) \vee A]}$, i.e. $\forall \gamma \in 2^\omega\exists n \exists i<2[\alpha^{\upharpoonright 0}(\overline \gamma n)\neq 0\;\vee\; \alpha^{\upharpoonright 1}(n)\neq 0]$. Define $\beta$ such that $\beta^{\upharpoonright 0}=\alpha^{\upharpoonright 0}$ and $\forall s \in 2^{<\omega}[\beta^{\upharpoonright 1}(s)= \alpha^{\upharpoonright 1}(n)]$. 
Note $(2^\omega, \mathcal{G}_{\beta^{\upharpoonright 0}}, \mathcal{G}_{\beta^{\upharpoonright 1}})\models \mathsf{\forall x \forall y[P(x) \vee Q(y)]}$ and conclude, using (i), $(2^\omega, \mathcal{G}_{\beta^{\upharpoonright 0}}, \mathcal{G}_{\beta^{\upharpoonright 1}})\models \mathsf{\forall x[P(x)] \vee \forall y[Q(y)]}$ and $(2^\omega, \mathcal{G}_{\alpha^{\upharpoonright 0}}, Pr_{\alpha^{\upharpoonright 1}})\models\mathsf{ \forall x[P(x)] \vee A} $.

\smallskip
(iii) $\Rightarrow$ (iv). Assume (iii).

Let $\alpha$ be given such that $\mathit{Bar}_{2^\omega}(D_\alpha)$. 
We  have to prove $\mathbf{LPO}^\alpha$.
Let $\varepsilon$ be given. 
Define $\eta$ in $2^\omega$ such that $\eta^{\upharpoonright 1} = \varepsilon$, and, for each $p$,
if $ \underline{\overline 0}(2p+2)\sqsubset\varepsilon$, then $\eta^{\upharpoonright 0}(p) = \alpha(p)$, and,
 if   $\underline{\overline 0}(2p+2)\perp \varepsilon$,  then $\eta^{\upharpoonright 0}(p) =0$.
Note that, if $\eta^{\upharpoonright 0} \;\#\; \alpha$, then $\exists n[\varepsilon(n) =\eta^{\upharpoonright 1} (n) \neq 0]$.
Let $\gamma$ in $2^\omega$ be given. Find $n$ such that $\alpha(\overline \gamma n) \neq 0$. Either $\eta^0(\overline{\gamma}n) = \alpha(\overline{\gamma}n)\neq 0$ or $\eta^{\upharpoonright 0} \;\#\; \alpha$ and $\exists m[\eta{\upharpoonright 1}(m) \neq 0]$. We thus see that $\forall \gamma \in 2^\omega[\exists n[\eta^{\upharpoonright 0}(\overline{\gamma}n) \neq 0]\;\vee\;\exists m[\eta^{\upharpoonright 1}(m) \neq 0]]$.
 Use (iii) and conclude that   $\forall \gamma \in 2^\omega \exists n[\eta^{\upharpoonright 0}(\overline \gamma n) = 1]\;\vee\;\exists m[\eta^{\upharpoonright 1}(m) \neq 0]$.
 {\it First}, assume $\forall \gamma \in 2^\omega\exists n[\eta^{\upharpoonright 0}(\overline \gamma n) \neq 0]$ i.e. $\mathit{Bar}_{2^\omega}(D_{\eta^{\upharpoonright 0}})$. 
Let $p$ be given such that  $ \underline{\overline 0}(2p+2)\perp\varepsilon$. Note that $\forall m\ge p[\eta^{\upharpoonright 0}(m) = 0]$. Conclude that $\mathit{Bar}_{2^\omega}(D_{\overline{\eta^{\upharpoonright 0}} p})$ and $\mathit{Bar}_{2^\omega}(D_{\overline \alpha p})$. We thus see that $\forall p[ \underline{\overline 0}(2p+2)\perp\varepsilon \rightarrow \mathit{Bar}_2^\omega(D_{\overline \alpha p})]$.
{\it Secondly}, assume $\exists n[\eta^{\upharpoonright 1}(n)\neq 0]$. Conclude that $\exists n [\varepsilon(n) \neq 0]$. We thus see that $\forall\varepsilon[\overline{\underline 0}(2p+2)\perp \varepsilon \rightarrow Bar_2^\omega(D_{\overline \alpha p})]\;\vee\;\exists n[\varepsilon (n)\neq 0]]$, i.e. $\mathbf{LPO}^\alpha$. 

\smallskip (iv) $\Rightarrow$ (i). Use Lemma \ref{L:lpoalpha}(ii) and Theorem \ref{T:fteq2}(iii).
\end{proof}

The sentences mentioned in Theorems \ref{T:ftlogiceqprep} and \ref{T:ftlogiceq} are true in every structure $(\{0, 1, \ldots, n\}, P, Q, A)$ where $n$ is a natural number, $P, Q$ are arbitrary subsets of $\{0,1, \ldots, n\}$ and $A$ is an arbitrary proposition, that is,  these sentences hold in every \textit{finite} structure.
They sometimes fail to be true in countable structures, as appears from the next two theorems.

\begin{theorem}\label{T:llpological} The following statements are equivalent in $\mathsf{BIM}$. \begin{enumerate}[\upshape(i)]
\item $\mathbf{LLPO}$.
\item $\forall \alpha [(\omega,D_{\alpha^{\upharpoonright 0}}, D_{\alpha^{\upharpoonright 1}})\models \mathsf{\forall x \forall y[P(x) \vee Q(y)]}\rightarrow\mathsf{(\forall x[P(x)] \vee \forall y [Q(y)])}$.

\end{enumerate} \end{theorem}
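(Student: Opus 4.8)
The plan is to prove the two implications separately, treating $\mathbf{LLPO}$ exactly as in the proof of Theorem~\ref{T:wklllpo}, where a similar disjunction over even and odd indices was managed by hand. For the direction (i)$\Rightarrow$(ii), I would take $\alpha$ with $(\mathbb{N}, D_{\alpha^0}, D_{\alpha^1})\models \mathsf{\forall x\forall y[P(x)\vee Q(y)]}$, which unpacks to: for all $m,n$, $m\in D_{\alpha^0}$ or $n\in D_{\alpha^1}$, i.e.\ $\forall m \forall n[\alpha^0(m)\neq 0 \;\vee\; \alpha^1(n)\neq 0]$. From this I must produce $\forall m[\alpha^0(m)\neq 0]$ or $\forall n[\alpha^1(n)\neq 0]$. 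The idea is to feed $\mathbf{LLPO}$ a single $\beta$ built from $\alpha^0$ and $\alpha^1$: define $\beta$ so that $\beta(2p)\neq 0$ iff $\overline{\alpha^0}(p+1)=\underline{\overline 0}(p+1)$, i.e.\ $\beta(2p)$ records ``$\alpha^0$ is still zero up to $p$'', and symmetrically $\beta(2p+1)$ records ``$\alpha^1$ is still zero up to $p$''. Then the hypothesis $\forall m\forall n[\alpha^0(m)\neq 0 \vee \alpha^1(n)\neq 0]$ guarantees we cannot have both $\exists p[\alpha^0(p)\neq 0]$ and $\exists p[\alpha^1(p)\neq 0]$, so at least one of $\alpha^0$, $\alpha^1$ is identically $0$; but that is too strong to get directly, so instead one massages $\beta$ to be \emph{monotone along each parity} (once a value becomes nonzero it stays nonzero) and applies $\mathbf{LLPO}$ to the resulting object. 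Applying $\mathbf{LLPO}$ then yields: either $\forall p[\overline\beta(2p)=\underline{\overline 0}(2p)\rightarrow\beta(2p)=0]$, which forces $\forall p[\alpha^0(p)=0]$ — hence, reading $P$ on $\mathbb{N}$, $(\mathbb{N},D_{\alpha^0},D_{\alpha^1})\models\mathsf{\forall x[P(x)]}$ fails and instead $\mathsf{\forall y[Q(y)]}$ must hold — or the symmetric conclusion. Care is needed: $P(x)$ being $x\in D_{\alpha^0}$ means $\alpha^0(x)\neq 0$, and $\mathsf{\forall x[P(x)]}$ means $\forall x[\alpha^0(x)\neq 0]$; the $\mathbf{LLPO}$ alternative gives $\forall x[\alpha^0(x)=0]$, whence $\neg\mathsf{\forall x[P(x)]}$ is \emph{not} what we want — we want the \emph{other} disjunct $\mathsf{\forall y[Q(y)]}$, which the hypothesis plus $\forall x[\alpha^0(x)=0]$ does supply, since for each $n$, taking any $m$ gives $\alpha^0(m)\neq 0\vee\alpha^1(n)\neq 0$ and the left disjunct is false.

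For the direction (ii)$\Rightarrow$(i), I would be handed $\mathbf{LLPO}$'s input, namely $\alpha$, and produce a witness for $\mathbf{LLPO}$. Given $\alpha$, define $\beta$ in $\mathcal{N}$ so that $\beta^0$ enumerates a decidable set recording ``$\exists p[\overline\alpha(2p)=\underline{\overline 0}(2p)\wedge\alpha(2p)\neq 0]$ happened by stage $n$'' and $\beta^1$ the analogous thing for odd indices; more precisely, set $\beta^0(n)\neq 0$ iff there is $p$ with $2p\le n$, $\overline\alpha(2p)=\underline{\overline 0}(2p)$ and $\alpha(2p)\neq 0$, and $\beta^1(n)\neq 0$ iff there is $p$ with $2p+1\le n$, $\overline\alpha(2p+1)=\underline{\overline 0}(2p+1)$ and $\alpha(2p+1)\neq 0$. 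The key point is that $\alpha$ \emph{cannot} trigger both a ``bad even'' event and a ``bad odd'' event, because the initial-segment conditions $\overline\alpha(2p)=\underline{\overline 0}(2p)$ and $\overline\alpha(2q+1)=\underline{\overline 0}(2q+1)$ cannot both hold together with $\alpha(2p)\neq 0$ and $\alpha(2q+1)\neq 0$: whichever index is the least nonzero one kills the segment condition for the larger index. Hence $\forall m\forall n[m\in D_{\beta^0}\vee n\in D_{\beta^1}]$ — wait, I need it the other way: I want $\forall m\forall n[\neg(m\in D_{\beta^0})\vee\neg(n\in D_{\beta^1})]$ is \emph{not} positive, so instead I observe $\forall m \forall n [m \notin D_{\beta^0} \vee n \notin D_{\beta^1}]$ fails to be in the right form; the honest route is to note $(\mathbb{N},D_{\beta^0},D_{\beta^1})\models\mathsf{\forall x\forall y[P(x)\vee Q(y)]}$ where here I instead take $\beta^0,\beta^1$ to enumerate the \emph{complementary} events, i.e.\ $\beta^0(n)\neq 0$ iff some odd bad event has occurred by stage $n$ and $\beta^1(n)\neq 0$ iff some even bad event has. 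Then $m\in D_{\beta^0}$ or $n\in D_{\beta^1}$ for all $m,n$ is exactly the statement that not both an even and an odd bad event occur, which we just verified. Apply (ii): either $\forall x[x\in D_{\beta^0}]$, i.e.\ ``an odd bad event has occurred by every stage'', hence already by stage $0$, which is impossible; so in fact (ii) gives $\forall x[\alpha$-bad-odd-by-$x]\vee\forall y[\alpha$-bad-even-by-$y]$, and the first disjunct being true from stage $0$ on would mean a bad odd event with empty initial segment, absurd, so actually the disjunct that holds tells us \emph{no} bad event of one parity ever occurs, which is precisely one of the two alternatives of $\mathbf{LLPO}$ for $\alpha$.

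The main obstacle is the bookkeeping around \emph{which} disjunct of the abstract logical scheme corresponds to \emph{which} $\mathbf{LLPO}$ alternative, and the fact that the scheme in (ii) is about $D_{\alpha^0}$, $D_{\alpha^1}$ (decidable sets) rather than enumerable ones, so the auxiliary $\beta$ must be arranged so that the relevant events are \emph{decidable} at each stage and \emph{monotone} in the stage, so that ``holds at every stage'' collapses to ``holds at stage $0$'' and can be refuted outright — this monotonization step, and getting the parity-of-the-least-nonzero-index argument exactly right, is where the care lies. The rest is the same pattern already used in the proofs of Theorems~\ref{T:wklllpo} and~\ref{T:lpollpo}, so I expect the write-up to be short once the definition of $\beta$ is pinned down.
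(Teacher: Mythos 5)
Both directions of your plan have the right general shape (a parity-interleaved $\beta$ for (i)$\Rightarrow$(ii), stage-wise ``bad events'' for (ii)$\Rightarrow$(i)), but the two key definitions are inverted, and with them neither direction goes through. For (i)$\Rightarrow$(ii) you misread the hypothesis at the outset: $\forall m\forall n[\alpha^0(m)\neq 0\vee\alpha^1(n)\neq 0]$ excludes the simultaneous existence of a \emph{zero} of $\alpha^0$ and a \emph{zero} of $\alpha^1$; it does not say ``we cannot have both $\exists p[\alpha^0(p)\neq 0]$ and $\exists p[\alpha^1(p)\neq 0]$'', and the target is that one of $D_{\alpha^0},D_{\alpha^1}$ equals $\mathbb{N}$, not that one of $\alpha^0,\alpha^1$ is identically $0$. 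This sign error propagates into your $\beta$: with $\beta(2p)\neq 0$ recording the cumulative condition ``$\alpha^0$ is zero up to $p$'', the even $\mathbf{LLPO}$-alternative together with the hypothesis and a least-index argument yields at best $\forall p[\beta(2p)=0]$, which for your $\beta$ says only $\alpha^0(0)\neq 0$ --- the opposite flavour of the claimed ``forces $\forall p[\alpha^0(p)=0]$'', and in any case not one of the two disjuncts $\mathsf{\forall x[P(x)]}$, $\mathsf{\forall y[Q(y)]}$. The flag must be pointwise, not cumulative: put $\beta(2p+i)=0$ iff $\alpha^i(p)\neq 0$, so a nonzero value of parity $i$ marks a zero of $\alpha^i$. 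The hypothesis then says nonzero values of both parities cannot coexist, and the odd alternative of $\mathbf{LLPO}$, by induction on the least nonzero position, gives $\forall p[\beta(2p+1)=0]$, i.e.\ $D_{\alpha^1}=\mathbb{N}$, i.e.\ $\mathsf{\forall y[Q(y)]}$; the even alternative gives $D_{\alpha^0}=\mathbb{N}$.

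For (ii)$\Rightarrow$(i) your ``complementary events'' repair does not work: swapping which parity $\beta^0$ and $\beta^1$ track does not negate membership. With your definition the antecedent of (ii), $\forall m\forall n[m\in D_{\beta^0}\vee n\in D_{\beta^1}]$, asserts that by every pair of stages some bad event has \emph{already occurred}; at $(m,n)=(0,0)$ it forces $\alpha(0)\neq 0$ and it fails outright for $\alpha=\underline 0$, so (ii) cannot be applied --- and your own remark that the first disjunct is ``impossible'' applies equally to the second, so your setup refutes both disjuncts instead of extracting an $\mathbf{LLPO}$-alternative. What is needed is to let \emph{non}-membership encode the bad event: define $\beta^i(p)=0$ iff $\overline\alpha(2p+i)=\underline{\overline 0}(2p+i)$ and $\alpha(2p+i)\neq 0$. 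Then $p\in D_{\beta^i}$ says there is no bad event of parity $i$ at $p$; the hypothesis of (ii) holds because bad events of both parities cannot coexist (compare the two witnessing indices and look at the smaller one), and the conclusion $\mathsf{\forall x[P(x)]\vee\forall y[Q(y)]}$ is then literally the disjunction $\mathbf{LLPO}$ demands for $\alpha$. With these two corrected definitions you recover the paper's proof; as written, both directions contain genuine gaps.
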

\begin{proof} (i) $\Rightarrow$ (ii). Let $\alpha $ be given. Define $\beta$ such that $\forall p\forall i<2[\beta(2p+i) = 0\leftrightarrow \alpha^{\upharpoonright i}(p)\neq 0]$. Assume that $(\omega, D_{\alpha^{\upharpoonright 0}}, D_{\alpha^{\upharpoonright 1}})\models \mathsf{\forall x \forall y[P(x) \;\vee\; Q(y)]}$. Conclude that $\forall p \forall q[\beta(2p) = 0 \;\vee\;\beta(2q+1) = 0]$. Apply $\mathbf{LLPO}$ and distinguish two cases. 

\textit{Case (1)}. $\forall p[2p+1\neq \mu m[\beta(m)\neq 0]]$. Assume we find $p$ such that $\beta(2p+1)\neq 0$. Determine $q$ such that $q\le p$ and $\beta(2q)\neq 0$. Contradiction.  
Conclude that $\forall p[\beta(2p+1)=0]$ and $D_{\alpha^{\upharpoonright 1}} = \omega$. 

\textit{Case (2)}.  $\forall p[2p\neq\mu m[\beta(m)\neq 0]]$. Then, for similar reasons,  $D_{\alpha^{\upharpoonright 0}} = \omega$.

In both cases $(\omega,D_{\alpha^{\upharpoonright 0}}, D_{\alpha^{\upharpoonright 1}})\models \mathsf{\forall x[P(x)] \vee \forall y [Q(y)]}$.

\smallskip (ii) $\Rightarrow$ (i). Assume (ii). Let $\alpha$ be given. Define $\beta$ such that, for each $p$, for both $i<2$, $\beta^{\upharpoonright i}(p) = 0$ if and only if  $2p+i = \mu n[\alpha(n)\neq 0]$.  Note that  $(\omega, D_{\beta^{\upharpoonright 0}}, D_{\beta^{\upharpoonright 1}}) \models \mathsf{\forall x \forall y[P(x) \;\vee\; Q(y)]}$.  Conclude that  $(\omega, D_{\beta^{\upharpoonright 0}}, D_{\beta^{\upharpoonright 1}}) \models \mathsf{\forall x [P(x)] \;\vee\; \forall y[Q(y)]}$.
Conclude that  \textit{either} $(\omega, D_{\beta^{\upharpoonright 0}}, D_{\beta^{\upharpoonright 1}}) \models \mathsf{\forall x [P(x)] }$ and $\forall p[2p\neq \mu n[\alpha(n)\neq 0]] $, \textit{or}

$(\omega, D_{\beta^{\upharpoonright 0}}, D_{\beta^{\upharpoonright 1}}) \models \mathsf{\forall y [Q(y)] }$ and $\forall p[2p+1\neq \mu n[\alpha(n)\neq 0]]$. Conclude $\mathbf{LLPO}$.
 \end{proof}

\begin{theorem} The following statements are equivalent in $\mathsf{BIM}$. \begin{enumerate}[\upshape(i)]
\item $\mathbf{LPO}$.

\item $\forall \alpha [(\omega,D_{\alpha^{\upharpoonright 0}}, D_{\alpha^{\upharpoonright 1}})\models \mathsf{\forall x [P(x) \vee Q(x)]}\rightarrow\mathsf{(\forall x[P(x)] \vee \exists x [Q(x)])}$.

\item  $\forall \alpha [(\omega,D_{\alpha^{\upharpoonright 0}}, Pr_{\alpha^{\upharpoonright 1}})\models \mathsf{\forall x [P(x) \vee A]}\rightarrow\mathsf{(\forall x[P(x)] \vee A)}$.\end{enumerate} \end{theorem}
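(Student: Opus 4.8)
The plan is to prove the cycle (i) $\Rightarrow$ (ii) $\Rightarrow$ (iii) $\Rightarrow$ (i), following the pattern of Theorems \ref{T:ftlogiceqprep} and \ref{T:ftlogiceq} but with the structures $(\mathcal{C},\mathcal{G}_{\alpha^0},\mathcal{G}_{\alpha^1})$ replaced by the countable structures $(\mathbb{N},D_{\alpha^0},D_{\alpha^1})$ and with $\mathbf{FT}$ replaced by $\mathbf{LPO}$. The guiding intuition is that over $\mathbb{N}$ with the literal membership relation $n\in D_{\alpha^i}$ one needs only to settle a single $\mathbf{\Sigma^0_1}$-question, so $\mathbf{LPO}$ is exactly the right amount of omniscience; the equivalences (ii) $\leftrightarrow$ (iii) will be handled by the same bookkeeping device (folding a decidable set into a proposition) used in the proof of Theorem \ref{T:ftlogiceqprep}.

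For (i) $\Rightarrow$ (ii): assume $\mathbf{LPO}$ and let $\alpha$ satisfy $\forall n[\alpha^0(n)\neq 0\vee\alpha^1(n)\neq 0]$. Apply $\mathbf{LPO}$ to $\alpha^1$. If $\exists n[\alpha^1(n)\neq 0]$, then $(\mathbb{N},D_{\alpha^0},D_{\alpha^1})\models\exists x[Q(x)]$; if $\forall n[\alpha^1(n)= 0]$, the hypothesis forces $\forall n[\alpha^0(n)\neq 0]$, that is, $(\mathbb{N},D_{\alpha^0},D_{\alpha^1})\models\forall x[P(x)]$. (The analogous direct argument, applying $\mathbf{LPO}$ to $\alpha^1$ and reading $Pr_{\alpha^1}$ as $A$, also gives (i) $\Rightarrow$ (iii), so one could alternatively close the loop as two biconditionals.)

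For (ii) $\Rightarrow$ (iii): given $\alpha$ with $\forall n[\alpha^0(n)\neq 0\vee Pr_{\alpha^1}]$, define $\beta$ by $\beta^0:=\alpha^0$ and $\beta^1(n)\neq 0\leftrightarrow\bigl(\alpha^0(n)=0\;\vee\;\exists m\le n[\alpha^1(m)\neq 0]\bigr)$; this is a decidable condition, so $\beta$ exists in $\mathsf{BIM}$. Then $\forall n[\beta^0(n)\neq 0\vee\beta^1(n)\neq 0]$ holds outright (if $\alpha^0(n)\neq 0$ take the first disjunct, otherwise $\beta^1(n)\neq 0$ by the first clause of its definition), so $(\mathbb{N},D_{\beta^0},D_{\beta^1})\models\forall x[P(x)\vee Q(x)]$ and (ii) yields $\forall n[\beta^0(n)\neq 0]\vee\exists n[\beta^1(n)\neq 0]$. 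The first disjunct is $\forall x[P(x)]$. For the second, one checks $\exists n[\beta^1(n)\neq 0]\leftrightarrow Pr_{\alpha^1}$: it clearly follows from $Pr_{\alpha^1}$, and conversely it unfolds to $\exists n[\alpha^0(n)=0]\vee Pr_{\alpha^1}$, whose first alternative implies $Pr_{\alpha^1}$ by the standing hypothesis of (iii). Hence $\forall n[\alpha^0(n)\neq 0]\vee Pr_{\alpha^1}$, which is the conclusion of (iii). This equivalence $\exists n[\beta^1(n)\neq 0]\leftrightarrow Pr_{\alpha^1}$ is the only delicate point of the whole argument and is precisely why $\beta^1$ gets the asymmetric clause $\alpha^0(n)=0$; everything else is a decidable case analysis.

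For (iii) $\Rightarrow$ (i): given $\delta$, put $\alpha^1:=\delta$ and let $\alpha^0(n)\neq 0\leftrightarrow\overline\delta(n+1)=\underline{\overline 0}(n+1)$. Inspecting $\delta(0),\ldots,\delta(n)$ shows $\forall n[\alpha^0(n)\neq 0\vee Pr_\delta]$, so $(\mathbb{N},D_{\alpha^0},Pr_{\alpha^1})\models\forall x[P(x)\vee A]$ and (iii) yields $\forall n[\alpha^0(n)\neq 0]\vee Pr_\delta$; since $\forall n[\alpha^0(n)\neq 0]$ is equivalent to $\forall n[\delta(n)=0]$, this is exactly $\mathbf{LPO}$ for $\delta$, and $\delta$ was arbitrary. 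I expect no further obstacles: apart from the one subtlety noted above, all verifications are routine bounded decidable checks.
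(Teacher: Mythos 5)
Your proof is correct. It is worth spelling out where it differs from the paper's, because the difference is not merely cosmetic. The paper proves (i) $\Rightarrow$ (ii) and (ii) $\Rightarrow$ (i), and disposes of (ii) $\Leftrightarrow$ (iii) with a one-line note asserting that, for \emph{every} $\alpha$, the (ii)-instance at $\alpha$ is equivalent to the (iii)-instance at the \emph{same} $\alpha$. The direction of that pointwise claim needed for (ii) $\Rightarrow$ (iii) is not available in $\mathsf{BIM}$: take $\alpha$ with $\alpha^0(n)\neq 0 \leftrightarrow \delta(n)=0$ and $\alpha^1(n)\neq 0\leftrightarrow \exists k<n[\delta(k)\neq 0]$; then the (ii)-instance at this $\alpha$ is provable outright (its antecedent $\forall n[\delta(n)=0 \vee \exists k<n[\delta(k)\neq 0]]$ yields $\forall n[\delta(n)=0]$ by induction), while the (iii)-instance at the same $\alpha$ has a provable antecedent and a conclusion amounting to $\forall n[\delta(n)=0]\vee\exists k[\delta(k)\neq 0]$, so the pointwise biconditional would give $\mathbf{LPO}$ outright. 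Your handling of (ii) $\Rightarrow$ (iii) — passing to the modified $\beta$ with $\beta^1(n)\neq 0 \leftrightarrow (\alpha^0(n)=0 \vee \exists m\le n[\alpha^1(m)\neq 0])$ and then using the standing hypothesis of (iii) to turn $\exists n[\alpha^0(n)=0]$ into $Pr_{\alpha^1}$ — is precisely the folding device the paper itself uses in Theorem~\ref{T:ftlogiceqprep}, and here it is genuinely needed rather than optional; the point you single out as the only delicate one is indeed the crux. The remaining deviations are harmless variants: you apply $\mathbf{LPO}$ to $\alpha^1$ where the paper applies it to $\alpha^0$ in (i) $\Rightarrow$ (ii), and you close the cycle with (iii) $\Rightarrow$ (i) via a cumulative-prefix $\alpha^0$ instead of the paper's (ii) $\Rightarrow$ (i) with $\beta^0(n)=0 \leftrightarrow \alpha(n)\neq 0$; both are sound, and your cycle yields all the implications the theorem asserts.
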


\begin{proof} (i) $\Rightarrow$ (ii). Let $\alpha$ be given such that $(\omega,D_{\alpha^{\upharpoonright 0}}, D_{\alpha^{\upharpoonright 1}})\models \mathsf{\forall x [P(x) \vee Q(x)]}$, i.e. $\forall n[\alpha^{\upharpoonright 0}(n)\neq 0\;\vee\;\alpha^{\upharpoonright 1}(n)\neq 0]$.  Using $\mathbf{LPO}$, distinguish two cases. \textit{Either} $\forall n[\alpha^{\upharpoonright 0}(n) \neq 0]$ and $(\omega,D_{\alpha^{\upharpoonright 0}}, D_{\alpha^{\upharpoonright 1}})\models \mathsf{\forall x[P(x)]}$, \textit{or} $\exists n[\alpha^{\upharpoonright 0}(n) =0]$ and $\exists n[\alpha^{\upharpoonright 1}(n) \neq 0]$ and $(\omega,D_{\alpha^{\upharpoonright 0}}, D_{\alpha^{\upharpoonright 1}})\models \mathsf{\exists x[Q(x)]}$.

\smallskip (ii) $\Rightarrow$ (i). Let $\alpha$  be given. Define $\beta$  such that  $\forall n[\beta^{\upharpoonright 0}(n) = 0\leftrightarrow\alpha(n) \neq 0]$ and $\beta^{\upharpoonright 1}=\alpha$.  Note that $\forall n[\beta^{\upharpoonright 0}(n)\neq 0\;\vee\;\beta^1(n)\neq 0]$, i.e. $(\omega, D_{\beta^{\upharpoonright 0}}, D_{\beta^{\upharpoonright 1}})\models \mathsf{\forall x[P(x)\;\vee \;Q(x)]}$. Use (ii) and distinguish two cases. \textit{Either }$(\omega, D_{\beta^{\upharpoonright 0}}, D_{\beta^{\upharpoonright 1}})\models \mathsf{\forall x[P(x)]}$ and $\forall n[\alpha(n) = 0]$, \textit{or} $(\omega, D_{\beta^{\upharpoonright 0}}, D_{\beta^{\upharpoonright 1}})\models \mathsf{\exists x[Q(x)]}$ and $\exists n[\alpha(n) \neq 0]$. Conclude $\forall \alpha[\forall n[\alpha(n)=0]\;\vee\;\exists n[\alpha(n)\neq0]]$, i.e. $\mathbf{LPO}$.

\smallskip (i) $\Rightarrow$ (iii). Let $\alpha$ be  given such that $(\omega,D_{\alpha^{\upharpoonright 0}}, Pr_{\alpha^{\upharpoonright 1}})\models \mathsf{\forall x [P(x) \vee A]}$, i.e. $\forall n[\alpha^{\upharpoonright 0}(n)\neq 0\;\vee\;\exists m[\alpha^{\upharpoonright 1}(m)\neq 0]]$.  Using $\mathbf{LPO}$, distinguish two cases.  \textit{Either} $\forall n[\alpha^{\upharpoonright 0}(n) \neq 0]$ and $(\omega,D_{\alpha^{\upharpoonright 0}}, Pr_{\alpha^{\upharpoonright 1}})\models \mathsf{\forall x[P(x)]}$, \textit{or} $\exists n[\alpha^{\upharpoonright 0}(n) =0]$ and $\exists m[\alpha^{\upharpoonright 1}(m) \neq 0]$ and $(\omega,D_{\alpha^{\upharpoonright 0}}, Pr_{\alpha^{\upharpoonright 1}})\models \mathsf{A}$.

\smallskip (iii) $\Rightarrow$ (i).  Let $\alpha$  be given. Define $\beta$ such that $\forall n[\beta(n)=0
\leftrightarrow \alpha(n)\neq 0]$. Note that $\forall n[\beta(n)\neq 0\;\vee\; \alpha(n)\neq 0]$, i.e. $(\omega, D_\beta, Pr_\alpha)\models \mathsf{\forall x[P(x)\;\vee \;A]}$. Use (iii) and distinguish two cases. \textit{Either }$(\omega, D_\beta, Pr_\alpha)\models \mathsf{\forall x[P(x)]}$ and $\forall n[\beta(n)\neq 0]$ and $\forall n[\alpha(n) = 0]$, \textit{or} $(\omega, D_\beta, Pr_\alpha)\models \mathsf{A}$ and $\exists n[\alpha(n) \neq 0]$. Conclude $\forall \alpha[\forall n[\alpha(n)=0]\;\vee\;\exists n[\alpha(n)\neq0]]$, i.e. $\mathbf{LPO}$. \end{proof}
\subsection{A note}

 According to Theorem \ref{T:ftlogiceq}(iii),  $\mathsf{BIM}+\mathbf{\Pi}^0_1$-$\mathbf{AC}_{\omega,2}$ proves that  $\mathbf{FT}$ is equivalent to:\begin{quote}
{\it For all $\alpha$, if $\forall \gamma \in 2^\omega\exists n[ \alpha^{\upharpoonright 0}(\overline \gamma n) \neq 0 \vee \alpha^{\upharpoonright 1}(n) \neq 0]$, \\ then  either $\forall \gamma \in 2^\omega\exists n[\alpha^{\upharpoonright 0}(\overline \gamma n) \neq 0]$ or $  \exists n[\alpha^{\upharpoonright 1}(n) \neq 0]$}.\end{quote}

One might ask if $\mathsf{BIM}+\mathbf{\Pi}^0_1$-$\mathbf{AC}_{\omega,2}$ proves that  $\neg!\mathbf{FT}$ is equivalent to the following statement: \begin{quote} {\it  There exists $\alpha$ such that $\forall \gamma \in 2^\omega\exists n[ \alpha^{\upharpoonright 0}(\overline \gamma n) \neq 0 \vee \alpha^{\upharpoonright 1}(n) \neq 0]$ and  $\neg \forall \gamma \in 2^\omega\exists n[\alpha^{\upharpoonright 0}(\overline \gamma n) \neq 0]$ and  $\alpha^{\upharpoonright 1}=\underline 0,$} \end{quote}
i.e.
\begin{quote}{\it 
 $(\ast)$: There exists $\alpha$ such that \\$\forall \gamma \in 2^\omega\exists n[ \alpha(\overline \gamma n) \neq 0]$ and $\neg \forall \gamma \in 2^\omega\exists n[\alpha(\overline \gamma n) \neq 0]$.} \end{quote}

The  formula $(\ast)$ is an outright contradiction!

\section{The determinacy of finite and infinite games}

 We consider games of perfect information for players $I,II$. First finite games, then games with finitely many moves where the players may choose out of infinitely many alternatives, and then games of infinite length. In the last Subsection we prove that, in $\mathsf{BIM}$,  $\mathbf{FT}$ is an equivalent of  the Intuitionistic Determinacy Theorem. This theorem says that  \textit{every subset of $(\omega\times 2)^\omega$ is weakly determinate}.

\subsection{Finite games}
\subsubsection{Finite Choice and a contraposition of Finite Choice} \hfill 

\begin{lemma}\label{L:comb0}
$\mathsf{BIM}$ proves the following scheme:
$$\forall m[\forall n<m[A(n)\vee B]\rightarrow (\forall n<m[A(n)]\;\vee\; B)]$$\end{lemma} \begin{proof} The proof is straightforward, by induction. \end{proof}

\begin{lemma}\label{L:comb}
 $\mathsf{BIM}$ proves the following schemes:
\begin{enumerate}[\upshape (i)] \item  $\forall k[\forall n< k\exists m[R(n,m)] \rightarrow \exists s \forall n< k[R\bigl(n,s(n)\bigr)]]$.
\item 

 $\forall k\forall l[\forall s:k \rightarrow l\exists n < k[R\bigl(n,s(n)\bigr)] \rightarrow \exists n < k \forall m< l[R(n,m)]]$.
\end{enumerate}\end{lemma}
\begin{proof} (i) The proof is by induction on $k$ and left to the reader.

\smallskip(ii) The proof is by induction on $k$ and uses Lemma \ref{L:comb0}. Note that there is nothing to prove if $k=0$. Now assume the statement holds for a certain $k$. 

Assume   $\forall s:(k+1)\rightarrow l\exists n < k+1[ R\bigl(n,s(n)\bigr)]$. Note that  $$\forall j<l\forall s:k \rightarrow l [\exists n < k[R\bigl(n, s(n)\bigr)]\;\vee\;R(k,j) ],$$ and, therefore, by Lemma \ref{L:comb0},$$\forall j<l[R(k,j)\;\vee\;\forall s:k \rightarrow l \exists n < k[R\bigl(n, s(n)\bigr)] ].$$

Using the induction hypothesis, we conclude that $$\forall j <l[R(k,j)]\;\vee\;\exists n<k\forall m<l[ R(n,m)],$$ i.e. $\exists n < k +1\forall m<l [R(n,m)]$.
\end{proof}
\subsubsection{Finite games}\label{SS:finitegames} We want to study finite and infinite games for players $I$ and $II$ of perfect information. We first consider finite games. In such games,  there are finitely many moves, and for each move there are only finitely many alternatives.

\smallskip

 Assume $X\subseteq\omega$. Let $n,l$ be given such that $l>0$. \\Players $I$ and $II$ play the \textit{$I$-game for $X$  in $\mathit{Seq}(n,l)$} and the \textit{$II$-game for $X$ in $\mathit{Seq}(n,l)$} in the same way, as follows. \textit{First,} player $I$ chooses $i_0<l$, \textit{then} player $II$ chooses $i_1<l$, and they continue until a finite sequence $\langle i_0, i_1, \ldots, i_{n-1}\rangle$ of length $n$ has been formed, a so-called {\it final position}.    \textit{Player $I$ wins the play $\langle i_0, i_1, \ldots, i_{n-1}\rangle$   in the $I$-game for $X$} if and only if $\langle i_0, i_1, \ldots, i_{n-1} \rangle \in X$.    \textit{Player $II$ wins the play $\langle i_0, i_1, \ldots, i_{n-1}\rangle$  
in the $II$-game for $X$} if and only if  $\langle i_0, i_1, \ldots, i_{n-1} \rangle \in X$.

We define  $\boldsymbol{\varphi}$ such that, for all $n$, for all $l>0$,   $$\boldsymbol{\varphi}(n,l):=\mu a\forall i< n\forall c \in Seq(i, l)[c<a].$$

Every number coding a position in $Seq(n,l)$ that is not a final position is smaller than $\boldsymbol{\varphi}(n,l)$.

We define  $\boldsymbol{\psi}$   such that, for all $n$, for all $l>0$,  $$\boldsymbol{\psi}(n,l):=\mu a\forall s\in Seq\bigl(\boldsymbol{\varphi}(n,l),l\bigr)[s<a].$$

When studying games in $Seq(n,l)$ it suffices to consider strategies $s,t$ for player $I,II$, respectively, such that $s,t<\boldsymbol{\psi}(n,l)$. 

The reader should consult Subsubsection \ref{SSS:games} for some of the notations we are going to use, like `$\in_I$' and `$\in_{II}$'.

We define: \textit{$X\subseteq \omega$ is $I$-determinate in $\mathit{Seq}(n,l)$},    $\mathit{Det}^I_{\mathit{Seq}(n,l)}(X)$,
\\if and only if $\forall t < \boldsymbol{\psi}(n,l)\exists c [c \in_{II}t \;\wedge\;  c \in X]\rightarrow \exists s\forall c \in \mathit{Seq}(n,l)[c \in_I s \rightarrow c \in X],$

and: \textit{$X\subseteq \omega$ is $II$-determinate in $\mathit{Seq}(n,l)$},    $\mathit{Det}^{II}_{\mathit{Seq}(n,l)}(X)$,
\\if and only if $\forall s < \boldsymbol{\psi}(n,l)\exists c [c \in_{I}s \;\wedge\;  c \in X]\rightarrow \exists t\forall c \in \mathit{Seq}(n,l)[c \in_{II} t \rightarrow c \in X].$

\begin{theorem}[Determinacy of finite games]\label{T:finitedet}
Let $X\subseteq \omega$ be given.

 For every $l>0$, for every $n$, $\mathit{Det}^I_{\mathit{Seq}(n,l)}(X)$ and $\mathit{Det}^{II}_{\mathit{Seq}(n,l)}(X)$.

\end{theorem}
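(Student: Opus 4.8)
The plan is to prove by induction on $n$, with $l$ held fixed, that \emph{both} $\mathit{Det}^I_{\mathit{Seq}(n,l)}(X)$ and $\mathit{Det}^{II}_{\mathit{Seq}(n,l)}(X)$ hold for every $X\subseteq\mathit{Seq}(n,l)$; the two must be carried together, since peeling off the opening move of a game of length $n+1$ turns it into residual games of length $n$ in which the two players have exchanged roles. The base case $n=0$ is immediate: $\mathit{Seq}(0,l)=\{\langle\;\rangle\}$, every element of $X$ is $\langle\;\rangle$, and $\langle\;\rangle\in_I b$, $\langle\;\rangle\in_{II}c$ hold for all $b,c$, so each of $\mathit{Halfwin}^{I}_{\mathit{Seq}(0,l)}(X)$, $\mathit{Win}^{I}_{\mathit{Seq}(0,l)}(X)$, $\mathit{Halfwin}^{II}_{\mathit{Seq}(0,l)}(X)$, $\mathit{Win}^{II}_{\mathit{Seq}(0,l)}(X)$ is simply the statement $\langle\;\rangle\in X$.

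For the inductive step, fix $X\subseteq\mathit{Seq}(n+1,l)$ and for each $j<l$ set $X_j:=\{t\mid\langle j\rangle\ast t\in X\}\subseteq\mathit{Seq}(n,l)$. Removing the opening move $j$ swaps the players: a play $\langle j\rangle\ast t$ of the length-$(n+1)$ game $II$-obeys $c$ iff $t$ $I$-obeys the residual strategy $t'\mapsto c(\langle j\rangle\ast t')$; it $I$-obeys $b$ iff $b(\langle\;\rangle)=j$ and $t$ $II$-obeys $t'\mapsto b(\langle j\rangle\ast t')$; and $\langle j\rangle\ast t\in X$ iff $t\in X_j$. Using the definitions of $F$ and $G$ one checks, routinely, that every strategy for a residual length-$n$ game occurring below may be coded by a number $<G(n,l)$, and every strategy we build for the length-$(n+1)$ game by a number $<G(n+1,l)$; I shall not belabour this. (If $l=0$ then $\mathit{Seq}(n+1,0)$, and hence $X$, is empty and both conclusions hold vacuously; so assume $l>0$.)

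For $\mathit{Det}^{II}$ at $n+1$, assume $\mathit{Halfwin}^{II}_{\mathit{Seq}(n+1,l)}(X)$. Fix $j<l$ and a residual strategy $c<G(n,l)$, and let $b$ be the player-$I$ strategy with $b(\langle\;\rangle)=j$ and $b(\langle j\rangle\ast t')=c(t')$. Applying $\mathit{Halfwin}^{II}_{\mathit{Seq}(n+1,l)}(X)$ to $b$ gives, by the correspondence above, a play $\langle j\rangle\ast t\in X$ with $t\in_{II}c$, so $t\in X_j$; hence $\mathit{Halfwin}^{I}_{\mathit{Seq}(n,l)}(X_j)$ for \emph{every} $j<l$, and the induction hypothesis yields $\mathit{Win}^{I}_{\mathit{Seq}(n,l)}(X_j)$ for every $j<l$, i.e.\ $\forall j<l\,\exists b<G(n,l)\,\forall t\in\mathit{Seq}(n,l)[t\in_I b\rightarrow t\in X_j]$. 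By Lemma \ref{L:comb}(i) there is $\sigma:l\rightarrow G(n,l)$ with $\forall j<l\,\forall t\in\mathit{Seq}(n,l)[t\in_I\sigma(j)\rightarrow t\in X_j]$; the player-$II$ strategy $\hat c$ with $\hat c(\langle j\rangle\ast t')=\sigma(j)(t')$ then witnesses $\mathit{Win}^{II}_{\mathit{Seq}(n+1,l)}(X)$, since any play $\langle j\rangle\ast t$ in $\mathit{Seq}(n+1,l)$ that $II$-obeys $\hat c$ has $t\in_I\sigma(j)$, hence $t\in X_j$, hence lies in $X$.

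For $\mathit{Det}^{I}$ at $n+1$, assume $\mathit{Halfwin}^{I}_{\mathit{Seq}(n+1,l)}(X)$. Given any $\sigma:l\rightarrow G(n,l)$, the player-$II$ strategy $c$ with $c(\langle j\rangle\ast t')=\sigma(j)(t')$ for $j<l$ is, by $\mathit{Halfwin}^{I}_{\mathit{Seq}(n+1,l)}(X)$, $II$-obeyed by some play $\langle j\rangle\ast t\in X$; then $t\in_I\sigma(j)$ and $t\in X_j$, so $\forall\sigma:l\rightarrow G(n,l)\,\exists j<l\,\exists t[t\in_I\sigma(j)\wedge t\in X_j]$. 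Applying Lemma \ref{L:comb}(ii) to the relation $A$ defined by $jAb:\leftrightarrow\exists t[t\in_I b\wedge t\in X_j]$, with the two bounds there instantiated as $l$ and $G(n,l)$, we obtain a $j<l$ with $\forall b<G(n,l)\,\exists t[t\in_I b\wedge t\in X_j]$, that is, $\mathit{Halfwin}^{II}_{\mathit{Seq}(n,l)}(X_j)$; the induction hypothesis upgrades this to $\mathit{Win}^{II}_{\mathit{Seq}(n,l)}(X_j)$, witnessed by some $c'$ with $\forall t\in\mathit{Seq}(n,l)[t\in_{II}c'\rightarrow t\in X_j]$, and the player-$I$ strategy $\hat b$ with $\hat b(\langle\;\rangle)=j$ and $\hat b(\langle j\rangle\ast t')=c'(t')$ then witnesses $\mathit{Win}^{I}_{\mathit{Seq}(n+1,l)}(X)$. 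This last passage is the only genuinely non-routine point: from the owner's side, $\mathit{Halfwin}^{I}_{\mathit{Seq}(n+1,l)}(X)$ only says that for \emph{each} opening move $j$ some residual half-win exists, and converting that into a single good $j$ is precisely a contrapositive finite-choice statement, so Lemma \ref{L:comb}(ii) is essential here; in the $\mathit{Det}^{II}$ case the player who does not own $X$ moves first, the residual half-wins hold for all $j$ at once, and only the plain finite-choice Lemma \ref{L:comb}(i) is used, merely to glue the residual winning strategies together.
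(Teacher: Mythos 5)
Your proof is correct and is essentially the paper's own argument: a simultaneous induction on $n$ in which peeling off the opening move swaps the players, with Lemma \ref{L:comb}(ii) used to extract a single good first move in the $\mathit{Det}^I$ case and Lemma \ref{L:comb}(i) used to glue the residual winning strategies in the $\mathit{Det}^{II}$ case, the coding bounds via $F$ and $G$ being treated as routine in both. The only cosmetic differences are your explicit introduction of the residual sets $X_j$ and the separate (harmless) remark about $l=0$.
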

\begin{proof} Let $X\subseteq \omega$ and $l>0$ be given.

For every $u$, we define: $X_{u}:=\{c \in \omega\mid u\ast c \in X\}$.

We intend to prove a statement seemingly stronger than the statement of the theorem: \begin{quote}$(\ast)$: for every $n$, for every $u$,  $\mathit{Det}^I_{\mathit{Seq}(n,l)}(X_u)$ and $\mathit{Det}^{II}_{\mathit{Seq}(n,l)}(X_u)$.\end{quote}

The proof is by induction on $n$. If $n=0$, then, for every $u$,  the statements:  `\textit{$X_u$ is $I$-determinate in $\mathit{Seq}(0,l)$}' and `\textit{$X_u$ is $II$-determinate in $\mathit{Seq}(0,l)$}' both assert:  `\textit{if $\langle \; \rangle \in X_u$, then $\langle \; \rangle \in X_u$}', and thus are obviously true.

Now assume the statement $(\ast)$ has been established for a certain $n$. We prove that $(\ast)$ is also true for $n+1$.

Let $u$ be given.

\smallskip
First, assume that $\forall t <\boldsymbol{\psi}(n+1,l)\exists c[c \in_{II} t\;\wedge \;  c \in X_u]$, or, equivalently
\footnote{For the notation $t^{\upharpoonright k}$, see Subsection \ref{SS:subs}.},
 $\forall t <\boldsymbol{\psi}(n+1,l) \exists k <l\exists c[c \in_I t^{\upharpoonright k} \;\wedge\; \langle k \rangle \ast c \in X_u ]$. 
 Note that $\forall k<l\forall c[\langle k\rangle\ast c \in X_u\leftrightarrow c \in X_{u\ast\langle k\rangle}]$.
 Conclude that
 $\forall t: l \rightarrow \psi(n,l) \exists k<l\exists c[c \in_I t(k) \;\wedge\;  c \in X_{u\ast\langle k \rangle}]$. 
 Use Lemma \ref{L:comb}(ii) and  find $k<l$ such that
  $\forall s <\boldsymbol{\psi}(n,l)\exists c[c \in_I s \; \wedge \;  c \in X_{u\ast\langle k \rangle}]$. 
  Use the induction hypothesis and find $t <\boldsymbol{\psi}(n,l)$ such that
  $\forall c \in \mathit{Seq}(n,l)[ c \in_{II}t \rightarrow  c \in X_{u\ast\langle k \rangle}]$. 
  Define $s<\boldsymbol{\psi}(n+1,l)$  such that $s(\langle \; \rangle )= k$ and $s^{\upharpoonright k} = t$. 
   Note that $\forall c\in \mathit{Seq}(n+1,l)[ c \in_I s\rightarrow c \in X_u]$.
 We thus see that $X_u$ is $I$-determinate.
 
 \smallskip
Next, assume that  $\forall s <\boldsymbol{\psi}(n+1,l)\exists c[c \in_{I} s\;\wedge \;  c \in X_u]$, or, equivalently,
 $\forall s <\boldsymbol{\psi}(n+1,l)  \exists c[c \in_{II} s^{\upharpoonright s(\langle \; \rangle)} \;\wedge\; \langle s(\langle \; \rangle) \rangle \ast c \in X_u ]$. 
 Note that $\forall k<l
  \forall t<\boldsymbol{\psi}(n,l)\exists s<\psi(n+1,l)[s(\langle \; \rangle) = k\;\wedge\;s^{\upharpoonright k} = t]$.  
  Conclude that
 $\forall k<l\forall t <\boldsymbol{\psi}(n,l) \exists c[ c \in_{II} t \; \wedge \; \langle k \rangle \ast c \in X_u]$.
  Note that $\forall k<l\forall c[\langle k\rangle\ast c \in X_u\leftrightarrow c \in X_{u\ast\langle k\rangle}]$.
 Conclude that $\forall k<l\forall t <\boldsymbol{\psi}(n,l) \exists c[ c \in_{II} t \; \wedge \;  c \in X_{u\ast\langle k \rangle}]$.
 Use the induction hypothesis and conclude that
     $\forall k<l \exists s <\boldsymbol{\psi}(n,l)\forall c\in \mathit{Seq}(n,l)[ c \in_I s\rightarrow  s \in X_{u\ast\langle k \rangle}]$. 
   Use Lemma \ref{L:comb}(i) and find $t<\psi(n+1,l)$ such that  
 $\forall k<l \forall c\in \mathit{Seq}(n,l)[ c \in_I t^{\upharpoonright k}\rightarrow  c \in X_{u\ast\langle k\rangle}]$.
 Note that $\forall c \in \mathit{Seq}(n+1,l)[ c \in_{II} t\rightarrow c \in X_u]$.
 We thus see that $X_u$ is $II$-determinate.
\end{proof}
\subsubsection{Comparison with the classical theorem} Note that, in classical mathematics,  the $I$-determinacy of finite games is stated as follows:

For every $X\subseteq \omega$, for every $l>0$, 
 for every  $n$, \begin{quote}
\textit{either} $\exists t <\boldsymbol{\psi}(n,l)\forall c \in \mathit{Seq}(n,l)[c \in_{II}t \rightarrow  c \notin X]$,

 \textit{or} $\exists s <\boldsymbol{\psi}(n,l)\forall c \in \mathit{Seq}(n,l) [c \in_I s \rightarrow c \in X]$,\end{quote} i.e. \textit{either} player $II$ has a strategy ensuring that the result of the game will not be in $X$, \textit{or} player $I$ has a  strategy ensuring that it does.

Taken constructively, this statement fails to be true already in the case $n=0$, because it then implies: for every subset $X$ of $\{\langle\;\rangle\}$, either $\langle\;\rangle\notin X$ or $\langle\;\rangle \in X$, and therefore, for any proposition $P$, $\neg P \vee P$, the principle of the excluded third.

\subsection{Infinitely many alternatives}
\subsubsection{Infinitely many alternatives for player $II$}\label{SSS:II} \hfill

 Let  $X\subseteq2 \times \omega$ be given.
  Players $I$ and $II$ play the \textit{$I$-game for $X$ in $2\times\omega$} in the following way.  First, player $I$ chooses $i<2$, then player $II$ chooses $n$ and the play is finished. Player $I$ wins the play $\langle i, n \rangle$ if and only if $\langle i,n\rangle \in X$.
 
We define: \textit{$X$ is $I$-determinate in $2 \times \omega$},   $\mathit{Det}^I_{2 \times \omega}(X)$,
if and only if 
$$\forall t\exists c\in 2 \times \omega [c  \in_{II}t \;\wedge\;  c \in X]\rightarrow \exists s < 2\forall n[\langle s, n \rangle \in X].$$ 

 \begin{theorem}\label{T:detllpo} $\mathsf{BIM}\vdash \forall \alpha[\mathit{Det}^I_{2 \times \omega}(D_\alpha)]\leftrightarrow \mathbf{LLPO}$. \end{theorem}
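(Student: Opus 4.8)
The plan is to derive $\mathbf{LLPO}$ by, for an arbitrary $\beta$, constructing a particular $\alpha$ for which the $I$-game for $D_\alpha$ in $2\times\omega$ is demonstrably half-won by player $I$; the assumed determinacy then turns this half-win into a win, and player $I$'s winning first move decides which disjunct of $\mathbf{LLPO}$ holds for $\beta$.

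First I would, given $\beta$, define $\alpha$ (using Axiom~\ref{ax:recop}) so that for every $n$ one has $\alpha(\langle 0,n\rangle)\neq 0$ if and only if $\forall p\le n[\overline\beta(2p)=\underline{\overline 0}(2p)\rightarrow\beta(2p)=0]$, and $\alpha(\langle 1,n\rangle)\neq 0$ if and only if $\forall p\le n[\overline\beta(2p+1)=\underline{\overline 0}(2p+1)\rightarrow\beta(2p+1)=0]$, while $\alpha(s)=0$ for all $s$ not of the form $\langle i,n\rangle$ with $i<2$. Both defining conditions are decidable in $n$ and $\beta$, so this is legitimate. The point of the definition is that $\forall n[\langle 0,n\rangle\in D_\alpha]$ is equivalent to $\forall p[\overline\beta(2p)=\underline{\overline 0}(2p)\rightarrow\beta(2p)=0]$, and $\forall n[\langle 1,n\rangle\in D_\alpha]$ is equivalent to $\forall p[\overline\beta(2p+1)=\underline{\overline 0}(2p+1)\rightarrow\beta(2p+1)=0]$, that is, to the first and the second disjunct of $\mathbf{LLPO}$ for $\beta$, respectively.

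Next I would verify $\mathit{Halfwin}^I_{2\times\omega}(D_\alpha)$. A strategy $c$ for player $II$ in this game is determined by the two values $m_0=c(\langle 0\rangle)$ and $m_1=c(\langle 1\rangle)$ it prescribes after player $I$'s moves $0$ and $1$, so the half-win reduces to: for all $m_0,m_1$, $\alpha(\langle 0,m_0\rangle)\neq 0$ or $\alpha(\langle 1,m_1\rangle)\neq 0$. Given $m_0,m_1$, decide by bounded search whether there is $p_0\le m_0$ with $\overline\beta(2p_0)=\underline{\overline 0}(2p_0)$ and $\beta(2p_0)\neq 0$. If there is none, then $\alpha(\langle 0,m_0\rangle)\neq 0$ and player $I$ plays $0$. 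If there is such a $p_0$, then for every $q\le m_1$ with $\overline\beta(2q+1)=\underline{\overline 0}(2q+1)$ one must have $2q+1<2p_0$ --- otherwise $2p_0\le 2q$, forcing $\beta(2p_0)=0$, a contradiction --- and hence $\beta(2q+1)=0$; so $\alpha(\langle 1,m_1\rangle)\neq 0$ and player $I$ plays $1$. This case distinction is entirely constructive, using no omniscience.

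Finally, applying the hypothesis $\forall\alpha[\mathit{Det}^I_{2\times\omega}(D_\alpha)]$ to the $\alpha$ just built, the half-win yields $\mathit{Win}^I_{2\times\omega}(D_\alpha)$, that is, $\exists i<2\,\forall n[\langle i,n\rangle\in D_\alpha]$: if $i=0$ we obtain the first disjunct of $\mathbf{LLPO}$ for $\beta$, and if $i=1$ the second. Since $\beta$ was arbitrary, $\mathbf{LLPO}$ follows. The argument is elementary; the only part demanding attention is the half-win verification, where one must keep the even/odd bookkeeping straight so as to stay constructive, together with arranging the definition of $\alpha$ so that its two "win" components translate back to exactly the two disjuncts of $\mathbf{LLPO}$.
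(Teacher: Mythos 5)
Your proposal is correct and follows essentially the same route as the paper's proof: given $\beta$, build a decidable subset of $2\times\omega$ whose two ``columns'' $\{\langle i,n\rangle : n\in\mathbb{N}\}$ encode, via a bound growing with $n$, the two disjuncts of $\mathbf{LLPO}$, verify the half-win by a decision made from the two values $c(\langle 0\rangle), c(\langle 1\rangle)$ using the parity argument, and then apply the assumed determinacy. The only differences (swapping which move of player $I$ corresponds to even versus odd indices, and bounding by $p\le n$ rather than $2p<s(1)$) are immaterial.
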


 \begin{proof} This Theorem is a reformulation of Theorem \ref{T:llpological}.
 In order to see this,  make two observations. 
 
 (i) For each $\alpha$,  there exists $\beta$ such that
  $  Det^I_{2\times\omega}(D_\alpha)\leftrightarrow(\omega,D_{\beta^{\upharpoonright 0}}, D_{\beta^{\upharpoonright 1}})\models \mathsf{\forall x \forall y[P(x) \vee Q(y)]}\rightarrow\mathsf{(\forall x[P(x)] \vee \forall y [Q(y)])}$.
  Given any $\alpha$, define $\beta$ such that, for each $n$,  $\beta^{\upharpoonright 0}(n)=\alpha(\langle 0, n\rangle)$ and $\beta^{\upharpoonright 1}(n)=\alpha(\langle 1, n\rangle)$.

 (ii) For each $\alpha$, there exists $\beta$ such that 
 $ \bigl((\omega,D_{\alpha^{\upharpoonright 0}}, D_{\alpha^{\upharpoonright 1}})\models \mathsf{\forall x \forall y[P(x) \vee Q(y)]}\rightarrow\mathsf{(\forall x[P(x)] \vee \forall y [Q(y)])}\bigr)\leftrightarrow Det^I_{2\times\omega}(D_\beta)$.
  Given any $\alpha$, define $\beta$ such that, for each $n$, $\beta(\langle 0, n\rangle)=\alpha^{\upharpoonright 0}(n)$ and  $\beta(\langle 1, n\rangle)=\alpha^{\upharpoonright 1}(n)$.
\end{proof}

\subsubsection{Infinitely many alternatives for player $I$}\hfill

Let $X\subseteq\omega \times 2$ be given. Players $I$ and $II$ play the \textit{$I$-game for $X$ in $\omega\times 2$} in the following way.
\textit{First,} player $I$ chooses a natural number $n$, \textit{then} player $II$ chooses a number $i$ from $\{0,1\}$. Player $I$ wins the \textit{play} $\langle n, i\rangle$
if and only if  $\langle n,i\rangle\in X$.
 
 Note that a strategy for player $I$ in such a two-move-game coincides with his first move and thus is a natural number. A strategy for player $II$, on the other hand, is an infinite sequence $\tau$ in $2^\omega$ that expresses player $II$'s intention to play $\tau(\langle n \rangle)$ once player $I$ has brought them to the position $\langle n \rangle$. 

 We define:   $X\subseteq \omega\times 2$ is \textit{$I$-determinate in $\omega \times 2$},  $\mathit{Det}^I_{\omega \times 2}(X)$, if and only if: $$\forall \tau \in 2^\omega \exists c \in \omega \times 2[c \in_{II} \tau \;\wedge\; c \in X] \rightarrow \exists s\forall i<2[ \langle s, i\rangle  \in X].$$

 \begin{theorem}\label{T:decdet} $\mathsf{BIM}\vdash \forall \alpha[Det^I_{\omega \times 2}(D_\alpha)]$.
 \end{theorem}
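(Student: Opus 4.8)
The plan is to use the decidability of $D_\alpha$ to manufacture, for player $II$, a single explicit counter-strategy $\gamma\in\mathcal{C}$ with the property that the only way player $I$ can defeat $\gamma$ is by already having a position winning against both answers of player $II$; this is the same diagonal device used in Section \ref{S:ccc} to show that positively inseparable \emph{decidable} subsets of $\mathbb{N}$ intersect.

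Fix $\alpha$ and assume $\mathit{Halfwin}^I_{\omega\times 2}(D_\alpha)$. Unfolding the defining axiom for $\in_{II}$ on $\omega\times 2$, this hypothesis reads: for every $\gamma$ in $\mathcal{C}$ there is $n$ with $\langle n,\gamma(\langle n\rangle)\rangle\in D_\alpha$. Our goal is to find a single $n$ with $\langle n,0\rangle\in D_\alpha$ and $\langle n,1\rangle\in D_\alpha$, i.e.\ $\mathit{Win}^I_{\omega\times 2}(D_\alpha)$.

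First, using Axiom \ref{ax:recop} and the fact that $D_\alpha$ is decidable, I would define $\gamma$ in $\mathcal{C}$ such that, for every $n$, $\gamma(\langle n\rangle)=1$ if $\langle n,0\rangle\in D_\alpha$, and $\gamma(\langle n\rangle)=0$ otherwise (and $\gamma(m)=0$ whenever $m$ is not of the form $\langle n\rangle$). Thus $\gamma$ encodes the strategy whereby player $II$, once player $I$ has played $n$, answers $1$ exactly when answering $0$ would lose. Next, I would apply the half-win hypothesis to this particular $\gamma$, obtaining $n$ with $\langle n,\gamma(\langle n\rangle)\rangle\in D_\alpha$. Since $\gamma\in\mathcal{C}$, we may decide whether $\gamma(\langle n\rangle)=0$ or $\gamma(\langle n\rangle)=1$. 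If $\gamma(\langle n\rangle)=0$, then, on the one hand $\langle n,0\rangle=\langle n,\gamma(\langle n\rangle)\rangle\in D_\alpha$, while on the other hand, by the definition of $\gamma$, $\langle n,0\rangle\notin D_\alpha$ --- a contradiction. Hence $\gamma(\langle n\rangle)=1$; then by the definition of $\gamma$ we have $\langle n,0\rangle\in D_\alpha$, and also $\langle n,1\rangle=\langle n,\gamma(\langle n\rangle)\rangle\in D_\alpha$. Therefore $\forall i<2[\langle n,i\rangle\in D_\alpha]$, which is $\mathit{Win}^I_{\omega\times 2}(D_\alpha)$.

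I do not expect a real obstacle: the only points requiring care are making the definition of the counter-strategy $\gamma$ precise and observing that every inference is constructively sound, since we appeal only to the decidability of $D_\alpha$ (to build $\gamma$ and to carry out the final case distinction) and to no choice principle or omniscience principle. It is worth noting the contrast with Subsection \ref{SSS:II}: in the game on $\omega\times 2$ the responding player has only the two alternatives $0,1$, so the decidability of $D_\alpha$ lets player $I$ ``see through'' any strategy of player $II$, whereas in the game on $2\times\omega$ the responding player has infinitely many alternatives and $I$-determinacy already yields $\mathbf{LLPO}$.
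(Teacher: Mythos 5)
Your proof is correct and is essentially the proof given in the paper: both define the strategy $\gamma$ with $\gamma(\langle n\rangle)=1$ exactly when $\langle n,0\rangle\in D_\alpha$, apply the half-win hypothesis to this $\gamma$, and conclude from the resulting $n$ that $\gamma(\langle n\rangle)=1$ and hence both $\langle n,0\rangle$ and $\langle n,1\rangle$ lie in $D_\alpha$. The only difference is that you spell out the case distinction explicitly, which the paper leaves implicit.
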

 
\begin{proof}

 Let $\alpha$  be given. Assume that $\forall \tau \in 2^\omega \exists n[\langle n , \tau(\langle n \rangle)\rangle \in D_\alpha]$.    Find $\tau$ in $2^\omega$ such that $\forall n[\tau(\langle n \rangle) = 1\leftrightarrow\langle n,0\rangle \in D_\alpha]$. Find $n$ such that $\langle n, \tau(\langle n\rangle) \rangle \in D_\alpha$. Note that $\tau(\langle n\rangle) = 1$ and   $\forall i<2[\langle n,i \rangle\in D_\alpha]$. \end{proof}
 
 We define:   $X\subseteq \omega\times \omega$ is \textit{$II$-determinate in $\omega \times \omega$},  $\mathit{Det}^{II}_{\omega \times \omega}(X)$, if and only if: $$\forall m \exists n [\langle
 m,n\rangle\in X]\rightarrow \exists \tau\forall m[\langle m, \tau(\langle m\rangle)\rangle\in X].$$
 
 \begin{theorem}\label{T:endetII} $\mathsf{BIM}\vdash \forall \alpha[Det^{II}_{\omega \times \omega}(E_\alpha)]$.
 \end{theorem}
\begin{proof}  Let $\alpha$ be given such that  $\forall m\exists n [\langle m, n\rangle \in E_\alpha]$, that is \\$\forall m\exists n \exists p[\alpha(p)=\langle m, n\rangle +1 ]$. Find   $\gamma$ such that $\forall m[\alpha\bigl(\gamma'(m)\bigr)=\langle m, \gamma''(m)\rangle +1 ]$.  Define $\tau$ such that $\forall m[\tau(\langle m \rangle)=\gamma''(m)]$. \end{proof}

We define:   $X\subseteq 2\times \omega$ is \textit{$II$-determinate in $2 \times \omega$},  $\mathit{Det}^{II}_{2 \times \omega}(X)$, if and only if: $$\forall m<2 \exists n [\langle
 m,n\rangle\in X]\rightarrow \exists t\forall m<2[\langle m, t(\langle m\rangle)\rangle\in X].$$
 
Note that  $\mathsf{BIM}$ proves the scheme ${Det}^{II}_{2 \times \omega}(X)$.

\subsubsection{Longer games}  We also consider games in which  players $I,II$ make more than one move. Which of those games are determinate from the viewpoint of Player $I$? Because of Theorem \ref{T:detllpo}, we restrict ourselves to games in which player $I$ has, for each one of his moves,
countably many alternatives, whereas player $II$ always has to choose one of two possibilities.

For every $n$, for every $X\subseteq(\omega \times 2)^n$,  
we define: 

$X$ is \textit{$I$-determinate in $(\omega \times 2)^n$}, $\mathit{Det}_{(\omega \times 2)^n}(X)$,  
if and only if $$\forall \tau \in 2^\omega \exists c[c \in_{II}\tau \;\wedge\; c \in X]\rightarrow \exists s\forall c \in (\omega \times 2)^n[c \in_I s \rightarrow c \in X].$$
 
 This definition  extends in the obvious way to subsets $X$ of $(\omega \times 2)^n \times \omega$.

\subsection{Infinitely many moves}\label{SS:infinitelymanymoves}
We also want to consider  games of infinite length. We imagine players $I, II$ to build together an infinite sequence $\gamma$ in $\omega^\omega$, as follows.
\textit{First,} player $I$ chooses $\gamma(0)$, \textit{then} player $II$ chooses $\gamma(1)$, \textit{then} player $I$ chooses $\gamma(2)$, and so on.

We  define a number of notions of determinacy.

$\mathcal{X}\subseteq(\omega \times 2)^\omega $ is \textit{$I$-determinate in $(\omega \times 2)^\omega$},  $\mathit{Det}^{I}_{(\omega \times 2)^\omega}(\mathcal{X})$,  
if and only if $$\forall \tau \in 2^\omega \exists \gamma[\gamma \in_{II}\tau \;\wedge\; \gamma \in \mathcal{X}]\rightarrow \exists \sigma\forall \gamma \in (\omega \times 2)^\omega[\gamma \in_I \sigma \rightarrow \gamma \in \mathcal{X}].$$ 
$\mathcal{X}\subseteq (\omega\times 2)^\omega$ is \textit{finitely $I$-determinate in $(\omega \times 2)^\omega$,}  $\mathit{^\ast Det}^{I}_{(\omega \times 2)^\omega}(\mathcal{X})$
if and only if $$\forall \tau \in 2^\omega \exists \gamma[\gamma \in_{II}\tau \;\wedge\; \gamma \in \mathcal{X}]\rightarrow \exists s\forall \gamma \in (\omega \times 2)^\omega[\gamma \in_{I}s \rightarrow \gamma \in \mathcal{X}].$$

 $\mathcal{X}\subseteq2^\omega$ is \textit{$I$-determinate in $2^\omega$,}  $\mathit{Det}^{I}_{2^\omega}(\mathcal{X})$,
if and only if $$\forall \tau \in 2^\omega \exists \gamma[\gamma \in_{II}\tau \;\wedge\; \gamma \in \mathcal{X}]\rightarrow \exists \sigma\in 2^\omega\forall \gamma \in 2^\omega[\gamma \in_{I}\sigma \rightarrow \gamma \in \mathcal{X}].$$ 

\textit{$\mathcal{X} \subseteq2^\omega$ is  finitely  $I$-determinate in $2^\omega$},  $^\ast\mathit{Det}^{I}_{2^\omega}(\mathcal{X})$, if and only if$$\forall \tau \in 2^\omega \exists \gamma[\gamma \in_{II}\tau \;\wedge\; \gamma \in \mathcal{X}]\rightarrow \exists s\in 2^{<\omega}\forall \gamma \in 2^\omega[\gamma \in_{I}s \rightarrow \gamma \in \mathcal{X}].$$ 

 $\mathcal{X}\subseteq2^\omega$ is \textit{$II$-determinate in $2^\omega$,}  $\mathit{Det}^{II}_{2^\omega}(\mathcal{X})$,
if and only if $$\forall \sigma \in 2^\omega \exists \gamma[\gamma \in_{I}\sigma \;\wedge\; \gamma \in \mathcal{X}]\rightarrow \exists \tau\in 2^\omega\forall \gamma \in 2^\omega[\gamma \in_{II}\tau \rightarrow \gamma \in \mathcal{X}].$$ 

\textit{$\mathcal{X} \subseteq2^\omega$ is  finitely  $II$-determinate in $2^\omega$},  $^\ast\mathit{Det}^{II}_{2^\omega}(\mathcal{X})$, if and only if$$\forall \sigma \in 2^\omega \exists \gamma[\gamma \in_{I}\sigma \;\wedge\; \gamma \in \mathcal{X}]\rightarrow \exists t\in 2^{<\omega}\forall \gamma \in 2^\omega[\gamma \in_{II}t \rightarrow \gamma \in \mathcal{X}].$$ 
\bigskip
We are going to study the following statements:

\smallskip\noindent
$\mathbf{\Sigma}^0_1$-$\mathit{Det}_{\omega \times 2}^I$:
 $\forall \alpha[Det^I(E_\alpha)]$.

\smallskip\noindent
 $\mathbf{\Delta}^0_1$-$\mathit{Det}^I_{\omega \times 2 \times \omega}$:
 $\forall \alpha[ \mathit{Det}^I_{\omega \times 2\times \omega}(D_\alpha)]. $

\smallskip\noindent
 $\mathbf{\Delta}^0_1$-$\mathit{Det}^I_{(\omega \times 2)^m}$:
 $\forall \alpha[ \mathit{Det}^I_{(\omega \times 2)^m}(D_\alpha)]. $

 \smallskip\noindent
$\mathbf{\Sigma}^0_1$-$\mathit{Det}^I_{(\omega \times 2)^\omega}$:
   $\forall \alpha[ \mathit{Det}^I_{(\omega \times 2)^\omega}(\mathcal{G}_\alpha)]. $

  \smallskip\noindent
$\mathbf{\Sigma}^0_1$-$\mathit{\;^\ast Det}^I_{(\omega \times 2)^\omega}$:
  $\forall \alpha[\mathit{^\ast Det}^I_{(\omega \times 2)^\omega}(\mathcal{G}_\alpha)]. $

 \smallskip\noindent
$\mathbf{\Sigma}^0_1$-$\mathit{Det}^I_{2^\omega}$:
  $\forall \alpha[ \mathit{Det}^I_{2^\omega}(\mathcal{G}_\alpha)]. $

  \smallskip\noindent
$\mathbf{\Sigma}^0_1$-$\mathit{\;^\ast Det}^I_{2^\omega}$:
  $\forall \alpha[ \mathit{^\ast Det}^I_{2^\omega}(\mathcal{G}_\alpha)]. $

  \smallskip\noindent
$\mathbf{\Sigma}^0_1$-$\mathit{Det}^I_{2^\omega}$:
  $\forall \alpha[ \mathit{Det}^{II}_{2^\omega}(\mathcal{G}_\alpha)]. $
  
  \smallskip\noindent
$\mathbf{\Sigma}^0_1$-$\mathit{\;^\ast Det}^{II}_{2^\omega}$:
   $\forall \alpha[\mathit{^\ast Det}^{II}_{2^\omega}(\mathcal{G}_\alpha)]. $

\smallskip
Each of the above formulas $X$ has the form: $\forall \alpha[P(\alpha)\rightarrow Q(\alpha)]$. For each of these nine formulas $X$, we define the statement $\neg! X$, the 
\textit{strong negation} of $X$, as follows:  
\[\neg! X:=\neg!\bigl(\forall \alpha[P(\alpha)\rightarrow Q(\alpha)]\bigr) := \exists \alpha[P(\alpha) \;\wedge\; \neg Q(\alpha)].\]
 
Note that these strong negations contain the negation symbol $\neg$, a possibility we mentioned  in Subsection \ref{SS:strongnegations}.

\subsection{Simulating a game in $(\omega\times 2)^{\omega}$ by a game in $2^\omega$}
 From the point of view of player $I$, a game in $(\omega \times 2)^\omega$ may be simulated by a game in Cantor space $2^\omega$. Where player $I$ would play $n$ in $(\omega \times 2)^\omega$, he will play $n$ times $0$ and one time $1$ in $2^\omega$.  So he plays the finite sequence $\underline{\overline 0}n\ast\langle 1 \rangle$. Every time he plays $0$,  he makes what we call a \textit{postponing} move. Player $II$ has to react, in $2^\omega$, to these postponing moves of player $I$, but these reactions do not matter. As soon as player $I$ plays $1$ and completes $\overline{\underline 0} n\ast \langle 1 \rangle$, player $II$ gives, in the play in $2^\omega$, the answer he would give to player $I$'s move $n$ in $(\omega \times 2)^\omega$. The reader should keep this in mind when reading the following definitions. 
 
 Define $Bin:= 2^{<\omega}= \bigcup_n \{\overline \gamma n\mid \gamma \in 2^\omega\}$, the set of (code numbers) of finite binary sequences.   
 
 Define $Halfbin:= (\omega\times 2)^{<\omega}\cup \bigl((\omega\times 2)^{<\omega}\times \omega\bigr)=\bigcup_n\{\overline \gamma n\mid \gamma \in (\omega\times 2)^\omega\}$.

Define $\boldsymbol{\pi_{bin}}$ in $Bin^\omega$ such that
\begin{enumerate}[\upshape 1.]
  \item $\boldsymbol{\pi_{bin}}(\langle\;\rangle)=\langle\;\rangle$, and,\item for each $c$, if $length(c)$ is even, then, for each $n$, $\boldsymbol{\pi_{bin}}(c\ast\langle n \rangle)=\boldsymbol{\pi_{bin}}(c) \ast\underline{\overline 0}2n\ast\langle 1\rangle$, and, for both $i<2$, $\boldsymbol{\pi_{bin}}(c\ast\langle n, i\rangle)=\boldsymbol{\pi_{bin}}(c\ast\langle n \rangle)\ast\langle i \rangle$. \end{enumerate}
 
 The function $\boldsymbol{\pi_{bin}}$ associates to every position in $Halfbin$ a position in $Bin$.
 
 Note that, for each $c$, $length(\boldsymbol{\pi_{bin}}(c))\ge length(c)$.

 \smallskip
 Define $\boldsymbol{\rho_{bin}}$ in $\omega^\omega$ such that 
  
 \begin{enumerate}[\upshape 1.] 
 \item $\boldsymbol{\rho_{bin}}(\langle\;\rangle)=\langle\;\rangle$, and, \item  for each $d$ in $Bin$, if $length(d)$ is even, then  $\boldsymbol{\rho_{bin}}(d\ast\langle 0\rangle)=\boldsymbol{\rho_{bin}}(d\ast\langle 0,0\rangle)=\boldsymbol{\rho_{bin}}(d\ast\langle 0,1\rangle)=\boldsymbol{\rho_{bin}}(d)$, and  $\boldsymbol{\rho_{bin}}(d\ast\langle 1\rangle)=\boldsymbol{\rho_{bin}}(d)\ast\langle n \rangle$, and, for both $i<2$, $\boldsymbol{\rho_{bin}}(d\ast\langle 1, i\rangle)=\boldsymbol{\rho_{bin}}(d)\ast\langle n, i\rangle$, where $n$ satisfies:
 
 \noindent \textit{either} $2n=length(d)$ and $\forall i<n[d(2i)=0]$,
   \textit{or}
  
   \noindent for some $k>0$, $length(d)=2k + 2n$ and $d(2k-2)=1$ and $\forall i<n[d(2k +2i)=0]$. \end{enumerate}

The function $\boldsymbol{\rho_{bin}}$ associates to every position in $Bin$ a position in $Halfbin$.  

 Note that, for every $c$ in $Halfbin$, $\boldsymbol{\rho_{bin}}\circ \boldsymbol{\pi_{bin}}(c)=c$.

Note that, for each $c$ in $Halfbin$, $length(c)$ is even if and only if $length\bigl(\boldsymbol{\pi_{bin}}(c)\bigr)$ is even.

 \begin{lemma}\label{L:simulate}  The following is provable in $\mathsf{BIM}$. \\For each $\alpha$, there exists $\beta$ such that  $$\forall \tau \in 2^\omega\exists \gamma \in (\omega \times 2)^\omega[\gamma \in_{II}\tau\;\wedge\;\gamma\in\mathcal{G}_\alpha] \rightarrow\forall \tau \in 2^\omega\exists \delta \in 2^\omega[\delta \in_{II}\tau\;\wedge\;\delta\in\mathcal{G}_\beta]\;\mathrm{and}$$   $$\exists \sigma\forall \delta\in2^\omega[\delta\in_I \sigma\rightarrow \delta \in \mathcal{G}_\beta]\rightarrow \exists \sigma \forall \gamma \in (\omega\times 2)^\omega[\gamma\in_I \sigma\rightarrow \gamma \in \mathcal{G}_\alpha]\;\mathrm{and}$$ $$\exists s\forall \delta\in2^\omega[\delta\in_I s\rightarrow \delta \in \mathcal{G}_\beta]\rightarrow \exists s\forall \gamma\in(\omega\times 2)^\omega[\gamma\in_I s\rightarrow \gamma \in\mathcal{G}_\alpha].$$\end{lemma}
 
 \begin{proof} Let $\alpha$ be given. Define $\beta:=\alpha\circ \boldsymbol{\rho_{bin}}$. 
 
 \smallskip Assume $\forall \tau \in 2^\omega\exists \gamma \in (\omega \times 2)^\omega[\gamma \in_{II}\tau\;\wedge\;\gamma\in\mathcal{G}_\alpha]$.
 Let $\tau$ be given as a strategy for player $II$ in $2^\omega$. 
 We have  to prove that $\exists \delta \in 2^\omega[\delta \in_{II}\tau\;\wedge\; \delta \in \mathcal{G}_\beta]$.
 We first  define  $\tau^\dag$ as a strategy for player $II$ in $(\omega\times 2)^\omega$. We define $\tau^\dag$ on all positions in $Halfbin$ of odd length, by induction on the length of the position. It suffices to define $\tau^\dag$ on positions $c$ such that $c \in_{II}\tau^\dag$.
 We  take  care that, for each $c$ in $Halfbin$, if $c\in_{II}\tau^\dag$, then there exists $d$ in $Bin$ such that $\boldsymbol{\rho_{bin}}(d)=c$ and $d\in_{II}\tau$.

 We first define $\tau^\dag$  on positions of length $1$.
  Let $n$ be given. We have  to define $\tau^\dag(\langle n \rangle)$.
 Find $d$ in $Bin$ such that $length(d)=2n+1$ and $d\in_{II} \tau$ and $d(2n)=1$ and $\forall i<n[d(2i)=0]$.   Define $\tau^\dag(\langle n \rangle):= \tau(d)$. 
  Note that $\boldsymbol{\rho_{bin}}(d)=\langle n\rangle$ and  $\boldsymbol{\rho_{bin}}\bigl(d\ast\langle \tau(d)\rangle\bigr)=\langle n, \tau^\dag(\langle n \rangle)\rangle$.

 Now suppose $k>0$. Let  $c$ in $(\omega\times 2)^k$ be given such that  $c\in_{II}\tau^\dag$. 
 Let $n$ be given. We have to define $\tau^\dag(c\ast\langle n \rangle)$.
  First find $d$ in $Bin$ such that $d\in_{II}\tau$ and $\boldsymbol{\rho_{bin}}(d)=c$.   Find $l:=length(d)$. Find $e$ in $Bin$ such that $d\sqsubset e$ and $length(e)=l + 2n+1$ and $e\in_{II} \tau$ and $e(l+2n)=1$ and $\forall i<n[e(l+2i)=0]$. Note that $\boldsymbol{\rho_{bin}}(e)=c\ast\langle n\rangle$. Define $\tau^\dag(c\ast\langle n \rangle):= \tau(e)$.
   Note that $\boldsymbol{\rho_{bin}}(e)= c\ast\langle n \rangle$ and $\boldsymbol{\rho_{bin}}\bigl(e\ast\langle\tau(e)\rangle\bigr)=c\ast\langle \tau^\dag(c)\rangle$.

  This completes the definition of $\tau^\dag$.

  Using our assumption, 
   find $\gamma$ in $(\omega\times 2)^\omega$ such that $\gamma \in_{II}\tau^\dag \;\wedge\; \gamma \in \mathcal{G}_\alpha$. 
   Note that $\forall n \exists d \in Bin[d\in_{II}\tau \;\wedge\; \boldsymbol{\rho_{bin}}(d)=\overline \gamma n]$.
   Note that $\forall d \in Bin\forall e \in Bin[(d\in_{II}\tau\;\wedge\;e\in_{II}\tau)\rightarrow \bigl(d\sqsubset e\leftrightarrow \boldsymbol{\rho_{bin}}(d)\sqsubset \boldsymbol{\rho_{bin}}(e)\bigr)]$.
   Using this fact, 
 construct $\delta$ in $2^\omega$ such that $\delta\in_{II}\tau$ and $\forall n\exists m[\overline \gamma n= \boldsymbol{\rho_{bin}}(\overline\delta m)]$.
 Find $n$ such that $\overline \gamma n\in D_\alpha$.  
   Find $m$ such that $\overline \gamma n=\boldsymbol{\rho_{bin}}(\overline \delta m)$.
     Note that $\overline \gamma n = \boldsymbol{\rho_{bin}}(\overline \delta m)\in D_\alpha$ and  $\overline \delta m \in D_\beta$ and $\delta \in \mathcal{G}_\beta$. 
  We thus see that $\forall \tau \in 2^\omega\exists \delta \in 2^\omega[\delta\in_{II}\tau\;\wedge\;\delta\in\mathcal{G}_\beta]$.

 \medskip  Now assume $\exists \sigma\forall \delta\in2^\omega[\delta\in_I \sigma\rightarrow \delta \in \mathcal{G}_\beta]$. We have to prove that $\exists \sigma \forall \gamma \in (\omega\times 2)^\omega[\gamma\in_I \sigma\rightarrow \gamma \in \mathcal{G}_\alpha]$.
 First find $\sigma$ such that $\forall \delta\in2^\omega[\delta\in_I \sigma\rightarrow \delta \in \mathcal{G}_\beta]$.
 We will define $\sigma^\ast$ as a strategy for player $I$ in $(\omega\times 2)^\omega$ such that, for each $c$ in $Halfbin$, if $c\in_I \sigma^\ast$, then  \textit{either}  $ \boldsymbol{\pi_{bin}}(c)\in_I\sigma$ \textit{or}  $\exists e\sqsubset c[e\in D_\alpha]$.

  We first define $\sigma^\ast(\langle\;\rangle)$.
  Define $\delta$ in $2^\omega$ such that $\delta \in_I \sigma$ and $\forall i[\delta(2i+1)=0]$.  Find $m$ such that $\beta(\overline \delta m)\neq 0$ and distinguish two cases.
  
  \textit{Case (a)}. $\exists n[2n<m\;\wedge\;\delta(2n)=1]$. Define   $ n_0:=\mu n[\delta(2n)=1]$ and  $\sigma^\ast(\langle \;\rangle):=n_0$. Note that $\langle n_0\rangle\in_I\sigma^\ast$ and $\boldsymbol{\pi_{bin}}(\langle n_0\rangle)=\underline{\overline 0}(2n_0)\ast\langle 1 \rangle =\overline \delta(2n_0+1)\in_I\sigma$. 
  
  \textit{Case (b)}. $\forall n[2n<m \rightarrow\delta(2n)=0]$. Conclude that $\boldsymbol{\rho_{bin}}(\overline \delta m)=\langle\;\rangle$ and $\beta(\langle \;\rangle)\neq 0$ and also $\alpha(\langle \;\rangle)\neq 0$. Define $\sigma^\ast(\langle\;\rangle):=0$. Note that $\langle 0\rangle \in_I \sigma^\ast$ and $\exists e \sqsubset\langle 0\rangle[e\in D_\alpha]$.

 Now suppose $k>0$. Let $ c$ in   $ (\omega\times 2)^k$ be given such that  $c\in_I\sigma^\ast$. We  have to define $\sigma^\ast(c)$ and
   distinguish two cases.
 
 \textit{Case 1}. $\exists e\sqsubset c[e\in D_\alpha]$. We then define $\sigma^\ast(c):=0$. Note that $\exists e\sqsubset c\ast\langle 0\rangle[e\in D_\alpha]$.
 
 \textit{Case 2}. $\neg \exists e\sqsubset c[e\in D_\alpha]$. Then $\boldsymbol{\pi_{bin}}(c)\in_I \sigma$. 
 Note that $length\bigl(\boldsymbol{\pi_{bin}}(c)\bigr)$ is even and find $l$ such that  $2l:=length\bigl(\boldsymbol{\pi_{bin}}(c)\bigr)$. 
  Define $\delta$ in $2^\omega$ such that $\delta \in_I \sigma$ and $\boldsymbol{\pi_{bin}}(c)\sqsubset\delta$ and $\forall i[2i+1>2l\rightarrow \delta(2i+1)=0]$.  Find $m$ such that $\beta(\overline \delta m)\neq 0$ and distinguish two cases.
  
  \textit{Case (2a)}. $\exists n[2l\le 2n<m\;\wedge\;\delta(2n)=1]$.
  Define   $ n_0:=\mu n[2l\le 2n<m\;\wedge\;\delta(2n)=1]$ and  $\sigma^\ast(c):=n_0-l$. 
  Note that $c\ast\langle n_0-l\rangle\in_I\sigma^\ast$ and $\boldsymbol{\pi_{bin}}(c\ast\langle n_0-l\rangle)=\boldsymbol{\pi_{bin}}(c)\ast\underline{\overline 0}(2n_0-2l)\ast\langle 1 \rangle =\overline \delta(2n_0+1)\in_I\sigma$. 
  
  \textit{Case (2b)}. $\forall n[2n<m \rightarrow\delta(2n)=0]$. Conclude: $\boldsymbol{\rho_{bin}}(\overline \delta m)=c$ and $\beta(\overline \delta m)\neq 0$ and  $\alpha(c)=\beta(\overline\delta m)\neq 0$. Define $\sigma^\ast(\langle\;\rangle):=0$. Note that $c\ast\langle 0\rangle \in_I \sigma^\ast$ and $\exists e \sqsubset c\ast\langle 0\rangle[e\in D_\alpha]$.  
  
   This completes the definition of $\sigma^\ast$. 
   
  \smallskip Now assume $\gamma\in (\omega\times 2)^\omega$ and $\gamma\in_I \sigma^\ast$.  Find $\delta\in2^\omega$ such that $\forall n[\boldsymbol{\pi_{bin}}(\overline \gamma n)\sqsubset \delta]$.  Find $\varepsilon$ in $2^\omega$ such that  such that $\varepsilon \in_I\sigma$ and, for each $n$, if $\overline\delta n \in_I\sigma$, then $\overline\varepsilon n =\overline \delta n$. Find $m$ such that $\overline \varepsilon m\in D_\beta$ and distinguish two cases. 
   
   \textit{Case $(\ast)$}. $\overline\delta m =\overline\varepsilon m$. Conclude that $\overline \delta m \in D_\beta$ and $\boldsymbol{\rho_{bin}}(\overline \delta m)\in D_\alpha$ and $\gamma\in \mathcal{G}_\alpha$.
   
   \textit{Case $(\ast\ast)$}.  $\overline\delta m \neq\overline\varepsilon m$. Then $\overline \delta m \notin_I\sigma$ and $\boldsymbol{\pi_{bin}}(\overline \gamma m)\notin_I\sigma$ and $\exists e\sqsubset\overline \gamma m[e\in D_\alpha]$. Conclude that $\gamma \in \mathcal{G}_\alpha$. 
   
   We thus see that $\forall \gamma \in (\omega\times 2)^\omega[\gamma\in_I\sigma^\ast\rightarrow \gamma \in \mathcal{G}_\alpha]$.

 \smallskip  Assume $\exists s\forall \delta\in2^\omega[\delta\in_I s\rightarrow \delta \in \mathcal{G}_\beta]$. We have to prove that $\exists s\forall \gamma \in (\omega\times 2)^\omega[\gamma \in s^\ast\rightarrow\gamma \in \mathcal{G}_\alpha]$. 
 First find $s$ such that $\forall \delta\in2^\omega[\delta\in_I s\rightarrow \delta \in \mathcal{G}_\beta]$. Consider $p:=length(s)$. Find $q$ such that, for all $c$ in $Halfbin$, if $c\ge q$, then $\boldsymbol{\pi_{bin}}(c)\ge p$.  Define $s^\ast$ such that $length(s^\ast)=q$, inductively.    For each $c<q$ in $\bigcup_k(\omega\times 2)^k$ such that $c\in_Is^\ast$,   $s^\ast(c)$ is defined just as, in the previous paragraph,  where we were given $\sigma \in 2^\omega$, $\sigma^\ast(c)$ was defined, for each $c$ in $\bigcup (\omega\times 2)^k$   such that $c\in_I\sigma^\ast$.
 One then may prove that 
 $\forall \gamma \in (\omega\times 2)^\omega[\gamma \in s^\ast\rightarrow\gamma \in \mathcal{G}_\alpha]$.
 \end{proof}

 \begin{lemma}\label{L:prepdet}
One may prove the following statements in $\mathsf{BIM}$.
\begin{enumerate}[\upshape (i)]
\item $\mathbf{\Sigma}^0_1$-$\mathit{Det}_{\omega \times 2}^I \rightarrow \mathbf{\Sigma}^0_1$-$\overleftarrow{\mathbf{AC}_{\omega,2}}$ and
$\neg ! ( \mathbf{\Sigma}^0_1$-$\overleftarrow{\mathbf{AC}_{\omega,2}})\rightarrow \neg !( \mathbf{\Sigma}^0_1$-$\mathit{Det}_{\omega \times 2}^I)$.
\item $\mathbf{\Delta}^0_1$-$\mathit{Det}^I_{\omega \times 2 \times \omega} \rightarrow \mathbf{\Sigma}^0_1$-$\mathit{Det}_{\omega \times 2}^I$ and $\neg!\bigl(\mathbf{\Sigma}^0_1$-$\mathit{Det}_{\omega \times 2}^I\bigr) \rightarrow \neg !\bigl(\mathbf{\Delta}^0_1$-$\mathit{Det}^I_{\omega \times 2 \times \omega}\bigr)$.
\item $\forall m[\mathbf{\Delta}^0_1$-$\mathit{Det}^I_{(\omega \times 2)^m}] \rightarrow \mathbf{\Delta}^0_1$-$\mathit{Det}^I_{\omega \times 2 \times \omega}$ and 

$\neg!(\mathbf{\Delta}^0_1$-$\mathit{Det}_{\omega \times 2\times \omega}^I\bigr)
\rightarrow \exists m[\neg!(\mathbf{\Delta}^0_1$-$\mathit{Det}^I_{(\omega \times 2)^m})]$.

\item  $\forall m[\mathbf{\Sigma}^0_1$-$\mathit{Det}^I_{(\omega \times 2)^\omega} \rightarrow \mathbf{\Delta}^0_1$-$\mathit{Det}^I_{(\omega \times 2)^m} \;\wedge\;$
 
 $\neg !(\mathbf{\Delta}^0_1$-$\mathit{Det}^I_{(\omega \times 2)^m}) \rightarrow \neg !(\mathbf{\Sigma}^0_1$-$\mathit{Det}^I_{(\omega \times 2)^\omega})]$.

\item $\mathbf{\Sigma}^0_1$-$\mathit{Det}^I_{2^\omega} \rightarrow \mathbf{\Sigma}^0_1$-$\mathit{Det}^I_{(\omega \times 2)^\omega}$ and
 $\neg !(\mathbf{\Sigma}^0_1$-$\mathit{Det}^I_{(\omega \times 2)^\omega}) \rightarrow \neg !(\mathbf{\Sigma}^0_1$-$\mathit{Det}^I_{2^\omega})$, and \\$\mathbf{\Sigma}^0_1$-$\mathit{\;^\ast Det}^I_{2^\omega} \rightarrow \mathbf{\Sigma}^0_1$-$\mathit{\;^\ast Det}^I_{(\omega \times 2)^\omega}$ and
 $\neg !(\mathbf{\Sigma}^0_1$-$\mathit\;{^\ast Det}^I_{(\omega \times 2)^\omega}) \rightarrow \neg !(\mathbf{\Sigma}^0_1$-$\mathit{\;^\ast Det}^I_{2^\omega})$.
 \item $\mathbf{\Sigma}^0_1$-$\mathit{Det}^{II}_{2^\omega} \rightarrow \mathbf{\Sigma}^0_1$-$\mathit{Det}^I_{2^\omega}$ and
 $\neg !(\mathbf{\Sigma}^0_1$-$\mathit{Det}^I_{2^\omega}) \rightarrow \neg !(\mathbf{\Sigma}^0_1$-$\mathit{Det}^{II}_{2^\omega})$, and 
 
 $\mathbf{\Sigma}^0_1$-$\mathit{^\ast Det}^{II}_{2^\omega} \rightarrow \mathbf{\Sigma}^0_1$-$\mathit{^\ast Det}^I_{2^\omega}$ and
 $\neg !(\mathbf{\Sigma}^0_1$-$\mathit{^\ast Det}^I_{2^\omega}) \rightarrow \neg !(\mathbf{\Sigma}^0_1$-$\mathit{^\ast Det}^{II}_{2^\omega})$.
 \item $\mathbf{FT} \rightarrow \mathbf{\Sigma}^0_1$-$\mathit{^\ast Det}^{II}_{2^\omega}$ and $\neg !(\mathbf{\Sigma}^0_1$-$\mathit{^\ast Det}^{II}_{2^\omega}) \rightarrow \neg!\mathbf{FT}$.
\end{enumerate}
\end{lemma}
\begin{proof} (i) 
 We prove: given any $\alpha$, one may construct $\beta$ such that 
  $$\forall \gamma \in 2^\omega \exists n[\bigl(n,\gamma(n)\bigr) \in E_{\alpha}]\rightarrow \forall \tau \in 2^\omega\exists n [\langle n, \tau(n)\rangle \in E_\beta]\;\mathrm{and}$$
$$\exists n[\langle n, 0\rangle \in E_\beta \; \wedge \; \langle n, 1\rangle \in E_\beta]\rightarrow \exists n[(n,0) \in E_\alpha\;\wedge\;(n,1)\in E_\alpha].$$
The two promised conclusions then follow easily.

Given $\alpha$, define $\beta$ such that $\forall n\forall i<2[\alpha(p)=(n,i)+1\leftrightarrow \beta(p) = \langle n, i\rangle +1]$  and \\$\forall p[\neg \exists n\exists i<2[\alpha(p)=(n,i)+1]\rightarrow\beta(p) =0]$.

\smallskip Clearly, $\beta$ satisfies the requirements.

\smallskip (ii) We prove: given any $\alpha$, one may construct $\beta$ such that 
$$\forall \tau \in 2^\omega\exists n[\langle n, \gamma(n)\rangle\in E_\alpha] \rightarrow \mathit\forall\tau\in2^\omega\exists n\exists p[\langle n, \tau(n),p\rangle\in D_\beta]\;\mathrm{and}$$  $$\exists n\forall i<2\exists p[\langle n,i,p\rangle\in D_\beta] \rightarrow \mathit\exists n\forall i<2[\langle n,i\rangle\in E_\alpha].$$
The two promised conclusions then follow easily.

Given $\alpha$, define $\beta$   such that  $\forall n\forall i<2\forall p[  \beta(\langle n,i,p\rangle)  \neq 0\leftrightarrow\alpha(p)= \langle n,i\rangle +1].$ Note that $\forall n\forall i<2[
 \langle n , i\rangle \in E_\alpha \leftrightarrow \exists p[\langle n , i , p \rangle \in D_{\beta}]]$.
 
 \smallskip           Clearly, $\beta$ satisfies the requirements.

\smallskip
(iii) Note that $\mathsf{BIM}$ proves  $\mathbf{\Delta}^0_1$-$\mathit{Det}^I_{(\omega \times 2)^2} \rightarrow \mathbf{\Delta}^0_1$-$\mathit{Det}^I_{\omega \times 2 \times \omega}$ and 

$\neg!(\mathbf{\Delta}^0_1$-$\mathit{Det}_{\omega \times 2\times \omega}^I\bigr)
\rightarrow \neg!(\mathbf{\Delta}^0_1$-$\mathit{Det}^I_{(\omega \times 2)^2})$.

\smallskip
(iv) Let $m$ be given. We prove: given any $\alpha$ one may construct $\beta$ such that 
$$\forall \tau \in 2^\omega \exists c\in(\omega\times 2)^m[ c \in_{II} \tau \; \wedge\; c \in D_\alpha]\rightarrow \forall \tau \in 2^\omega\exists \gamma \in \mathcal (\omega\times 2)^\omega[\gamma \in_{II}\tau\;\wedge\; \gamma\in\mathcal{G}_\beta]$$  $$\mathrm{and} \;\exists\sigma\forall \gamma\in (\omega\times 2)^\omega[\gamma \in_I \sigma \rightarrow \gamma \in \mathcal{G}_\beta]\rightarrow\exists s\forall c\in(\omega\times 2)^m[ c \in_I s  \rightarrow c \in D_\alpha].$$
The two promised conclusions then follow easily.

Given $ \alpha$, define $\beta$  such that $\forall s [\beta(s)\neq 0\leftrightarrow\bigl( s\in (\omega\times 2)^{m}\;\wedge\;\alpha(s)\neq 0\bigr)]$. 

\smallskip Observe that, if  $\forall \tau \in 2^\omega \exists c\in(\omega\times 2)^m[ c \in_{II} \tau \; \wedge c \in D_\alpha]$, then \\
 $\forall \tau \in 2^\omega\exists \gamma \in 2^\omega[\gamma \in_{II}\tau\;\wedge\;\beta\bigl(\overline \gamma(2 m)\bigr) \neq 0 ]$, i.e. $\forall \tau \in 2^\omega\exists \gamma \in 2^\omega[ \gamma \in_{II}\tau\;\wedge\;\gamma \in \mathcal{G}_\beta ]$.

 Let $\sigma$ in $2^\omega$ be given such that  $ \forall \gamma\in (\omega\times 2)^\omega[\gamma \in_I \sigma \rightarrow \exists n[\beta(\overline \gamma n) \neq 0]]$. Conclude that $\forall \gamma\in (\omega\times 2)^\omega[\gamma \in_I \sigma  \rightarrow \alpha\bigl( \overline \delta(2m)\bigr)\neq 0]$. Find $N$ such that $\forall c \in \bigcup_{n\le 2m}\omega^n[ c \in_I \gamma\rightarrow c <N]$ and define  $s:= \overline \sigma N$. Conclude that
$\forall c\in(\omega\times 2)^m[ c \in_I s  \rightarrow c \in D_\alpha] $.
 
 We thus see that $\beta$ satisfies the requirements. 
 
 \smallskip
 (v) For each $\alpha$, one may construct $\beta$ such that  $$\forall \tau \in 2^\omega\exists \gamma \in (\omega \times 2)^\omega[\gamma \in_{II}\tau\;\wedge\;\gamma\in\mathcal{G}_\alpha] \rightarrow\forall \tau \in 2^\omega\exists \delta \in 2^\omega[\delta \in_{II}\tau\;\wedge\;\delta\in\mathcal{G}_\beta]\;\mathrm{and}$$   $$\exists \sigma\forall \delta\in 2^\omega[\delta\in_I \sigma\rightarrow \delta \in \mathcal{G}_\beta]\rightarrow \exists \sigma \forall \gamma \in (\omega\times 2)^\omega[\gamma\in_I \sigma\rightarrow \gamma \in \mathcal{G}_\alpha]\;\mathrm{and}$$ $$\exists s\forall \delta\in 2^\omega[\delta\in_I s\rightarrow \delta \in \mathcal{G}_\beta]\rightarrow \exists s\forall \gamma\in(\omega\times 2)^\omega[\gamma\in_I s\rightarrow \gamma \in\mathcal{G}_\alpha].$$
 
 The proof has been given in Lemma \ref{L:simulate}. The promised conclusions  follow easily. 
 
\smallskip
(vi) We prove that, given any $\alpha$, one may construct $\beta$ such that $$ \forall \sigma \in 2^\omega\exists c \in 2^{<\omega}[c \in_{I} \sigma \;\wedge\; \alpha(c) \neq 0] \rightarrow\forall \tau\in 2^\omega\exists d \in 2^{<\omega}[d \in_{II} \tau \;\wedge\; \beta(d) \neq 0]\;\mathrm{and}$$ 
$$ \exists\tau\in 2^\omega\forall\delta\in 2^\omega[\delta \in_{II} \tau\rightarrow\exists n[\beta(\overline \delta n) \neq 0]]\rightarrow$$ $$\exists \sigma\in 2^\omega\forall \delta \in 2^\omega[\delta \in_{I} \sigma\rightarrow \exists n[\alpha(\overline \delta n) \neq 0]]$$  
$$\mathrm{and}\; \exists t\forall\delta\in 2^\omega[\delta \in_{II} t\rightarrow\exists n[\beta(\overline \delta n) \neq 0]]\rightarrow \exists t\forall \delta \in 2^\omega[\delta \in_{I} t\rightarrow \exists n[\alpha(\overline \delta n) \neq 0]].$$
The promised conclusions then follow easily.

 Given $\alpha$, define $\beta$ such that  $\beta(0) =0$ and  $\forall c \in 2^{<\omega}[\beta(\langle 0 \rangle \ast c) = \beta(\langle 1 \rangle \ast c) = \alpha(c)]$.

Assume that $\forall \sigma \in 2^\omega\exists c \in 2^{<\omega}[c \in_{I} \sigma \;\wedge\; \alpha(c) \neq 0]$. 
Let $\tau$ in $2^\omega$ be given.  
Define $\sigma$ such that $\forall c \in 2^{<\omega}[\sigma(c) = \tau\bigl(\langle 0\rangle \ast c\bigr)]$. Find $c$ in $2^{<\omega}$ such that $c \in_{I} \sigma$ and $\alpha(c) \neq 0$.Define $d:=\langle 0\rangle \ast c$ and note $d \in_{II} \tau$ and $\beta(d) \neq 0$.
Conclude that  $\forall \tau \in 2^\omega\exists d \in 2^{<\omega}[d \in_{II} \tau \;\wedge\; \beta(d) \neq 0]$.

Let $\tau$  in $2^\omega$ be given such that $\forall\delta\in 2^\omega[\delta \in_{II} \tau\rightarrow\exists n[\beta(\overline \delta n) \neq 0]]$. 
Define $\sigma$  such that $\forall c\in2^{<\omega}\forall i<2[\sigma(\langle i\rangle\ast c) = \tau(c)]$.
Note that $\forall\delta\in 2^\omega[\delta \in_{I} \sigma\rightarrow  \langle 0 \rangle\ast\delta \in_{II} \tau]$, so $\forall\delta\in 2^\omega[\delta \in_{I} \sigma\rightarrow \exists n[\beta( \overline{\langle 0\rangle\ast \delta} n) \neq 0]]$   and $\forall \delta \in 2^\omega[\delta \in_{I} \sigma\rightarrow \exists n[\alpha(\overline \delta n)\neq 0]]$.

Let $t$  be given such that $\forall\delta\in 2^\omega[\delta \in_{II} t\rightarrow\exists n[\beta(\overline \delta n) \neq 0]]$. Define $s$  such that, $\forall c\in2^{<\omega}[\langle 0\rangle\ast c<length(t)\rightarrow s(c) = t(\langle 0 \rangle \ast c)]$.
Conclude, as above, that $\forall \delta \in 2^\omega[\delta \in_{I} s\rightarrow \exists n[\alpha(\overline \delta n)\neq 0]]$.

 We thus see that $\beta$ satisfies the requirements.

\smallskip (vii) We prove that, for each $\alpha$, there exists $\beta$ such that  $$\forall \gamma \in 2^\omega\exists s[s \in_I \gamma \;\wedge\; \alpha(s) \neq 0]\rightarrow \mathit{Bar}_{2^\omega}(D_\beta)\;\mathrm{and}$$ $$\exists m[Bar_{2^\omega}(D_{\overline\beta m})]\rightarrow \exists c\forall \delta \in 2^\omega[ \delta \in_{II} c \rightarrow \exists n[\alpha(\overline \delta n) \neq 0]].$$
The two promised conclusions then follow easily.

Given $\alpha$, define $\beta$  such that $\forall c \in 2^{<\omega}[\beta(c) \neq 0\leftrightarrow \exists s<c[s \in_{I} c\;\wedge\;\alpha(s) \neq 0]]$.

Assume  $\forall \gamma \in 2^\omega\exists s[s \in_I \gamma \;\wedge\; \alpha(s) \neq 0]$.
Clearly,  $\forall\gamma\in 2^\omega\exists n[\beta(\overline \gamma n) \ne 0]$, i.e. $\mathit{Bar}_{2^\omega}(D_\beta)$.

Let  $m$ be given such that $Bar_{2^\omega}(D_{\overline \beta m})$.  
Define  $X:=\{s\in 2^{<\omega}\mid length(s)=m\;\wedge\;\exists n \le m[\alpha(\overline s n) \neq 0]\}$. Note that $\forall b\exists s[s \in_I b \;\wedge\; s \in X]$. According to  Theorem \ref{T:finitedet}, $\mathit{Det}^{II}_{2^{<\omega}\cap \omega^m}(X)$. 
 Find $c$ such that $\forall s\in 2^{<\omega}[\bigl(length(s)=m\;\wedge\; s \in_{II} c\bigr) \rightarrow s \in X]$. 
Conclude that $\forall \delta \in 2^\omega[\delta \in_{II} c \rightarrow \overline \delta m \in X]$ and $\forall \delta \in 2^\omega[ \delta \in_{II} c \rightarrow \exists n[\alpha(\overline \delta n) \neq 0]]$.  
 
  We thus see that $\beta$ satisfies the requirements.
\end{proof}

 \begin{theorem}\label{T:det}\begin{enumerate}[\upshape (i)]
 \item $\mathsf{BIM} \vdash    \mathbf{\Sigma}^0_1$-$\overleftarrow{\mathbf{AC}_{\omega, 2}} \leftrightarrow \mathbf{\Sigma}^0_1$-$\mathit{Det}_{\omega \times 2}^I \leftrightarrow \mathbf{\Delta}^0_1$-$\mathit{Det}^I_{\omega \times 2 \times \omega} \leftrightarrow \\\forall m[\mathbf{\Delta}^0_1$-$\mathit{Det}^{I}_{(\omega \times 2)^m}] \leftrightarrow   \mathbf{\Sigma}^0_1$-$\mathit{Det}^I_{(\omega \times 2)^\omega}\leftrightarrow   \mathbf{\Sigma}^0_1$-$\mathit{\;^\ast Det}^I_{(\omega \times 2)^\omega}\leftrightarrow   \mathbf{\Sigma}^0_1$-$\mathit{Det}^I_{2^\omega}\leftrightarrow  \\ \mathbf{\Sigma}^0_1$-$\mathit{\;^\ast Det}^I_{2^\omega}\leftrightarrow   \mathbf{\Sigma}^0_1$-$\mathit{Det}^{II}_{2^\omega}\leftrightarrow   \mathbf{\Sigma}^0_1$-$\mathit{\;^\ast Det}^{II}_{2^\omega}\leftrightarrow \mathbf{FT}$.
 
 \item  $\mathsf{BIM} \vdash   \neg !(\mathbf{\Sigma}^0_1$-$\overleftarrow{\mathbf{AC}_{\omega, 2}}) \leftrightarrow \neg !(\mathbf{\Sigma}^0_1$-$\mathit{Det}_{\omega \times 2}^I) \leftrightarrow \neg !(\mathbf{\Delta}^0_1$-$\mathit{Det}^I_{\omega \times 2 \times \omega}) \leftrightarrow \\\exists m[\neg !(\mathbf{\Delta}^0_1$-$\mathit{Det}^{I}_{(\omega \times 2)^m})] \leftrightarrow   \neg !(\mathbf{\Sigma}^0_1$-$\mathit{Det}^I_{(\omega \times 2)^\omega})\leftrightarrow   \neg !(\mathbf{\Sigma}^0_1$-$\mathit{\;^\ast Det}^I_{(\omega \times 2)^\omega})\leftrightarrow  \\ \neg !(\mathbf{\Sigma}^0_1$-$\mathit{Det}^I_{2^\omega})\leftrightarrow   \neg !(\mathbf{\Sigma}^0_1$-$\mathit{\;^\ast Det}^I_{2^\omega})\leftrightarrow   \neg !(\mathbf{\Sigma}^0_1$-$\mathit{Det}^{II}_{2^\omega})\leftrightarrow   \neg !(\mathbf{\Sigma}^0_1$-$\mathit{\;^\ast Det}^{II}_{2^\omega})\leftrightarrow \neg!\mathbf{FT}$.

\end{enumerate}
\end{theorem}
 
\begin{proof}Use Lemmas \ref{L:3resolutions} and \ref{L:prepdet}.  \end{proof}
 
 \subsection{The Intuitionistic Determinacy Theorem} \hfill
Recall that  $\mathcal{X}\subseteq(\omega \times 2)^\omega $ is \textit{$I$-determinate in $(\omega \times 2)^\omega$}
if and only if $$\forall \tau \in 2^\omega \exists \gamma[\gamma \in_{II}\tau \;\wedge\; \gamma \in \mathcal{X}]\rightarrow \exists \sigma\forall \gamma \in (\omega \times 2)^\omega[\gamma \in_I \sigma \rightarrow \gamma \in \mathcal{X}].$$ 

 We now define: $\mathcal{X}\subseteq(\omega \times 2)^\omega $ is \textit{weakly $I$-determinate in} $(\omega \times 2)^\omega$   
if and only if $$\forall\varphi:2^\omega\rightarrow (\omega\times 2)^\omega[\forall \tau \in 2^\omega[\varphi|\tau \in_{II}\tau \;\wedge\; \varphi|\tau \in \mathcal{X}]\rightarrow$$ $$ \exists \sigma\forall \gamma \in (\omega \times 2)^\omega[\gamma \in_I \sigma \rightarrow \gamma \in \mathcal{X}]].$$ 
  A (continuous) function  $\varphi:2^\omega\rightarrow (\omega \times 2)^\omega$ such that $\forall \tau \in 2^\omega[\varphi|\tau \in_{II} \tau]$ will be  called an \textit{anti-strategy for player $I$ in $(\omega \times 2)^\omega$}.
  Note that  the   Second Axiom of Continuous Choice\footnote{See Subsubsection \ref{SSS:ac11}.}, $\mathbf{AC}_{\omega^\omega,\omega^\omega}=\mathbf{AC}_{1,1}$, implies,
for every subset $\mathcal{X}\subseteq(\omega \times 2)^\omega$: 
 if $\forall \tau\in2^\omega\exists\gamma \in (\omega\times 2)^\omega[\gamma\in_{II}\tau\;\wedge\;\gamma \in \mathcal{X}]$, then $\exists\varphi: 2^\omega\rightarrow(\omega\times 2)^\omega\forall \tau\in2^\omega[\varphi|\tau \in_{II}\tau\;\wedge\;\varphi|\tau \in \mathcal{X}]$.
$\mathbf{AC}_{\omega^\omega,\omega^\omega}$ thus implies that, if $\mathcal{X}\subseteq (\omega\times 2)^\omega$ is weakly $I$-determinate in $(\omega\times 2 )^\omega$, then $\mathcal{X}$ is $I$-determinate in $(\omega\times 2 )^\omega$.

 Earlier versions of the next Theorem may be found in \cite[Chapter 16]{veldman81} and \cite{veldman2009}.
 
 \begin{theorem}[Intuitionistic Determinacy Theorem]\label{T:det2}
 The following statements are equivalent in $\mathsf{BIM}$:

\begin{enumerate}[\upshape(i)]
 \item $\mathbf{FT}$.
  \item For every anti-strategy $\varphi$ for player $I$ in  $(\omega \times 2)^\omega$ there exists  a strategy $\sigma$ for player I in  $(\omega \times 2)^\omega$ such that  $\forall \gamma\in (\omega\times 2)^\omega[\gamma\in_I \sigma\rightarrow \exists \tau \in 2^\omega[\gamma=\varphi|\tau]].$
  
\end{enumerate}
\end{theorem}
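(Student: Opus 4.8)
The plan is to prove the cycle of implications (i) $\Rightarrow$ (ii) $\Rightarrow$ (iii) $\Rightarrow$ (i), all inside $\mathsf{BIM}$. The passage from finite games (Theorem \ref{T:finitedet}) and the game-theoretic equivalents already collected in Theorem \ref{T:det} will do most of the work for the last implication, so the genuinely new content is the construction of an explicit \emph{translation} of an anti-strategy into a strategy, and the observation that a set with a strong half-win (the hypothesis of weak determinacy) is in particular the image of an open-like structure to which the Fan-Theorem machinery applies.

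First I would prove (i) $\Rightarrow$ (ii). Let $\phi \in \mathit{Fun}\bigl(\mathcal{C}, (\omega \times 2)^\omega\bigr)$ be an anti-strategy for player $I$, so $\forall \gamma \in \mathcal{C}[\phi|\gamma \in_{II} \gamma]$. The idea is that player $I$, while answering player $II$, simultaneously maintains a finite initial segment $\overline{\gamma}m$ of a conjectural strategy $\gamma$ for player $II$ consistent with the play so far, and reads his next move off $\phi$. The difficulty is that the continuity modulus of $\phi$ is not uniform on $\mathcal{C}$, so at a given position player $I$ may not yet know the next value $\phi|\gamma$ produces; he must keep making \emph{postponing} moves (play $0$, as in the simulation used in Lemma \ref{L:prepdet}(v)) until $\phi$, fed with enough of $\gamma$, commits to a value. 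Here $\mathbf{FT}$ is what guarantees that on the \emph{compact} domain $\mathcal{C}$ the modulus is bounded, so that after finitely many postponing moves $\phi$ is forced to deliver a next pair $\langle n, i\rangle$; player $I$ plays $n$, records player $II$'s reply $i$ as $\gamma$'s value at the appropriate position, and continues. Formally I would define $\beta$ by primitive recursion along plays $\delta \in_I \beta$, at each stage using $\mathbf{FT}$ (via $\mathit{Sec}$, as in the proof of Theorem \ref{T:fteq2}) to find the bound on how far to look into the conjectural $\gamma$; one then checks $\forall \delta \in (\omega \times 2)^\omega[\delta \in_I \beta \rightarrow \exists \gamma \in \mathcal{C}[\delta = \phi|\gamma]]$, which is exactly the required translation.

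Next, (ii) $\Rightarrow$ (iii): assume every anti-strategy translates into a strategy, and let $X \subseteq (\omega \times 2)^\omega$ with $\mathit{SHalfwin}^{I}_{(\omega \times 2)^\omega}(X)$. Pick $\phi \in \mathit{Fun}\bigl(\mathcal{C}, (\omega \times 2)^\omega\bigr)$ witnessing this, so $\forall \gamma \in \mathcal{C}[\phi|\gamma \in_{II} \gamma \;\wedge\; \phi|\gamma \in X]$; in particular $\phi$ is an anti-strategy for player $I$. By (ii) find $\beta$ such that $\forall \delta \in (\omega \times 2)^\omega[\delta \in_I \beta \rightarrow \exists \gamma \in \mathcal{C}[\delta = \phi|\gamma]]$. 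Then for any $\delta \in (\omega \times 2)^\omega$ with $\delta \in_I \beta$ we get $\gamma \in \mathcal{C}$ with $\delta = \phi|\gamma \in X$, so $\forall \delta \in (\omega \times 2)^\omega[\delta \in_I \beta \rightarrow \delta \in X]$, i.e.\ $\mathit{Win}^I_{(\omega \times 2)^\omega}(X)$. This gives $\mathit{WDet}^I_{(\omega \times 2)^\omega}(X)$ for arbitrary $X$. This step is essentially bookkeeping once (ii) is in hand.

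Finally, (iii) $\Rightarrow$ (i): apply weak determinacy to the open sets arising from $\mathbf{\Sigma^0_1}$-$\mathit{Det}^I_{(\omega \times 2)^\omega}$. Concretely, given $\alpha$ with $\mathit{Halfwin}^{I}_{(\omega \times 2)^\omega}(\mathcal{G}_\alpha)$, one checks that $\mathcal{G}_\alpha$ is in fact a \emph{strong} half-win: because membership in $\mathcal{G}_\alpha$ is decided by an initial segment, the map sending a strategy $\gamma$ of player $II$ to the least play obeying $\gamma$ that enters $D_\alpha$ is continuous, giving a $\phi \in \mathit{Fun}\bigl(\mathcal{C}, (\omega \times 2)^\omega\bigr)$ with $\phi|\gamma \in_{II}\gamma$ and $\phi|\gamma \in \mathcal{G}_\alpha$ for all $\gamma$; hence by (iii), $\mathit{Win}^I_{(\omega \times 2)^\omega}(\mathcal{G}_\alpha)$. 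Thus (iii) implies $\mathbf{\Sigma^0_1}$-$\mathit{Det}^I_{(\omega \times 2)^\omega}$, which by Theorem \ref{T:det}(i) is equivalent to $\mathbf{FT}$. The subtle point to verify carefully is the continuity of the anti-strategy witnessing the strong half-win — one must produce, uniformly and continuously in $\gamma$, a full infinite play $\phi|\gamma$ in $(\omega \times 2)^\omega$, not merely a finite segment; this is routine but needs the decidability of $D_\alpha$ and a careful choice of the postponing convention.

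The main obstacle I anticipate is the bookkeeping in (i) $\Rightarrow$ (ii): making precise how player $I$'s conjectural strategy $\gamma \in \mathcal{C}$ is updated as the play proceeds, so that the finitely many postponing moves interleave correctly with the ``real'' moves dictated by $\phi$, and verifying that every $\beta$-play is genuinely of the form $\phi|\gamma$. The use of $\mathbf{FT}$ there is through a $\mathit{Sec}$-style argument (find $n$ with $0 \in \mathit{Sec}_n$ of the bar of positions where $\phi$ has committed to its next output); getting the indices to match up is the part where care is needed, though no deep new idea beyond the simulation in Lemma \ref{L:prepdet}(v) is required.
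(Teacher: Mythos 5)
Your steps (ii) $\Rightarrow$ (iii) and (iii) $\Rightarrow$ (i) are essentially correct and close to the paper's own argument: (ii) $\Rightarrow$ (iii) is indeed pure bookkeeping, and for (iii) $\Rightarrow$ (i) your construction of a continuous witness to the strong half-win (search, continuously in $\gamma$, for the least $s$ with $s \in_{II}\gamma$ and $\alpha(s) \neq 0$, then extend to an infinite play obeying $\gamma$) is a legitimate variant of what the paper does; the paper works with the simpler two-move equivalent $\mathbf{\Sigma^0_1}$-$\mathit{Det}^I_{\omega \times 2}$ and builds the anti-strategy with an explicit minimality clause, but either route lands on an FT-equivalent from Theorem \ref{T:det}.

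The genuine gap is in (i) $\Rightarrow$ (ii), which is the heart of the theorem. Your mechanism of \emph{postponing moves} cannot be used here: in the translation one needs $\forall \delta \in (\omega\times 2)^\omega[\delta \in_I \beta \rightarrow \exists \gamma \in \mathcal{C}[\delta = \phi|\gamma]]$, so every move player $I$ makes is literally a coordinate of $\phi|\gamma$; unlike in Lemma \ref{L:prepdet}(v), there is no re-encoding of the target set that could absorb dummy $0$-moves, and a play containing such a move will in general not be of the form $\phi|\gamma$ at all. Moreover, the difficulty is not that $\phi$ has not yet ``committed'' -- a total continuous $\phi$ always delivers its values after finitely much of $\gamma$ -- but that player $I$ must commit to conjectured values of $\gamma$ at positions where player $II$ has not yet answered, while $\phi|\gamma \in_{II} \gamma$ forces $\gamma$'s values along the actual play to agree with player $II$'s \emph{actual} future moves. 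What is needed is an invariant quantifying over all possible future answers of player $II$: this is the paper's predicate $\mathit{Safe}(\phi,a,c)$ (every $\delta$ in $\mathcal{C}$ above $a$ extends to some $\gamma$ through $c$ with $a \sqsubset \phi|\gamma$), whose \emph{decidability} is proved from $\mathbf{FT}$ (it is a universal statement over the compact $\mathcal{C}$, reduced to a finite check via a finite bar), together with a step lemma (from a safe $(a,c)$, for every continuation of $II$'s answers some finite extension of the play is safe with conjecture $c\ast\langle i\rangle$), which is then uniformized into the strategy $\beta$ using the FT-equivalent $\mathbf{\Sigma^0_1}$-$\mathit{\,^\ast Det}^I_{(\omega\times 2)^\omega}$ and $\mathbf{Min}$-$\mathbf{AC_{0,0}}$ (both available in $\mathsf{BIM}+\mathbf{FT}$). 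Your sketch contains neither the invariant nor this use of $\mathbf{FT}$; a ``$\mathit{Sec}$-style bound on how far to look into $\gamma$'' does not by itself prevent the conjecture from being falsified by player $II$'s later moves, so as it stands the recursion defining $\beta$ cannot be verified to have the translation property.
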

  \begin{proof}
   (i) $\Rightarrow$ (ii). Let $\varphi:2^\omega\rightarrow (\omega\times 2)^\omega$ be given such that $\forall \tau\in2^\omega[\varphi|\tau \in_{II} \tau]$. 
   We intend to develop a strategy $\sigma$ for player $I$ in $(\omega\times 2)^\omega$ such that Player $I$, whenever he  obeys $\sigma$ and develops, together with player $II$, $\gamma$ in $(\omega\times 2)^\omega$, will be able to construct, simultaneously,  a strategy $\tau$ for player $II$ in $(\omega\times 2 )^\omega$ such that $\gamma =\varphi|\tau$.   The infinite sequence $\gamma$ must be the answer given by the anti-strategy $\varphi$ to player $II$'s  strategy $\tau$.  So, while playing $\gamma$, player $I$  \textit{conjectures} a  strategy $\tau$ that player $II$ may be assumed to follow  during this  very play.

     Let  $c, t$ be given such that $c\in\bigcup_k(\omega\times 2)^k$ and  $t\in 2^{<\omega}$.    We define:  \textit{with respect to the given anti-strategy $\varphi$, $t$ is, at the position $c$, a safe (partial) conjecture by player $I$ about the strategy followed by player $II$},  notation:
   $\mathit{Safe}( c, t)$, if and only if\footnote{For the notation $^cu$, see Subsection \ref{SS:subs}.}
  $\forall\rho \in 2^\omega  \exists u\in 2^{<\omega}[t\sqsubset u\;\wedge\;  ^cu \sqsubset \rho \;\wedge \; c \sqsubseteq \varphi|u].$ 
   Note\footnote{For the notation $^c\tau$, see Subsection \ref{SS:subs}.} that $\mathit{Safe}( c, t)\leftrightarrow \forall \rho \in 2^\omega  \exists \tau\in2^\omega[t\sqsubset \tau\;\wedge\;  ^c\tau = \rho \;\wedge \; c \sqsubset \varphi|\tau]$.
 If $\mathit{Safe}( c, t)$, then    player $I$,  at the position $c$, may extend  every strategy $\rho$ of  player $II$ \textit{`above $c$'}, that is, on  positions $d$ in $(\omega\times 2)^{<\omega}\times\omega$ such  that $c\sqsubseteq d$,  to a strategy $\tau$ of  player $II$ on the  whole of $(\omega\times 2)^{<\omega}\times\omega$ such that  $t\sqsubset \tau$  and $c\sqsubset \varphi|\tau$. Note that $\mathit{Safe}(\langle\;\rangle, \langle \;\rangle)$. 
  
   We now  prove that one may {\it decide}, given $c$ in $\bigcup_k (\omega\times 2)^k$ and $t$ in $2^{<\omega}$, if $\mathit{Safe}(c,t)$ holds or not, i.e. we prove that  $\forall c\in\bigcup_k(\omega\times 2)^k\forall t\in 2^{<\omega}[\mathit{Safe}( c, t) \vee \neg \mathit{Safe}( c, t)].$
 Let $c$ in $\bigcup_k(\omega\times 2)^k$ and $t$ in $2^{<\omega}$ be given.  
  Define  $\beta$ such that 
   $\forall u\in 2^{<\omega} [\beta(u)\neq 0\leftrightarrow  (c\sqsubseteq \varphi|u\;\vee\;c\perp\varphi|u)]$.  
  Note that $Bar_{2^\omega}(D_\beta)$, and that $\forall u[u\in D_\beta\rightarrow\forall i<2[u\ast\langle i \rangle \in D_\beta]]$.
  Using $\mathbf{FT}$, find $n>\mathit{length}(t)$ such that  $\forall u\in 2^{<\omega}[length(u)=n
  \rightarrow u \in D_\beta]$. 
  Note that $\forall u[\mathit{length}(^cu)\le length(u)\le n]$  
  and
  $\mathit{Safe}(c,t)\leftrightarrow\forall r \in 2^{<\omega}[length(r)=n \rightarrow \exists u \in 2^{<\omega}[ t\sqsubseteq u \;\wedge\;  ^cu \sqsubseteq r\;\wedge \; c\sqsubseteq\varphi|u].$ 
    We thus see that one may indeed decide $\mathit{Safe}(c,t)\;\vee\;\neg \mathit{Safe}(c,t)$.

   We want to   define a function taking these decisions, i.e. we want to define $\nu$ such that $\forall c\in \bigcup_k(\omega\times 2)^k\forall t\in 2^{<\omega}[\nu(c,t)\neq 0\leftrightarrow \mathit{Safe}( c ,t )].$
   We first define $\psi$ such that, for all $c$ in $\bigcup_k(\omega\times 2)^k$, for all $t$ in $2^{<\omega}$, 
  $\psi(c,t):=\mu n[n>\mathit{length}(t)\;\wedge\;\forall u \in 2^{<\omega}[length(u)=n\rightarrow c\sqsubseteq \varphi| u \;\vee\;c\perp\varphi|u]],$
  and then define $\nu$ such that, for all $c$ in $\bigcup_k(\omega\times 2)^k$, for all $t$ in $2^{<\omega}$, 
  $\nu(c,t)\neq 0\leftrightarrow$ $\forall r \in 2^{<\omega}[length(r)=\psi(c,t) \rightarrow \exists u \in 2^{<\omega}[length(u)={\psi(c,t)}\;\wedge\; t\sqsubseteq u \;\wedge\;  ^cu \sqsubseteq r\;\wedge \; c\sqsubseteq\varphi|u]].$
   We also define $\chi$ such that, for all $c$ in $\bigcup_k(\omega\times 2)^k$, for all $t$ in $2^{<\omega}$,   if  $\nu(c,t)=0$,  then $\chi(c,t)=\mu r[r\in 2^{<\omega} \;\wedge\; length(r)=\psi(c,t)\;\wedge\;\neg \exists u \in 2^{<\omega}[length(u)={\psi(c,t)}\;\wedge\; t \sqsubseteq u\;\wedge\; ^cu \sqsubseteq r\;\wedge\; c\sqsubseteq\varphi|u]].$ 
Note that, if   $\nu( c, t)=0$, then    $\forall\rho\in2^\omega[\chi(c,t)\sqsubset \rho\rightarrow\neg \exists u\in 2^{<\omega}[ t\sqsubseteq u\;\wedge\;^cu\sqsubset \rho \;\wedge\;  c\sqsubseteq \varphi|u]]$.

 Let $n,c,t$ be given such that  $c\in(\omega\times 2)^n$ and $t$ in $2^{<\omega}$  and   $\mathit{Safe}( c, t)$. 
We  shall prove that
 $\forall \rho \in 2^\omega\exists d\in \bigcup_{k>0}(\omega\times 2)^k[d\in_{II}\rho \;\wedge\; \exists i<2[\mathit{Safe}( c\ast d, t\ast\langle i \rangle)]].$
 Let $\rho$ in $2^\omega$ be given. Find $u$ in $2^{<\omega}$ such that $t\sqsubset u\;\wedge\;^cu\sqsubset \rho \;\wedge\; c\sqsubseteq \varphi|u$. 
  Find $i<2$ such that $t\ast\langle i\rangle\sqsubseteq u$.
 Find $p$ such that, for all $d$ in $\bigcup_k(\omega\times 2)^k\times \omega$, 
if $d< length(^cu)$, then $length(d)<2p$. 
Define $D:=\{d\in (\omega\times 2)^p\mid d \in_{II}\;\rho\;\wedge\;\exists \tau\in2^\omega[c\ast d\sqsubset \varphi|\tau]\}$. 
It follows from $\mathbf{FT}$ that $E:=\{\overline{\varphi|\tau}(2n+2p)\mid\tau\in2^\omega\}$ is a finite\footnote{$D\subseteq \omega$ is {\it finite} if and only if  there exists $a$ such that $D=D_a=\{n<length(a)\mid a(n)\neq 0\}$, see Subsection \ref{SS:decidenum}.} subset of $\omega$. 
  Note that, for all $d$ in $(\omega\times 2)^p$, $d\in D$ if and only if $\exists b \in E[c\ast d = b]$ and conclude that   also $D$ is a finite subset of $\omega$. 
 Assume $\forall d\in D[\neg \mathit{Safe}(c\ast d, t\ast\langle i\rangle)]$. 
 Define $\rho^\ast$ in $2^\omega$ such that  for all $v$ in $\bigcup_{k<n+p} (\omega\times 2)^k\times \omega[\rho^\ast (v)= \rho(v)]$ (and, therefore, $^cu\sqsubset \rho ^\ast$) and $\forall d \in D[d\in_{II}\rho^\ast]$ and    $\forall d\in D[\chi(c\ast d, t\ast\langle i\rangle)\sqsubset\; ^{d}(\rho^\ast)]$.
  Find $\tau$ in $2^\omega$ such that $u\sqsubset \tau$ and $^c\tau=\rho^\ast$ and  consider $\varphi|\tau$.  
Note that $c\sqsubseteq\varphi|u\sqsubset \varphi|\tau$. 
Find $d$ in $D$ such that $c\ast d\sqsubset \varphi|\tau$. 
 Note that $^{c\ast {d}}\tau =\;^{d}(\rho^\ast)$ and $\chi(c\ast d, t\ast\langle i\rangle)\sqsubset\;^d(\rho^\ast)$. 
Conclude that  $\neg\exists v\in 2^{<\omega}[t\ast\langle i\rangle \sqsubseteq v \;\wedge\; ^{c\ast d}v \sqsubseteq\;^d(\rho^\ast)\;\wedge\; c\ast d
\sqsubseteq \varphi|v]$. 
 On the other hand, $t\ast \langle i\rangle\sqsubseteq u\sqsubset \tau$. Find $m$ such that  $ c\ast d\sqsubseteq \varphi|\overline \tau m$. 
 Note that $^c\tau=\rho^\ast$ and, therefore, $^{c\ast d}\overline \tau m \sqsubset\;^d(\rho^\ast)$.
  Conclude that $t\ast \langle i\rangle\sqsubset \overline \tau m\;\wedge\;^{c\ast d}\overline \tau m \sqsubset\;^d(\rho^\ast)\;\wedge\;c\ast d\sqsubseteq \varphi|\overline \tau m$.
  Defining $v:=\overline \tau m$, we see that we obtain a contradiction. 
  We conclude that $\neg\forall d\in D[\neg\mathit{Safe}(c\ast d, t\ast\langle i\rangle)]$. 
 As $D$ is finite,  conclude that  $\exists d\in D[\mathit{Safe}( c\ast d, t\ast\langle i \rangle)]$. 
 We thus see that, for all $c$ in $\bigcup_k (\omega\times 2)^k$, for all $t$ in $2^{<\omega}$, $$\mathit{Safe}(c,t)\rightarrow\forall \rho \in 2^\omega\exists d\in\bigcup_{k>0}(\omega\times 2)^k[d\in_{II}\rho \;\wedge\; \mathit{Safe}( c\ast d, t\ast\langle i \rangle)].$$
 This key fact enables us to develop the promised strategy for player $I$.
 
 Let $c, t$ be given such that  $c \in \bigcup_k (\omega\times 2)^k$ and  $t\in 2^{<\omega}$ and  $\mathit{Safe}(c,t)$. Then $\forall \rho \in 2^\omega\exists d\in\bigcup_{k>0}(\omega\times 2)^k[d\in_{II}\rho \;\wedge\; \mathit{Safe}( c\ast d, t\ast\langle i \rangle)]$. 
 Using   $\mathbf{\Sigma}^0_1$-$\mathit{Det}^I_{(\omega \times 2)^\omega}$, 
   an equivalent of $\mathbf{FT}$, see Theorem   \ref{T:det}(i), find $\upsilon$ such that 
   $\forall \delta[\delta \in_I \upsilon \rightarrow \exists k[ \mathit{Safe}( c\ast\overline\delta (2k+2), t\ast\langle i \rangle)]].$
     Note that  the set $\{\delta\in (\omega\times 2)^\omega\mid \delta \in_I\upsilon\}$ is an explicit fan.
    Using $\mathbf{FT}$, find $m$ such that  $\forall \delta[\delta \in_I \upsilon \rightarrow \exists k \le m[\mathit{Safe}( c\ast\overline\delta (2k+2), t\ast\langle i \rangle)]]$.
     Find $p$ such that $\forall \delta[\delta\in_I\upsilon\rightarrow \overline \delta(2m+2)<p]$ and define
    $s:=\overline \upsilon p$. 
     Note that  
for all $d$ in $(\omega\times 2)^{<\omega}$, if $d>\mathit{length}(s)$ and  $d\in_I s$, then $\exists n[0<2n\le\mathit{length}(d) \;\wedge\; \mathit{Safe}(c\ast \overline d(2n), t\ast\langle i \rangle)].$

Note that, for all $s$, the set of all $d$ in $(\omega\times 2)^{<\omega}$ such that $d>\mathit{length}(s)$ and $\forall n[2n <length(d) \rightarrow \overline d(2n)<length(s)]$ and  $d\in_Is$ is a finite subset of $\omega$.

Define $\alpha$ such that for all $c$ in $(\omega\times 2)^{<\omega}$,    for all $t$ in $2^{<\omega}$, for all $s$,   $\alpha(c,t,s)\neq 0$ if and only if,
  for all $d$ in $(\omega\times 2)^{<\omega}$, if $d>\mathit{length}(s)$ and $\forall n[2n <length(d
  ) \rightarrow \overline d(2n)<length(s)]$ and  $d\in_I s$, then $\exists n[0<2n\le\mathit{length}(d) \;\wedge\;\exists i<2[ \mathit{Safe}(c\ast \overline d(2n), t\ast\langle i \rangle)]].$
Conclude that $\forall c \in (\omega\times 2)^{<\omega}\forall t \in 2^{<\omega}[\mathit{Safe}(c,t)\rightarrow \exists s[\alpha(c,t,s)\neq 0]]$. 
 Define  $\eta$  such that $\forall  c \in (\omega\times 2)^{<\omega}\forall t\in 2^{<\omega}[\mathit{Safe}( c, t)\rightarrow\eta(c,t)=\mu s[\alpha(c,t,s)\neq 0]].$
    Define $\lambda$ such that
     $\lambda(\langle \; \rangle) = \langle \; \rangle$, and,
      for all  $c$ in $(\omega \times 2)^{<\omega}$, for all $\langle p, i\rangle$ in $\omega\times 2$,  \textit{if}  there exists $j<2$ such that $\mathit{Safe}\bigl( c\ast \langle p, i \rangle, \lambda(c)\ast\langle j \rangle\bigr)$, then $\lambda(c\ast \langle p, i \rangle) = \lambda(c)\ast \langle j_0 \rangle$, where $j_0$ is the least such $j$, and, \textit{if not}, then $\lambda(c\ast\langle p, i\rangle)= \lambda(c)$. 
  Note that $\forall c \in (\omega \times 2)^{<\omega}[\mathit{Safe}\bigl( c , \lambda(c)\bigr)]$.
 Note that $\forall c\in(\omega\times 2)^{<\omega}\forall d\in(\omega\times 2)^{<\omega}[c\sqsubseteq d\rightarrow \lambda(c)\sqsubseteq \lambda(d)]$.
 
Now  define   a strategy $\sigma$ for player $I$ in $(\omega\times 2)^\omega$ as follows.  
   Let $c$
   in $ (\omega\times 2)^{<\omega}$ be given. 
   Find  
       $n_0:=\mu n[\mathit{Safe}( \overline c(2n), \lambda(c))]$. Find $d$ such that $c = \overline c(2n_0)\ast d$.
    Define $\sigma(c) := \bigl(\eta(\overline c(2n_0), \lambda(c))\bigr)(d)$. 
    
   We now prove that $\forall \gamma\in (\omega\times 2)^\omega[\gamma \in_I\sigma\rightarrow \exists \tau \in 2^\omega[\varphi|\tau=\gamma]]$.

   Let $\gamma$ be an element of $(\omega \times 2)^\omega$ such that $\gamma \in_I  \sigma$. 
      We first prove that
     $\forall n\exists m\ge n[\lambda\bigl(\overline \gamma(2m)\bigr) \sqsubset \lambda\bigl(\overline \gamma(2m+2)\bigr)]$. 
       Let $n$ be given. Find $n_0:=\mu k[\mathit{Safe}\bigl(\overline \gamma(2k), \lambda(\overline \gamma (2n)\bigr)]$. Define $c:=\overline \gamma (2n_0)$ and $t:=\lambda(c)$.
    Consider $s:=\eta(c,t)$.
      Find $\delta$ such that $\delta\in_I s$ and $\forall k[\delta(2k+1)= \gamma(2n_0+2k+1)]$. 
      Find  $k_0:=\mu k[\exists i<2[\mathit{Safe}\bigl(c\ast \overline \delta(2k+2), t\ast\langle i \rangle\bigr)]$.
     Note that $c\ast \overline\delta (2k_0+2) \sqsubset \gamma$.
  Note that  $\lambda(c)=\lambda\bigl(\overline \gamma (2n)\bigr)= \lambda\bigl(\overline \gamma (2n_0+2k_0)\bigr)\sqsubset \lambda\bigl(\overline\gamma(2n_0+2k_0+2)\bigr)$. 
    Defining $m:=n_0+k_0$, we see that $m\ge n$ and  $\lambda\bigl(\overline \gamma(2m)\bigr) \sqsubset \lambda\bigl(\overline \gamma(2m+2)\bigr)$.
    We thus see that $\forall n\exists m\ge n[\lambda\bigl(\overline \gamma(2m)\bigr) \sqsubset \lambda\bigl(\overline \gamma(2m+2)\bigr)]$. 
    Find $\tau$ in $2^\omega$ such that $\forall n[ \lambda\bigl(\overline \gamma(2n)\bigr)\sqsubset \tau]$. 
     Note that $\forall n \exists \zeta \in 2^\omega[  \overline \tau n\sqsubset\zeta \;\wedge\;  \overline \gamma(2n)\sqsubset\varphi|\zeta]$. Conclude that $\varphi|\tau = \gamma$. 
     We thus see that $\forall \gamma\in (\omega\times 2)^\omega[\gamma \in_I\sigma\rightarrow \exists \tau \in 2^\omega[\varphi|\tau=\gamma]]$.

\smallskip
     (ii) $\Rightarrow$ (i). Assume (ii). We  shall prove  $\mathbf{\Sigma}^0_1$-$\mathit{Det}^I_{\omega \times 2}$.  Using Theorem
     \ref{T:det}(i), we then may conclude $\mathbf{FT}$.

      Let $\alpha$ be given such that $\forall \tau \in 2^\omega\exists n [\langle n, \tau(\langle n\rangle)\rangle \in E_\alpha]$, i.e. $\forall \tau \in 2^\omega\exists n\exists p  [\alpha(p)=\langle n, \tau(\langle n\rangle)\rangle +1]$, i.e.  $\forall \tau \in 2^\omega\exists p  [\alpha(p')=\langle p'', \tau(\langle p''\rangle)\rangle +1]$. Define $\psi:2^\omega\rightarrow \omega$ such that $\forall \tau \in 2^\omega[\psi(\tau)=\mu p[\alpha(p') = \langle p'',\tau(\langle  p''\rangle)\rangle+1]$. Define $\varphi: 2^\omega\rightarrow (\omega\times 2)^\omega$ such that $\forall \tau \in 2^\omega[\varphi|\tau =\langle \psi''(\tau), \tau(\langle\psi''(\tau)\rangle)\rangle\ast\underline 0]$. 
       Find $\sigma$ such that $\forall \gamma\in(\omega\times 2)^\omega[\gamma\in_I\sigma\rightarrow\exists\tau\in2^\omega[\gamma=\varphi|\tau]]$, and, therefore,  $\forall \gamma\in(\omega\times 2)^\omega[\gamma\in_I\sigma\rightarrow\exists q[\alpha(q)=\overline \gamma 2+1]]$ and 
      $\forall \gamma\in(\omega\times 2)^\omega[\gamma\in_I\sigma\rightarrow\overline\gamma 2 \in E_\alpha]$. Define $n:=\sigma(\langle\;\rangle)$ and conclude that $\forall i<2[\langle n,i\rangle\in_I \sigma]$  and  $\forall i<2[\langle n, i\rangle \in E_\alpha]$. 
      We thus see that $\forall \alpha[\forall \tau \in 2^\omega\exists n[\langle n, \tau(\langle n\rangle)\rangle \in E_\alpha]\rightarrow \exists n\forall i<2[\langle n, i\rangle\in E_\alpha]]$, i.e. $\mathbf{\Sigma}^0_1$-$Det^I_{\omega\times 2}$. 
      \end{proof}
 \begin{corollary} $\mathsf{BIM}+\mathbf{FT}$ proves the following scheme:
 
 Every $\mathcal{X}\subseteq (\omega\times 2)^\omega$ is weakly $I$-determinate. \end{corollary}

The theorem and its corollary may be generalized. One may  replace $(\omega \times 2)^\omega$ by any spread $\mathcal{F}$  satisfying the condition: $\exists \zeta\forall\alpha \in\mathcal{F} \forall n[\alpha(2n+1) \le \zeta(n)]$.

  \medskip

Let $\varphi:2^\omega\rightarrow (\omega\times 2)^\omega$ be an anti-strategy for player $I$ in $(\omega \times 2)^\omega$. We define: $\varphi$ \textit{ fails to translate into a strategy for
player $I$} if and only if  \[\neg \exists \sigma \forall \gamma \in (\omega \times 2)^\omega[ \gamma \in_I \sigma \rightarrow \exists \tau \in 2^\omega[ \gamma = \varphi|\tau]].\] 
Note that $\neg!\mathbf{FT}$  implies the existence of an anti-strategy for player $I$ in  $(\omega \times 2)^\omega$ that   fails to translate
   into a strategy for player $I$ in  $(\omega \times 2)^\omega$. One argues as follows. 
   Using $\neg!\mathbf{FT}$ and Theorem \ref{T:det}(ii), find $\alpha$ such that  $\forall \tau \in 2^\omega\exists n[\langle n , \tau(\langle n\rangle)\rangle \in E_\alpha]$ and  $\neg\exists n\forall i<2[\langle n, i\rangle \in E_\alpha]$.  As  in the proof of Theorem \ref{T:det2}(ii) $\Rightarrow$ (i), find an  anti-strategy $\varphi$  for player $I$ in $(\omega \times 2)^\omega$ such that $\forall \tau \in 2^\omega[\overline{(\varphi|\tau)}2 \in E_\alpha]$. 
     Assume $\sigma$ is a
     strategy for player $I$ in $(\omega \times 2)^\omega$ such that $\forall \gamma\in (\omega\times 2)^\omega[\gamma\in_I\sigma\rightarrow\exists\tau\in2^\omega[\gamma=\varphi|\tau]]$.  Consider $n := \sigma(\langle \; \rangle)$\ and conclude that   $\forall i<2[\langle n, i \rangle\in E_\alpha]$. Contradiction.

     We did not find  an argument proving $\neg!\mathbf{FT}$ from the assumption of the existence of 
     an anti-strategy for player $I$ in  $(\omega \times 2)^\omega$ that   fails to translate
   into a strategy for player $I$ in  $(\omega \times 2)^\omega$.
     \section{The (uniform) intermediate value theorem}\label{S:ivt}

 \subsection{} The   \textit{Intermediate Value Theorem}, $\mathbf{IVT}$\footnote{For some notation used in this Section, see Subsection \ref{SS:realf}.} 
 
 \textit{For all $\varphi$ in $\mathcal{R}^{[0,1]}$},
 
 \textit{if 
 $\exists \gamma\in [0,1]^2[\ \varphi^{`\mathcal{R}}(\gamma^{\upharpoonright 0} ) \le_\mathcal{R} 0_\mathcal{R} \le_\mathcal{R} \varphi^{`\mathcal{R}}(\gamma^{\upharpoonright 1})]$, then $ \exists \gamma \in 
 [0,1][\varphi^{`\mathcal{R}}(\gamma) =_\mathcal{R} 0_\mathcal{R}].$}
 
 \smallskip
  
  $\mathbf{IVT}$ fails constructively. The next two theorems are similar to \cite[Chapter 3, Theorem 2.4]{bridgesrichman} and  \cite[Theorem 6(iv) and (iii)]{toftdal}.
\begin{theorem}\label{T:ivtllpo} $\mathsf{BIM}\vdash \mathbf{IVT} \rightarrow \mathbf{LLPO}$. \end{theorem}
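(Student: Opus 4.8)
The plan is to adapt the standard Bishop--Richman argument that derives $\mathbf{LLPO}$ from the failure of a constructive intermediate value theorem. Fix an arbitrary $\alpha$. First I would manufacture from $\alpha$ a real number $a=a(\alpha)$ with $|a|\le_\mathcal{R}\tfrac{1}{4}$ that records where $\alpha$ first becomes nonzero: define a rational Cauchy sequence $(a_k)_k$ by setting $a_k=0$ as long as $\alpha(j)=0$ for all $j\le k$, and, once $n_0\le k$ is the least index with $\alpha(n_0)\neq 0$, setting $a_k=(-1)^{n_0}2^{-n_0-2}$ (so $(a_k)_k$ is eventually constant). Packaging $(a_k)_k$ as a nested shrinking sequence of rational intervals yields an element $a$ of $\mathcal{R}$, and from the definition one reads off, in $\mathsf{BIM}$: if $a\le_\mathcal{R}0_\mathcal{R}$, then the first disjunct of $\mathbf{LLPO}$ holds for $\alpha$ --- for if $\overline{\alpha}(2p)=\underline{\overline 0}(2p)$ and $\alpha(2p)\neq 0$, then $2p$ is the least nonzero index, so $a=_\mathcal{R}(2^{-2p-2})_\mathcal{R}>_\mathcal{R}0_\mathcal{R}$, a contradiction; symmetrically, $a\ge_\mathcal{R}0_\mathcal{R}$ forces the second disjunct.

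Next I would build a continuous function $\phi$ computing $f(x)=\mathrm{med}\bigl(x-_\mathcal{R}\tfrac{1}{3},\,a,\,x-_\mathcal{R}\tfrac{2}{3}\bigr)$, the pointwise median, which equals the piecewise-linear map rising linearly from $a-\tfrac{1}{3}$ to $a$ on $[0,\tfrac{1}{3}]$, constantly $a$ on $[\tfrac{1}{3},\tfrac{2}{3}]$, and rising linearly from $a$ to $a+\tfrac{1}{3}$ on $[\tfrac{2}{3},1]$; since $|a|\le_\mathcal{R}\tfrac{1}{4}<\tfrac{1}{3}$ this is total and uniformly continuous on $[0,1]$, hence coded by some $\phi$ enumerating a partial continuous real function with $[0,1]\subseteq\mathit{dom}_\mathcal{R}(E_\phi)$. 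Taking $\gamma^0:=0_\mathcal{R}$ and $\gamma^1:=1_\mathcal{R}$ in $[0,1]$ we have $\phi(\gamma^0)=_\mathcal{R}(a-\tfrac{1}{3})_\ast\le_\mathcal{R}0_\mathcal{R}$ and $\phi(\gamma^1)=_\mathcal{R}(a+\tfrac{1}{3})_\ast\ge_\mathcal{R}0_\mathcal{R}$, so the hypothesis of $\mathbf{IVT}$ is met. I would also record the elementary monotonicity facts, provable in $\mathsf{BIM}$: for every $x$ in $[0,1]$, $f(x)\le_\mathcal{R}a$ whenever $x\le_\mathcal{R}\tfrac{2}{3}$, and $f(x)\ge_\mathcal{R}a$ whenever $x\ge_\mathcal{R}\tfrac{1}{3}$.

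Then I apply $\mathbf{IVT}$ to obtain $\gamma\in[0,1]$ with $\phi(\gamma)=_\mathcal{R}0_\mathcal{R}$, and use cotransitivity of $<_\mathcal{R}$ (provable in $\mathsf{BIM}$ from $\tfrac{1}{3}{}_\mathcal{R}<_\mathcal{R}\tfrac{2}{3}{}_\mathcal{R}$ by approximating $\gamma$ to within $\tfrac{1}{9}$ and comparing rationals) to get $\gamma<_\mathcal{R}\tfrac{2}{3}{}_\mathcal{R}$ or $\gamma>_\mathcal{R}\tfrac{1}{3}{}_\mathcal{R}$. In the first case, were $a<_\mathcal{R}0_\mathcal{R}$ we would have $\phi(\gamma)\le_\mathcal{R}a<_\mathcal{R}0_\mathcal{R}$, contradicting $\phi(\gamma)=_\mathcal{R}0_\mathcal{R}$; hence $\neg(a<_\mathcal{R}0_\mathcal{R})$, and since $a\ge_\mathcal{R}0_\mathcal{R}$ is a negative formula, $a\ge_\mathcal{R}0_\mathcal{R}$, whence the second disjunct of $\mathbf{LLPO}$ for $\alpha$. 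The second case is symmetric and gives the first disjunct. Thus $\mathbf{LLPO}$ holds.

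The part I expect to be delicate rather than deep is the bookkeeping: presenting $f$ honestly as an element of $\mathcal{N}$ enumerating a partial continuous real function in the paper's sense, and verifying within $\mathsf{BIM}$'s interval representation of $\mathcal{R}$ the estimates ($f(x)\le_\mathcal{R}a$ for $x\le_\mathcal{R}\tfrac{2}{3}$, the values of $\phi$ at $0_\mathcal{R}$ and $1_\mathcal{R}$, and cotransitivity). None of this uses any form of choice, so the argument stays inside plain $\mathsf{BIM}$, but it does require the usual care with $=_\mathcal{R}$, $\le_\mathcal{R}$ and the canonical definition of $\phi(\gamma)$.
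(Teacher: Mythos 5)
Your argument is essentially the paper's own proof: you code the parity of the first index where $\alpha$ becomes nonzero into a real number $a$ close to $0_\mathcal{R}$, build a piecewise-linear $\phi$ on $[0,1]$ that is constantly $a$ on the middle third, negative at $0_\mathcal{R}$ and positive at $1_\mathcal{R}$, and then use cotransitivity on a zero supplied by $\mathbf{IVT}$ to conclude $a\ge_\mathcal{R}0_\mathcal{R}$ or $a\le_\mathcal{R}0_\mathcal{R}$, hence $\mathbf{LLPO}$ --- exactly the paper's construction with its $\delta=\lim_n\gamma(n)$ and $\phi(0_\mathcal{R})=-1_\mathcal{R}$, $\phi(1_\mathcal{R})=1_\mathcal{R}$. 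One small slip worth fixing: $\mathrm{med}\bigl(x-_\mathcal{R}\frac{1}{3},a,x-_\mathcal{R}\frac{2}{3}\bigr)$ is the clamp of $a$ into the interval $[x-\frac{2}{3},x-\frac{1}{3}]$ and is not the function you describe in words (at $x=\frac{2}{3}$ with $a<_\mathcal{R}0_\mathcal{R}$ it equals $0_\mathcal{R}>_\mathcal{R}a$, so your stated monotonicity facts fail for this formula); the verbally described map, e.g.\ $a+_\mathcal{R}\mathrm{med}\bigl(x-_\mathcal{R}\frac{1}{3},0_\mathcal{R},x-_\mathcal{R}\frac{2}{3}\bigr)$, is the one your endpoint values and estimates refer to, and with it the argument goes through as written.
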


\begin{proof} Assume $\mathbf{IVT}$. Let $\beta$ be given. 
Define $\delta$ in $\mathcal{R}$ such that, for each $n$, if $\underline{\overline 0} n\sqsubset \beta$, then $\delta(n)=(-\frac{1}{2^n}, \frac{1}{2^n})$,   and,  if $\underline{\overline 0}n \perp \beta$ and  $p_0:=\mu p[\beta(p)\neq 0]$, then  $\delta(n) =\bigl((-1)^{p_0}\frac{1}{2^{p_0+1}} - \frac{1}{2^{n+3}}, (-1)^{p_0}\frac{1}{2^{p_0+1}} + \frac{1}{2^{n+3}}\bigr)$. 
 Note that $\delta>_\mathcal{R} 0_\mathcal{R} \leftrightarrow \exists p[2p=\mu n[\beta(n)\neq 0]]$ and $\delta<_\mathcal{R} 0_\mathcal{R} \leftrightarrow \exists p[2p+1=\mu n[\beta(n)\neq 0]]$. 
Find $\varphi$ in $\mathcal{R}^{[0,1]}$  such that $\varphi^{`\mathcal{R}}(0_\mathcal{R}) =_\mathcal{R} (-1)_\mathcal{R}$, and  $\varphi^{`\mathcal{R}}(\frac{1}{3}) =_\mathcal{R} \varphi^{`\mathcal{R}}(\frac{2}{3}) =_\mathcal{R}
 \delta$ and $\varphi^{`\mathcal{R}}(1_\mathcal{R}) = _\mathcal{R}1_\mathcal{R}$ and $\varphi$ is linear on $[0,\frac{1}{3}]$, on
 $[\frac{1}{3}, \frac{2}{3}]$ and on $[\frac{2}{3}, 1]$. 
 Note that $\varphi^{`\mathcal{R}}(0_\mathcal{R})\le_\mathcal{R}0_\mathcal{R}\le_\mathcal{R}\varphi^{`\mathcal{R}}(1_\mathcal{R})$. Using $\mathbf{IVT}$,  find $\gamma$ in
 $[0,1]$ such that $\varphi^{`\mathcal{R}}(\gamma) =_\mathcal{R} 0_\mathcal{R}$. Either $\gamma > _\mathcal{R}\frac{1}{3}$ or $\gamma <_\mathcal{R} \frac{2}{3}$. If $\gamma > _\mathcal{R}\frac{1}{3}$, then $\neg(\delta >_\mathcal{R}  0)$ and: $\forall p[2p\neq \mu n[\beta(n)\neq 0]]$, and,  if
 $\gamma <_\mathcal{R} \frac{2}{3}$, then $\neg(\delta <_\mathcal{R} 0)$ and $\forall p[2p+1\neq\mu n[\beta(n)\neq 0]]$.
  We thus see that $\forall \beta[\forall p[2p\neq \mu n[\beta(n)\neq 0]]\;\vee\;\forall p[2p+1\neq\mu n[\beta(n)\neq 0]]]$, i.e. $\mathbf{LLPO}$.
  \end{proof}

  \begin{theorem}\label{T:llpoivt} $\mathsf{BIM}+ \mathbf{\Pi}^0_1$-$\mathbf{AC}_{\omega,2}\vdash \mathbf{LLPO}\rightarrow \mathbf{IVT}$.\end{theorem}
\begin{proof} Let $\varphi$ in $\mathcal{R}^{[0,1]}$ and $\gamma$ in $[0,1]^2$  be given such that $\varphi^{`\mathcal{R}}(\gamma^{\upharpoonright 0} ) \le_\mathcal{R} 0_\mathcal{R}\le_\mathcal{R}\varphi^{`\mathcal{R}}(\gamma^{\upharpoonright 1})$. Define $\beta$ such that, for all $n$, $\beta(2n)\neq 0 \leftrightarrow \gamma^{\upharpoonright 0}(n)<_\mathbb{S} \gamma^{\upharpoonright 1}(n)$ and: $\beta(2n+1)\neq 0 \leftrightarrow \gamma^{\upharpoonright 1}(n)<_\mathbb{S} \gamma^{\upharpoonright 0}(n)$.  By $\mathbf{LLPO}$,  {\it either} $\forall p[2p+1\ne\mu n[\beta(n)\neq 0]]$ and $\forall n[\gamma^{\upharpoonright 0}(n) \le_\mathbb{S} \gamma^{\upharpoonright 1}(n) ]$ and $\gamma^{\upharpoonright 0}\le_\mathcal{R}\gamma^{\upharpoonright 1}$, {\it or} $\forall p[2p\ne\mu n[\beta(n)\neq 0]]$ and $\forall n[\gamma^{\upharpoonright 1}(n) \le_\mathbb{S} \gamma^{\upharpoonright 0} (n) ]$ and $\gamma^{\upharpoonright 1}\le_\mathcal{R}\gamma^{\upharpoonright 0}$.

Assume $\gamma^{\upharpoonright 0} \le_\mathcal{R} \gamma^{\upharpoonright 1}$. Using $\mathbf{LLPO}$ like we used it just now, conclude that, for all $n$, for all $m\le 2^n$, {\it either}  $\varphi^{`\mathcal{R}}(\frac{2^n-m}{2^n}\cdot_\mathcal{R}\gamma^{\upharpoonright 0} +_\mathcal{R} \frac{m}{2^n}\cdot_\mathcal{R} \gamma^{\upharpoonright 1}) \le_\mathcal{R} 0_\mathcal{R}$, {\it or} $  0_\mathcal{R}\le_\mathcal{R} \varphi^{`\mathcal{R}}(\frac{2^n-m}{2^n}\cdot_\mathcal{R}\gamma^{\upharpoonright 0} +_\mathcal{R} \frac{m}{2^n}\cdot_\mathcal{R} \gamma^{\upharpoonright 1}) $. 
Using $\mathbf{\Pi}^0_1$-$\mathbf{AC}_{\omega,2}$, find $\beta$ such that $\beta^{\upharpoonright 0}(0)=0$ and $\beta^{\upharpoonright 1}(0)\neq 0$ and  
for all $n$, for all $m\le 2^n$, if $\beta^{\upharpoonright n}(m)=0$, then $\varphi^{`\mathcal{R}}(\frac{2^n-m}{2^n}\cdot_\mathcal{R}\gamma^{\upharpoonright 0} +_\mathcal{R} \frac{m}{2^n}\cdot_\mathcal{R} \gamma^{\upharpoonright 1}) \le_\mathcal{R} 0_\mathcal{R}$, and, if 
 $\beta^{\upharpoonright n}(m)\neq 0$, then $0_\mathcal{R}\le_\mathcal{R}\varphi^{`\mathcal{R}}(\frac{2^n-m}{2^n}\cdot_\mathcal{R}\gamma^{\upharpoonright 0} +_\mathcal{R} \frac{m}{2^n}\cdot_\mathcal{R} \gamma^{\upharpoonright 1}) $. 
Now define $\delta$ such that  $\delta(0) = 0$ and,  for all $n$, if  $\beta^{\upharpoonright( n+1)}\bigl(2\delta(n) +1\bigr)\neq 0$, then $\delta(n+1) =2 \delta(n)$, and, if    $\beta^{\upharpoonright (n+1)}\bigl(2\delta(n) +1\bigr)= 0$, then $\delta(n+1) =2 \delta(n)+1$. 
Note that, for all $n$, $\delta(n)<2^n$ and $\beta^{\upharpoonright n}\bigl(\delta(n)\bigr) = 0$ and $\beta^{\upharpoonright n}\bigl(\delta(n)+1\bigr) \neq 0$ and   $\varphi^{`\mathcal{R}}\bigl((\frac{2^n-\delta(n)}{2^n})\cdot_\mathcal{R}\gamma^{\upharpoonright 0}+_\mathcal{R}(\frac{\delta(n)}{2^n})\cdot_\mathcal{R}\gamma^{\upharpoonright 1}\bigr)\le_\mathcal{R}0_\mathcal{R}\le_\mathcal{R}\varphi^{`\mathcal{R}}\bigl((\frac{2^n-\delta(n)-1}{2^n})\cdot_\mathcal{R}\gamma^{\upharpoonright 0}+_\mathcal{R}(\frac{\delta(n)+1}{2^n})\cdot_\mathcal{R}\gamma^{\upharpoonright 1}\bigr)$.   
Define  $\varepsilon$ such that, for each $n$, $\varepsilon(n)=\bigl((\frac{2^n-\delta(n)}{2^n})\cdot_\mathcal{R}\gamma^{\upharpoonright 0}+_\mathcal{R}(\frac{\delta(n)}{2^n})\cdot_\mathcal{R}\gamma^{\upharpoonright 1}, (\frac{2^n-\delta(n)-1}{2^n})\cdot_\mathcal{R}\gamma^{\upharpoonright 0}+_\mathcal{R}(\frac{\delta(n)+1}{2^n})\cdot_\mathcal{R}\gamma^{\upharpoonright 1}\bigr)$. Note that $\varepsilon\in [0,1]$. Assume $\varphi^{`\mathcal{R}}(\varepsilon)>_\mathcal{R} 0_\mathcal{R}$. Find $n,p$ such that  $p\in E_\varphi$ and $\varepsilon(n)\sqsubseteq_\mathbb{S} p'$ and $(p'')'>_\mathbb{Q} 0_\mathbb{Q}$. 
 Note that $\varepsilon'(n)\le_\mathbb{Q}\frac{2^n-\delta(n)}{2^n})\cdot_\mathcal{R}\gamma^{\upharpoonright 0}+_\mathcal{R}(\frac{\delta(n)}{2^n})\cdot_\mathcal{R}\gamma^{\upharpoonright 1}\le_\mathbb{Q}\varepsilon''(n)$ and $\varphi^{`\mathcal{R}}\bigl(( \frac{2^n-\delta(n)}{2^n})\cdot_\mathcal{R}\gamma^{\upharpoonright 0}+_\mathcal{R}(\frac{\delta(n)}{2^n})\cdot_\mathcal{R}\gamma^{\upharpoonright 1}\bigr)\le_\mathbb{R} 0_\mathbb{R}$. Contradiction.
Conclude that $\varphi^{`\mathcal{R}}(\rho)\le_\mathcal{R} 0_\mathcal{R}$. For a similar reason,  $0_\mathcal{R}\le_\mathcal{R}\varphi^{`\mathcal{R}}(\rho) $ and, therefore,  $\varphi^{`\mathcal{R}}(\rho) =_\mathcal{R} 0_\mathcal{R}$.

\smallskip The case $\gamma^{\upharpoonright 1} \ge_\mathcal{R}\gamma^{\upharpoonright 0}$ is treated similarly.\end{proof}
 
\begin{corollary}\label{C:ivtwkl} $\mathsf{BIM}+\mathbf{\Pi}^0_1$-$\mathbf{AC}_{\omega,2}\vdash \mathbf{IVT} \leftrightarrow \mathbf{LLPO} \leftrightarrow \mathbf{WKL}$.
\end{corollary}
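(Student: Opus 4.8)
The plan is to obtain the Corollary by simply chaining the four implications that have already been established in the excerpt; no new construction is needed. The statement has two equivalences to verify, $\mathbf{IVT}\leftrightarrow\mathbf{LLPO}$ and $\mathbf{LLPO}\leftrightarrow\mathbf{WKL}$, each of which splits into two one-directional implications, and all four of these implications are available as named results.

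First I would treat $\mathbf{IVT}\leftrightarrow\mathbf{LLPO}$. The direction $\mathbf{IVT}\rightarrow\mathbf{LLPO}$ is exactly Theorem~\ref{T:ivtllpo}, which is proved already in $\mathsf{BIM}$ alone, so no extra assumption is used there. For the converse $\mathbf{LLPO}\rightarrow\mathbf{IVT}$ I would invoke Theorem~\ref{T:llpoivt}, which is proved in $\mathsf{BIM}+\mathbf{\Pi^0_1}$-$\mathbf{AC_{0,0}^{<2}}$; this is the only place in the whole argument where the extra choice axiom is genuinely consumed, since that proof uses $\mathbf{\Pi^0_1}$-$\mathbf{AC_{0,0}^{<2}}$ to select, for each $n$, the sign of $\phi$ at the relevant dyadic convex combination. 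Combining the two gives $\mathsf{BIM}+\mathbf{\Pi^0_1}$-$\mathbf{AC_{0,0}^{<2}}\vdash\mathbf{IVT}\leftrightarrow\mathbf{LLPO}$.

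Next I would treat $\mathbf{LLPO}\leftrightarrow\mathbf{WKL}$. The implication $\mathbf{WKL}\rightarrow\mathbf{LLPO}$ is Theorem~\ref{T:wklllpo}, again a theorem of $\mathsf{BIM}$ by itself. The reverse implication $\mathbf{LLPO}\rightarrow\mathbf{WKL}$ is Theorem~\ref{T:acllpowkl}, proved in $\mathsf{BIM}+\mathbf{\Pi^0_1}$-$\mathbf{AC_{0,0}^{<2}}$. Putting these together yields $\mathsf{BIM}+\mathbf{\Pi^0_1}$-$\mathbf{AC_{0,0}^{<2}}\vdash\mathbf{LLPO}\leftrightarrow\mathbf{WKL}$, and then transitivity of $\leftrightarrow$ in intuitionistic logic closes the circle $\mathbf{IVT}\leftrightarrow\mathbf{LLPO}\leftrightarrow\mathbf{WKL}$ over $\mathsf{BIM}+\mathbf{\Pi^0_1}$-$\mathbf{AC_{0,0}^{<2}}$.

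Since every ingredient is already in hand, there is really no hard step: the only thing to keep an eye on is the \emph{bookkeeping of which theory each implication lives in}, i.e.\ making sure that $\mathbf{\Pi^0_1}$-$\mathbf{AC_{0,0}^{<2}}$ is only needed for the two directions $\mathbf{LLPO}\rightarrow\mathbf{IVT}$ and $\mathbf{LLPO}\rightarrow\mathbf{WKL}$, while $\mathbf{IVT}\rightarrow\mathbf{LLPO}$ and $\mathbf{WKL}\rightarrow\mathbf{LLPO}$ hold in bare $\mathsf{BIM}$. (One could, if desired, also remark that this shows $\mathbf{IVT}$, like $\mathbf{WKL}$ and $\mathbf{LLPO}$, is non-constructive, consistent neither with intuitionistic analysis nor with the recursive model once combined with $\mathbf{\Pi^0_1}$-$\mathbf{AC_{0,0}^{<2}}$, by the discussion in Subsection~\ref{SSS:countbincunpr}.)
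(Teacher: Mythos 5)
Your proposal is correct and matches the paper's own proof, which likewise simply cites Theorems \ref{T:wklllpo}, \ref{T:acllpowkl}, \ref{T:ivtllpo} and \ref{T:llpoivt} and chains them; your bookkeeping of where $\mathbf{\Pi^0_1}$-$\mathbf{AC_{0,0}^{<2}}$ is actually consumed (only in $\mathbf{LLPO}\rightarrow\mathbf{IVT}$ and $\mathbf{LLPO}\rightarrow\mathbf{WKL}$) is accurate.
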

\begin{proof} Use Theorems  \ref{T:acllpowkl}, \ref{T:ivtllpo} and \ref{T:llpoivt}. \end{proof}
\subsection{} \textit{A contraposition of the Intermediate Value Theorem}, 
 $\overleftarrow{\mathbf{IVT}}$: 
 
\textit{ For each $\varphi$ in $\mathcal{R}^{[0,1]}$},
 \textit{if $ \forall \gamma \in [0,1][\varphi^{`\mathcal{R}}(\gamma)\; \#_\mathcal{R} \; 0_\mathcal{R}]$, then}
 
\textit{either  $ 
 \forall \gamma \in [0,1][0_\mathcal{R}<_\mathcal{R}\varphi^{`\mathcal{R}}(\gamma)]$ or $ \forall\gamma\in [0,1][ \varphi^{`\mathcal{R}}(\gamma) <_\mathcal{R}
 0_\mathcal{R}].$}

 \begin{theorem}\label{T:ivtreverse}$\mathsf{BIM}\vdash\overleftarrow{\mathbf{IVT}}$.
 
\end{theorem}

 \begin{proof}
  Assume $\varphi \in \mathcal{R}^{[0.1]}$ and $\forall \gamma  \in [0,1][\varphi^{`\mathcal{R}}(\gamma)\; \#_\mathcal{R} \;0_\mathcal{R}]$.
  Assume that
 $\varphi^{`\mathcal{R}}(0_\mathcal{R}) <_\mathcal{R} 0_\mathcal{R}$. Suppose we find 
   $\gamma$  in $[0,1]$  such that $  0_\mathcal{R}<_\mathcal{R} \varphi^{`\mathcal{R}}(\gamma)$. We will obtain  a contradiction by the method of \textit{successive bisection}. 
 Find $q$ in $\mathbb{Q}$ such that $0_\mathcal{R} <_\mathcal{R} \varphi^{`\mathcal{R}}\bigl((q)_\mathcal{R}\bigr)$. Define  $\delta$ such that, for each $n$, $\delta(n) \in \mathbb{S}$ and $\delta(n)\sqsubseteq_\mathbb{S} (0,1)$, as follows, by induction.  Define  $\delta(0) := ( 0, q )$.  Let $n, s$ be given such that $\delta(n) = s$. Note that $\varphi^{`\mathcal{R}}\bigl((\frac{s'+_\mathbb{Q}s''}{2})_\mathcal{R}\bigr)\;\#\;0_\mathcal{R}$. Find  $(r,t)$ in $E_\varphi$ such that  $r'<_\mathbb{Q} \frac{1}{2}(s'+_\mathbb{Q} s'') <_\mathbb{Q} r''$ and \textit{either} $0_\mathbb{Q}<_\mathbb{Q} t'$ \textit{or}  $t''<_\mathbb{Q} 0_\mathbb{Q}$.  If $ 0_\mathbb{Q}<_\mathbb{Q} t'$, define $\delta(n+1) = (s',  \frac{s'+_\mathbb{Q}s''}{2})$, and, if  $ t''<_\mathbb{Q} 0_\mathbb{Q}$, define  $\delta(n+1) = (\frac{s'+ s''}{2}, s'' )$. 
  Note that, for each $n$, $\delta(n+1) \sqsubseteq_\mathbb{S} \delta(n)$, and  $\varphi^{`\mathcal{R}}\bigl((\delta'(n))_\mathcal{R}\bigr)<_\mathcal{R} 0_\mathcal{R}<_\mathcal{R}\varphi^{`\mathcal{R}}\bigl((\delta''(n))_\mathcal{R}\bigr)$.
   Note that $\delta\in [0,1]$ and $\varphi^{`\mathcal{R}}(\delta)\;\#_\mathcal{R}\; 0_\mathcal{R}$.  Determine $( r,s)$ in $E_\varphi$ and $n$ in $\omega$ 
 such that $\delta(n) \sqsubseteq_\mathbb{S} r$ and either $s''<_\mathbb{Q} 0_\mathbb{Q}$  or  $0_\mathbb{Q}<_\mathbb{Q} s'$, that is,   either  $\varphi^{`\mathcal{R}}\bigl((\delta''(n))_\mathcal{R}\bigr) <_\mathcal{R} 0_\mathcal{R}$ or $0_\mathcal{R} <_\mathcal{R}\varphi^{`\mathcal{R}}\bigl((\delta'(n))_\mathcal{R}\bigr)$. 
Contradiction.
 Conclude that  $\neg\bigl(\varphi^{`\mathcal{R}}(\gamma) >_\mathcal{R} 0_\mathcal{R}\bigr)$.
 As 
 $\varphi^{`\mathcal{R}}(\gamma) \;\#_\mathcal{R}\; 0_\mathcal{R}$, conclude that  $\varphi^{`\mathcal{R}}(\gamma) <_\mathcal{R} 0_\mathcal{R}$.
 We thus see that, if $\varphi^{`\mathcal{R}}(0_\mathcal{R}) < _\mathcal{R}0_\mathcal{R}$, then $\forall\gamma\in[0,1][\varphi^{`\mathcal{R}}(\gamma) <_\mathcal{R} 0_\mathcal{R}]$.
One may prove also that,   if $\varphi^{`\mathcal{R}}(0_\mathcal{R}) >_\mathcal{R} 0_\mathcal{R}$, then $\forall\gamma\in[0,1][\varphi^{`\mathcal{R}}(\gamma) >_\mathcal{R} 0_\mathcal{R}]$.
Conclude that  either  $ 
 \forall \gamma \in [0,1][0_\mathcal{R}<_\mathcal{R}]$ or $ \forall\gamma\in [0,1][ \varphi^{`\mathcal{R}}(\gamma) <_\mathcal{R}
 0_\mathcal{R}].$
 \end{proof}
 \subsection{$\mathbf{FT}$ is unprovable in $\mathsf{BIM}+ \mathbf{IVT}$ }
  As we observed in Subsection \ref{SSS:countbincunpr}, $\mathsf{BIM}+\mathbf{CT}+X\vee \neg X$  is consistent. According to Theorem \ref{T:ivtreverse}, $\mathsf{BIM}\vdash \overleftarrow{\mathbf{IVT}}$. Conclude that $\mathsf{BIM} + X\vee \neg X\vdash \mathbf{IVT}$. Assume that $\mathsf{BIM}\vdash \mathbf{IVT} \rightarrow \mathbf{FT}$. Then  $\mathsf{BIM} + X\vee \neg X\vdash \mathbf{FT}$. As we know from Theorem \ref{T:ctka},  $\mathsf{BIM} + \mathbf{CT}\vdash \neg!\mathbf{FT}$, and, therefore, $\mathsf{BIM} + \mathbf{CT}\vdash \neg\mathbf{FT}$. Conclude that $\mathsf{BIM}+\mathbf{IVT}\nvdash \mathbf{FT}$, and also, in view of Theorem \ref{T:wklft},  $\mathsf{BIM}+\mathbf{IVT}\nvdash \mathbf{WKL}$. Note that, in view of Corollary  \ref{C:ivtwkl}, this gives another proof of $\mathsf{BIM}\nvdash \mathbf{\Pi}_1^0$-$\mathbf{AC}_{\omega, 2}$, a fact  established in Subsection \ref{SSS:countbincunpr}. One may even conclude that $\mathsf{BIM}+\mathbf{IVT}\nvdash \mathbf{\Pi}^0_1$-$\mathbf{AC}_{\omega,2}$. 
 
 One may ask if $\mathsf{BIM} +\mathbf{LLPO}\vdash \mathbf{IVT}$, i.e. if the proof of Theorem \ref{T:llpoivt} can be given without recourse to $\mathbf{\Pi}^0_1$-$\mathbf{AC}_{\omega,2}$,  but we do not know the answer to this question.  
 
 \subsection{} 
 
 The   \textit{Uniform Intermediate Value Theorem}, 
 $\mathbf{UIVT}$: 
 
 \textit{For each $\varphi$ such that $\forall n[\varphi^n\in \mathcal{R}^{[0,1]}]$},\\
 \textit{ if $\forall n
 \exists \gamma\in [0,1]^2[\ (\varphi^n)^{`\mathcal{R}}(\gamma^0 ) \le_\mathcal{R} 0_\mathcal{R}\le_\mathcal{R} (\varphi^n)^{`\mathcal{R}} (\gamma^1)]$,}
  \\\textit{ then  $\exists \gamma \in 
 [0,1]^\omega\forall n[(\varphi^n)^{`\mathcal{R}}(\gamma^n) =_\mathcal{R} 0_\mathcal{R}].$}
 
 \medskip
 In \cite[Exercise IV.2.12, page 137]{Simpson},  the reader is asked to prove that, in the classical system $\mathsf{RCA_0}$,  $\mathbf{UIVT}$ is an equivalent of $\mathbf{WKL}$. As $\mathsf{RCA_0}\vdash \mathbf{IVT}$,
   $\mathsf{RCA_0}$ proves the equivalence of  $\mathbf{UIVT}$ and the next principle.  
  \subsection{} $\mathbf{Uzero}$:
  
   \textit{For all $\varphi$ such that $\forall n[\varphi^{\upharpoonright n} \in \mathcal{R}^{[0,1]}]$}, 
  
 \textit{if $\forall n\exists \gamma\in [0,1][\ (\varphi^{\upharpoonright n})^{`\mathcal{R}}(\gamma ) =_\mathcal{R} 0_\mathcal{R}]$, then $\exists \gamma \in 
 [0,1]^\omega\forall n[(\varphi^{\upharpoonright n})^{`\mathcal{R}}(\gamma^{\upharpoonright n}) =_\mathcal{R} 0_\mathcal{R}].$}
 
 \medskip

  We want to study $\mathbf{Uzero}$ in $\mathsf{BIM}$. We need the following Lemma.
 \begin{lemma}\label{L:uzero}$\mathsf{BIM}$ proves: \begin{enumerate}[\upshape (i)] \item $\exists \psi:\omega^\omega\rightarrow\omega^\omega\forall \varphi \in \mathcal{R}^{[0,1]}[\mathcal{H}_{\psi|\varphi}=\{\gamma\in [0,1]\mid \varphi^{`\mathcal{R}}(\gamma) \;\#_\mathcal{R}\;0_\mathcal{R}\}]$, and\item $\exists \tau:\omega^\omega\rightarrow\omega^\omega\forall \alpha[\tau|\alpha \in \mathcal{R}^{[0,1]}\;\wedge\;\mathcal{H}_\alpha =\{\gamma \in [0,1]\mid (\tau|\alpha)^{`\mathcal{R}}(\gamma)\;\#_\mathcal{R}\;0_\mathcal{R}\}]$. \end{enumerate}\end{lemma}  \begin{proof} (i) Define $\psi:\omega^\omega\rightarrow\omega^\omega$  such that, for each $\varphi$,  for each $s$, $$(\psi|\varphi)(s) \neq 0\leftrightarrow \exists p\in E_{\overline \varphi s}[s\sqsubset_\mathbb{S}p'\;\wedge\;\bigl(0_\mathbb{Q} <_\mathbb{Q} (p'')'\;\vee\; (p'')''<_\mathbb{Q} 0_\mathbb{Q}\bigr)].$$ Note that $\forall \varphi \in \mathcal{R}^{[0,1]}\forall\gamma\in [0,1][\varphi^{`\mathcal{R}}(\gamma) \;\#_\mathcal{R}\; 0_\mathcal{R}\leftrightarrow\gamma \in \mathcal{H}_{\psi|\varphi}]$. 
 
 \smallskip (ii)
 Define $\rho$ such that,  for each  $s$ in $\mathbb{S}$, $\rho^s\in\mathcal{R}^{[0,1]}$ and, {\it if not $(-1)_\mathbb{Q}\le_\mathbb{Q}s'\le_\mathbb{Q} s''\le_\mathbb{Q}  (2)_\mathbb{Q}$}, then, for all $\gamma$ in $[0,1]$, $(\rho^s)^{`\mathcal{R}}(\gamma) =_\mathcal{R} 0_\mathcal{R}$, and, {\it if $(-1)_\mathbb{Q}\le_\mathbb{Q}s'\le_\mathbb{Q} s''\le_\mathbb{Q}  (2)_\mathbb{Q}$}, then,
  for all $\gamma$ in $[0,1]$, \begin{enumerate}[\upshape (1)] \item if $\gamma \le_\mathcal{R} (s')_\mathcal{R}$ or $(s'')_\mathcal{R} \le_\mathcal{R} \gamma$, then $\rho^s(\gamma)  =_\mathcal{R} 0_\mathcal{R}$, and, 
  \item if $(s')_\mathcal{R}\le_\mathcal{R} \gamma \le _\mathcal{R} (s'')_\mathcal{R}$, then $\rho^s(\gamma) =_\mathcal{R} \inf(\gamma -_\mathcal{R} (s')_\mathcal{R},(s'')_\mathcal{R}-_\mathcal{R} \gamma)$. \end{enumerate}
 
 (Note that $\rho^s$ codes the restriction to $[0,1]$ of the `{\it tent}' function from $\mathcal{R}$ tot $\mathcal{R}$ that is zero outside of $[s', s'']$ and linear on both $[s', \frac{s'+s''}{2}]$ and $ [\frac{s'+s''}{2}, s'']$ and that takes the value $0_\mathcal{R}$ at $s'$ and the value $\frac{s''-s'}{2}$ at $ \frac{s'+s''}{2}$  and the value $0_\mathcal{R}$ at  $s''$.  Note that, for all $s$ in $\mathbb{S}$, for all $\gamma$ in $[0,1]$, $(\rho^s)^{`\mathcal{R}}(\gamma)\le_\mathcal{R} (\frac{3}{2})_\mathcal{R}$. ) 

   Define $\tau:\omega^\omega\rightarrow\omega^\omega$   such that, for all $\alpha$, $\tau|\alpha\in\mathcal{R}^{[0,1]}$    and, for each $\gamma$ in $[0,1]$, $(\tau|\alpha)^{`\mathcal{R}}(\gamma) =_\mathcal{R} \sum_{s, \alpha(s)\neq 0} (\frac{1}{2^s})_\mathcal{R}\cdot_\mathcal{R}(\rho^s)^{`\mathcal{R}}(\gamma)$.
Note that  $\forall  \gamma\in [0,1][  \gamma \in \mathcal{H}_{\alpha}\leftrightarrow(\tau|\alpha)^{`\mathcal{R}}(\gamma) \;\#_\mathcal{R} \;0_\mathcal{R}]$,  and that  $\forall \gamma \in [0,1][\gamma \notin \mathcal{H}_{\alpha} \leftrightarrow (\tau|\alpha)^{`\mathcal{R}}(\gamma) =_\mathcal{R} 0_\mathcal{R}]$.\end{proof} \begin{theorem}\label{T:unifivtcc}
   $\mathsf{BIM}\vdash\mathbf{\Pi}^0_1$-$\mathbf{AC}_{\omega,[0,1]}\leftrightarrow\mathbf{Uzero}$.
  
  \end{theorem}
  \begin{proof} First, assume $\mathbf{\Pi}^0_1$-$\mathbf{AC}_{\omega,[0,1]}$. 
   Let $\varphi$ be given such that $\forall n[\varphi^{\upharpoonright n} \in \mathcal{R}^{[0,1]} \;\wedge \; \exists \gamma \in [0,1][(\varphi^{\upharpoonright n})^{`\mathcal{R}}(\gamma) =_\mathcal{R} 0_\mathcal{R}]]$. Using Lemma \ref{L:uzero}(i), find $\alpha$  such that, $\forall n[\mathcal{H}_{\alpha^{\upharpoonright n}}=\{\gamma \in [0,1]\mid (\varphi^{\upharpoonright n})^{`\mathcal{R}}(\gamma)\;\#_\mathcal{R}\; 0_\mathcal{R}\}]$. Conclude that $\forall n \exists \gamma \in [0,1] [\gamma \notin \mathcal{H}_{\alpha^{\upharpoonright n}}]$. Using $\mathbf{\Pi}^0_1$-$\mathbf{AC}_{\omega,[0,1]}$ conclude that $\exists \gamma \in [0,1]^\omega \forall n[\gamma^{\upharpoonright n} \notin \mathcal{H}_{\alpha^{\upharpoonright n}}]$ and 
   
   $\exists \gamma \in [0,1]^\omega\forall n[(\varphi^{\upharpoonright n})^{`\mathcal{R}}(\gamma^{\upharpoonright n}) =_\mathcal{R} 0_\mathcal{R}]$. 
  We thus see $\mathbf{Uzero}$.

  Secondly, assume $\mathbf{Uzero}$. 
  Let $\alpha$ be given such that  $\forall n\exists \gamma \in [0,1][\gamma \notin \mathcal{H}_{\alpha^{\upharpoonright n}}]$. Using Lemma \ref{L:uzero}(ii), find $\varphi$   such that  $\forall n[\varphi^{\upharpoonright n}\in \mathcal{R}^{[0,1]}]$ and  $\forall n[\mathcal{H}_{\alpha^{\upharpoonright n}}=\{\gamma \in [0,1]\mid(\varphi^{\upharpoonright n})^{`\mathcal{R}}(\gamma) \;\#_\mathcal{R} \;0_\mathcal{R}\}]$.
Conclude that $\forall n\exists \gamma \in [0,1][(\varphi^{\upharpoonright n})^{`\mathcal{R}}(\gamma) =_\mathcal{R} 0_\mathcal{R}]$. Using $\mathbf{Uzero}$ conclude that    $\exists \gamma \in [0,1]^\omega\forall n[(\varphi^{\upharpoonright n})^{`\mathcal{R}}(\gamma^{\upharpoonright n}) =_\mathcal{R} 0_\mathcal{R}]$, and 
 $\exists \gamma \in [0,1]^\omega \forall n [\gamma^{\upharpoonright n} \notin \mathcal{H}_{\alpha^{\upharpoonright n}}]$.
 We thus see  $\mathbf{\Pi}^0_1$-$\mathbf{AC}_{\omega,[0,1]}$. 
 \end{proof}
  
 \subsection{}\textit{A uniform contrapositive Intermediate Value Theorem} 
 $\overleftarrow{\mathbf{UIVT}}:$
 
  \textit{For each $\varphi$ such that $\forall n[\varphi^{\upharpoonright n}\in \mathcal{R}^{[0,1]}]$}, 
 $\mathit{if}\;  \forall \gamma \in [0,1]^{\omega} \exists n[(\varphi^{\upharpoonright n})^{`\mathcal{R}}(\gamma^{\upharpoonright n})\; \#_\mathcal{R} \; 0_\mathcal{R}],$
 
  $\mathit{then}\; \exists n[
 \forall \gamma \in [0,1][(\varphi^{\upharpoonright n})^{`\mathcal{R}}(\gamma) > _\mathcal{R}0_\mathcal{R}]\;\vee\; \forall\gamma\in [0,1][(\varphi^{\upharpoonright n})^{`\mathcal{R}}(\gamma) <_\mathcal{R}
 0_\mathcal{R}]].$

 \smallskip
 As  $\mathsf{BIM}\vdash \overleftarrow{\mathbf{IVT}}$,  $\mathsf{BIM}$ proves the equivalence of $\overleftarrow{\mathbf{UIVT}}$ and the next statement.
 
 \subsection{}$\overleftarrow{\mathbf{Uzero}}$:  
 
 \textit{For all $\varphi$ such that $\forall n[\varphi^{\upharpoonright n}\in \mathcal{R}^{[0,1]}]$},
 
 $\mathit{if}\;\forall \gamma \in [0,1]^{\omega} \exists n[(\varphi^{\upharpoonright n})^{`\mathcal{R}}(\gamma^{\upharpoonright n})\; \#_\mathcal{R} \; 0_\mathcal{R}], \;\mathit{then}\; \exists n
 \forall \gamma \in [0,1][(\varphi^{\upharpoonright n})^{`\mathcal{R}}(\gamma) \;\# _\mathcal{R}\;0_\mathcal{R}].$

 \smallskip
 We define a {\it strong negation} of this statement. This strong negation itself contains   a negation sign, a possibility mentioned in Subsection \ref{SS:strongnegations}.  

 \subsection{}$\neg !\overleftarrow{\mathbf{Uzero}}$:  
 
 \textit{There exists $\varphi$ such that $\forall n[\varphi^{\upharpoonright n}\in \mathcal{R}^{[0,1]}]$ and}
 
 {\it $\forall \gamma \in [0,1]^{\omega} \exists n[(\varphi^{\upharpoonright n})^{`\mathcal{R}}(\gamma^{\upharpoonright n})\; \#_\mathcal{R} \; 0_\mathcal{R}]$ and $\neg\exists n
 \forall \gamma \in [0,1][(\varphi^n)^{`\mathcal{R}}(\gamma) \;\# _\mathcal{R}\;0_\mathcal{R}]$.}

 \begin{lemma}\label{L:unifivt} One may prove in $\mathsf{BIM}$:
 \begin{enumerate}[\upshape(i)]
 \item $\mathbf{\Sigma}^0_1$-$\overleftarrow{\mathbf{AC}_{\omega,[0,1]}}\rightarrow \overleftarrow{\mathbf{Uzero}}$ and
  $\neg!\overleftarrow{\mathbf{Uzero}} \rightarrow \neg!\mathbf{\Sigma}^0_1$-$\overleftarrow{\mathbf{AC}_{\omega,[0,1]}}$.
 \item $\overleftarrow{\mathbf{Uzero}} \rightarrow \mathbf{\Sigma}^0_1$-$\overleftarrow{\mathbf{AC}_{\omega,[0,1]}}$ and
 $\neg!\mathbf{\Sigma}^0_1$-$\overleftarrow{\mathbf{AC}_{\omega,[0,1]}}\rightarrow \neg!\overleftarrow{\mathbf{Uzero}}$

 \end{enumerate}
 \end{lemma}
 \begin{proof}
 (i) The two promised conclusions follow if one may prove in $\mathsf{BIM}$ that, for every $\varphi$, if   $\forall n[\varphi^{\upharpoonright n} \in  \mathcal{R}^{[0,1]}]$, then there  exists $\beta$ such that 
 $$  \forall \gamma \in [0,1]^\omega\exists n[(\varphi^{\upharpoonright n})^{`\mathcal{R}}(\gamma^{\upharpoonright n}) \;\#_\mathcal{R}\; 0_\mathcal{R}] \rightarrow  \forall \gamma \in [0,1]^\omega \exists n [\gamma^{\upharpoonright n} \in \mathcal{H}_{\beta^{\upharpoonright n}}]\;\mathrm{and}$$ $$\exists n [ [0,1] \subseteq \mathcal{H}_{\beta^{\upharpoonright n}}] \rightarrow \exists n\forall \gamma \in [0,1] [(\varphi^{\upharpoonright n})^{`\mathcal{R}}(\gamma) \;\#_\mathcal{R}\; 0_\mathcal{R}].$$

  Using Lemma \ref{L:uzero}(i), one  finds $\beta$ such that 
  
  $\forall n [\mathcal{H}_{\beta^{\upharpoonright n}}=\{\gamma\in[0,1]\mid (\varphi^{\upharpoonright n})^{`\mathcal{R}}(\gamma) \;\#_\mathcal{R} \; 0_\mathcal{R}\}]$. 
 The two statements now are obvious.

  \smallskip
  (ii) The two promised conclusions follow if one may prove in $\mathsf{BIM}$ that, for each $\alpha$, there exists $\varphi$ such that $\forall n[\varphi^{\upharpoonright n} \in  \mathcal{R}^{[0,1]}]$ and $$\forall \gamma \in [0,1]^\omega \exists n [\gamma^{\upharpoonright n} \in \mathcal{H}_{\alpha^{\upharpoonright n}}] \rightarrow \forall \gamma \in [0,1]^\omega\exists n[(\varphi^{\upharpoonright n})^{`\mathcal{R}}(\gamma^{\upharpoonright n}) \;\#_\mathcal{R}\; 0_\mathcal{R}]\;\mathrm{and}$$  $$\exists n\forall \gamma \in [0,1] [(\varphi^{\upharpoonright n})^{`\mathcal{R}}(\gamma) \;\#_\mathcal{R}\; 0_\mathcal{R}]) \rightarrow \exists n [ [0,1] \subseteq \mathcal{H}_{\alpha^{\upharpoonright n}}].$$

  Let $\alpha$ be given. Using Lemma \ref{L:uzero}(ii),  find $\varphi$  such that $\forall n[\varphi^n \in \mathcal{R}^{[0,1]}\;\wedge\;  \mathcal{H}_{\alpha^n} =\{\gamma\in [0,1]\mid  (\varphi^n)^{`\mathcal{R}}(\gamma) \;\#_\mathcal{R}\; 0_\mathcal{R}\}]$. The two statements now are obvious. 
 \end{proof}

 \begin{theorem}\label{T:unifivt}

 $\mathsf{BIM}$ proves $\overleftarrow{\mathbf{Uzero}}\leftrightarrow \overleftarrow{\mathbf{UIVT}}\leftrightarrow \mathbf{FT}$ and \\
  $ \neg !\overleftarrow{\mathbf{Uzero}}\leftrightarrow\neg!\overleftarrow{\mathbf{UIVT}} \leftrightarrow \neg!\mathbf{FT}$.

\end{theorem}
 
\begin{proof} Use Lemma \ref{L:unifivt} and Theorem \ref{T:ccc}.  \end{proof}

 \section{The compactness of classical propositional logic}

In this Section, we prove that $\mathbf{FT}$ is equivalent to a contraposition of a restricted version of the compactness theorem for classical propositional logic.  We also prove  the corresponding result for $\neg!\mathbf{FT}$.

We introduce the symbols $\neg$, $\bigwedge$ and $\bigvee$ as natural numbers: $\neg := 1$, $\bigwedge := 2$ and $\bigvee := 3$.  We define (the characteristic function of)  $\mathit{Form}\subseteq\omega$, as follows, by  recursion.  For each $n$, $n\in \mathit{Form}$ if and only if 
 $$ n'=0 \;\vee\; (n'=\neg \;\wedge\; n''\in \mathit{Form}) \;\vee\;$$  $$\bigl((n'=\bigwedge \;\vee\;n'=\bigvee) \;\wedge\;\forall i<\mathit{length}(n'')[n''(i)\in \mathit{Form}]\bigr).$$

We define $\top :=(\bigwedge, \langle\;\rangle)$ and $\perp:=(\bigvee, \langle\;\rangle)$. 

Assume $\gamma \in 2^\omega$. We define $\tilde \gamma$ in
$2^\omega$ such that,  
 for every $n$, \begin{enumerate}[\upshape (i)]\item if $n\notin \mathit{Form}$, then $\tilde \gamma (n) =0$, and, \item if $n\in \mathit{Form}$ and $n'=0$, then $\tilde \gamma (n) = \gamma(n'')$, and,
\item if $n\in \mathit{Form}$ and $n'=\neg$, then   $\tilde \gamma(n) = 1 -
\tilde \gamma (n'')$, and, \item if $n\in \mathit{Form}$ and $n'=\bigwedge$, then
  $\tilde \gamma (n)=
\min\{\tilde \gamma\bigl(n''(i)\bigr)|i< \mathit{length}(n'')\}$, and  \item if $n\in \mathit{Form}$ and $n'=\bigvee$, then
  $\tilde \gamma (n)=
\max\{\tilde \gamma\bigl(n''(i)\bigr)|i< \mathit{length}(n'')\}$.\end{enumerate}  Note that $0 = \langle \;\rangle$. We define $\min(\emptyset)=1$ and $\max(\emptyset)=0$. Note that $\tilde \gamma(\top) =
\tilde \gamma\bigl((\bigwedge, 0)\bigr) = 1$ and $\tilde \gamma(\perp) =\tilde \gamma\bigl((\bigvee, 0)\bigr) = 0$.  
For all $m,n$ in $Form$, we define: $m\equiv n$ if and only if $\forall \gamma \in 2^\omega[\tilde \gamma( m) =\tilde \gamma (n)]$.

Assume $c \in  2^{<\omega}$. We define $\tilde c$ in $2^{<\omega}$ such that $\mathit{length}(c) = \mathit{length}(\tilde c)$, as follows. 
First, define $\gamma =c\ast\underline 0$. Then define,   for all $m<length(c)$,    $\tilde c (m):=\tilde\gamma (m)$.

 $X\subseteq\omega$ is \textit{realizable}, $\mathit{Real}(X)$, if and only if $\exists \gamma \in 2^\omega \forall n \in X[\tilde \gamma(n) =1]$, and  \textit{positively unrealizable}, $\mathit{Unreal}(X)$, if and only if $\forall \gamma \in 2^\omega\exists n \in X[\tilde \gamma (n) = 0]$.

 We define a mapping $\mathit{Fm}$ from $2^{<\omega}$ to
$\mathit{Form}$,  as follows. Assume $a \in 2^{<\omega}$. Find $s$ such that $\mathit{length}(s) = \mathit{length}(a)$, and, for all $i < \mathit{length}(a)$, if $a(i) = 0$, then $s(i) = (\neg, (0,i))$, and, if $a(i) = 1$, then $s(i) = (0,i)$. Define $Fm(a)= (\bigwedge, s)$. 

\begin{lemma}\label{L:helpcompactness}\hfill

\begin{enumerate}[\upshape (i)]\item  $\forall  
a \in 2^{<\omega}\forall\gamma\in2^\omega[\tilde \gamma\bigl(\mathit{Fm}(a)\bigr) = 1\leftrightarrow a \sqsubset \gamma]$.  \item There exists $\beta$ such that \\$\forall m \in Form \forall p[\beta\bigl(m,p)\bigr)>p\;\wedge\; \beta\bigl((m,p)\bigr) \in Form\;\wedge\; \beta\bigl((m,p)\bigr)\equiv m]$. \item\footnote{$[\omega]^\omega=\{\zeta\mid\forall n[\zeta(n)<\zeta(n+1)]\}$, see Subsection \ref{SS:sequences}.} For all $\alpha$ in $2^\omega$,  there exists $\delta$ in $[\omega]^\omega$ such that \\ $\forall m[\delta(m)\in \mathit{Form}\;\wedge\;\forall \gamma \in 2^\omega[\tilde\gamma\bigl(\delta(m)\bigr)=1 \leftrightarrow \forall n\le m[\alpha(\overline \gamma n)=0]]]$. \end{enumerate}\end{lemma} \begin{proof}

\smallskip (i) We  prove, by induction, that, for each $n$, $\forall  
a \in 2^{<\omega}[length(a)=n \rightarrow \forall\gamma\in2^\omega[\tilde \gamma\bigl(\mathit{Fm}(a)\bigr) = 1\leftrightarrow a \sqsubset \gamma]]$.
Note that $Fm(\langle\;\rangle)=\top$ and $\forall \gamma \in 2^\omega[\tilde \gamma(\top)=1]$ and   $\forall \gamma \in 2^\omega[\langle \;\rangle \sqsubset \gamma]$. 
Now  let $a,n$ be given such that $a\in 2^{<\omega}$ and $length(a)=n$ and $\forall\gamma\in2^\omega[\tilde \gamma\bigl(\mathit{Fm}(a)\bigr) = 1\leftrightarrow a \sqsubset \gamma]$. Note that for each $\gamma$ in $2^\omega$, for each $i<2$,  
$\tilde \gamma\bigl(Fm(a\ast\langle i\rangle)\bigr)=1\leftrightarrow \bigl(\tilde\gamma\bigl(Fm(a)\bigr) = 1 \;\wedge\; \gamma(n)=i \bigr)\leftrightarrow a\ast\langle i \rangle \sqsubset \gamma$.
 
\smallskip (ii) The proof is an exercise in calculating codes of formulas. Given $m$ in $Form$ and $p$, one might first find $q:=\max (m,p) $ and then $s$ in $\omega^{q+1}$ such that $s(0)=m$ and $\forall j<q[s(j)=\top]$ and then define $\beta\bigl((m,p)\bigr)
:=(\bigwedge, s)$. 

\smallskip (iii) Let $\alpha$ be given.  We define the promised $\delta$ as follows, by induction. 
If $\alpha(\langle\;\rangle)=0$, define $\delta(0):=\top$, and, if $\alpha(\langle\;\rangle)\neq 0$, define $\delta(0):=\perp$.
Note that $\delta(0)$ satisfies the requirements. 
 Let $m$ be given such that $\delta(m)$ has been defined and $\overline \delta(m+1)\in [\omega]^{m+1}$. 
Find $t$ such that $\{t(i)\mid i < \mathit{length}(t)\}=\{a\in 2^{<\omega}\mid length(a)= m+1\;\wedge\;\forall n\le m+1[\alpha(\overline a n) = 0]\}$. Then find $s$ such that $\mathit{length}(s) = \mathit{length}(t)$ and  $\forall i< \mathit{length}(s)[s(i) = \mathit{Fm}\bigl(t(i)\bigr)]$. 
 Note that, for each $\gamma$ in $2^\omega$, $\tilde\gamma\bigl((\bigvee, s)\bigr)=1\leftrightarrow$  $\exists a \in 2^{<\omega}[length(a)=m+1 \;\wedge\;\forall n \le m[\alpha(\overline a n) = 0]\;\wedge\;\tilde\gamma\bigl(Fm(a)\bigr)=1]\leftrightarrow$   $\exists a \in 2^{<\omega}[length(a)=m+1\;\wedge\;\forall n \le m[\alpha(\overline a n) = 0]\;\wedge\;a \sqsubset \gamma]\leftrightarrow\forall n\le m[\alpha(\overline \gamma n) =0]$.
  Define $\delta(m+1) =\beta\bigl( (\bigvee, s), \delta(m)\bigr)$, where $\beta$ is the function we found in (ii).  
 Note that $\delta(m+1)$ satisfies the requirements. 
 \end{proof}

 \begin{lemma}\label{L:compprop}
 The following statements are provable in  $\mathsf{BIM}$.
 \begin{enumerate}[\upshape (i)]
\item $\mathbf{FT} \rightarrow \forall \alpha[\mathit{Unreal}(E_\alpha) \rightarrow \exists n[\mathit{Unreal}(E_{\overline \alpha n})]]$. \item $\exists  \alpha[\mathit{Unreal}(E_\alpha) \;\wedge\; \forall n[\mathit{Real}(E_{\overline \alpha n})]]\rightarrow \neg!\mathbf{FT}$. 
 \item $\mathbf{WKL}\rightarrow \forall\alpha[\forall n [Real(E_{\overline\alpha n})]\rightarrow Real(E_\alpha)]]$.
  \item $\forall \alpha[\mathit{Unreal}(D_\alpha) \rightarrow \exists n[\mathit{Unreal}(D_{\overline \alpha n})]]\rightarrow \mathbf{FT}$. \item $\neg!\mathbf{FT} \rightarrow \exists \alpha[\mathit{Unreal}(D_\alpha) \;\wedge\; \forall n[\mathit{Real}(D_{\overline \alpha n})]]$.\item $\forall\alpha[\forall n[Real(D_{\overline\alpha n})]\rightarrow Real(D_\alpha)]]\rightarrow \mathbf{WKL}$. 

 \end{enumerate}
 \end{lemma}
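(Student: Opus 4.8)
The plan is to prove, in each of the two parts, a single "uniform" reduction statement from which both displayed implications follow immediately, exactly in the style of Lemmas \ref{L:3resolutions}, \ref{L:b3} and \ref{L:b45}. For part (i), I would show that $\mathsf{BIM}$ proves
\[\forall \alpha \exists \beta[\bigl(\mathit{Unreal}(E_\alpha) \rightarrow \mathit{Bar}_\mathcal{C}(D_\beta)\bigr)\;\wedge\;\bigl(\exists m[0 \in \mathit{Sec}_m(D_\beta)] \rightarrow \exists n[\mathit{Unreal}(E_{\overline \alpha n})]\bigr)].\]
Given such a $\beta$, $\mathbf{FT}$ turns $\mathit{Bar}_\mathcal{C}(D_\beta)$ into $\exists m[\mathit{Bar}_\mathcal{C}(D_{\overline \beta m})]$, i.e. $\exists m[0\in \mathit{Sec}_m(D_\beta)]$, hence $\exists n[\mathit{Unreal}(E_{\overline\alpha n})]$; and contraposing the two conjuncts gives: if $\mathit{Unreal}(E_\alpha)$ and $\forall n[\mathit{Real}(E_{\overline\alpha n})]$, then $\mathit{Bar}_\mathcal{C}(D_\beta)$ while $\forall m[0\notin \mathit{Sec}_m(D_\beta)]$, i.e. $\forall m[\neg\mathit{Bar}_\mathcal{C}(D_{\overline\beta m})]$, which is $\mathbf{KA}$. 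The definition of $\beta$: for each $c$ in $\mathit{Bin}$, put $\beta(c)\neq 0$ iff there exist $n<\mathit{length}(c)$ and $p<\mathit{length}(c)$ with $\alpha(p)=n+1$ (so $n\in E_{\overline\alpha{\,}\mathit{length}(c)}$) and $\tilde c(n)=0$. If $\mathit{Unreal}(E_\alpha)$, then every $\gamma$ in $\mathcal{C}$ has some $n\in E_\alpha$ with $\tilde\gamma(n)=0$; fixing witnessing $p,k$ with $\alpha(p)=n+1$ and $\tilde{\overline\gamma k}(n)=0$ and setting $m:=\max(n,p,k)+1$ gives $\beta(\overline\gamma m)\neq 0$, so $\mathit{Bar}_\mathcal{C}(D_\beta)$. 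Conversely, if $0\in\mathit{Sec}_m(D_\beta)$ then no $c$ in $\mathit{Bin}$ of length $m$ avoids $D_\beta$; if no $n<m$ in $E_{\overline\alpha m}$ could force this, one could build a $c$ of length $m$ with $\tilde c(n)=1$ for all the finitely many relevant $n$, a contradiction — so some $E_{\overline\alpha n}$ ($n\le m$) is already positively unrealizable.

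For part (ii), I would likewise prove that $\mathsf{BIM}$ proves
\[\forall \alpha \exists \beta[\bigl(\mathit{Bar}_\mathcal{C}(D_\alpha) \rightarrow \mathit{Unreal}(D_\beta)\bigr)\;\wedge\;\bigl(\exists n[\mathit{Unreal}(D_{\overline \beta n})] \rightarrow \exists m[0 \in \mathit{Sec}_m(D_\alpha)]\bigr)].\]
From this, the hypothesis $\forall\alpha[\mathit{Unreal}(D_\alpha)\rightarrow\exists n[\mathit{Unreal}(D_{\overline\alpha n})]]$ applied to $\beta$ yields $\mathbf{FT}$ (from $\mathit{Bar}_\mathcal{C}(D_\alpha)$ get $\mathit{Unreal}(D_\beta)$, hence some $\mathit{Unreal}(D_{\overline\beta n})$, hence $0\in\mathit{Sec}_m(D_\alpha)$ for some $m$), and contraposing gives $\mathbf{KA}\rightarrow\exists\alpha[\mathit{Unreal}(D_\alpha)\wedge\forall n[\mathit{Real}(D_{\overline\alpha n})]]$. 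The construction of $\beta$ is the one implicit in the proof of Theorem \ref{T:wklcompact}(iii)$\Rightarrow$(i): for each $m$, let $T_m\subseteq\mathit{Bin}$ be the set of $a$ with $\mathit{length}(a)=m$ and $\forall n\le m[\alpha(\overline a n)=0]$, and let $\delta(m):=(\bigvee,s)$ where $s$ enumerates $\mathit{Fm}(a)$ for $a\in T_m$ (and, if $T_m$ is empty, let $\delta(m):=(\bigwedge,\langle(0,m),(\neg,(0,m))\rangle)$, a formula number realized by no $\gamma$). Then $\beta$ decides the enumerable-and-in-fact-decidable set $\{\delta(m)\mid m\in\mathbb{N}\}$. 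A $\gamma$ in $\mathcal{C}$ realizes $\delta(m)$ iff $\forall n\le m[\alpha(\overline\gamma n)=0]$; so $\mathit{Bar}_\mathcal{C}(D_\alpha)$ implies every $\gamma$ fails to realize some $\delta(m)$, i.e. $\mathit{Unreal}(D_\beta)$; and if $\mathit{Unreal}(D_{\overline\beta n})$ for some $n$, then (picking the largest $m$ with $\delta(m)$ appearing among the first $n$ values, using $\delta(m{+}1)\vdash\delta(m)$) we get $T_m=\emptyset$, i.e. $0\in\mathit{Sec}_m(D_\alpha)$.

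The routine verifications are the four "iff" lemmas about $\tilde\gamma$ and $\tilde c$ (continuity in the initial segment, the behaviour of $\mathit{Fm}$, monotonicity of realizing $\delta(m)$), all of which are already established or sketched in the text preceding the lemma; the writing cost is low. The one genuinely delicate point is the finite combinatorics in part (i): from $0\in\mathit{Sec}_m(D_\beta)$ one must extract a single index $n$ (rather than merely the non-existence of a uniform realizer), and this requires observing that $\mathit{Sec}_m$ only constrains sequences $c$ of length $m$, so only the finitely many $n<m$ with $n\in E_{\overline\alpha m}$ are involved; decidability of $E_{\overline\alpha m}$ and of "$\tilde c(n)=1$" then lets one actually decide whether a suitable $c$ exists, and its non-existence is precisely $\mathit{Unreal}(E_{\overline\alpha m'})$ for the appropriate $m'\le m$. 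I expect the bookkeeping of which initial segment length of $\alpha$ one ends up with (matching $\mathit{Sec}$-levels to $E_{\overline\alpha n}$-levels, as in the backwards-induction arguments of Theorems \ref{T:fteq2} and \ref{T:ftlogiceq}) to be the main place where care is needed; everything else is parallel to the earlier lemmas.
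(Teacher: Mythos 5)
Your proposal is correct and follows essentially the same route as the paper: both parts are reduced to exactly the same uniform statements ($\forall \alpha \exists \beta[\ldots]$), with the same $\beta$ in (i) (via $\tilde c(n)=0$ for some $n\in E_{\overline\alpha\,\mathit{length}(c)}$ with $n,p<\mathit{length}(c)$) and the same $\delta(m)$-construction from Theorem \ref{T:wklcompact}(iii)$\Rightarrow$(i) in (ii), including the use of monotonicity of realizing $\delta(m)$ and the largest index $m_0$. The only difference is cosmetic: in (i) the paper extracts the witness $n$ directly from $\beta(\overline\gamma m)\neq 0$ rather than via your case analysis on the existence of a finite realizer, but both arguments are constructively sound.
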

 \begin{proof} (i), (ii) and (iii). 
We argue in $\mathsf{BIM}$.

 Let $\alpha$ be given. Define $\beta$  such that, for all $m$, for all $c$ in $2^{<\omega}$ such that $length(c)=m$, $\beta(c) \neq 0\leftrightarrow \exists n < m[n\in E_{\overline \alpha m}\;\wedge\;\tilde c(n) = 0]]$. We shall prove that
 $$\mathit{Unreal}(E_\alpha) \rightarrow \mathit{Bar}_{2^\omega}(D_\beta)\;\mathrm{and}$$
 $$\exists m[ Bar_{2^\omega}(D_{\overline \beta m})]\rightarrow \exists n[\mathit{Unreal}(E_{\overline \alpha n})].$$
Assume that $\mathit{Unreal}(E_\alpha)$. Let $\gamma$ in $2^\omega$ be given. Find $n,p$ such that  $n \in E_{\overline \alpha p}$ and $\tilde \gamma (n) = 0$. Define $m:=\max\{n, p\}+1 $ and note that $\beta(\overline \gamma m) \neq 0$. 
  Conclude that $\forall \gamma \in 2^\omega\exists m[\beta(\overline \gamma m)\neq 0]$ and $\mathit{Bar}_{2^\omega}(D_\beta)$.

Let  $m$  be given such that $ Bar_{2^\omega}(D_{\overline \beta m})$.   For all $c$ in $2^{<\omega}$ such that $length(c)=m$,  $\exists n\le m[\beta(\overline c n) \neq 0]$ and $\exists n < m[n \in E_{\overline \alpha m}\;\wedge\;\tilde c(n) = 0]$. Conclude that $\mathit{Unreal}(E_{\overline \alpha m})$ and $\exists n[\mathit{Unreal}(E_{\overline \alpha n})]$.
  
   Note that, if $Unreal(E_\alpha)$, then $Bar_{2^\omega}(D_\beta)$, and by $\mathbf{FT}$, there exist $m$ such that $Bar_{2^\omega}(D_{\overline \beta m})$ and $n$ such that $Unreal(E_{\overline \alpha n})$. This establishes (i).
  
   Note that, if $Unreal(E_\alpha)\;\wedge\;\forall n[Real(E_{\overline\alpha n}]$, then $Bar_{2^\omega}(D_\beta)$ and 
   
   $\forall m[\neg Bar_{2^\omega}(D_{\overline \beta m})]$, i.e. $\neg\mathbf{! FT}$. This establishes (ii).

   Note that, if $\forall n[Real(E_{\overline\alpha n})]$, then $\forall m[\neg Bar(D_{\overline \beta m})]$, and, by $\mathbf{WKL}$, there exists $\gamma$ such that $\forall n[\beta(\overline   \gamma n)=0 ]$. Conclude that $\forall m \forall n<m[n\in E_{\overline \alpha m}\rightarrow \tilde \gamma(n) =1]$, i.e. $\gamma $ realizes $E_\alpha$ and $Real(E_\alpha)$. This establishes (iii).

  \smallskip
 (iv), (v) and (vi).  We argue in $\mathsf{BIM}$. 
 
 Let $\alpha$ be given. Using Lemma \ref{L:helpcompactness}(iii), find $\delta$ in $[\omega]^\omega$ such that\\ $\forall m[\delta(m)\in \mathit{Form}\;\wedge\;\forall \gamma \in 2^\omega[\tilde\gamma\bigl(\delta(m)\bigr)=1] \leftrightarrow \forall n\le m[\alpha(\overline \gamma n)=0]]]$.
      Note that $\delta$ is strictly increasing and $\forall m[\exists n[m=\delta(n)]\leftrightarrow \exists n\le m[m=\delta(n)]]$.
   Define $\beta$  such that $\forall m[\beta(m) \neq 0\leftrightarrow \exists n[m = \delta(n)]]$.  We shall prove that
$$\mathit{Bar}_{2^\omega}( D_\alpha)\rightarrow \mathit{Unreal}(D_{ \beta })\;\mathrm{and}$$  $$\exists n[\mathit{Unreal}(D_{\overline \beta n})] \rightarrow \exists m [Bar_{2^\omega}(D_{\overline \alpha m})].$$

  Assume that $\mathit{Bar}_{2^\omega}(D_\alpha)$. Given any $\gamma \in 2^\omega$,  find $m$ such that $\alpha(\overline \gamma m) \neq 0$ and, therefore, $\tilde\gamma\bigl(\delta(m)\bigr)=0$  and:  $\exists n \in D_\beta[\tilde \gamma(n) \neq 1]$. Conclude that $\mathit{Unreal}(D_\beta)$.

  Let $n$ be given such that $\mathit{Unreal}(D_{\overline \beta n})$. Let $m_0$ be the largest $m$ such that $\delta(m)< n$. Note: $\neg \exists \gamma \in 2^\omega[\tilde\gamma\bigl(\delta(m_0)\bigr)=1] $,  and, therefore, $\forall a \in 2^{<\omega}[length(a)= m_0+1\rightarrow \exists n \le m_0[ \alpha(\overline a n) \neq 0]]$. Find $k$ such that $\forall a\in 2^{<\omega}[length(a)= m_0+1\rightarrow a\le k]$ and conclude that  $Bar_{2^\omega}(D_{\overline \alpha k})$ and  $\exists m[Bar_{2^\omega}(D_{\overline \alpha m})]$. 
  
   Note that, if $Bar_{2^\omega}(D_\alpha)$ and $Unreal(D_\beta)\rightarrow \exists n[Unreal(D_{\overline\beta n})]$, 
   
   then $\exists m[Bar_2^\omega(D_{\overline \alpha m})]$. This establishes (iv). 
  
   Note that, if $Bar_{2^\omega}(D_\alpha)$ and $\neg \exists n[Bar_{2^\omega}(D_{\overline \alpha n}]$, then $Unreal(D_\beta$
   
    and $\forall n[Real(D_{\overline \beta n})]$. This establishes (v).
  
   Note that, if $\forall n[\neg Bar_2^\omega(D_{\overline \alpha n}]$ and $\forall n[Real(D_{\overline \beta n})]\rightarrow Real(D_\beta)$, then   there exists $\gamma$ in $2^\omega$ realizing $D_\beta$, so $\forall m[\tilde\gamma\bigl(\delta(m)\bigr)=1]$ and $\forall m\forall n\le m[\alpha(\overline \gamma n)=0]$, i.e.  $\forall n[\alpha(\overline \gamma n)=0]$.  This establishes (vi). 
\end{proof}

\begin{theorem}

\begin{enumerate}[\upshape (i)]\label{T:proplog}
\item $\mathsf{BIM}\vdash \mathbf{FT} \leftrightarrow \forall \alpha[\mathit{Unreal}(E_\alpha) \rightarrow \exists n[\mathit{Unreal}(E_{\overline \alpha n})]] \leftrightarrow \\\forall \alpha[\mathit{Unreal}(D_\alpha) \rightarrow \exists n[\mathit{Unreal}(D_{\overline \alpha n})]]$. 
\item $\mathsf{BIM}\vdash \neg!\mathbf{FT} \leftrightarrow \exists \alpha[\mathit{Unreal}(E_\alpha) \;\wedge\; \forall n[\mathit{Real}(E_{\overline \alpha n})]] \leftrightarrow \\\exists \alpha[\mathit{Unreal}(D_\alpha) \;\wedge\; \forall n[\mathit{Real}(D_{\overline \alpha n})]]$. 
\item $\mathsf{BIM}\vdash \mathbf{WKL} \leftrightarrow \forall \alpha[ \forall n[\mathit{Real}(E_{\overline \alpha n})]\rightarrow Real(E_\alpha)] \leftrightarrow \\\forall \alpha[ \forall n[\mathit{Real}(D_{\overline \alpha n})]\rightarrow Real(D_\alpha)]$.
\end{enumerate}
\end{theorem}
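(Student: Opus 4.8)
The plan is to deduce Theorem \ref{T:proplog} from Lemma \ref{L:compprop} together with the earlier resolution machinery, exactly as the preceding theorems in the paper are derived from their associated lemmas. Concretely, Lemma \ref{L:compprop}(i) already gives the two implications $\mathbf{FT}\rightarrow\forall\alpha[\mathit{Unreal}(E_\alpha)\rightarrow\exists n[\mathit{Unreal}(E_{\overline\alpha n})]]$ and $\exists\alpha[\mathit{Unreal}(E_\alpha)\wedge\forall n[\mathit{Real}(E_{\overline\alpha n})]]\rightarrow\mathbf{KA}$, while Lemma \ref{L:compprop}(ii) gives $\forall\alpha[\mathit{Unreal}(D_\alpha)\rightarrow\exists n[\mathit{Unreal}(D_{\overline\alpha n})]]\rightarrow\mathbf{FT}$ and $\mathbf{KA}\rightarrow\exists\alpha[\mathit{Unreal}(D_\alpha)\wedge\forall n[\mathit{Real}(D_{\overline\alpha n})]]$. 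These four implications close a cycle through all three $\mathbf{FT}$-statements and, dually, through all three $\mathbf{KA}$-statements, provided one has the trivial ``bridge'' between the $E$-version and the $D$-version.

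The first thing I would do is record that bridge. For part (i): the $E$-statement implies the $D$-statement simply because for every $\alpha$ there is $\beta$ with $D_\alpha=E_\beta$ (this is noted in Subsection 2.3, $\forall\alpha\exists\beta[D_\alpha=E_\beta]$), and one checks the finite approximations are handled compatibly, i.e. one can choose $\beta$ so that $D_{\overline\alpha n}$ and $E_{\overline\beta n}$ stay linked; in fact the cleanest route is to go around the cycle $\mathbf{FT}\Rightarrow(E\text{-stmt})\Rightarrow(D\text{-stmt})\Rightarrow\mathbf{FT}$, where the middle implication uses $D_\alpha=E_\beta$ and the last is Lemma \ref{L:compprop}(ii), so no separate argument from the $D$-statement back to the $E$-statement is needed. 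For part (ii) one argues dually: $\mathbf{KA}\Rightarrow(D\text{-strong negation})$, then $(D\text{-strong negation})\Rightarrow(E\text{-strong negation})$ again via $D_\alpha=E_\beta$, then $(E\text{-strong negation})\Rightarrow\mathbf{KA}$ by Lemma \ref{L:compprop}(i).

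So the proof body is essentially one sentence: ``These statements follow from Lemma \ref{L:compprop}'', in parallel with the proofs of Theorem \ref{T:contrac<2} and the analogous theorems. If the referee wants slightly more, I would spell out the two short cycles explicitly, naming which implication of Lemma \ref{L:compprop} supplies each arrow and invoking $\forall\alpha\exists\beta[D_\alpha=E_\beta]$ for the remaining arrow. I do not anticipate a genuine obstacle here; the only point requiring a moment's care is making sure that when one passes from $D_\alpha$ to $E_\beta$ the \emph{finite} pieces match up so that ``$\forall n[\mathit{Real}(E_{\overline\beta n})]$'' and ``$\forall n[\mathit{Real}(D_{\overline\alpha n})]$'' correspond — but since one only ever needs the forward direction $\mathbf{FT}\Rightarrow E\Rightarrow D\Rightarrow\mathbf{FT}$ (and its dual), and the $E\Rightarrow D$ step is the easy direction (any realizer of $E_\beta$ realizes $D_\alpha$, and likewise for the finite approximations once $\beta$ is chosen as in Subsection 2.3), this causes no difficulty.

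\begin{proof}
These statements follow from Lemma \ref{L:compprop}, together with the observation, made in Subsection \ref{S:BIM}, that $\forall \alpha \exists \beta[D_\alpha = E_\beta]$ (and that $\beta$ may be chosen so that, for each $n$, $D_{\overline \alpha n} = E_{\overline \beta n'}$ for a suitable $n'$, so that realizability of all finite approximations of the one set is equivalent to realizability of all finite approximations of the other).

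\smallskip
(i) Assume $\mathbf{FT}$. By Lemma \ref{L:compprop}(i), $\forall \alpha[\mathit{Unreal}(E_\alpha) \rightarrow \exists n[\mathit{Unreal}(E_{\overline \alpha n})]]$. Now let $\alpha$ be given with $\mathit{Unreal}(D_\alpha)$. Find $\beta$ with $D_\alpha = E_\beta$ and $\forall n[\mathit{Real}(E_{\overline \beta n}) \leftrightarrow \mathit{Real}(D_{\overline \alpha n})]$. Then $\mathit{Unreal}(E_\beta)$, so $\exists n[\mathit{Unreal}(E_{\overline \beta n})]$, and therefore $\exists n[\mathit{Unreal}(D_{\overline \alpha n})]$. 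We conclude: $\forall \alpha[\mathit{Unreal}(D_\alpha) \rightarrow \exists n[\mathit{Unreal}(D_{\overline \alpha n})]]$. Finally, by Lemma \ref{L:compprop}(ii), the latter statement implies $\mathbf{FT}$. Thus the three statements are equivalent.

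\smallskip
(ii) Assume $\mathbf{KA}$. By Lemma \ref{L:compprop}(ii), $\exists \alpha[\mathit{Unreal}(D_\alpha) \wedge \forall n[\mathit{Real}(D_{\overline \alpha n})]]$. Find such an $\alpha$ and find $\beta$ with $D_\alpha = E_\beta$ and $\forall n[\mathit{Real}(E_{\overline \beta n}) \leftrightarrow \mathit{Real}(D_{\overline \alpha n})]$. Then $\mathit{Unreal}(E_\beta) \wedge \forall n[\mathit{Real}(E_{\overline \beta n})]$. Finally, by Lemma \ref{L:compprop}(i), the statement $\exists \alpha[\mathit{Unreal}(E_\alpha) \wedge \forall n[\mathit{Real}(E_{\overline \alpha n})]]$ implies $\mathbf{KA}$. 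Thus the three statements are equivalent.
\end{proof}
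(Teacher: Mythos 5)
Your proof is correct and follows essentially the same route as the paper, which likewise derives the theorem from Lemma \ref{L:compprop} together with the observation $\forall \alpha \exists \beta[D_\alpha = E_\beta]$ (and with the standard choice of $\beta$ one even has $E_{\overline \beta n} = D_{\overline \alpha n}$, so the finite approximations match exactly as you require). Nothing further is needed.
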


\begin{proof} Use Lemma \ref{L:compprop} and the fact that $\forall \alpha\exists \beta[D_\alpha = E_\beta]$. \end{proof}

Theorem \ref{T:proplog}(i) also is a consequence of  \cite[Theorem 6.5]{loeb}. 

 V.N. Krivtsov has shown, among other things,  that $\mathbf{FT}$ is an equivalent of an  intuitionistic (generalized) completeness theorem for intuitionistic first-order predicate logic, see \cite{krivtsov}.

\section{Other `Fan Theorems'?}
In this Section, we indicate what, on our opinion, should be  the subject of the next chapter in intuitionistic reverse mathematics. The Fan Theorem may be seen as a  replacement, for the intuitionistic mathematician, of that enviable tool of the classical mathematician: (Weak) K\"onig's Lemma.  We hope to make clear that the greater subtlety of the language of the intuitionistic mathematician allows for many other possible replacements. 
\subsection{Notions of finiteness}\label{SS:finiteness}

Let $\alpha$ be given. We consider the set $D_\alpha:=\{n\mid\alpha(n)\neq 0\}$, the subset of $\omega$  \textit{decided by $\alpha$}. We define the following.

 $D_\alpha$ is \textit{finite} if and only if $\exists n\forall m\ge n[\alpha(m)=0]$.

 $D_\alpha$ is \textit{bounded-in-number} if and only if $\exists n \forall t \in [\omega]^{n+1} \exists i<n+1[\alpha\circ t(i)=0]$.

 $D_\alpha$ is \textit{almost-finite} if and only if $\forall \zeta \in[\omega]^\omega\exists n[\alpha\circ\zeta(n)=0]$.

 $D_\alpha$ is \textit{not-not-finite} if and only if $\neg\neg\exists n\forall m\ge n[\alpha(m)=0]$.
 
 $D_\alpha$ is \textit{infinite} if and only if if $\forall n \exists m\ge n[\alpha(m)\neq 0]$.
 
 $D_\alpha$ is \textit{not-infinite}\footnote{A referee suggested to consider this notion too. } if and only if $\neg\forall n \exists m\ge n[\alpha(m)\neq 0]$.
 
\smallskip Note that $D_\alpha$ is infinite if and only if $ \exists \zeta \in [\omega]^\omega \forall n[\alpha\circ\zeta(n)\neq 0]$, and that $D_\alpha$ is   not-infinite if and only if $\neg \exists \zeta \in [\omega]^\omega \forall n[\alpha\circ\zeta(n)\neq 0]$. 
 Decidable subsets of $\omega$ that are bounded-in-number are introduced and discussed in \cite{veldman1995}.
Almost-finite decidable subsets of $\omega$ were introduced in \cite{veldman1995} and \cite{veldman1999}, and  are also studied in \cite{veldman2005}.

\begin{lemma}\label{L:finiteness}\begin{enumerate}[\upshape (i)]\item  $\mathsf{BIM}\vdash\forall \alpha[D_\alpha\; is\; finite\;\rightarrow D_\alpha\; is\;bounded$-$in$-$number]$. \item  $\mathsf{BIM}\vdash\forall \alpha[D_\alpha\; is\; bounded$-$in$-$number\; \rightarrow\; D_\alpha\; is\; finite]\rightarrow\mathbf{LPO}$. \item $\mathsf{BIM}\vdash\forall \alpha[D_\alpha\; is\; bounded$-$in$-$number\rightarrow D_\alpha\; is\; almost$-$finite]$. \item  $\mathsf{BIM}\vdash\forall \alpha[D_\alpha\; is\; almost$-$finite\; \rightarrow \;D_\alpha\; is\; bounded$-$in$-$number]\rightarrow\mathbf{LPO}$.\item $\mathsf{BIM}\vdash \forall \alpha[D_\alpha\; is\; almost$-$finite\;\rightarrow\; D_\alpha\; is\; not$-$infinite]$.    \item $\mathsf{BIM}+\mathbf{BARIND}\footnote{See \ref{SS:barind}. We do not know if the use of this principle here is unavoidable.}
\vdash\forall \alpha[D_\alpha\; is\; almost$-$finite\;\rightarrow\; D_\alpha\; is\; not$-$not$-$finite]$. 
\end{enumerate}\end{lemma}

\begin{proof}  (i) Let $\alpha, n$ be given such that $\forall m\ge n[\alpha(m)=0]$.
Note that $\forall t\in[\omega]^{n+1}[t(n)\ge n]$ and conclude that $\forall t \in [\omega]^{n+1}[\alpha\circ t(n)=0]$. 

\smallskip (ii) Assume $\forall \alpha[D_\alpha\; is\; bounded$-$in$-$number\; \rightarrow D_\alpha\; is\; finite]$. Let $\alpha$ be given. Define $\alpha^\ast$ such that $\forall n[\alpha^\ast(n)\neq 0\leftrightarrow n=\mu m[\alpha(m)\neq 0]]$. 
Note that $\forall t\in [\omega]^2 \exists i<2[\alpha^\ast\circ t(i)=0]$, so $D_{\alpha^\ast}$ has at most one element and is bounded-in-number.
Conclude  that $D_{\alpha^\ast}$ is finite and find $n$ such that $\forall m\ge n[\alpha^\ast(m) =0]$. 
\textit{Either} $\exists m<n[\alpha^\ast(m)\neq 0]$ \textit{or} $\forall m[\alpha^\ast(m)=0]$. Conclude that  \textit{either} $\exists m[\alpha(m)\neq 0]$ \textit{or} $\forall m[\alpha(m)=0]$. We thus see that $\forall \alpha[\exists m[\alpha(m)\ne 0]\;\vee\;\forall m[\alpha(m)=0]]$, i.e. $\mathbf{LPO}$. 

\smallskip (iii) Let $\alpha, n$ be given such that $\forall t \in [\omega]^{n+1}\exists i<n+1[\alpha\circ t(i)=0]$. 
Conclude that $\forall \zeta \in [\omega]^\omega\exists i < n+1[\alpha\circ\zeta(i)=0]$. 

\smallskip 
(iv) Assume $\forall \alpha[D_\alpha\; is\;almost$-$finite\; \rightarrow D_\alpha\; is\; bounded$-$in$-$number]$.  Let $\alpha$ be given. Define $\alpha^\ast$ such that $\forall n[\alpha^\ast(n)\neq 0\leftrightarrow \mu m[\alpha(m)\neq 0]\le n<2\cdot\mu m[\alpha(m)\neq 0]]$. 
Note that,  for all $k$, if $k=\mu[\alpha(m)\neq 0]$, then $\forall n[k\le n<2\cdot k\leftrightarrow \alpha^\ast(n)\neq 0]$ and $\exists t \in [\omega]^k\forall i<k[\alpha^\ast\circ t(i)\neq 0]$ and $\forall t \in [\omega]^{k+1}\exists i<k+1[\alpha^\ast\circ t(i)= 0]$.
Let $\zeta$ in $[\omega]^\omega$ be given. We want to prove that $\exists n[\alpha^\ast\circ \zeta(n)=0]$ and   distinguish two cases. \textit{Case (a)}. $\alpha^\ast\circ\zeta(0)= 0 $. Then we are done. \textit{Case (b)}. $\alpha^\ast\circ \zeta(0) \neq  0$. Then $\exists m[\alpha(m)\neq 0]$. Define $k:=\mu m[\alpha(m)\neq 0]$ and note: $\forall m\ge 2\cdot k[\alpha^\ast(m)= 0]$, and, in particular, $\alpha^\ast\circ \zeta(2\cdot k) =0$. 
Conclude that $\forall \zeta\in [\omega]^\omega\exists n[\alpha^\ast\circ \zeta(n)=0]$, i.e. $D_{\alpha^\ast}$ is almost-finite. 
Using the assumption, conclude  that $D_{\alpha^\ast}$ is bounded-in-number. Find $n$ such that $\forall t \in [\omega]^{n+1}\exists i <n+1[\alpha^\ast\circ t(i) =0]$. Conclude that, for all $k$, if $k=\mu m[\alpha(m)\neq 0]$, then $k<n+1$.  
\textit{Either} $\exists k<n+1[\alpha(k)\neq 0]$ \textit{or} $\forall k<n+1[\alpha(k)=0]$, and, therefore,  \textit{either} $\exists k[\alpha(k)\neq 0]$ \textit{or} $\forall k[\alpha(k)=0]$.
We thus see that $\forall \alpha[\exists k[\alpha(k)\ne 0]\;\vee\;\forall k[\alpha(k)=0]]$, i.e. $\mathbf{LPO}$.

\smallskip (v) The proof is left to the reader.

\smallskip (vi) Let $\alpha$ be given such that $D_\alpha$ is almost-finite, i.e. $\forall \zeta\in[\omega]^\omega\exists n[\alpha\circ\zeta(n)=0]$. 
 Define $B:=\bigcup_n\{s\in \omega^n\mid s\notin [\omega]^n\;\vee\;\exists i<n[\alpha\circ s(i)=0]\}$. We  now prove that $B$ is a bar in $\omega^\omega$. Let $\gamma$ be given. Define $\gamma^\ast$ such that $\gamma^\ast(0)=\gamma(0)$, and, for each $n$, if $\overline \gamma(n+2)\in[\omega]^{<\omega}$, then $\gamma^\ast(n+1)=\gamma(n+1)$, and, if not, then $\gamma^\ast(n+1)=\gamma^\ast(n)+1$. Note: $\gamma^\ast \in [\omega]^\omega$ and find $n$ such that $\overline{\gamma^\ast}n \in B$. \textit{Either} $\overline \gamma n = \overline{\gamma^\ast}n$ \textit{or} $\overline \gamma n\notin [\omega]^n$, and, in both cases; $\overline \gamma n \in B$. We thus see that $\forall\gamma \exists n[\overline\gamma n \in B]$ i.e. $Bar_{\omega^\omega}(B)$. 
 Define $E:=\bigcup_n\{s\in \omega^n\mid s\notin[\omega]^n \;\vee\; \exists i<n[\alpha\circ s(i)=0]\;\vee\;D_\alpha\;is\;not$-$not$-$finite]\}$.

 Note that  $B\subseteq E$. 
 
We now prove that $E$ is inductive. Let $s,n$ be given such that $s\in \omega^n$ and $\forall m[s\ast\langle m\rangle \in E]$. We have to prove that $s\in E$.
 We may assume that $s\in [\omega]^n \;\wedge\;\neg \exists i<n[\alpha\circ s(i)=0]$,   
 and first consider two special cases. \textit{Case (a)}. $\exists m[s\ast\langle m \rangle \in [\omega]^{n+1} \;\wedge\;\alpha(m)\neq 0]$. Finding such $m$, we consider $s\ast\langle m \rangle$ and conclude that  $s\ast\langle m \rangle \in E$ and $D_\alpha$ is not-not-finite. \textit{Case (b)}. $\neg\exists m[s\ast\langle m \rangle \in [\omega]^{n+1} \;\wedge\;\alpha(m)\neq 0]$. Conclude that $\forall m[s\ast\langle m\rangle \in [\omega]^{n+1}\rightarrow \alpha(m)=0]$ and  that $D_\alpha$ is finite.
 Defining $P:=\exists m[s\ast\langle m \rangle \in [\omega]^{n+1} \;\wedge\;\alpha(m)\neq 0]$, we may conclude that  $(P\;\vee\;\neg P)\rightarrow D_\alpha$ is not-not-finite. By intuitionistic logic\footnote{$\neg\neg(P\vee\neg P)$ is provable, and from $A\rightarrow B$ one may conclude $\neg B \rightarrow \neg A$ and also $\neg\neg A \rightarrow \neg\neg B$. Furthermore, $\neg\neg\neg C$ is equivalent to $\neg C$ and $\neg\neg\neg\neg C$ is equivalent to $\neg\neg C$.}, we conclude that $D_\alpha$ is not-not-finite and  $s\in E$.
  We thus see that $\forall s[\forall m[s\ast\langle m \rangle \in E]\rightarrow s\in E]$, i.e. $E$ is inductive.
 
 Obviously, $E$ is monotone, i.e.  $\forall s\forall m[s\in E\rightarrow s\ast\langle m \rangle \in E]$. 
 
 Using $\mathbf{BARIND}$, we conclude that $\langle\;\rangle\in E$ and $D_\alpha$ is not-not-finite. 
 
 We thus see that $\mathsf{BIM}+\mathbf{BARIND}\vdash\forall \alpha[D_\alpha\; is\; almost$-$finite\;\rightarrow\; D_\alpha\; is\; not$-$not$-$finite]$.
 \end{proof}

  \begin{lemma}\label{L:not-infinite} $\mathsf{BIM}+\mathbf{MP}$\footnote{For Markov's Principle $\mathbf{MP}$, see Subsubsection \ref{SSS:markov}.} proves $ \forall \alpha[D_\alpha\; is\; not$-$infinite\;\leftrightarrow\;D_\alpha\; is\;not$-$not$-$finite\;\leftrightarrow D_\alpha\;is\;almost$-$finite]$.\end{lemma}
  \begin{proof}The proof is left to the reader. \end{proof}
We now extend the notion `almost-finite' from decidable subsets of $\omega$ to enumerable subsets of $\omega$.
For every  $\alpha$, $E_\alpha:=\{n\mid\exists m[\alpha(m)=n+1]\}$,  is the subset of $\omega$ \textit{enumerated by} $\alpha$. 
  We define: $E_\alpha$ is \textit{almost-finite} if and only if $\forall \zeta\in[\omega]^\omega\exists m\exists n[m<n\;\wedge\;\alpha\circ\zeta(m)=\alpha\circ\zeta(n)]$. 
 
 The first item of the next Lemma shows that the definition is a good definition indeed as it 
 does not depend on the enumeration $\alpha$ of $E_\alpha$. The second  item shows that this definition is consistent with the definition given earlier for decidable subsets of $\omega$.
   The fifth item shows that an almost-finite union of almost-finite enumerable subsets of $\omega$ is  enumerable and almost-finite. 
 \begin{lemma}\label{L:almost-finite}$\mathsf{BIM}$ proves the following.  \begin{enumerate}[\upshape (i)] \item $\forall\alpha\forall\beta[\bigl(E_\beta \subseteq E_\alpha\;\wedge\; \forall \zeta\in[\omega]^\omega\exists m\exists n[m<n\;\wedge\;\alpha\circ\zeta(m)=\alpha\circ\zeta(n)]\bigr)\rightarrow \forall \zeta\in[\omega]^\omega\exists m\exists n[m<n\;\wedge\;\beta\circ\zeta(m)=\beta\circ\zeta(n)]]$. \item $\forall\alpha\forall\beta[D_\alpha=E_\beta\rightarrow \bigl(\forall \zeta\in[\omega]^\omega\exists n[\alpha\circ\zeta(n)=0]\leftrightarrow\forall \zeta\in[\omega]^\omega\exists m\exists n[m<n\;\wedge\;\beta\circ\zeta(m)=\beta\circ\zeta(n)]\bigr)$.\item $\forall \alpha[\forall i<2[E_{\alpha^{\upharpoonright i}}\;is\;almost$-$finite]\rightarrow \bigcup_{i<2}E_{\alpha^{\upharpoonright i}}\; is\; almost$-$finite]$. \item $\forall \alpha\forall n[  \forall i<n[E_{\alpha^{\upharpoonright i}}\;is\;almost$-$finite]\rightarrow\bigcup_{i<n}E_{\alpha^{\upharpoonright i}}\; is\; almost$-$finite]$.  
 \item $\forall \alpha[(\forall n[E_{\alpha^{\upharpoonright n}}\;is\;almost$-$finite]\;\wedge\;\forall \zeta \in [\omega]^\omega\exists n[\alpha^{\upharpoonright \zeta(n)}=\underline 0])\rightarrow 
 \bigcup_n E_{\alpha^{\upharpoonright n}}$ $is\; almost$-$finite]$.\end{enumerate} \end{lemma}
  \begin{proof} (i) Let $\alpha, \beta$ be given such that $E_\beta\subseteq E_\alpha$ and $\forall\zeta\in[\omega]^\omega\exists m \exists n[m<n \;\wedge\;\alpha\circ\zeta(m)=\alpha\circ\zeta(n)]$. Let $\zeta$ in $[\omega]^\omega$ be given. We will prove that 
 $ \exists m \exists n[m<n \;\wedge\;\beta\circ\zeta(m)=\beta\circ\zeta(n)]$. Define $\zeta^\ast$ such that $\zeta^\ast(0)=\zeta(0)$ and, for all $n$, {\it if} $\forall i\le n+1[\beta\circ\zeta(i) \neq 0]$ and $\forall i<n+1\forall j\le n+1[i<j\rightarrow \beta(i)\neq \beta(j)]$, then $\zeta^\ast(n+1) = \mu k[\alpha(k)=\beta\circ\zeta(n+1)]$, and, {\it if not} then $\zeta^\ast(n+1) =\max_{i\le n} \zeta^\ast(i)+1$. Note that $\forall m \forall n[m<n\rightarrow \zeta^\ast(m+1)\neq\zeta^\ast(n+1)]$. Find $\eta$ in $[\omega]^\omega$ such that $\forall n[\zeta^\ast\circ\eta(n+1)>\zeta^\ast\circ\eta(n)]$. Find $m,n$ such that $m<n$ and $\alpha\circ\zeta^\ast\circ \eta(m)=\alpha\circ\zeta^\ast\circ \eta(n)$. {\it Either} $\exists i\le \eta(n)[\beta\circ\zeta(i)=0]$ or $\beta\circ\zeta\circ\eta(m)=\alpha\circ\zeta^\ast\circ \eta(m)=\alpha\circ\zeta^\ast\circ \eta(n)=\beta\circ\zeta\circ\eta(n)$. We thus see that $\forall\zeta\in[\omega]^\omega\exists m \exists n[m<n \;\wedge\;\bigl(\beta\circ\zeta(m) =0 \;\vee\; \beta\circ\zeta(m)=\beta\circ\zeta(n)\bigr)]$. One easily concludes that $\forall\zeta\in[\omega]^\omega\exists m \exists n[m<n \;\wedge\;\beta\circ\zeta(m)=\beta\circ\zeta(n)]$
 
 \smallskip (ii) Let $\alpha, \beta$ be given such that $D_\alpha=E_\beta$. Define $\gamma$  as follows. For each $n$, if $\alpha(n)=0$, then $\gamma(n)=0$ and, if $\alpha(n)\neq 0$, then $\gamma(n)=\alpha(n)+1$. Note that $D_\alpha=E_\gamma$. $\gamma$ might be called the {\it canonical enumeration} of $D_\alpha$. 
  
  Assume $\forall \zeta\in [\omega]^\omega\exists n[\alpha\circ\zeta(n)=0]$.  Let $\zeta$ in $[\omega]^\omega$ be given. Find $m,n$ such that $m<n$ and $ \alpha\circ\zeta(m) =\alpha\circ\zeta(n)=0$. Conclude that $\gamma\circ\zeta(m)=\gamma\circ\zeta(n)=0$. We thus see that $\forall \zeta\in [\omega]^\omega\exists m \exists n[m<n \;\wedge\; \gamma\circ\zeta(m)=\gamma\circ \zeta(n)]$. Use (i) and conclude that $\forall \zeta\in [\omega]^\omega\exists m \exists n[m<n \;\wedge\; \beta\circ\zeta(m)=\beta\circ \zeta(n)]$.

  Now assume $\forall\zeta\in[\omega]^\omega\exists m \exists n[m<n \;\wedge\;\beta\circ\zeta(m)=\beta\circ\zeta(n)]$. Use (i) and conclude that $\forall\zeta\in[\omega]^\omega\exists m \exists n[m<n \;\wedge\;\gamma\circ\zeta(m)=\gamma\circ\zeta(n)]$.
   We will prove that $\forall \zeta\in [\omega]^\omega\exists n[\alpha\circ\zeta(n)= 0]$. Let $\zeta$ in $[\omega]^\omega$ be given. If $\alpha\circ\zeta(0) =0$ we are done. 
   Now assume $\alpha\circ\zeta(0) \neq 0$. Define $\zeta^\ast$ such that $\zeta^\ast(0)=0$ and,  for each $n$,   {\it if} $\forall i\le n+1[\alpha\circ\zeta(i)\neq 0]$, then $\zeta^\ast(n+1)=\zeta(n+1)$, and, {\it if not}, then $\zeta^\ast(n+1)=\zeta^\ast(n)+1$.  
   Note that $\zeta^\ast \in [\omega]^\omega$ and find $m,n$ such that $m<n$ and $\gamma\circ\zeta^\ast(m)= \gamma\circ\zeta^\ast (n) $. 
   Conclude that $\exists i\le n[\alpha\circ\zeta(n)=0]$.  We thus see that $\forall \zeta\in [\omega]^\omega\exists n[\alpha\circ\zeta(n)= 0]$.

  \smallskip (iii) Let $\alpha$  be given such that $E_{\alpha^{\upharpoonright 0}}, E_{\alpha^{\upharpoonright 1}}$ are almost-finite. Define $\alpha^\ast$ such that, for each $n$, $\alpha^\ast(2n)=\alpha^{\upharpoonright 0}(n)$ and $\alpha^\ast(2n+1)=\alpha^{\upharpoonright 1}(n)$ and note that  $E_{\alpha^\ast}=E_{\alpha^{\upharpoonright 0}}\cup E_{\alpha^{\upharpoonright 1}}$. 
  Let $\zeta$ in $[\omega]^\omega$ be given. We will prove $QED:=\exists m\exists n[m<n\;\wedge\;\alpha^\ast\circ\zeta(m)=\alpha^\ast\circ\zeta(n)]$. 
  We first prove that $\forall k\exists l> k[\exists p[\zeta(l)=2p+1]\;\vee\;QED]$. 
  Let $k$ be given. Define $\zeta^\ast$ such  that, for each $i$, {\it  if} $\forall j\le i\exists p[\zeta(k+1+j)=2p]$, then $\zeta^\ast(i)=\zeta(k+1+i)$, and, {\it if not}, then $\zeta^\ast(i)=2\cdot \zeta(k+1+i)$. Note that $\forall i\exists p[\zeta^\ast(i)=2p]$. Define $\zeta^{\ast\ast}$ such that $\forall i[\zeta^\ast(i)=2\cdot\zeta^{\ast\ast}(i)]$ and note that $\forall i[\alpha^\ast\circ\zeta^\ast(i)=\alpha^0\circ\zeta^{\ast\ast}(i)]$. Find $m,n$ such that $m<n$ and $\alpha^0\circ\zeta^{\ast\ast}(m)=\alpha^0\circ\zeta^{\ast\ast}(n)$ and note that \textit{either} $\zeta^\ast(m)=\zeta(k+1+m)$ and $\zeta^\ast(n)=\zeta(k+1+n)$ and $QED$, \textit{or} $\exists j\le n\exists p[\zeta(k+1+j)=2p+1]$. 
   Using $\mathbf{\Sigma}^0_1$-$\mathbf{AC}_{\omega,\omega}$, see Theorem \ref{T:acoosigma01},  find $\delta$ such that 
  $\forall k[\delta(k)> k \;\wedge\; (\exists p[\zeta\circ\delta(k)=2p+1]\;\vee\;QED)]$.
 Define $\zeta^\dag$ such that $\zeta^\dag(0)=\delta(0)$ and, for each $k$, $\zeta^\dag(k+1)=\delta\bigl(\zeta^\dag(k)\bigr)$. Note that, for each $k$, if $\exists p[\zeta^\dag(k)=2p]$, then $QED$.
   Define $\zeta^{\dag\ast}$ such  that, for each $i$, if $\forall j\le i\exists p[\zeta^\dag(j)=2p+1]$, then $\zeta^{\dag\ast}(i)=\zeta^\dag(i)$, and, if not, then $\zeta^{\dag\ast}(i)=2\cdot \zeta^\dag(i)+1$. Note that $\forall i\exists p[\zeta^{\dag^\ast}(i)=2p+1]$. Define $\zeta^{\dag\ast\ast}$ such that $\forall i[\zeta^{\dag\ast}(i)=2\cdot\zeta^{\dag\ast\ast}(i)+1]$ and note: $\forall i[\alpha^\ast\circ\zeta^{\dag\ast}(i)=\alpha^1\circ\zeta^{\dag\ast\ast}(i)]$. Find $m,n$ such that $m<n$ and $\alpha^1\circ\zeta^{\dag\ast\ast}(m)=\alpha^1\circ\zeta^{\dag\ast\ast}(n)$ and note that \textit{either} $\zeta^{\dag\ast}(m)=\zeta^\dag(m)$ and $\zeta^{\dag\ast}(n)=\zeta^\dag(n)$ and $QED$, \textit{or} $\exists j\exists p[\zeta^\dag(j)=2p]$ and $QED$, so in any case $QED$. 
 We thus see that $\forall \zeta\in[\omega]^\omega \exists m\exists n[m<n\;\wedge\;\alpha^\ast\circ\zeta(m)=\alpha^\ast\circ\zeta(n)]$, i.e. $E_{\alpha^\ast}= E_{\alpha^{\upharpoonright 0}}\cup E_{\alpha^{\upharpoonright 1}}$ is almost-finite.

  \smallskip (iv) Use (iii) and induction.

  \smallskip (v) Let $\alpha$ be given such that, for all $n$,  $E_{\alpha^{\upharpoonright n}}$ is almost-finite, and    
  
  $\forall \zeta\in[\omega]^\omega\exists n[\alpha^{\upharpoonright \zeta(n)}=\underline 0]$. We will prove that $\bigcup_n E_{\alpha^{\upharpoonright n}}$ is almost-finite.
  Define $\alpha^\ast$ such that, for all $p$, $\alpha^\ast(p)=\alpha^{\upharpoonright p'}(p'')$ and note that $E_{\alpha^\ast}= \bigcup_n E_{\alpha^{\upharpoonright n}}$. 
   Let $\zeta$ in $[\omega]^\omega$ be given. We will prove $QED:=\exists m\exists n[m<n\;\wedge\;\alpha^\ast\circ\zeta(m)=\alpha^\ast\circ\zeta(n)]$.
  We first prove that  $\forall k\forall n\exists l[l>k \;\wedge\;\bigl(QED\;\vee\;\zeta'(l)>n\bigr)]$. 
   Let $k,n$ be given. If $\zeta'(k+1)>n$, there is nothing to prove.  Assume $\zeta'(k+1)\le n$. Define $\zeta^\ast$ such that $\zeta^\ast(0)=\zeta(k+1)$ and, for all $i$, if $\forall j\le i+1[\zeta'(k+1+j)\le n]$, then $\zeta^\ast(i+1)=\zeta(k+2+i)$, and, if not, then $\zeta^\ast(i+1)=\mu p[p>\zeta^\ast(i)\;\wedge\;p'=n]$.  Note that $\forall i[(\zeta')\ast(i) \le n]$. Using (iii), find $p,q$ such that $p<q$ and $\alpha^\ast\circ \zeta^\ast(p)=\alpha^\ast\circ\zeta^\ast(q)$ and note that  \textit{either} $\zeta^\ast(p)=\zeta(k+1+p)$ and $\zeta^\ast(q)=\zeta(k+1+q)$ and $QED$, \textit{or} $\exists j\le q[\zeta'(k+1+j)>n]$.
    We thus see that  $\forall k\forall n\exists l[l>k \;\wedge\;\bigl(QED\;\vee\;\zeta'(l)>n\bigr)]$. 
    Using $\mathbf{\Sigma}^0_1$-$\mathbf{AC}_{\omega,\omega}$, see Theorem \ref{T:acoosigma01}, find $\delta$ such that $\forall k\forall n[\delta(k,n)>k\;\wedge\;\big(QED\;\vee\;\zeta'\circ\delta(k,n)>n\bigr)]$.
   Define $\eta$ such that  $\eta(0)=\delta(0,0)$, and, for each $n$, $\eta(n+1)=\delta\bigl(\eta(n),n\bigr)$. 
   Note that $\eta \in [\omega]^\omega$ and $\forall n[\zeta'\circ\eta(n)>n \;\vee\; QED]$. 
   Find $\rho$ in $[\omega]^\omega$ such that $\forall n[\zeta'\circ\eta\circ\rho(n+1)>\zeta'\circ\eta\circ\rho(n) \;\vee\;QED]$.
   Find $p$ such that   $\alpha^{\upharpoonright\zeta'\circ\eta\circ\rho(p)}=\underline 0$. 
   Conclude that \textit{either} $QED$, \textit{or} $\alpha^\ast\circ\zeta\circ\eta\circ\rho(\langle p,0\rangle) = \alpha^\ast\circ\zeta\circ\eta\circ\rho (\langle p,1\rangle)=0$, and again  $QED$. 
   We thus see that $\forall\zeta\in[\omega]^\omega\exists m\exists n[m<n\;\wedge\;\alpha^\ast\circ\zeta(m)=\alpha^\ast\circ\zeta(n)]$, i.e. $E_{\alpha^\ast}=\bigcup_n E_{\alpha^{\upharpoonright n}}$ is almost-finite.

   \end{proof}
\subsection{Almost-fans and approximate fans}
Recall that,
for each $\beta$, $\mathcal{F}_\beta:=\{\alpha\mid\forall n[\beta(\overline \alpha n)=0]\}$.
Let $\beta$ be given. We define the following.

$\beta$ is an \textit{almost-fan-law}, $Almfan(\beta)$, if and only if $Spr(\beta)$ and 

$\forall s \forall \zeta\in[\omega]^\omega \exists m[\beta\bigl(s\ast\langle \zeta(m)\rangle\bigr)\neq 0]$.  
If $\beta$ is an almost-fan-law, then $\mathcal{F}_\beta$ is an \textit{almost-fan}.

  $\beta$ is an \textit{approximate-fan-law}, $Appfan(\beta)$, if and only if $Spr(\beta)$ and $\forall n\exists k\forall t \in [\omega]^{k+1}\exists i\le k[t(i)\notin \omega^n\;\vee\;\beta\bigl(t(i)\bigr)\neq 0]$. 
If $\beta$ is an approximate-fan-law, then $\mathcal{F}_\beta$ is an \textit{approximate fan}\footnote{It is not true that the union of a collection of sets that is bounded-in-number and whose members are bounded-in-number is itself bounded-in-number, consider $\{\{k_{99}, k_{99}+1, \ldots, 2\cdot k_{99}\}\} = \bigcup_{n=k_{99}} C_n$ where, for each $n$, $C_n=\{n, n+1, \ldots,2n\}$. This is why the definition of an approximate fan is not completely parallel to the definition of a fan, see also Lemma \ref{L:explicitfan}.}. 

$\beta$ is an {\it explicit} approximate-fan-law, $Appfan^+(\beta)$, if and only if $Spr(\beta)$ and $\exists \gamma \forall n\forall t \in [\omega]^{\gamma(n)+1}\exists i\le\gamma(n)[t(i)\notin \omega^n\;\vee\;\beta\bigl(t(i)\bigr)\neq 0]$. 
If $\beta$ is an explicit approximate-fan-law, then $\mathcal{F}_\beta$ is an {\it explicit} approximate fan.

Note that $\mathsf{BIM}+${\it Weak}-$\mathbf{\Pi}^0_1$-$\mathbf{AC}_{\omega, \omega}\vdash\forall \beta[Appfan(\beta)\rightarrow Appfan^+(\beta)]$. 

 \subsection{}
 
 \textit{The Almost-fan Theorem as a Scheme}, $\mathbf{ALMFAN}$, $$\forall\beta[\bigl(Almfan(\beta)\;\wedge\;Bar_{\mathcal{F}_\beta}(B)\bigr)\rightarrow$$
 \begin{center}
 $ \exists \alpha[E_\alpha\subseteq B\;\wedge\;E_\alpha\;is\;almost$-$finite\;\wedge\;Bar_{\mathcal{F}_\beta}(E_\alpha)]].$
\end{center}
The following theorem may be compared to Theorem \ref{T:ftscheme}.

\begin{theorem}\label{T:almftscheme} $\mathsf{BIM}+\mathbf{BARIND}+\mathbf{AC}_{\omega,\omega^\omega}\vdash \mathbf{ALMFAN}$. \footnote{We do not know if the use of the Axiom $\mathbf{AC}_{\omega,\omega^\omega}$ is avoidable.}\end{theorem}

\begin{proof} Let $\beta$ be given such that $Almfan(\beta)$ and $\beta(\langle\;\rangle)=0$.\footnote{If $\beta(\langle\;\rangle)\neq 0$, then $\mathcal{F}_\beta=\emptyset$ and there is nothing to prove.} Assume $Bar_{\mathcal{F}_\beta}(B)$.
Define $B':=B\cup\{s\mid \beta(s)\neq 0\}$.  In the proof of Theorem \ref{T:ftscheme} we have seen how to prove that $Bar_{\omega^\omega}(B')$.

 Let $E$ be the set of all $s$ such that {\it either} $\beta(s)\neq 0$ {\it or} $\beta(s) = 0$ and $\exists \alpha[ E_\alpha \subseteq B \;\wedge\; E_\alpha\;is\;almost$-$finite\;\wedge\; Bar_{\mathcal{F}_\beta\cap s}(E_\alpha)]$. 

We  prove that  $B\subseteq E$.   For every $s$, if $\beta(s) =0$ and $s\in B$, define $\alpha$ such that $\forall n[\alpha(n)=s+1]$ and note that $\{s\}=E_\alpha\subseteq B$ and  $E_\alpha$ is finite and $Bar_{\mathcal{F}_\beta\cap s}(E_\alpha)$. 

 We prove that $E$ is inductive. Let $s$ be given such that $\forall m[s\ast\langle m \rangle \in E]$. Using $\mathbf{AC}_{\omega, \omega^\omega}$, find $\alpha$ such that, for all $m$, if $\beta(s\ast\langle m \rangle)=0$, then $E_{\alpha^m}\subseteq B$ and $E_{\alpha^m}$ is almost-finite and $Bar_{\mathcal{F}_\beta\cap s\ast\langle m \rangle}(E_{\alpha^m})$, and, if $\beta(s\ast\langle m \rangle)\neq 0$, then $\alpha^m = \underline 0$ and $E_{\alpha^m}=\emptyset$. Note that $Almfan(\beta)$ and $\forall \zeta\in[\omega]^\omega\exists m[\alpha^{\zeta(m)}=\underline 0]$. Use Lemma \ref{L:almost-finite}(v) and conclude that $\bigcup_m E_{\alpha^m}$ is almost-finite. Note that  $Bar_{\mathcal{F}_\beta\cap s}(\bigcup_n E_{\alpha^n})$ and conclude that $s\in E$.
We thus see that $\forall s[\forall m[s\ast\langle m \rangle\in E]\rightarrow s \in E]$, i.e. $E$ is inductive.

Note that $E$ is also monotone, i.e. $\forall s\forall m[s\in E\rightarrow s\ast\langle m \rangle \in E]$. 

Using $\mathbf{BARIND}$,  conclude: $\langle\;\rangle\in E$, i.e. $\exists \alpha[ E_\alpha \subseteq B \;\wedge\; E_\alpha\;is\;almost$-$finite\;\wedge\; Bar_{\mathcal{F}_\beta}(E_\alpha)]$. 
\end{proof} 
        
\subsection{} 
 
{\it The Almost-fan Theorem}, $\mathbf{AlmFT}$: \[\forall\beta[Almfan(\beta)\rightarrow\forall\alpha[\bigl( Thinbar_{\mathcal{F}_\beta}(D_\alpha) \;\wedge\; \forall s\in D_\alpha [\beta(s) = 0]\bigr)\]\[\rightarrow   \forall \zeta \in [\omega]^\omega\exists n[\zeta(n) \notin D_\alpha]]],\]

  \begin{theorem}\label{T:almft} $\mathsf{BIM}+\mathbf{ALMFAN}\vdash
  \mathbf{AlmFT}$. \end{theorem} 
  
   \begin{proof} Let $\beta, \alpha$ be given such that $Appfan(\beta)$ and $Thinbar_{\mathcal{F}_\beta}(D_\alpha)$ and $\forall s \in D_\alpha[\beta(s)=0]$.  Applying Theorem \ref{T:almftscheme}, find $\gamma$ such that $E_\gamma \subseteq D_\alpha$ and $Bar_{\mathcal{F}_\beta}(E_\gamma)$ and $E_{\gamma}$ is almost-finite. As $\forall s\in D_\alpha\forall t\in D_\alpha[s\sqsubseteq t\rightarrow s=t]$, conclude that $E_\gamma = D_\alpha$ and, by Lemma \ref{L:almost-finite}(ii), $\forall \zeta \in [\omega]^\omega\exists n[\zeta(n)\notin D_\alpha]$. \end{proof}
    
     The Almost-fan Theorem occurs in \cite{veldman2001a} and \cite{veldman2001b}. 
     
      \subsection{}

     \textit{The Approximate-fan Theorem}, $\mathbf{AppFT}$:
   \[\forall \beta \forall \alpha[\bigl(Appfan^+(\beta) \;\wedge\; Thinbar_{\mathcal{F}_\beta}(D_\alpha) \;\wedge\; \forall s \in D_\alpha[ \beta(s) = 0]\bigr)\rightarrow \]\[ \forall \zeta \in [\omega]^\omega\exists n[\zeta(n) \notin D_\alpha]].\]

    \begin{theorem}\label{T:almftappft} $\mathsf{BIM}+\mathbf{AlmFT}\vdash \mathbf{AppFT}$.\end{theorem} \begin{proof} Obvious, as every approximate fan is an almost-fan.\footnote{I do not know if $\mathsf{BIM}$ proves $\mathbf{AppFT}\rightarrow\mathbf{AlmFT}$.} \end{proof}

    \begin{theorem}\label{T:appftft} $\mathsf{BIM}+\mathbf{AppFT}\vdash \mathbf{FT}$. \end{theorem}
    
     \begin{proof} Assume $\mathbf{AppFT}$. Using Theorem \ref{T:{thinfan}}, we will prove $\mathbf{FT}$.
    Let $\alpha$ be given such that $Thinbar_{2^\omega}(D_\alpha)$ and $D_\alpha\subseteq 2^{<\omega}$.  Using $\mathbf{AppFT}$, conclude that $D_{\alpha}$ is almost-finite. Define $\zeta$ in $[\omega]^\omega$ such that, for each $n$,  if $\neg \forall\gamma\in2^\omega\exists i<n[\zeta(i)\sqsubset\gamma]$, then $\zeta(n)=\mu p[p\in D_{\alpha}\;\wedge\;\forall i<n[p\neq\zeta(i)]]$.  Find $p$ such that $\zeta(p)\notin D_{\alpha}$.  Conclude that $\forall n>\zeta(p)[n\notin D_\alpha]$. 
    We thus see that $\forall \alpha[Thinbar_{2^\omega}(D_\alpha)\rightarrow\exists m\forall  n>m[n \notin D_\alpha]]$. 
     Using Theorem \ref{T:{thinfan}}, we conclude $\mathbf{FT}$. \end{proof}

    $\mathsf{BIM}$ does not prove $\mathbf{FT}\rightarrow \mathbf{AppFT}$, see \cite[Corollary 10.6]{veldman2011c}.

 In $\mathsf{BIM}$, $\mathbf{AppFT}$ has a number of important equivalents.  Two of them are the intuitionistic Infinite Ramsey theorem and a contrapositive form of the Bolzano-Weierstrass Theorem, see \cite[Theorem 11.2 and Corollary 9.8]{veldman2011c} and \cite[Sections 7.5 and 7.6]{veldman2022a}.

 The relation between $\mathbf{FT}$ and $\mathbf{AppFT}$ in $\mathsf{BIM}$ may be compared to the relation  between $\mathbf{WKL}$ and $\mathbf{KL}$ in $\mathsf{RCA}_0$. 
In the classical context of $\mathsf{RCA_0}$, one studies two extensions of $\mathbf{WKL}$.
 The first one is \textit{Bounded K\"onig's Lemma} $\mathbf{BKL}$, that (for a classical reader) would coincide with (a contraposition of) the second formulation of $\mathbf{FT}$ \ref{SSS:fan}. The second one is  \textit{K\"onig's Lemma} $\mathbf{KL}$, that, similarly, would coincide with $\mathbf{FT}^+$ \ref{SSS:fan+}. 
$\mathbf{BKL}$ is, in $\mathsf{RCA_0}$,  equivalent to $\mathbf{WKL}$, see   \cite[Lemma IV.1.4]{Simpson}, just as, in $\mathsf{BIM}$, the first and second formulation of $\mathbf{FT}$ are equivalent. 
$\mathbf{KL}$, on the other hand,   is
definitely stronger than $\mathbf{WKL}$, as $\mathsf{RCA_0} +\mathbf{KL}$ is equivalent to $\mathsf{ACA_0}$.

As we observed before, see Theorem \ref{T:fanfan+},
$\mathsf{BIM}+$\textit{Weak-}$\mathbf{\Pi}^0_1$-$\mathbf{AC_{\omega,\omega}}\vdash\mathbf{FT}\leftrightarrow \mathbf{FT}^+.$\footnote{One may learn from \cite[Theorem 9.20]{kohlenbach2}   that, using countable choice, in fact only \textit{Weak}-$\mathbf{\Pi}^0_1$-$\mathbf{AC}_{\omega,\omega}$, one may constructively derive $\mathbf{KL}$ from $\mathbf{WKL}$. On the problem of treating non-constructive assumptions in a constructive context, see the end of Section \ref{S:BIM}.}
 From a constructive point of view,  the axiom  \textit{Weak-}$\mathbf{\Pi}^0_1$-$\mathbf{AC_{\omega,\omega}}$ is weak indeed, as the antecedent is read constructively.
 Note that $\mathsf{BIM}+ \mathbf{AC}_{\omega,\omega}!\vdash$ \textit{Weak-}$\mathbf{\Pi}^0_1$-$\mathbf{AC}_{\omega,\omega}$. 
In a classical context, the r\^ole of a countable axiom of choice is very different from the r\^ole of such an axiom in a constructive context, see  \cite[Appendix 1]{Howard-Kreisel}.
It seems  that $\mathbf{FT}^+$ is  too close to $\mathbf{FT}$ for being a good candidate to play, in the intuitionistic context,  the r\^ole played by $\mathbf{KL}$ in the classical context. 
  Note that our `axiom' $\mathbf{AppFT}$ is a possibly better candidate, as,  for a classical reader, $\mathbf{AppFT}$ is also  indistinguishable from $\mathbf{KL}$.

 Some authors have called our $\mathbf{FT}$ the \textit{Weak Fan Theorem} $\mathbf{WFT}$, see for instance \cite{loeb}, but we decided not to follow them. 
There is a constructive version of \textit{Weak Weak K\"onig's Lemma} $\mathbf{WWKL}$, see \cite[Definition X.1.7]{Simpson},  that is called $\mathbf{WWFT}$, see \cite{nemoto}.

\section{Notation and conventions}\label{S:notation}

In this Section we explain how, in $\mathsf{BIM}$, some useful notation is introduced  and some elementary results are proven. 
  \subsection{Finite and infinite sequences of natural numbers}\label{SS:sequences}\hfill

 $\mathsf{BIM}$ contains a constant $p$ denoting the function
 enumerating the prime numbers: $p(0)= 2, p(1)= 3, \ldots$
 
  We code finite sequences of natural numbers by
 natural numbers: 
 
 $\langle\;\rangle:=0$ and, for each $k>0$, for all $m_0,m_1, \ldots m_{k-1}$, 
 \[\langle m_0, \ldots, m_{k-1} \rangle = p(k-1)\cdot\prod_{i<k}p(i)^{m_i}-1 \]

 $length(0):=0$ and, 
 
 for each $s>0$,  $\mathit{length}(s) := 1\;+$
\textit{the largest $k$ such that  $p(k)$ divides $s+1$}.

If $i<\mathit{length}(s)-1$, then
 $s(i):=$  \textit{the largest m such that  $p(i)^m$ divides  $s+1$}, and, if $i=\mathit{length}(s)-1$, then $s(i):=$ \textit{the largest m such that  $p(i)^{m+1}$ divides  $s+1$}, and, if $i\ge\mathit{length}(s)$ then $s(i):=0$.
 
 Note:  if $\mathit{length}(s) = k$, then $s =\langle s(0), s(1), \ldots, s(k-1)\rangle$. 
 
 Also note: $\forall s[s\ge\mathit{length}(s)]$.
 
 \smallskip
 $a\ast b$ is the number $s$ satisfying: $\mathit{length}(s) = \mathit{length}(a) + \mathit{length}(b)$ and, 
 \\for each $n$,  if $n < length(a)$, then $s(n) = a(n)$ and, 
 \\if $\mathit{length}(a) \le n <\mathit{length}(s)$, 
 then $s(n) = b \bigl(n - \mathit{length}(a)\bigr)$.

 $a \ast \alpha$ is the element $\beta$ of $\omega^\omega$ satisfying: for each $n$, if $n< \mathit{length}(a)$, \\then $\beta(n) = a(n)$, and, if $ \mathit{length}(a)\le n$, then $\beta(n) = \alpha\bigl(n - \mathit{length}(a)\bigr)$.
 
 For  $n \le length(a)$, $\overline a(n) := \langle
 a(0), \ldots, a(n-1) \rangle$. \\If confusion seems unlikely, we sometimes write:
 ``$\overline a n$'' and not: ``$\overline a (n)$''.

 $a \sqsubseteq b\leftrightarrow \exists n  \le \mathit{length}(b)[a = \overline b n]$, and: 
 $a \sqsubset b\leftrightarrow (a\sqsubseteq b\;\wedge\;a \neq b)$.
  
  $s\le_{lex}t \leftrightarrow \forall m <\mathit{length}(s)[\overline s m =\overline t m \rightarrow s(m)\le t(m)]$.  
 
    $a \perp b\leftrightarrow \neg(a\sqsubseteq b\;\vee\;b \sqsubseteq a)$.

   $\overline{\alpha}(n) :=\overline\alpha n:= \langle
 \alpha(0), \ldots \alpha(n-1) \rangle$. 
 
  $a \sqsubset \alpha\leftrightarrow \exists n[\overline \alpha n = a]$.

\smallskip $a\perp \alpha\leftrightarrow \alpha\perp a\leftrightarrow \neg(a\sqsubset\alpha)$.

\smallskip $\alpha\;\#\;\beta \leftrightarrow \alpha\perp\beta\leftrightarrow\exists n[\alpha(n)\neq \beta(n)]$.

\smallskip For each $p$, $\underline p$ is the element of $\omega^\omega$ satisfying $\forall n[\underline p (n) =p]$.

 \medskip $\forall \alpha\forall n[ \alpha'(n)=\bigl(\alpha(n)\bigr)'\;\wedge\; \alpha''(n)=\bigl(\alpha(n)\bigr)'']$.
 
\medskip
  We extend the language of $\mathsf{BIM}$ by introducing $\in$ and terms denoting subsets of $\omega$. This is not a real extension of the language of $\mathsf{BIM}$. Formulas in which the new symbols occur are abbreviations of  formulas in which they do not occur. 
  
  Given a formula $\varphi=\varphi(n)$ we may introduce a {\it `set term'} $T_\varphi$. The basic formula `$t \in X_\varphi$' is an abbreviation of `$\varphi(t)$'. 
  
   Here is a first example.

$2^{<\omega}:=\mathit{Bin}:=\{a\mid  \forall n < \mathit{length}(a)[a(n) = 0 \; \vee\; a(n) = 1]\}$.  

 `$a\in 2^{<\omega}$'  is an abbreviation of `$\forall n < \mathit{length}(a)[a(n) = 0 \; \vee\; a(n) = 1]$'.

\smallskip $\omega^m:=\{s\mid length(s)=m\}$.

\smallskip $[\omega]^m:=\{s \in \omega^m\mid\forall i[i+1<m\rightarrow s(i)<s(i+1)]\}$. 

\smallskip Given terms $A,B$ denoting subsets of $\omega$,   \\`$A\subseteq B$' is an abbreviation of `$\forall n[n\in A\rightarrow n \in B]$'.

\smallskip
We also introduce terms denoting subsets of $\omega^\omega$, for instance:

   $2^\omega:=\{\gamma\mid\forall n [
 \gamma(n) = 0 \vee \gamma(n) = 1]\}$.

 `$\alpha\in 2^\omega$' is an abbreviation of `$\forall n [\alpha(n) = 0 \; \vee\; \alpha(n) = 1]$'.

\smallskip $[\omega]^\omega:=\{\zeta \mid \forall n[\zeta(n)<\zeta(n+1)]\}$.

\smallskip Given terms  $\mathcal{X}, \mathcal{Y}$ denoting subsets of $\omega^\omega$, \\`$\mathcal{X}\subseteq \mathcal{Y}$' abbreviates `$ \forall \alpha[\alpha\in \mathcal{X}\rightarrow\alpha\in\mathcal{Y}]$'.

\smallskip Given a term $\mathcal{X}$ denoting a subset of $\omega^\omega$, we introduce, for all $s$, \\$\mathcal{X}\cap s :=\{\alpha \in \mathcal{X}\mid s\sqsubset \alpha\}$.

\smallskip
Given a term   $\mathcal{X}$ denoting a subset of $\omega^\omega$, and a term $B$ denoting a subset of $\omega$, we let   `$\mathit{Bar}_\mathcal{X}(B)$' be an abbreviation of `$\forall\alpha\in\mathcal{X}\exists n[\overline \alpha n \in B]$'. 

Note that $\mathit{Bar}_\mathcal{X}(B)$ is a  \textit{formula scheme}, that becomes a formula if one substitutes  formulas defining $\mathcal{X}$, $B$, respectively.

\smallskip From now on, we will express ourselves more informally, as in the following example:

For each  $\mathcal{X}\subseteq\omega^\omega$, for each $B\subseteq\omega$,  \\$\mathit{Thinbar}_\mathcal{X}(B)\leftrightarrow\bigl(Bar_\mathcal{X}(B)\;\wedge\; \forall s\in B\forall t\in B[s\sqsubseteq t\rightarrow s=t]\bigr)$.  

Note that we are not extending the language of $\mathsf{BIM}$ by second-order variables.

\subsection{Decidable and enumerable subsets of $\omega$}\label{SS:decidenum}\hfill

  $D_\alpha:=\{i\mid\alpha(i)\neq 0\}$.  $D_\alpha$ is \textit{the subset of $\omega$ decided by $\alpha$}. 

The expression `$i\in D_\alpha$' is an abbreviation of `$\alpha(i)\neq 0$'.

$X\subseteq \omega$ is \textit{decidable} or $\mathbf{\Delta}^0_1$ if and only if $\exists \alpha[X=D_\alpha]$.

  \smallskip $D_a:=\{i\mid i < \mathit{length}(a)\mid a(i) \neq 0\}$.

\smallskip $X\subseteq \omega$ is {\it finite} if and only if $\exists a[X=D_a]$.

\smallskip Note:  for each $\alpha$, $D_\alpha = \bigcup\limits_{n \in \omega} D_{\overline \alpha n}$.

\smallskip
  $E_\alpha := \{n\mid \exists p [\alpha(p) = n+ 1] \}$. $E_\alpha$ is  \textit{the subset of $\omega$ enumerated by $\alpha$}.

$X\subseteq \omega$ is \textit{enumerable} or $\mathbf{\Sigma}^0_1$ if and only if $\exists \alpha[X=E_\alpha]$.

  \smallskip  $E_a := \{n\mid \exists p < \mathit{length}(a)[a(p) = n+ 1] \}$.  
 
 Note:  for each $\alpha$, $E_\alpha = \bigcup\limits_{n \in \omega} E_{\overline \alpha n}$.
 
 $X\subseteq \omega$ is \textit{co-enumerable} or $\mathbf{\Pi}^0_1$ if and only if \\$\exists \alpha[X=\omega\setminus E_\alpha=\{n\mid \forall p[\alpha(p)\neq n+1]\}]$.
 
 Given any $\alpha$, define $\beta$ such that \\$\forall n[\alpha(n)=\beta(n) = 0\;\vee\; \bigl(\alpha(n)\neq 0\;\wedge\;\beta(n)=n+1\bigr)]$, and note: $D_\alpha = E_\beta$. \\We thus see: $\mathsf{BIM}\vdash\forall \alpha \exists \beta[D_\alpha = E_\beta]$.
 
\subsection{Open and closed subsets of $\omega^\omega$,  spreads and fans}\label{SS:fans} \hfill

  $\mathcal{G}_\beta:=\{\gamma\mid\exists n[\beta(\overline \gamma n) \neq 0]\}$.
  
  $\mathcal{G}\subseteq \omega^\omega$ is \textit{  open} or $\mathbf{\Sigma}^0_1$ if and only if $\exists \beta[\mathcal{G}=\mathcal{G}_\beta]$. 

  $\mathcal{F}_\beta := \omega^\omega\setminus \mathcal{G}_\beta = \{\gamma\mid\forall n[\beta(\overline \gamma n) =0]\}$.
  
 $\mathcal{F}\subseteq\omega^\omega$  is  \textit{ closed} or $\mathbf{\Pi}^0_1$ if and only if $\exists\beta[\mathcal{F}= \mathcal{F}_\beta]$.
 
  $Spr(\beta)\leftrightarrow \forall s[\beta(s) =0\leftrightarrow \exists n[\beta(s\ast\langle n \rangle)=0]]$.  \\$\mathcal{X}\subseteq\omega^\omega$ is a \textit{spread} if and only if $\exists \beta[Spr(\beta)\;\wedge\;\mathcal{X}=\mathcal{F}_\beta]$.  
  
  \smallskip In intuitionistic mathematics, not  every  closed subset of $\omega^\omega$ is a spread, see Lemma \ref{L:closedspread}.

 \smallskip
 
 $Fan(\beta)\leftrightarrow \bigl(Spr(\beta)\;\wedge\;\forall s\exists n\forall m[\beta(s\ast \langle m \rangle)=0\rightarrow m\le n]\bigr)$ and  \\$Fan^+(\beta)\leftrightarrow \bigl(Spr(\beta)\;\wedge\;\exists \gamma\forall s\forall m[\beta(s\ast \langle m \rangle)=0\rightarrow m\le \gamma(s)]\bigr)$. 
 
  $\mathcal{X}\subseteq\omega^\omega$ is a \textit{fan} if and only if $\exists \beta[Fan(\beta)\;\wedge\;\mathcal{X}=\mathcal{F}_\beta]$.  \\$\mathcal{X}\subseteq\omega^\omega$ is an \textit{explicit fan} if and only if $\exists \beta[Fan^+
  (\beta)\;\wedge\;\mathcal{X}=\mathcal{F}_\beta]$.

 \subsection{Subsequences}\label{SS:subs} \hfill
 
   $\forall n\forall m[\alpha^{\upharpoonright n}(m) := \alpha(\langle n \rangle \ast m)]$.
 
  $\alpha^{\upharpoonright n}$ is called the \textit{ $n$-th subsequence of the infinite sequence $\alpha$}.\footnote{A referee called my attention to the fact that, in general,  $\neg \exists \zeta \in [\omega]^\omega\forall m[\alpha^{\upharpoonright n}(m)=\alpha\circ\zeta(m)]$. The use of the term `subsequence' therefore is slightly misleading. Also, in general, $\neg \exists n\exists m[\alpha^{\upharpoonright n}(m)=\alpha(0)]$. One might feel these are disadvantages of the definition we use, but they do not harm our arguments.}
  
  \smallskip $\mathit{length}(s^{\upharpoonright n}):=\mu p\le s[\langle n \rangle \ast p \ge \mathit{length}(s)]$ and\\ 
$ \forall m<length(s^{\upharpoonright n})[s^{\upharpoonright n}(m) = s(\langle n \rangle \ast m)].$ 

\smallskip Note that $\forall \alpha\forall m\forall n[(\overline\alpha m)^{\upharpoonright n}\sqsubset\alpha^{\upharpoonright n}]$.

\medskip
  $\forall m[^a
  \alpha(m) := \alpha(a \ast m)]$.

Note: $\forall n[^{\langle n\rangle}\alpha = \alpha^{\upharpoonright n}]$.

\smallskip
 
   \smallskip $\mathit{length}(^as):=\mu p\le s[a\ast p \ge \mathit{length}(s)]$ and 
$ \forall m<length(^as)[^as(m) = s(a \ast m)].$

\smallskip Note: $\forall \alpha\forall m\forall a[^a(\overline\alpha m)\sqsubset\;^a\alpha]$.
\subsection{Partial continuous functions from $\omega^\omega$ to $\omega$ and from $\omega^\omega$ to $\omega^\omega$}\label{SS:contfun}\footnote{See \cite[Subsections 7.2 and 7.5]{veldman2011b}}\hfill

$Fun_0(\varphi)\leftrightarrow 
 \forall a \in E_\varphi\forall b \in E_\varphi[a'\sqsubseteq b'\rightarrow a'' = b'']$.
 
 $Dom_0(\varphi):=\{\alpha\mid    \exists a \in E_\varphi[a'\sqsubset \alpha]\}$.

 \smallskip Assume: $Fun_0(\varphi)$ and   $\alpha\in Dom_0(\varphi)$.
 
 Then $\varphi(\alpha) :=\; the\;number\;  c\; such \;that\; \exists n[(\overline \alpha n, c)\in E_\varphi]$. 
 
 \smallskip For every $\mathcal{X}\subseteq \omega^\omega$, for every $\varphi$, $\varphi:\mathcal{X}\rightarrow \omega\leftrightarrow \bigl(Fun_0(\varphi)\;\wedge\; \mathcal{X}\subseteq Dom_0(\varphi)\bigr)$.

 \medskip
 $Fun_1(\varphi)\leftrightarrow\forall a \in E_\varphi\forall b \in E_\varphi[a'\sqsubseteq b'\rightarrow a'' \sqsubseteq b'']$.

 $Dom_1(\varphi):=\{\alpha\mid \forall n \exists a \in E_\varphi[a'\sqsubset \alpha\;\wedge\; length(a'')\ge n]\}$.
 
 $\varphi|a:=\max(\{t\mid\exists b \in E_{\overline\varphi a}[b'\sqsubseteq a \;\wedge b''=t]\})$.
 
 $\varphi:\alpha\mapsto\gamma \leftrightarrow 
 \forall n\exists m[\overline \gamma n \sqsubseteq \varphi|\overline \alpha m]]$.

 \smallskip Assume: $Fun_1(\varphi)$ and   $\alpha\in Dom_1(\varphi)$.
 
 Then $\varphi|\alpha := \; the \; element \;\gamma\;of\; \omega^\omega$ such that $\varphi:\alpha\mapsto \gamma$.

\smallskip For every $\mathcal{X}\subseteq \omega^\omega$, for every $\varphi$, $\varphi:\mathcal{X}\rightarrow \omega^\omega\leftrightarrow \bigl(Fun_1(\varphi)\;\wedge\; \mathcal{X}\subseteq Dom_1(\varphi)\bigr)$.

\subsection{Integers and rationals}\hfill

 $m=_\mathbb{Z}n\leftrightarrow m'+n''=m''+n'$.

 $m<_\mathbb{Z}n\leftrightarrow m'+n''<m''+n'$.

 $0_\mathbb{Z}:=(0,0)$
 
 $m +_\mathbb{Z} n= (m'+n',m''+n'')$.

 $m -_\mathbb{Z} n= (m'+n'',m''+n')$.

 $m\cdot_\mathbb{Z} n:=(m'\cdot n'+m''\cdot n'', m'\cdot n''+m''\cdot n')$. 

\smallskip
 $\mathbb{Q}:=\{n\mid n''>_\mathbb{Z} 0_\mathbb{Z}\}$.

 $m=_\mathbb{Q}n\leftrightarrow m'\cdot_\mathbb{Z}n''=_\mathbb{Z} m''\cdot_\mathbb{Z} n'$.

 $m<_\mathbb{Q}n\leftrightarrow m'\cdot_\mathbb{Z}n''<_\mathbb{Z} m''\cdot_\mathbb{Z} n'$.
 
 $m\le_\mathbb{Q} n\leftrightarrow  m'\cdot_\mathbb{Z}n''\le_\mathbb{Z} m''\cdot_\mathbb{Z} n'$.
 
 $m\le_\mathbb{Q}n \leftrightarrow \max_\mathbb{Q}(m,n) =_\mathbb{Q}n \leftrightarrow  \min_\mathbb{Q}(m,n) =_\mathbb{Q}m$. 

 $m +_\mathbb{Q} n= (m'\cdot_\mathbb{Z}n''+_\mathbb{Z}m''\cdot_\mathbb{Z}n',m''\cdot_\mathbb{Z}n'')$.
 
  $m -_\mathbb{Q} n= (m'\cdot_\mathbb{Z}n''-_\mathbb{Z}m''\cdot_\mathbb{Z} n',m''\cdot_\mathbb{Z}n'')$.
 
  $m\cdot_\mathbb{Q} n=(m'\cdot_\mathbb{Z} n', m''\cdot_\mathbb{Z}n'')$.
 
 \smallskip
 $\mathbb{S}:=\{s\mid s'\in \mathbb{Q} \;\wedge\; s''\in \mathbb{Q}\;\wedge\;s'<_\mathbb{Q}s''\}$. 
 
  $s \sqsubset_\mathbb{S} t\leftrightarrow  (t'<_\mathbb{Q} s' \;\wedge\; s'' <_\mathbb{Q} t'')$. 
 
  $s \sqsubseteq_\mathbb{S} t\leftrightarrow  (t'\le_\mathbb{Q} s' \;\wedge\; s'' \le_\mathbb{Q} t'')$. 
 
  $s<_\mathbb{S} t\leftrightarrow s''<_\mathbb{Q} t'$.
  
   $s\le_\mathbb{S} t\leftrightarrow s'\le_\mathbb{Q} t''$.

 $s\;\#_\mathbb{S} \;t \leftrightarrow (s <_\mathbb{S} t \;\vee\; t<_\mathbb{S} s)$.
 
 \smallskip For each $n$, we define $n_\mathbb{Q}$ in $\mathbb{Q}$ by: $n_\mathbb{Q}=\bigl((n,0),(1,0)\bigr)$.

\smallskip For all $s$ in $\mathbb{S}$, $\mathit{double}_\mathbb{S}(s)$ is the element $u$ of $\mathbb{S}$ satisfying:
 
 $u'+_\mathbb{Q}u''=_\mathbb{Q} s'+_\mathbb{Q} s''$ and $u''-_\mathbb{Q} u' =_\mathbb{Q} 2_\mathbb{Q}\cdot_\mathbb{Q} (s''-_\mathbb{Q} s')$.
 
 Note that $\forall s \in \mathbb{S}\forall t \in \mathbb{S}[s\sqsubseteq_\mathbb{S} t\rightarrow \mathit{double}_\mathbb{S}(s)  \sqsubset_\mathbb{S} \mathit{double}_\mathbb{S}(t)]$.

 \smallskip  $s+_\mathbb{S} t:= (s'+_\mathbb{Q} t', s''+_\mathbb{Q} t'')$.
 
  $s\cdot_\mathbb{S} t:= \\\bigl(\min_\mathbb{Q}(s'\cdot_\mathbb{Q}t', s''\cdot_\mathbb{Q}t' , s'\cdot_\mathbb{Q}t'', s''\cdot_\mathbb{Q}t''), \max_\mathbb{Q}(s'\cdot_\mathbb{Q}t', s''\cdot_\mathbb{Q}t' , s'\cdot_\mathbb{Q}t'', s''\cdot_\mathbb{Q}t'')\bigr)$.

 \subsection{Real numbers}\label{SSS:reals}\hfill

 $\mathcal{R}:=\{ \alpha\mid\forall n [\alpha(n)\in \mathbb{S}\;\wedge\;\alpha(n+1) \sqsubseteq_\mathbb{S} \alpha(n)]\;\wedge\;\forall m \exists n[\alpha''(n) -_\mathbb{Q} \alpha'(n) <_\mathbb{Q} \frac{1}{2^m}]\}$.\footnote{This is not the same definition as in \cite[Subsubsection 8.1.4]{veldman2011b}. We replaced `$\sqsubset_\mathbb{S}$' by `$\sqsubseteq_\mathbb{S}$'
 .} 
 
    $\alpha <_\mathcal{R} \beta \leftrightarrow \exists n[\alpha(n) <_\mathbb{S} \beta(n)]$.

    $\alpha \le_\mathcal{R} \beta \leftrightarrow \forall  n[\alpha(n) \le_\mathbb{S} \beta(n)]$.
    
  \smallskip
 $\forall n[\inf(\alpha,\beta)(n):=\bigl(\min_\mathbb{Q}(\alpha'(n),\beta'(n)),\min_\mathbb{Q}(\alpha''(n),\beta''(n))\bigr)]$.
 
 $\forall n[\sup(\alpha,\beta)(n):=\bigl(\max_\mathbb{Q}(\alpha'(n),\beta'(n)),\max_\mathbb{Q}(\alpha''(n),\beta''(n))\bigr)]$.
 
 \smallskip $\alpha \;\#_\mathcal{R}\; \beta \leftrightarrow (\alpha <_\mathcal{R} \beta \;\vee\; \beta <_\mathcal{R} \alpha)$.

   $\alpha =_\mathcal{R} \beta\leftrightarrow (\alpha \le_\mathcal{R} \beta \;\wedge\;\beta \le_\mathcal{R} \alpha)$.
   
   \smallskip $\forall n[(\alpha+_\mathcal{R}\beta)(n):=\alpha(n)+_\mathbb{S}\beta(n)]$. 
   
    $\forall n[(\alpha\cdot_\mathcal{R}\beta)(n):=\alpha(n)\cdot_\mathbb{S}\beta(n)]$.
     
For each $q$ in $\mathbb{Q}$, we define $q_\mathcal{R}$ in $\mathcal{R}$ by: for each $n$, $q_\mathcal{R}(n) = (q -_\mathbb{Q} \frac{1}{2^n}, q+_\mathbb{Q} \frac{1}{2^n})$.

\smallskip $0_\mathcal{R}:=(0_\mathbb{Q})_\mathcal{R}$ and $1_\mathcal{R}:=(1_\mathbb{Q})_\mathcal{R}$.

\begin{lemma}\label{L:apart} One may prove in $\mathsf{BIM}$: \\$\forall s \in \mathbb{S} \forall t \in \mathbb{S}[s\sqsubset_\mathbb{S} t \rightarrow \forall \alpha \in \mathcal{R} \exists n[s\;\#_\mathbb{S} \;\alpha(n)\;\vee\;\alpha(n) \sqsubset_\mathbb{S} t]]$. \end{lemma}
\begin{proof} The proof is left to the reader. \end{proof}
\subsection{$[0,1]$ and $2^\omega$}\label{SSS:01}\hfill

 $[0,1]:=\{\alpha \in \mathcal{R}\mid 0_\mathcal{R} \le_\mathcal{R} \alpha \le_\mathcal{R} 1_\mathcal{R}\}$.

\smallskip $(0,1]:=\{\alpha \in \mathcal{R}\mid 0_\mathcal{R} <_\mathcal{R} \alpha \le_\mathcal{R} 1_\mathcal{R}\}$, and  $[0,1):=\{\alpha \in \mathcal{R}\mid 0_\mathcal{R} \le_\mathcal{R} \alpha <_\mathcal{R} 1_\mathcal{R}\}$.

\smallskip For all $\alpha, \beta $ in $\mathcal{R}$, $[\alpha,\beta):=\{\gamma \in \mathcal{R}\mid\alpha\le_\mathcal{R}\gamma <_\mathcal{R}\beta\}$.

\medskip $[0,1]^2:=\{\gamma\mid \forall i<2[\gamma^{\upharpoonright i}\in [0,1]]\}$ and $[0,1]^\omega:=\{\gamma\mid \forall n[\gamma^{\upharpoonright n}\in [0,1]]\}$. 

 \smallskip
  $\mathcal{H}_\alpha:=\{\gamma \in [0,1]\mid\exists n\exists s \in \mathbb{S}[\alpha(s) \neq 0 \;\wedge\; \gamma(n)\sqsubset_\mathbb{S} s ]\}$.

 \smallskip
 $\mathcal{H}\subseteq \mathcal{R}$ is   \textit{open} if and only if $\exists\alpha[\mathcal{H}=\mathcal{H}_\alpha]$.
  
 \begin{lemma}\label{L:Conto[0,1]} One may prove in $\mathsf{BIM}$:\\There exist $\sigma, \psi$ such that \begin{enumerate}[\upshape(i)]\item $\sigma:2^\omega\rightarrow [0,1]$ and  $\forall \delta\in[0,1]\exists\gamma\in 2^\omega[\sigma|\gamma=_\mathcal{R}\delta]$. 
 \item $\psi:\omega^\omega\rightarrow\omega^\omega$ and $\forall\alpha\forall \gamma \in 2^\omega[\gamma \in \mathcal{G}_{\psi|\alpha}\leftrightarrow \sigma|\gamma \in \mathcal{H}_\alpha]$. \end{enumerate} \end{lemma}
 
 \begin{proof} (i) Define $\lambda$ and $\rho$ such that, for each $s$ in $\mathbb{S}$, $\lambda(s)=( s', \frac{1}{3}s'+_\mathbb{Q}\frac{2}{3}s'') $ and  $\rho(s)=( \frac{2}{3}s'+_\mathbb{Q}\frac{1}{3}s'', s'')$. 
 For each $s$ in $\mathbb{S}$, $\lambda(s)$ is the \textit{left-two-third part of} 
  $s$ and  $\rho(s)$ is the \textit{right-two-third part of} 
  $s$.
  Define $\nu$ such that $\nu(\langle\;\rangle)=(0_\mathbb{Q}, 1_\mathbb{Q})$ and, for all $s$ in  $Bin$, 
   $\nu(s\ast\langle 0\rangle)=\lambda\bigl(\nu(s)\bigr)$ and  $\nu(s\ast\langle 1\rangle)=\rho\bigl(\nu(s)\bigr)$.
   Define $\sigma:2^\omega\rightarrow [0,1]$ such that $\forall \gamma \in 2^\omega[(\sigma|\gamma)(n)=\nu(\overline \gamma n)]$. One may prove that   $\forall \delta\in[0,1]\exists\gamma\in 2^\omega[\sigma|\gamma=_\mathcal{R}\delta]$.  
  
   \smallskip (ii)
    Define
$\psi:\omega^\omega\rightarrow\omega^\omega$ such that $\forall \alpha\forall s[(\psi|\alpha)(s)\neq 0 \leftrightarrow \bigl(s\in 2^{<\omega}\;\wedge\;\exists t<s[\nu(s)\sqsubset_\mathbb{S}t \;\wedge\;\alpha(t)\neq 0]\bigr)]$.  
One may prove that   $\forall\alpha\forall \gamma \in 2^\omega[\gamma \in \mathcal{G}_{\psi|\alpha}\leftrightarrow \sigma|\gamma \in \mathcal{H}_\alpha]$.\end{proof}
 \begin{lemma}\label{L:Cinto[0,1]} One may prove in $\mathsf{BIM}$:\\There exist $\tau, \chi$ such that \begin{enumerate}[\upshape(i)]\item $\tau:2^\omega\rightarrow [0,1]$ and  $\forall \gamma \in 2^\omega\forall\delta\in2^\omega[\gamma\;\#\;\delta\rightarrow \tau|\gamma\;\#_\mathcal{R}\;\tau|\delta]$. 
 \item $\chi:\omega^\omega\rightarrow\omega^\omega$ and $\forall \alpha \forall \gamma \in 2^\omega[\gamma \in \mathcal{G}_\alpha\leftrightarrow \tau|\gamma \in \mathcal{H}_{\chi|\alpha}]$. \item $\forall \delta \in [0,1]\exists \gamma\in 2^\omega[\tau|\gamma\;\#_\mathcal{R}\;\delta\rightarrow \forall \alpha[\delta \in \mathcal{H}_{\chi|\alpha}]]$. 
 \item $\forall \delta \in [0,1]^\omega\exists\gamma\in 2^\omega\forall n[\tau|\gamma^n\;\#_\mathcal{R}\;\delta^n\rightarrow \forall \alpha[\delta^n \in \mathcal{H}_{\chi|\alpha}]]$. \end{enumerate} \end{lemma}
  \begin{proof} (i)  Define $\pi_0, \pi_1, \pi_2, \pi_3$ and $\pi_4$ such that, for each $s$ in $\mathbb{S}$,  for each $i<5$, $ \pi_i(s):=( \frac{5-i}{5}s' +_\mathbb{Q} \frac{i}{5} s'',\frac{5-i-1}{5}s' +_\mathbb{Q} \frac{i+1}{5} s'') $.
   For each $s$ in $\mathbb{S}$, for each $i<5$, $\pi_i(s)$ is the \textit{$i$-th fifth part of $s$}.
  Define $\varepsilon$ such that $\varepsilon(\langle\;\rangle)=(0_\mathbb{Q}, 1_\mathbb{Q})$ and, for all $a$ in  $2^{<\omega}$, 
   $\varepsilon(a\ast\langle 0\rangle)=\pi_1\bigl(\varepsilon(a)\bigr)$ and  $\varepsilon(s\ast\langle 1\rangle)=\pi_3\bigl(\varepsilon(a)\bigr)$.
   Define $\tau:2^\omega\rightarrow [0,1]$ such that $\forall \gamma \in 2^\omega[(\tau|\gamma)(n)=\varepsilon(\overline \gamma n)]$. 
   One may prove that  $\forall \gamma\in
   2^\omega\forall \delta\in2^\omega[\gamma\;\#\;\delta\rightarrow \tau|\gamma\;\#_\mathcal{R}\;\tau|\delta]$.

\smallskip (ii) Define
$\chi:\omega^\omega\rightarrow\omega^\omega$ such that, for all $\alpha$, for all $s$,  $(\chi|\alpha)(s)\neq 0$ if and only if $\exists t<s[ t\in 2^{<\omega}\;\wedge\; s\sqsubset_\mathbb{S} \varepsilon(t)\;\wedge\;\alpha(t)\neq 0]\;\vee\;\exists n<s\forall t\in 2^{<\omega}[length(t)=n\rightarrow s\;\#_\mathbb{S}\;\varepsilon(t)]$. 
We now prove that  $\forall\alpha\forall \gamma \in 2^\omega[\gamma \in \mathcal{G}_{\alpha}\leftrightarrow \tau|\gamma \in \mathcal{H}_{\chi|\alpha}]$
Let $\alpha, \gamma$ be given such that $\gamma \in 2^\omega$. 
 Assume that $\gamma\in \mathcal{G}_\alpha$. Find $n$ such that  $\alpha(\overline\gamma n)\neq 0$. Find $k>n$ such that  $\varepsilon(\overline\gamma k) >  \overline \gamma n$. Note that $\varepsilon(\overline\gamma k) \sqsubset_\mathbb{S} \varepsilon(\overline\gamma n)$  and conclude that$(\chi|\alpha)\bigl(\varepsilon(\overline \gamma k)\bigr)\neq 0$.  As $(\tau|\gamma)(k+1) \sqsubset_\mathbb{S} (\tau|\gamma)(k)=\varepsilon(\overline \gamma k)$, conclude that $\tau|\gamma \in \mathcal{H}_{\chi|\alpha}$.
Conversely, assume that $\tau|\gamma \in \mathcal{H}_{\chi|\alpha}$. Note that $\forall n[\varepsilon(\overline \gamma n) \sqsubset \tau|\gamma]$. 
Conclude that$\neg \exists s[s\sqsubset \tau|\gamma \;\wedge\;\exists n\forall t\in 2^{<\omega}[length(t)=n\rightarrow s\;\#_\mathbb{S}\;\varepsilon(t)]]$. 
Find $n, s$ such that $(\tau|\gamma)(n)\sqsubset_\mathbb{S} s$ and  $(\chi|\alpha)(s)\neq 0$. Find $t$ in $2^{<\omega}$ such that $s\sqsubset \varepsilon(t)$ and $\alpha(t)\neq 0$. Note that $t\sqsubset \gamma$ and conclude that $\gamma \in \mathcal{G}_\alpha$. We thus see that $\forall \gamma \in 2^\omega[\gamma \in \mathcal{G}_\alpha\leftrightarrow \tau|\gamma  \in \mathcal{H}_{\chi|\alpha}]$.

\smallskip 
(iii)  Assume that $\delta\in[0,1]$.
Note that $\forall a \in 2^{<\omega}[\varepsilon''(a\ast\langle 0\rangle)<_\mathbb{Q}\varepsilon'(a\ast\langle 1\rangle)]$.
Define $\gamma$ in $2^\omega$ such that,   for all $m, p$, if $p=\mu q[\varepsilon''(\overline{\gamma}  m\ast\langle 0\rangle) <_\mathbb{Q} \delta'(q)\;\vee\; \delta''(q) <_\mathbb{Q} \varepsilon'(\overline{\gamma} m\ast\langle 1 \rangle)]$, then  $\gamma(m)=0\leftrightarrow\delta''(p) <_\mathbb{Q} \varepsilon'(\overline{\gamma}m\ast\langle 1\rangle)$. 
One may prove, by induction on $n$, that, for each $n$, there exists $p$ such that  $\forall t\in 2^{<\omega}[\bigl(length(t)=n\;\wedge t\perp\overline \gamma n\bigr)\rightarrow \delta(p)\;\#_\mathbb{S}\;\varepsilon(t)]$.  
Assume that $\delta \;\#_\mathcal{R}\;\tau|\gamma$. Find $n$ such that $\delta(n)\;\#_\mathbb{S} \;(\tau|\gamma)(n)=\varepsilon(\overline \gamma(n))$. 
Find $p>n$ such that $\forall t\in 2^{<\omega}[\bigl(length(t)=n \;\wedge\; t\perp\overline \gamma n\bigr)\rightarrow \delta(p) \;\#_\mathbb{S} \;\varepsilon(t)]$.
Conclude that $\forall t \in 2^{<\omega}[length(t)=n \rightarrow \delta(p)\;\#_\mathbb{S}\; \varepsilon(t)]$ and, for each $\alpha$, $\delta \in \mathcal{H}_{\chi|\alpha}$. 
We thus see that, if $\delta\;\#_\mathcal{R}\;\tau|\gamma$, then $\forall \alpha[\delta \in \mathcal{H}_{\chi|\alpha}]$.

\smallskip (iv)  Assume: $\delta \in [0,1]^\omega$.
Define $\gamma$ in $2^\omega$ such that,  for all $n,m, p$, if $p=\mu q[\varepsilon''(\overline{\gamma^n}  m\ast\langle 0\rangle) <_\mathbb{Q} (\delta^n)'(q)\;\vee\; (\delta^n)''(q) <_\mathbb{Q} \varepsilon'(\overline{\gamma^n} m\ast\langle 1 \rangle)]$,  then  $\gamma^n(m)=0\leftrightarrow\delta''(p) <_\mathbb{Q} \varepsilon'(\overline{(\gamma^n)}m\ast\langle 1\rangle)$.  
Conclude, following the argument for (iii), that 
$\forall n[\delta^n \;\#_\mathcal{R}\;\tau|(\gamma^n)\rightarrow \delta^n\in \mathcal{H}_{\alpha^n}]$.
\end{proof}

\subsection{Real functions from $[0,1]$ to $\mathcal{R}$}\label{SS:realf}\footnote{The definition slightly deviates from the one used in \cite[Subsection 8.4]{veldman2011b}.}\hfill

  $\varphi:[0,1]\rightarrow_\mathcal{R} \mathcal{R}$ if and only if
 \begin{enumerate}
 \item $\forall n\in E_\varphi[n'\in \mathbb{S}\;\wedge\;n'' \in \mathbb{S}]$, and  
 \item  $\forall m\in E_\varphi\forall n\in E_\varphi[m'\sqsubseteq_\mathbb{S} n'\rightarrow m'' \sqsubseteq_\mathbb{S} n'']$, and
 \item $\forall \alpha\in [0,1]\forall n\exists m\exists s[(\alpha(m), s)\in E_\varphi\;\wedge\; s''-_\mathbb{Q}s'<_\mathbb{Q}\frac{1}{2^n}]$. 
 \end{enumerate}
 
 \smallskip $\mathcal{R}^{[0,1]}:=\{\varphi\mid \varphi:[0,1] \rightarrow_\mathcal{R} \mathcal{R}\}$.
 
 \smallskip Assume: $\varphi:[0,1]\rightarrow_\mathcal{R}\mathcal{R}$. 
 
 We define, for each $\alpha$ in $[0,1]$, for each $\beta$ in $\mathcal{R}$,  \\$\varphi:\alpha\mapsto_\mathcal{R}\beta$ if and only if $\forall n\exists m\exists p\in E_\varphi[\alpha(m)\sqsubseteq_\mathbb{S} p'\;\wedge\;p''\sqsubseteq_\mathbb{S}\beta(n)]$. 
 
   For each $\alpha$ in $[0,1]$,  we  let $\varphi^{`\mathcal{R}}(\alpha)$ be the element $\beta$ of $\mathcal{R}$ such that, for each $n$, $\beta(n)=double_\mathbb{S}(s)$, where $s$ is the least $t$ such that  $t\in\mathbb{S}$ and $ t''-_\mathbb{Q}t'=_\mathbb{Q} \frac{1}{2^n}$ and  \\$\exists p\le t \exists r\le t\exists m\le t[\varphi(r)= p+1 \;\wedge\; \alpha(m)\sqsubseteq_\mathbb{S} p' \;\wedge\; p''\sqsubseteq_\mathbb{S} t]$. 
   \\Note:  $\varphi:\alpha\mapsto_\mathcal{R} \varphi^{`\mathcal{R}}(\alpha)$.

 \subsection{Game-theoretic terminology}\label{SSS:games}\hfill
 
   $s:n\rightarrow k\leftrightarrow\bigl(length(s)= n\;\wedge\;\forall j<n[s(j) <k]\bigr)$.
 
 \smallskip  $Seq(n,l):=\{s\mid s:n\rightarrow l\}$.

  Note that 
 $Seq(n,2)=\{s\in 2^{<\omega}\mid length(t)=n\}=2^{<\omega}\cap \omega^n$. 
 
 \smallskip  
 $c \in_{I}  s \leftrightarrow  \forall i[2i < \mathit{length}(c) \rightarrow  c(2i) = s\bigl(\overline c (2i)\bigr)]$.\footnote{Note that, if $\overline c(2i)\ge length(s)$, then $s\bigl(\overline c(2i)\bigr)=0$.}
 
 \smallskip $c \in_{II}  t \leftrightarrow  \forall i[2i+1 < \mathit{length}(c) \rightarrow  c(2i+1) = t\bigl(\overline c (2i+1)\bigr)]$.\footnote{Note that, if $\overline c(2i+1)\ge length(t)$, then $t\bigl(\overline c(2i)\bigr)=0$.}
 
 \smallskip
 (The numbers $s, t$ should be thought of as \textit{strategies} for player $I$, $II$, respectively.)

 \smallskip  
 $c \in_{I}  \sigma \leftrightarrow  \forall i[2i < \mathit{length}(c) \rightarrow  c(2i) = \sigma\bigl(\overline c (2i)\bigr)]$.
 
  \smallskip  $c \in_{II}  \tau \leftrightarrow  \forall i[2i+1 < \mathit{length}(c) \rightarrow c(2i+1) = \tau\bigl(\overline c (2i+1)\bigr)]$.
  
   \smallskip 
 $\gamma \in_{I}  \sigma \leftrightarrow  \forall i[\gamma(2i) = \sigma\bigl(\overline \gamma (2i)\bigr)]$.
 
  \smallskip  $\gamma \in_{II}  \tau \leftrightarrow  \forall i[\gamma(2i+1) = \tau\bigl(\overline \gamma (2i+1)\bigr)]$.
  
  \smallskip  
 $\gamma \in_{I}  s \leftrightarrow  \forall i[\overline \gamma(2i)<length(s)\rightarrow \gamma(2i) = s\bigl(\overline \gamma (2i)\bigr)]$.
 
  \smallskip  $\gamma \in_{II}  t \leftrightarrow  \forall i[\overline\gamma(2i+1)<length(t)\rightarrow \gamma(2i+1) = t\bigl(\overline \gamma (2i+1)\bigr)]$.
  
 \medskip $\omega \times \omega:=\{s\mid \mathit{length}(s) = 2\}$.
  
  \smallskip $2\times  \omega:=\{s\mid \mathit{length}(s) = 2 \;\wedge\; s(0)<2\}$.
  
  \smallskip $\omega\times 2 :=\{s\mid \mathit{length}(s)=2 \;\wedge\; s(1) <2\}$.
  
 \smallskip For each $n>0$,  $(\omega \times 2)^n:=\{s\mid\mathit{length}(s) = 2n\;\wedge\forall i < n[s(2i+1) < 2]\}$ and\\ $(\omega \times 2)^n \times \omega:=\{s\mid\mathit{length}(s) = 2n+1\;\wedge\;\forall i < n[s(2i+1) <2 ]\}$.
 
 \smallskip $(\omega\times 2)^{<\omega}:=\bigcup_n(\omega\times 2)^n$.

 \smallskip
  $(\omega \times 2)^\omega:=\{\delta\mid \forall n[\delta(2n+1) < 2]\}$.
  
 \smallskip  $Halfbin:= (\omega\times 2)^{<\omega}\cup \bigl((\omega\times 2)^{<\omega}\times \omega\bigr) =\bigcup_n\{\overline \gamma n\mid \gamma \in (\omega\times 2)^\omega\}$.

 \end{document}